\numberwithin{equation}{section}
\newcommand{\R}{\mathbb{R}}
\newcommand{\E}{\mathcal{E}}
\newcommand{\B}{\mathcal{B}}
\newcommand{\G}{\mathcal{G}}
\newcommand{\T}{\mathbb{T}}
\newcommand{\C}{\mathbb{C}}
\newcommand{\Z}{\mathbb{Z}}
\newcommand{\eps}{\epsilon}
\numberwithin{equation}{section} 
\newcommand{\obv}{\overline{\upsilon}}
\newcommand{\ubv}{\underline{\upsilon}}
\newtheorem{theorem}{Theorem}[section]
\newtheorem{lemma}[theorem]{Lemma}
\newtheorem{proposition}[theorem]{Proposition}
\newtheorem{remark}[theorem]{Remark}
\begin{document}

\title[]{Axi-symmetrization near point vortex solutions for the 2D Euler equation}

\author{Alexandru D. Ionescu}
\address{Princeton University}
\email{aionescu@math.princeton.edu}

\author{Hao Jia}
\address{University of Minnesota}
\email{jia@umn.edu}

\thanks{The first author was supported in part by NSF grant DMS-1600028 and by NSF-FRG grant DMS-1463753.  The second author was supported in part by DMS-1600779}

\begin{abstract}
{\small}
We prove a definitive theorem on the asymptotic stability of point vortex solutions to the full Euler equation in 2 dimensions. More precisely, we show that a small, Gevrey smooth, and compactly supported perturbation of a point vortex leads to a global solution of the Euler equation 2D, which converges weakly as $t\to\infty$ to a radial profile with respect to the vortex. The position of the point vortex, which is time dependent, stabilizes rapidly and becomes the center of the final, radial profile. The mechanism that leads to stabilization is mixing and inviscid damping.

\end{abstract}

\maketitle

\setcounter{tocdepth}{1}

\tableofcontents

\section{Introduction}\label{Introd}
The presence of coherent vortices is a prominent feature in two dimensional fluid flows, such as viscous flows with high Reynolds number and  perfect fluid flows. These vortices are believed to play an important role in the 2D turbulence theory (see  for example \cite{Ba1,Ba2,Benzi,Brachet,McW,McW1}). The stability analysis of vortices is among the oldest problems studied in hydrodynamics, starting with Rayleigh \cite{Ray}, Kelvin \cite{Kelvin}, and Orr \cite{Orr} and continuing to the present day, see for example \cite{Gallay, Gallay2, Hall, Schecter} and references therein.

To motivate the problem, recall that smooth solutions to the 2D Euler equation are global, due to the Beale-Kato-Majda criteria (see also \cite{Yudovich2} for the case of bounded initial data). The long time behavior of solutions is, however, very hard to understand and describe. Arnold (see \cite{Arnold}) proved an important criteria for nonlinear stability of some steady states using monotonicity formulae, but the precise dynamics near these solutions are not known.  There have been some attempts in building a theory of ``weak turbulence" for the two dimensional Euler equation, to explain the appearance of coherent structures, see e.g., chapter 34 of \cite{SverakNotes}. A proposed mathematical explanation is that generically, the vorticity $\omega(t)$ converges {\it weakly} but not strongly as $t\to\infty$. This would explain the local chaos versus global structure phenomenon. It is an attractive conjecture, but it  seems hard
to rigorously formulate, let alone prove such a conjecture. A more realistic approach is to consider the 2D Euler equations in physically relevant perturbative regimes, which is the problem we consider in this paper.

Vortices are radial solutions of the vorticity equation in 2D, and are stationary under the flow. Since vortices often self organize and may become the dominant feature in two dimensional fluid flows, it is important to understand their dynamical properties.

Numerical and physical experiments and formal asymptotic analysis (see \cite{Ba1,Ba2,Schecter} and references therein) suggest that small perturbations of vortices form spirals around the center of the vortex and the angle-dependent modes of the vorticity vanish in the weak sense as $t\to\infty$, which leads to ``axi-symmetrization" of the vorticity. This has been studied rigorously at the linearized level around a strictly decreasing vortex profile by Bedrossian--Coti Zelati--Vicol \cite{Bed2},  who established axi-symmetrization of the vorticity and precise rates of decay of the associated stream function. Coti Zelati and Zillinger  \cite{Zillinger3} studied a similar phenomenon around a class of degenerate circular flows that include the important class of point vortices. 

\subsection{Nonlinear asymptotic stability} In this paper we initiate the rigorous study of the full {\it{nonlinear}} asymptotic stability problem for vortices of the 2D Euler equation. We work with the simplest class of vortices, called {\it{point vortices}}, which are $\delta$-functions centered at points in $\mathbb{R}^2$. Such solutions (and more generally the so called $N-$vortex solutions) are models of general solutions with vorticity concentrated sharply in small neighborhoods, and have been studied by many authors. See for instance Kirchhoff \cite{Kirchhoff}, C.C. Lin \cite{CCLin}, see also a recent development \cite{JCWei}, and the book \cite{Majda} for more references.

To state our main conclusions, consider solutions to the 2D Euler equations of the form
\begin{equation}\label{IE1}
{\rm vorticity\,\,field}=\kappa\, \delta(P(t))+\omega,\qquad {\rm velocity\,\,field}=\nabla^{\perp}\Delta^{-1}\delta(P(t))+u,
\end{equation}
where $\kappa\in \mathbb{R}\backslash\{0\}$ is the strength of the point vortex, $\delta(P(t))$ is the Dirac mass centered at $P(t)=(P_1(t),P_2(t))\in\mathbb{R}^2$. We assume that $P(t)$ is not in the support of $\omega$, which will be satisfied as part of our analysis. Then the perturbation $\omega,u$ satisfy the equation
\begin{equation}\label{Euler}
\partial_t\omega+U\cdot\nabla\omega+u\cdot\nabla \omega=0,\qquad{\rm for}\,\,(x,y,t)\in \mathbb{R}^2\times[0,\infty),
\end{equation}
where 
\begin{equation}\label{U}
U=\nabla^{\perp}\Delta^{-1}\delta(P(t))=\frac{\kappa}{2\pi}\nabla^{\perp}\log{|(x,y)-P(t)|},
\end{equation}
and the velocity field $u$ and the stream function $\psi$ are determined through
\begin{equation}\label{St00}
u=\nabla^{\perp} \psi=(-\partial_y\psi,\partial_x\psi),\qquad\Delta\psi=\omega,\qquad{\rm for}\,\,(x,y)\in\mathbb{R}^2.
\end{equation}
In addition, the center $P(t)$ satisfies the ODE
\begin{equation}\label{PO}
P'(t)=\nabla^{\perp}\psi(t,P(t)).
\end{equation}
Denote
\begin{equation}\label{c0}
c_0:=\int_{\mathbb{R}^2}\omega(t,x,y) dxdy,
\end{equation}
which is preserved for all times, as long as the support of $\omega(t)$ is away from $P(t)$. The equations  \eqref{Euler}--\eqref{PO} can be derived rigorously when the vortex $P(t)$ lies outside of the support of $\omega(t)$, see for example \cite{Marchioro}. In our case, this support condition is propagated dynamically by the flow, as a consequence of the proof of stability.

In this paper we prove axi-symmetrization around a point vortex. More precisely we prove that small, Gevrey smooth,  and compactly supported perturbations symmetrize around the point vortex whose location changes in time and converges fast as $t\to\infty$. 

\begin{theorem}\label{Thorem1}
Assume that $\kappa\in\R\backslash\{0\}, \lambda\in(0,\infty), M\in(1,\infty)$, and $\omega_0\in C^{\infty}_0(\R^2)$ satisfies the support property ${\rm supp}\,\omega_0\subseteq\{x\in\mathbb{R}^2:\,|x|\in[1/M,M]\}$. Assume that
\begin{equation}\label{M1}
\int_{\R^2}e^{\lambda\langle \xi,\eta\rangle^{1/2}}\left|\widetilde{\omega_0}(\xi,\eta)\right|^2\,d\xi d\eta\leq \epsilon^2,
\end{equation}
for a sufficiently small constant $\epsilon\leq\epsilon(\kappa,M,\lambda)$. Then there is a unique smooth global solution $(\omega,P)$ of the system \eqref{Euler}--\eqref{PO} such that $P(t)$ stays outside of the support of $\omega(t)$ for all $t\ge0$. Moreover, for some $c=c(\kappa,M,\lambda)>0$,
\begin{equation}\label{M2}
|P(t)-P_{\infty}|\lesssim \epsilon \, e^{-c\langle t\rangle^{1/2}}, \qquad{\rm for\,\,some\,\,}P_{\infty}\in\R^2\,\,{\rm and\,\,all}\,\,t\ge0,
\end{equation}
and $\omega(t)$ converges weakly to a Gevrey-2 regular function $\omega_{\infty}\in C^{\infty}(\R^2)$ which is radial with respect to $P_{\infty}$, as $t\to\infty$.
\end{theorem}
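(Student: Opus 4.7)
The plan is to pass to polar coordinates $(r,\theta)$ centered at the moving vortex $P(t)$. Let $f(t,r,\theta)$ denote the perturbation vorticity and $\phi(t,r,\theta)$ the associated stream function in this moving frame. Translating by $P(t)$ introduces a transport term $-P'(t)\cdot\nabla f$, and the equation becomes schematically
\begin{equation*}
\partial_t f + \frac{\kappa}{2\pi r^2}\partial_\theta f + \frac{1}{r}\bigl(\partial_r\phi\,\partial_\theta f - \partial_\theta\phi\,\partial_r f\bigr) = P'(t)\cdot \nabla f,\qquad \Delta\phi=f,
\end{equation*}
coupled to $P'(t)=\nabla^{\perp}\phi(t,P(t))$. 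The background angular velocity $\Omega(r):=\kappa/(2\pi r^2)$ is strictly monotone and bounded on any annulus $\{|x|\in[1/M',M']\}$, which by (a posteriori) support propagation remains an annulus around $P(t)$. This matches the linear setting analyzed by Coti Zelati--Zillinger and is the source of the phase mixing and inviscid damping that drive the nonlinear dynamics.

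The next step is to define the profile $F(t,r,\theta):=f(t,r,\theta+t\,\Omega(r))$, which removes the background rotation, and to carry out an energy estimate for $F$ in a time-dependent Gevrey-$\tfrac{1}{2}$ class
\begin{equation*}
\|F(t)\|_{\lambda(t)}^2:=\sum_{k\in\Z}\int_{\R}\bigl|\widehat{F_k}(t,\eta)\bigr|^2\,e^{2\lambda(t)\langle k,\eta\rangle^{1/2}}\,d\eta,
\end{equation*}
with $\lambda(t)\searrow\lambda_\infty>0$. Rewriting the nonlinearity in terms of $F$ produces oscillating phases $e^{it(k\Omega(r)-\ell\Omega(r'))}$; away from a small resonant set these give large non-stationary phase gains that absorb the derivative losses inherited from the Biot--Savart law, following the Bedrossian--Masmoudi philosophy for Couette flow and its adaptation to monotone shear by Ionescu--Jia and Masmoudi--Zhao. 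A bootstrap on $\|F(t)\|_{\lambda(t)}$ is closed together with the pointwise bound on $P'(t)$ described below.

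From a uniform-in-time Gevrey bound on $F$, each angle-dependent mode $\phi_k$ ($k\neq 0$) solves an explicit ODE $(\partial_r^2+r^{-1}\partial_r-k^2/r^2)\phi_k=f_k$, from which one extracts the inviscid damping estimate $|\nabla\phi_k(t,P(t))|\lesssim \eps e^{-c\langle t\rangle^{1/2}}$. Inserted in $P'(t)=\nabla^{\perp}\phi(t,P(t))$, this yields $|P'(t)|\lesssim\eps e^{-c\langle t\rangle^{1/2}}$ and thus convergence $P(t)\to P_\infty$ at the Gevrey-2 rate claimed in \eqref{M2}. Simultaneously, the zero mode $F_0(t,r)$ evolves under a transport equation sourced only by the nonlinear flux, and the Gevrey bound shows that $F_0(t,r)$ converges in $C^\infty$ to a radial limit profile, which transported back to the lab frame yields $\omega_\infty$ radial about $P_\infty$; testing against smooth functions and using the damping of $F_k$ for $k\neq 0$ gives the weak convergence of $\omega(t)$.

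The main obstacle is the nonlinear energy estimate itself: the non-constant shear $\kappa/(2\pi r^2)$ forces frequency-dependent weights that track the individual resonant times for each interacting pair $(k,\ell)$, and the dipole modes $k=\pm 1$ are directly coupled to the ODE for $P(t)$ through the value of $\nabla\phi$ at the center. The whole scheme must be closed as a single bootstrap, in which the Gevrey norm controls $P'(t)$, which controls the displacement of $\mathrm{supp}\,f$, which guarantees that $\Omega$ stays bounded away from $0$ and $\infty$ on that support, which makes the energy estimate valid. Isolating the translation degree of freedom carried by the $k=\pm 1$ modes and quotienting it out before estimating the genuine mixing dynamics is the principal new technical difficulty compared with the fixed monotone shear setting.
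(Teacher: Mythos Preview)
Your broad architecture---polar coordinates centered at $P(t)$, a profile transformation, Gevrey energy estimates, and a bootstrap coupling the regularity of $F$ to the decay of $P'(t)$---matches the paper. But two of the steps you describe will not close as stated.

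\textbf{The linear profile is not enough.} You set $F(t,r,\theta)=f(t,r,\theta+t\Omega(r))$ with $\Omega(r)=\kappa/(2\pi r^2)$. This removes only the background rotation. The zero-in-$\theta$ mode of the angular velocity, $\langle\partial_r\psi'\rangle(t,r)/r$, is $O(\epsilon)$ but does not decay as $t\to\infty$; after your change of variables it leaves a transport term $(\langle\partial_r\psi'\rangle/r)\,\partial_\theta F$ in the equation for $F$, which over long times drives an $O(\epsilon t)$ drift in the effective frequency and destroys any uniform Gevrey bound. The fix (already present in Bedrossian--Masmoudi for Couette and carried out here in Section~2) is the \emph{nonlinear} change
\[
v(t,r)=\Omega(r)+\frac{1}{t}\int_0^t\frac{\langle\partial_r\psi'\rangle(s,r)}{r}\,ds,\qquad z=\theta-tv,
\]
which adapts to the emerging radial profile. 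This forces you to add the coordinate functions $V',\varrho,V_\ast,\varrho_\ast,W_\ast$ to the bootstrap and to control them simultaneously with $F$ and $\phi$; the simple two-quantity bootstrap you sketch (Gevrey norm of $F$ plus $|P'(t)|$) is not sufficient.

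\textbf{Plain Gevrey weights will not absorb the reaction term.} You propose the norm $\sum_k\int e^{2\lambda(t)\langle k,\eta\rangle^{1/2}}|\widehat{F_k}|^2$ and then, in your final paragraph, correctly identify the need for ``frequency-dependent weights that track the individual resonant times'' as the main obstacle---but your scheme does not contain them. Without the imbalanced factors $b_k(t,\xi)^{-1}$ (Section~7 here), the reaction term $\varrho V'\partial_v\mathbb{P}_{\neq0}\phi\cdot\partial_zF$ loses a full derivative near each resonance $t\approx\xi/k$ and the energy estimate does not close. The oscillating-phase picture you invoke is the right heuristic, but at the level of estimates it is implemented through these weights, not through stationary-phase integration.

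Two further points. First, the equation in the new variables has genuinely variable coefficients ($\varrho(t,v)$, $V'(t,v)$, etc.)\ that are $O(1)$, not small; commuting the weighted multipliers $A_k$ through them produces commutators that must be shown to be perturbative, which requires an additional smoothness property of the weights (Lemma~7.6 here) and is a new difficulty relative to Couette. Second, your suggestion to ``quotient out'' the $k=\pm1$ modes is not what happens: those modes are estimated like all the others, and the super-exponential decay of $P'(t)$ comes directly from the Gevrey control of $F$ at frequencies $(k,\xi)=(\pm1,\pm t)$, via the integral formula for $P'(t)$ (see \eqref{PtS2} and Lemma~4.4).
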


In the above $P_{\infty}$ can be determined by the initial data through conservation laws. More precisely using the conservation of $\int_{\R^2}(x,y)\,\omega(t,x,y)\,dxdy+\kappa P(t)$ and $\int_{\R^2}\omega(t,x,y)\,dxdy$, we conclude that $P_{\infty}=(\kappa+c_0)^{-1}\int_{\R^2}(x,y)\,\omega_0(x,y)\,dxdy$ assuming without loss of generality $P(0)$ is the origin.

See also Theorem \ref{Thm} below for a more quantitative statement of our results. 

In the inviscid case,  this appears to be the first nonlinear asymptotic stability result for vortices in the plane, with general initial data. In the viscous case, Gallay and Wayne \cite{Gallay2} proved global stability of vortex solutions to the two dimensional Navier-Stokes equations, and Gallay \cite{Gallay} obtained the enhanced dissipation and axi-symmetrization for the vortex solutions.  

The vortex stabilization we prove here has similarities to the inviscid damping near Couette flows in $\mathbb{T}\times \R$, established in the remarkable paper \cite{BM} by Bedrossian-Masmoudi, and extended to the bounded periodic channel in \cite{IOJI} (see the longer discussion in subsection \ref{MainIde} below). However, the phenomenon of inviscid damping in the case of vortex solutions seems to be more natural than the inviscid damping near shear flows. Indeed, the mechanism leading to stability in the neighborhood of vortex solutions is the mixing in the angular variable $\theta$, which is naturally a periodic variable. On the other hand, for shear flows $(b(y),0)$ in the domain $\mathbb{T}\times[-L, L]$, the stability is due to the mixing in the $x$ direction by the velocity $b(y)$, and periodicity in $x$ (but not in $y$) has to be imposed as part of the setup.

\subsection{The general inviscid damping problem}
The mechanism of axi-symmetrization considered in this paper, also called ``inviscid damping", has been studied at the linearized level in many works. For example, in \cite{dongyi} Wei--Zhang--Zhao  proved sharp decay estimates for a general class of monotone shear flows in a channel. See also a recent refinement \cite{JiaL} where more precise asymptotics for the stream function was obtained. In \cite{Bed2} Bedrossian-Coti Zelati-Vicol established sharp linear inviscid damping around vortices with strictly decreasing profile in the plane. These works, and many others such as \cite{Grenier,Faddeev,Orr,Stepin,Dongyi2,Zillinger1,Zillinger2,Zillinger3}, provide                                                                                                                                                               a rather complete picture of the linear inviscid damping, under suitably general conditions.

However, the nonlinear asymptotic stability of steady flows is much more subtle and challenging. So far, the only nonlinear asymptotic stability results are those of Bedrossian and Masmoudi \cite{BM} for plane Couette flows, and the extension to a finite channel by the authors \cite{IOJI}. The method introduced in \cite{BM} for proving nonlinear asymptotic stability is based on the use of  time dependent imbalanced weights, which are designed carefully to control frequency dependent resonances. In contrast, the linear stability analysis in \cite{Bed2,JiaL,dongyi} is based on the regularity analysis of resolvents, which, through an oscillatory integral, implies decay in time of the stream function. It is not clear at the moment whether the linear decay estimates can be applied to the nonlinear analysis.\footnote{See however the recent result \cite{JiaG} which established linear inviscid damping in Gevrey spaces and appears to be more promising for nonlinear applications. The methods introduced in this paper played an important role in \cite{JiaG}.}

\subsection{Adapted polar coordinates and precise results}
 It is convenient to study (\ref{Euler}) in the polar coordinates, re-centered at $P(t)$. Let
\begin{equation}\label{COV}
(x,y)=P(t)+r(\cos{\theta},\sin{\theta}).
\end{equation}
In $(r,\theta)$ coordinate, we set the functions  $u'_r,u'_{\theta},\psi',\omega'$ as follows
\begin{equation}\label{u'In}
\begin{split}
&\omega'(t,\theta,r)=\omega(t,x,y), \qquad \psi'(t,\theta,r)=\psi(t,x,y),\\
&u_r'(t,\theta,r)e_r+u'_{\theta}(t,\theta,r)e_{\theta}=u(t,x,y),\qquad e_r:=(\cos\theta, \sin\theta),\qquad e_{\theta}:=(-\sin\theta,\cos\theta).
\end{split}
\end{equation}
Then equation (\ref{Euler}) can be rewritten as
\begin{equation}\label{PEu}
\partial_t\omega'-(P'(t),e_r)\partial_r\omega'-\frac{1}{r}(P'(t),e_{\theta})\partial_{\theta}\omega'+\frac{\kappa}{2\pi r^2}\partial_{\theta}\omega'-\frac{\partial_{\theta}\psi'\partial_r\omega'-\partial_{r}\psi'\partial_{\theta}\omega'}{r}=0,
\end{equation}
where the stream function $\psi'(t,\theta,r)$ can be calculated through
\begin{equation}\label{St}
\partial_{r}^2\psi'+\frac{1}{r}\partial_r\psi'+\frac{1}{r^2}\partial_{\theta}^2\psi'=\omega'.
\end{equation} 
In the above, 
\begin{equation}\label{PtS2}
P'(t)=\frac{1}{2\pi}\int_0^{\infty}\int_0^{2\pi}(\sin{\theta},-\cos{\theta})\,\omega'(t,\theta,r)d\theta dr.
\end{equation}
 $(P'(t),e_r),(P'(t),e_{\theta})$ are dot products between vectors $P'(t)$ and the vectors $e_r$ and $e_\theta$.

To state our main theorem we define the Gevrey spaces $\mathcal{G}^{\lambda,s}\big(\mathbb{T}\times \R\big)$ as the space of $L^2$ functions $f$ on $\mathbb{T}\times \R$ defined by the norm
\begin{equation}\label{Gev}
\|f\|_{\G^{\lambda,s}(\mathbb{T}\times \R)}:=\big\|e^{\lambda \langle k,\xi\rangle^s}\widetilde{f}(k,\xi)\big\|_{L^2(\mathbb{Z}\times\R)}<\infty.
\end{equation}
In the above, $\widetilde{f}$ denotes the Fourier transform of $f$ in $(\theta,r)\in\T\times\R$, $s\in(0,1]$, and $\lambda>0$. More generally, for any interval $I\subseteq\R$ we define the Gevrey spaces $\mathcal{G}^{\lambda,s}\big(\mathbb{T}\times I\big)$ by
\begin{equation}\label{Gev2}
\|f\|_{\G^{\lambda,s}(\mathbb{T}\times I)}:=\|Ef\|_{\G^{\lambda,s}(\mathbb{T}\times \R)},
\end{equation}
where $Ef(r):=f(r)$ if $r\in I$ and $Ef(r):=0$ if $r\notin I$. 

For any function $H(\theta,r)$ let $\langle H\rangle(r)$ denote the average of $H$ in $\theta$. Our main theorem in this paper is the following:

\begin{theorem}\label{Thm}
Assume that $\beta_0,\vartheta_0\in (0,1/8]$, $\kappa\in (0,\infty)$, and assume $\omega'_0$ is smooth initial data, satisfying the support condition ${\rm supp}\,\omega'_0\subseteq\T\times[\vartheta_0, 1/\vartheta_0]$ and the smallness condition
\begin{equation}\label{Eur0}
\|\omega_0'\|_{\G^{\beta_0,1/2}(\mathbb{T}\times\R)}=\epsilon\leq\overline{\epsilon},
\end{equation}
where $\overline{\eps}=\overline{\eps}(\beta_0,\vartheta_0,\kappa)>0$ is sufficiently small. We have the following conclusions.

(i) (global regularity) There exists a unique global solution $\omega'\in C([0,\infty):\G^{\beta_1,1/2}(\mathbb{T}\times\R))$ of the system \eqref{PEu}--\eqref{PtS2} with initial data $\omega'(0)=\omega'_0$, where $\beta_1=\beta_1(\beta_0,\vartheta_0,\kappa)>0$, such that ${\rm supp}\,\omega'(t)\subseteq \T\times[\vartheta_0/2, 2/\vartheta_0]$ and $|P(t)|<\vartheta_0/100$ for any $t\in[0,\infty)$.

(ii) (asymptotic stability) There exist $\Omega_{\infty}\in \G^{\beta_1,1/2}(\T\times\R)$ and $P_{\infty}=(P_{\infty}^1,P_{\infty}^2)\in\R^2$ with ${\rm supp}\,\Omega_{\infty}\subseteq \mathbb{T}\times [\vartheta_0/2,2/\vartheta_0]$ and $|P_{\infty}|\leq\vartheta_0/100$ such that 
\begin{equation}\label{convergence1}
\left\|\omega'(t,\theta+\kappa t/(2\pi r^2)+\Phi(t,r),r)-\Omega_{\infty}(\theta,r)\right\|_{\G^{\beta_1,1/2}(\mathbb{T}\times\R)} \lesssim\epsilon\langle t\rangle^{-1},
\end{equation}
\begin{equation}\label{convergence2}
|P(t)-P_{\infty}|\lesssim \epsilon\, e^{-\beta_1t^{1/2}},
\end{equation}
for any $t\ge 0$. Here 
\begin{equation}\label{DefPhi}
\Phi(t,r):=\int_0^t\frac{\langle u'_{\theta}\rangle(\tau,r)}{r}\,d\tau=\int_0^t\frac{\langle\partial_r\psi'\rangle(\tau,r)}{r}d\tau.
\end{equation}

(iii) (control of the velocity field) There exists $u'_{\infty}\in \G^{\beta_1,1/2}(\R)$ such that
\begin{equation}\label{convergenceofux}
\left\|\langle u'_{\theta}\rangle(t,r)-u'_{\infty}(r)\right\|_{\G^{\beta_1,1/2}(\R)}\lesssim\epsilon\langle t\rangle^{-2}.
\end{equation}
The function $u'_{\infty}$ satisfies the additional properties
\begin{equation}\label{AsymPhi2}
\partial_r( ru'_{\infty}(r))=r \Omega_{\infty}(r),\qquad u'_{\infty}(r)=0\text{ if }r\leq\vartheta_0/2,\qquad u'_{\infty}(r)=c_0/(2\pi)\text{ if }r\geq 2/\vartheta_0.
\end{equation}

Finally, the velocity field $u'$ satisfies the pointwise bounds
\begin{equation}\label{convergencetomean}
\left\|u'_{\theta}(t,\theta,r)-\langle u'_{\theta}\rangle(t,r)\right\|_{L^{\infty}(\mathbb{T}\times \R)}\lesssim\epsilon\langle t\rangle^{-1},
\end{equation}
\begin{equation}\label{convergenceuy}
\left\|u'_{r}(t,\theta,r)\right\|_{L^{\infty}(\mathbb{T}\times \R)}\lesssim\epsilon\langle t\rangle^{-2}.
\end{equation}
\end{theorem}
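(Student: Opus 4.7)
The strategy is to pass to polar coordinates centered at the moving vortex and then to a straightened frame that absorbs both the linear mixing by the point vortex and the nonlinear mean angular drift. Set $\phi(t,r):=\kappa t/(2\pi r^2)+\Phi(t,r)$ and define the profile $f(t,z,r):=\omega'(t,z+\phi(t,r),r)$. Using $u'_\theta=\partial_r\psi'$, $u'_r=-r^{-1}\partial_\theta\psi'$ and $\Phi_t=\langle u'_\theta\rangle/r$, equation \eqref{PEu} reduces by direct computation to
\[
\partial_t f+\bigl[\widetilde u'_r-(P'(t),e_r)\bigr]\partial_r f+\Bigl\{\tfrac{\widetilde u'_\theta-\langle u'_\theta\rangle}{r}-\tfrac{(P'(t),e_\theta)}{r}-\bigl[\widetilde u'_r-(P'(t),e_r)\bigr]\phi_r\Bigr\}\partial_z f=0,
\]
where tildes denote evaluation at $(z+\phi(t,r),r)$; the assertion \eqref{convergence1} is then exactly $f(t,\cdot,\cdot)\to\Omega_\infty$ in the Gevrey norm. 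The coefficient $\phi_r=-\kappa t/(\pi r^3)+\Phi_r$ grows linearly in $t$, and the term $-\widetilde u'_r\,\phi_r\,\partial_z f$ is the nonlinear analogue of the Orr mechanism in this radial geometry: it generates resonances at radial Fourier frequencies $\xi\sim\kappa tk/(\pi r^3)$ that must be absorbed by a carefully chosen norm.

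The core of the proof is a continuity argument in a weighted Gevrey-$1/2$ energy
\[
E(t):=\bigl\|A_k(t,\xi)\widetilde f(t,k,\xi)\bigr\|_{L^2(\Z\times\R)}^2,\qquad A_k(t,\xi)=e^{\lambda(t)\langle k,\xi\rangle^{1/2}}w_k(t,\xi),
\]
where $\lambda(t)$ decreases slowly from $\beta_0$ towards $\beta_1$ and $w_k(t,\xi)$ is an imbalanced multiplier, in the spirit of \cite{BM,IOJI}, whose logarithmic derivative concentrates at the critical times $t_{k,\xi,r}\sim\xi r^3/(\kappa k)$ selected by the shear $\kappa/(2\pi r^2)$. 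Differentiating $E$ in time, the dissipation produced by $\dot\lambda$ and $\partial_t w_k/w_k$ is used to absorb the paraproduct contributions (reaction, transport, remainder) of the nonlinear terms, estimated mode by mode in Fourier. Simultaneously the ODE \eqref{PtS2} is controlled: its right-hand side involves only the angular modes $k=\pm1$ of $\omega'$, which in the original coordinates carry the oscillatory factor $e^{\pm i\kappa t/(2\pi r^2)}$. Since $r\mapsto 1/r^2$ is non-stationary on the support of $f$, repeated oscillatory integration by parts in $r$, using the Gevrey-$1/2$ regularity of $f$, yields $|P'(t)|\lesssim\epsilon e^{-2\beta_1\sqrt t}$, whose integral gives \eqref{convergence2} and closes a posteriori the bootstrap assumption $|P(t)|<\vartheta_0/100$.

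The stream function is reconstructed from \eqref{St}: for each $k\ne 0$, $\widehat\psi'_k$ solves a Bessel-type ODE in $r$ with explicit Green's function whose source $\omega'_k(t,s)=e^{-ik\phi(t,s)}\widehat f_k(t,s)$ is highly oscillatory in $s$. One integration by parts against this phase yields $\|\widetilde u'_\theta-\langle u'_\theta\rangle\|_{L^\infty}\lesssim\epsilon\langle t\rangle^{-1}$ and, exploiting the extra $\partial_\theta$ in $u'_r=-r^{-1}\partial_\theta\psi'$, a second integration by parts gives $\|\widetilde u'_r\|_{L^\infty}\lesssim\epsilon\langle t\rangle^{-2}$, producing \eqref{convergencetomean}--\eqref{convergenceuy}. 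The $k=0$ mode is essentially conservative and converges to $u'_\infty$ with $\partial_r(ru'_\infty)=r\Omega_\infty$, giving \eqref{convergenceofux}. Revisiting the equation for $\partial_t f$ and integrating from $t$ to $\infty$, the same inviscid damping estimates together with the paraproduct cancellations used in the bootstrap deliver the Gevrey convergence \eqref{convergence1} at rate $\langle t\rangle^{-1}$.

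The main obstacle is the design and propagation of the weight $w_k(t,\xi)$ in this radial setting. Unlike in the planar channel \cite{BM,IOJI}, the background shear $\kappa/(2\pi r^2)$ is non-affine in $r$, so the critical times depend sensitively on the radial location and the resonant chains couple non-trivially with the radial transport $\partial_r f$; moreover the gauge $\Phi(t,r)$ is itself part of the unknown and enters the resonant phases, requiring a self-consistent control of $\Phi$ inside the bootstrap. The geometry — a plane punctured at a moving vortex with a singular Poisson kernel — also complicates the stream function estimates near $r=0$, and the coupling to the ODE for $P(t)$ through moving-frame terms $(P',e_{r,\theta})$ combined with the linearly growing coefficient $\phi_r$ must be tamed using, in parallel, the subexponential decay of $P'(t)$ produced by the bootstrap itself.
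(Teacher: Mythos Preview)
Your outline captures the overall shape (renormalized profile, imbalanced Gevrey weights, bootstrap, oscillatory integration for $P'$ and $\psi'$), but it has a genuine gap at exactly the point you flag as ``the main obstacle''. You work in $(z,r)$ with $z=\theta-\phi(t,r)$ and keep $r$ as the second variable. In these coordinates the stream-function operator becomes $r^{-2}\partial_z^2+(\partial_r-\phi_r\partial_z)^2+\ldots$, and since $\phi_r\approx -\kappa t/(\pi r^3)$ depends on $r$, the ``resonant'' operator $(\partial_r-\phi_r\partial_z)$ has no clean Fourier symbol: the critical times are $t\sim \xi r^3/(\kappa k)$, which vary across the support of the solution. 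A weight $w_k(t,\xi)$ depending only on $(t,k,\xi)$ cannot track resonances that move with $r$, so the Bedrossian--Masmoudi mechanism, which rests on the \emph{fixed} critical times $t=\xi/k$, does not close in your variables. You correctly identify this as the obstacle, but you do not say how to get past it.

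The paper's resolution is to change the radial variable as well: set $v(t,r)=\kappa/(2\pi r^2)+t^{-1}\int_0^t\langle\partial_r\psi'\rangle/r$ and $z=\theta-tv$, and work with $F(t,z,v)=\omega'(t,\theta,r)$. Then the linearized transport is exactly $\partial_tF=0$ and the elliptic equation for the renormalized stream function takes the form $\varrho^2\partial_z^2\phi+(V')^2(\partial_v-t\partial_z)^2\phi+\ldots=F$, whose principal part has the constant-coefficient symbol $(\xi-tk)$ after Fourier transform in $(z,v)$. Now the resonances sit at $t=\xi/k$ regardless of physical location, and the weights $A_k,A_R,A_{NR}$ of \cite{IOJI} apply verbatim. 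The price is that the coefficients $\varrho,V',V''$ are non-constant functions of $v$ (not small perturbations of constants), so one must commute the weight operators through them; this requires the refined smoothness estimate $|A_k(t,\xi)-A_k(t,\eta)|\lesssim(\sqrt\delta+C(\delta)\langle k,\xi\rangle^{-1/8})A_k(t,\eta)A_R(t,\xi-\eta)$, which is built into the weight design. A further point you do not address is that the map $r\mapsto v$ is itself time-dependent and unknown; the paper closes this by introducing auxiliary small variables $V_\ast,\varrho_\ast,W_\ast$ for the coordinate functions, each with its own energy functional in the bootstrap. Without the change to $v$ and this accompanying machinery, the weighted energy estimate you sketch does not close.
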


We conclude this subsection with a few remarks.

\begin{remark} The inviscid damping is generated by the term $\frac{\kappa}{2\pi r^2}\partial_{\theta}\omega'$ in the equation (\ref{PEu}). Indeed, at the linearized level the equation \eqref{PEu} is
\begin{equation}\label{PEuLin}
\partial_t\omega^{lin}+\frac{\kappa}{2\pi r^2}\partial_{\theta}\omega^{lin}=0,
\end{equation}
with the explicit solution 
\begin{equation}\label{PEuLin2}
\omega^{lin}(t,\theta,r)=\omega^{lin}_0(\theta-\kappa t/(2\pi r^2),r).
\end{equation}
Using now \eqref{St} we can express $\psi^{lin}_k$, $k\in\mathbb{Z}\backslash\{0\}$, as 
\begin{equation}\label{PEuLin3}
\psi^{lin}_k(t,r)=\int_{\mathbb{R}}G_k(r,\rho)\omega^{lin}_{0,k}(\rho)e^{-ik\kappa t/(2\pi \rho^2)}\,d\rho,
\end{equation}
where $\psi^{lin}_k$ and $\omega^{lin}_{0,k}$ denote the $k$--th Fourier modes of the functions $\psi^{lin}$ and $\omega^{lin}_{0}$ in $\theta$ and $G_k$ defined as in \eqref{G.k} is the associated Green function for the operator $\partial_r^2+\partial_r/r-k^2/r^2$. 

If $\omega_0^{lin}$ is smooth and compactly supported away from $0$ then one can integrate by parts in $\rho$ in \eqref{PEuLin3} to prove pointwise decay in time for the velocity field $u^{lin}=(u^{lin}_\theta,u^{lin}_r)=(\partial_r\psi^{lin},\partial_\theta\psi^{lin}/r)$, consistent with the bounds \eqref{convergencetomean}--\eqref{convergenceuy}. 

In other words the main conclusions of Theorem \ref{Thm} can be verified for the linearized flow as a consequence of the explicit formula \eqref{PEuLin}, as expected.
\end{remark}

\begin{remark}
The assumption that the point vortex lies outside the support of the perturbation is necessary for the inviscid damping in Gevrey spaces, due to ``boundary effect" the point vortex can exert (but the compact support assumption can probably be replaced by suitable decay). For shear flows in a finite channel, it was demonstrated by Zillinger \cite{Zillinger2} that scattering in high Sobolev spaces cannot hold if the initial perturbation does not vanish at the boundary then. The effect of boundary can also be seen in the precise asymptotics for the stream function in \cite{JiaL}. In our case, the point vortex plays a similar role as the boundary effect, in the polar coordinate, with $r=0$ as the ``boundary".
\end{remark}

\begin{remark}
The requirement that the perturbation is Gevrey-2 regular, as in \eqref{Eur0}, is natural by analogy with the case of the Couette flow. See the recent counter-examples of Deng--Masmoudi \cite{Deng} in slightly larger Gevrey spaces, and the more definitive counter-examples to inviscid damping in low Sobolev spaces by Lin--Zeng \cite{ZhiWu}. 
\end{remark}

\subsection{Main ideas of the proof}\label{MainIde} Our proof is based on analysis of the equations \eqref{PEu}--\eqref{PtS2}. Assuming first, for simplicity that $P'(t)=0$, we notice that the equations \eqref{PEu}--\eqref{St} are more complicated versions of the main equations in the analysis of the stability of the Couette flow on $\mathbb{T}\times\mathbb{R}$ (see, for example, equation (1.14) in \cite{IOJI}). 

We describe below some of the main ingredients of the proof, noting that the proof in the point vortex case contains all the difficulties of the Couette problem, in a  significantly more complicated variable coefficient setting.

\subsubsection{Renormalization and time-dependent weights} These are two key ideas introduced by Bedrossian--Masmoudi \cite{BM} in the case of Couette flow. 

As in \cite{BM} and \cite{IOJI}, we define a new system of coordinates $z,v$ by 
\begin{equation}\label{newCoordInt}
v(t,r):=\frac{\kappa}{2\pi r^2}+\frac{1}{t}\int_0^t\frac{\langle \partial_r\psi'\rangle(s,r)}{r}\,ds,\qquad z(t,\theta,r):=\theta-tv(t,r).
\end{equation}
The change of variable (\ref{newCoordInt}) is a nonlinear refinement of the linear change of coordinates $z=\theta-t\kappa/(2\pi r^2)$ in \eqref{PEuLin2}. This change of variables automatically ``adapts" to the asymptotic profile $\Omega_{\infty}(\theta,r)$, which has to be determined by the nonlinear flow, as $t\to\infty$. The main point is to remove the terms containing the non-decaying components $\kappa/(2\pi r^2)$ and  $\langle\partial_r\psi\rangle$ from the evolution equation satisfied by the renormalized vorticity, compare equations \eqref{PEu} and \eqref{Pef}. 

We can then analyze the resulting evolution equations pertubatively, establishing simultaneously smoothness and decay in time of certain components. For this we set up several energy functionals for the main functions (vorticity, stream function, and coordinate functions) and use the equations to estimate the increment in time. 

There is one significant difficulty here, due to the  ``reaction term" $\partial_v\mathbb{P}_{\neq0}\phi\cdot\partial_zF$ in \eqref{Pef}, which cannot be estimated using standard weights, due to loss of derivatives around certain ``critical" or ``resonant" times. The idea of Bedrossian-Masmoudi \cite{BM}, which we used in \cite{IOJI} and we use here as well,  is to employ time dependent norms associated to \emph{imbalanced weights}, which are designed carefully to absorb the large factors due to resonances, at the critical times. See the longer discussion in \cite{BM} and \cite[Section 1.3]{IOJI}. We note that the special weights we use here are refinements of the weights of \cite{BM}, but with additional smoothness that is important at other stages of the argument.

\subsubsection{Variable coefficients} A key new difficulty compared to the case of the Couette flow is that linearizing around vortex solutions leads to equations with variable coefficients, which are not small perturbations of constant functions. This is an important issue to understand for future applications as well, since the analysis of more general shear flows or vortices will likely lead to equations with variable coefficients.{\footnote{For general shear flows or vortices there is one more issue, namely the presence of an additional nonlocal linear term in the equation for the vorticity deviation, which requires different ideas.}}  

Since we rely heavily on Fourier analysis, this difficulty is present at all levels of the proof. We illustrate it in the case of the normalized stream function, for which we would like to use the elliptic equation \eqref{Pephi}. In order to prove the precise elliptic estimates (with the carefully designed weights), see \eqref{rec3.5} and Proposition \ref{Imboot3'} below, we take a two-step approach. First we consider the low frequency component of the normalized stream function, for which the weight is not important, and the elliptic estimates reduce to standard elliptic regularity. 

For high frequencies, we define suitable Fourier multiplier operators associated with the weights, and pass the operator through the variable coefficients. We then apply the standard elliptic estimates to the resulting elliptic equations. Since the coefficients are not constant functions, we need to bound a number of commutator terms, by showing that these terms are small. The key issue here is the smoothness of the weights. For example, for standard weights, one gains derivatives in commutator terms through inequalities of the form
\begin{equation*}
|K(\xi)-K(\eta)|\lesssim \langle\xi\rangle^{-\gamma}\max\{K(\xi),K(\eta)\}\langle\xi-\eta\rangle, \qquad {\rm if \,\,\langle\xi-\eta\rangle\ll \min\{\langle\xi\rangle, \langle\eta\rangle\}},
\end{equation*}
for some $\gamma>0$, where we assume $K$ is the weight function we use to commute. This is the case for example if $K(\xi)=\langle\xi\rangle^s$ (Sobolev type regularity) with $\gamma=1$ or $K(\xi)=e^{\langle\xi\rangle^{1/2}}$ (Gevrey regularity) with $\gamma=1/2$. 

In our case such a gain is not possible. Our main idea is to construct the weights to depend on an additional small parameter $\delta$ and prove a weaker bound of the form
\begin{equation}\label{S1}
\begin{split}
&\left|A_k(t,\xi)-A_k(t,\eta)\right|\\
&\lesssim \left[\frac{C(\delta)}{\langle k,\xi\rangle^{1/2}}+\sqrt{\delta}\right]\max\{A_k(t,\xi),A_k(t,\eta)\}, \quad {\rm if\,\, \langle \xi-\eta\rangle\lesssim 1\ll \min\{\langle k,\xi\rangle, \langle k,\eta\rangle\}}.
\end{split}
\end{equation}
Such bounds are still suitable to control the commutators, due to the gain of $\sqrt{\delta}$ for large frequencies. The parameter $\delta$ should be thought of as small relative to the structural constants $\beta_0,\vartheta_0,\kappa$, but much larger that $\epsilon$ (the size of the solution).

Smoothness conditions such as \eqref{S1} impose additional constraints on the weights $A_\ast(t,\xi)$, which have to be satisfied simultaneously with the resonance conditions described earlier. Because of all of these constraints, the precise design of the weights is very important. We use similar weights as in our paper \cite{IOJI}, and provide all the details and the proofs in section \ref{weights}.

\subsubsection{Interaction with the point vortex} We also need to quantify the effect of the global shift in coordinates caused by the movement of the vortex $P(t)$. On the one hand, $P'(t)$ decays fast, see \eqref{Pboot4}, which implies rapid stabilization of the point vortex. On the other hand, the global change of variables, in the presence of mixing, causes loss of regularity of functions. 

Roughly speaking, we can understand the problem as follows: the change of coordinates is expected to normalize the transport in $\theta$ with a {\it fixed} center. Thus a change in the center disrupts the renormalization. The issue can be seen already for simple functions such as $\cos\theta$ which is very smooth in the $\theta, r$ coordinate, while in the new $z,v$ coordinate (see \eqref{PCoC} for the change of coordinates formula) becomes $\cos(z+tv)$, which loses regularity rapidly. As a result, for terms such as $P'(t)\cos{(z+tv)}$ which are related to the effect of the global shift of coordinates, we have exactly the right balance between the fast decay of $P'(t)$ in time and the loss of regularity in the function $\cos(z+tv)$ to close the estimates. The situation is somewhat analogous to the boundary effect when studying the asymptotic stability of the Couette flow in the finite channel considered in \cite{IOJI}.

\subsection{Organization}\label{Organize} The rest of the paper is organized as follows. In section 2 we define our renormalized variables and set up the main bootstrap Proposition \ref{MainBootstrap}. In section 3 we show how the main theorems follow from the conclusions of the bootstrap proposition. In section 4 we prove bounds on the main structural functions in the problem. In sections 5 and 6 we prove the main improved bounds required to complete the bootstrap argument. In section 7 we  review the construction of the main weights $A_k(t,\xi), A_R(t,\xi), A_{NR}(t,\xi)$ from \cite{IOJI} and prove some more precise bounds on these weights.

\section{Renormalization, energy functionals, and the main proposition}\label{sec:variables}

\subsection{The main change of variables}\label{sec:cha}

Assume $T\geq 1$ and $\omega'$ is a sufficiently smooth solution of the system \eqref{PEu}--\eqref{PtS2} on some time interval $[0,T]$, which satisfies $\|\langle\omega'\rangle(t)\|_{H^{10}}\ll 1$ and is supported away from the origin for all $t\in[0,T]$, i.e. 
\begin{equation}\label{omeCom}
{\rm supp}\,\omega'(t)\subseteq B_{2/\vartheta_0}\backslash B_{\vartheta_0/2}.
\end{equation}

In analogy with the inviscid damping for the Couette flow, we now introduce variables that unwind the mixing in $\theta$. More precisely, we define
\begin{equation}\label{NewV}
v(t,r):=\frac{\kappa}{2\pi r^2}+\frac{1}{t}\int_0^t\frac{\langle \partial_r\psi'\rangle(s,r)}{r}\,ds,\qquad z(t,\theta,r):=\theta-tv(t,r),
\end{equation}
where $\langle h\rangle(r)$ denotes the average in $\theta\in\mathbb{T}$ of the function $h(\theta,r)$. It follows from (\ref{St}) that
\begin{equation}\label{psiInf2}
\partial_r\big(r\langle\partial_r\psi'\rangle\big)=r\langle \omega'\rangle.
\end{equation}
In particular, the map $r\to v$ defines a bijective change of variables, provided that $\|\langle\omega'\rangle(t)\|_{H^{10}}$ is sufficiently small (depending only on $\kappa$ and $\vartheta_0$). In view of \eqref{omeCom} and \eqref{c0} we have
\begin{equation}\label{psiInf4}
r\langle\partial_r\psi'\rangle(t,r)=0\,\,{\rm if\,\,}r<\vartheta_0/2\qquad{\rm and}\qquad r\langle\partial_r\psi'\rangle(t,r)=\frac{c_0}{2\pi}\,\,{\rm if\,\,}r>2/\vartheta_0.
\end{equation}

We define the functions $F$ and $\phi$ by the identities
\begin{equation}\label{fphi}
F(t,z(t,\theta,r),v(t,r)):=\omega'(t,\theta,r),\qquad\phi(t,z(t,\theta,r),v(t,r)):=\psi'(t,\theta,r).
\end{equation}
Direct calculations show that
\begin{equation}\label{DNC}
\begin{split}
&\partial_t\omega'(t,\theta,r)=\partial_tF(t,z,v)-\frac{\kappa}{2\pi r^2}\partial_zF(t,z,v)-\frac{\langle \partial_r\psi'\rangle(t,r)}{r}\partial_zF(t,z,v)+\partial_tv(t,r)\,\partial_vF(t,z,v),\\
&\partial_r\omega'(t,\theta,r)=-t\partial_rv(t,r)\,\partial_zF(t,z,v)+\partial_rv(t,r)\partial_vF(t,z,v),\\
& \partial_{\theta}\omega'(t,\theta,r)=\partial_zF(t,z,v). 
\end{split}
\end{equation}
Completely analogous formulas hold connecting $\psi'$ and $\phi$. 

We also define the functions $V',V'',\dot{V},\varrho$ by the formulas
\begin{equation}\label{V}
V'(t,v):=\partial_rv(t,r),\qquad V''(t,v):=\partial_r^2v(t,r),\qquad \dot{V}(t,v):=\partial_tv(t,r),\qquad\varrho(t,v):=\frac{1}{r}.
\end{equation}
Using \eqref{DNC}--\eqref{V} we can rewrite the equation (\ref{PEu}) in terms $F,\phi$ as follows,
\begin{equation}\label{EM}
\partial_tF+\dot{V}\partial_vF-\varrho V'\partial_z\phi\partial_vF+\varrho V'\partial_v\mathbb{P}_{\neq0}\phi\partial_zF-P'_v\,V'(\partial_v-t\partial_z)F-\varrho P'_z\,\partial_zF=0.
\end{equation}
In the above, 
\begin{equation}\label{Pr}
\begin{split}
&P'_v(t,z,v)=(P'(t),e_r)=P'_1(t)\cos(z+tv)+P'_2(t)\sin(z+tv),\\
& P'_{z}(t,z,v)=(P'(t),e_{\theta})=-P'_1(t)\sin(z+tv)+P'_2(t)\cos(z+tv), \\
& \mathbb{P}_{\neq0}\phi=\phi-\langle\phi\rangle.
\end{split}
\end{equation}
Moreover, using \eqref{St}, the normalized stream function $\phi$ satisfies the equation
\begin{equation}\label{ell}
\varrho^2\partial_z^2\phi+(V')^2(\partial_v-t\partial_z)^2\phi+V''(\partial_v-t\partial_z)\phi+\varrho V'(\partial_v-t\partial_z)\phi=F.
\end{equation}

\subsubsection{Equations for the coordinate functions}

We would also like to derive equations for the change-of-coordinates functions. We notice that the functions $V',V'',\varrho$ defined in \eqref{V} are not ``small"; to derive useful equations we need to construct suitable combinations of these functions, which are small, and then derive evolution equations for these combinations.

From (\ref{NewV}) and (\ref{St}), it follows 
\begin{equation}\label{prv}
\partial_rv(t,r)=-\frac{\kappa}{\pi r^3}-\frac{2}{t}\int_0^t\frac{\langle \partial_r\psi'\rangle(s,r)}{r^2}ds+\frac{1}{t}\int_0^t\frac{\langle \omega'\rangle(s,r)}{r}ds.
\end{equation}
The identities (\ref{psiInf4}) and (\ref{NewV}) imply that
\begin{equation}\label{psiInf5}
v(t,r)=\frac{\kappa}{2\pi r^2}\,\,{\rm if}\,\,r<\vartheta_0/2,\qquad {\rm and}\qquad v(t,r)=\frac{\kappa+c_0}{2\pi r^2}\,\,{\rm if}\,\,r>2/\vartheta_0,
\end{equation}
therefore
\begin{equation}\label{psiInf6}
\partial_rv(t,r)=-2v/r\,\,\,{\rm if}\,\,\,r<\vartheta_0/2\,\,\,\text{ or }\,\,\,r>2/\vartheta_0. 
\end{equation}
Let
\begin{equation}\label{cf1}
V_\ast(t,v):=V'(t,v)+2\varrho(t,v) v.
\end{equation}
It follows from \eqref{psiInf6} that $V_\ast(t,v)=0$ if $r\in\mathbb{R}^+\backslash[\vartheta_0/2,2/\vartheta_0]$. Moreover, using \eqref{NewV} and \eqref{prv},
\begin{equation}\label{cf1.5}
V_\ast(t,v(t,r))=(\partial_rv)(t,r)+\frac{2v(t,r)}{r}=\frac{1}{t}\int_0^t\frac{\langle \omega'\rangle(s,r)}{r}ds.
\end{equation}
Therefore, for any $t\in[0,T]$,
\begin{equation*}
\frac{\langle \omega'\rangle(t,r)}{r}=\frac{d}{dt}\big[tV_\ast(t,v(t,r))\big]=V_\ast(t,v(t,r))+t(\partial_tV_\ast)(t,v(t,r))+t(\partial_tv)(t,r)(\partial_vV_\ast)(t,v(t,r)),
\end{equation*}
which can be rewritten in the form
\begin{equation}\label{cf2}
(\partial_tV_\ast)(t,v)+\dot{V}(t,v)(\partial_vV_\ast)(t,v)=\frac{1}{t}\big(\langle F\rangle(t,v)\varrho(t,v)-V_\ast(t,v)\big).
\end{equation}

We define now
\begin{equation}\label{cf4}
\varrho_\ast(t,v):=\varrho(t,v)-\sqrt{\frac{2\pi v}{\kappa}}.
\end{equation}
Using the definitions we have
\begin{equation*}
\varrho_\ast(t,v(t,r))=\frac{1}{r}-\sqrt{\frac{2\pi v(t,r)}{\kappa}}.
\end{equation*}
Therefore, after taking time derivatives
\begin{equation}\label{cf6}
(\partial_t\varrho_\ast)(t,v)+\dot{V}(t,v)(\partial_v\varrho_\ast)(t,v)=-\sqrt{\frac{\pi}{2\kappa v}}\dot{V}(t,v).
\end{equation}


Next we calculate
\begin{equation}\label{ptv}
\partial_tv(t,r)=\frac{1}{t}\left[-\frac{1}{t}\int_0^t\frac{\langle \partial_r\psi'\rangle(s,r)}{r}ds+\frac{\langle\partial_r\psi'\rangle(t,r)}{r}\right],
\end{equation}
and
\begin{equation}\label{ptrv}
\partial^2_{rt}v(t,r)=\frac{1}{t}\left[\frac{2}{t}\int_0^t\frac{\langle \partial_r\psi'\rangle(s,r)}{r^2}ds-\frac{1}{t}\int_0^t\frac{\langle \omega'\rangle(s,r)}{r}ds-2\frac{\langle \partial_r\psi'\rangle(t,r)}{r^2}+\frac{\langle\omega'\rangle(t,r)}{r}\right].
\end{equation}
Therefore
\begin{equation}\label{ptrvC}
\partial^2_{rt}v(t,r)+\frac{2}{r}\partial_tv(t,r)=\frac{1}{t}\left[-\frac{1}{t}\int_0^t\frac{\langle \omega'\rangle(s,r)}{r}ds+\frac{\langle\omega'\rangle(t,r)}{r}\right].
\end{equation}
Moreover
\begin{equation}\label{cfv} 
\langle \omega'\rangle(t,r)=\langle F\rangle(t,v(t,r)),\qquad \partial^2_{tr}v(t,r)=V'(t,v(t,r))(\partial_v\dot{V})(t,v(t,r)),
\end{equation}
Using (\ref{ptrvC}), (\ref{cfv}), and \eqref{cf1.5} we see that
\begin{equation}\label{ptdv}
t[V'(t,v)(\partial_v\dot{V})(t,v)+2\varrho(t,v)\dot{V}(t,v)]=-V_\ast(t,v)+\langle F\rangle(t,v)\varrho(t,v).
\end{equation}

Finally, we define\footnote{The function $W_\ast$ is, of course, ``small" since both $V_\ast$ and $F$ are ``small". However, the precise combination we choose here has the additional critical property that it decays as $t\to\infty$ essentially at the rate of $t^{-1}$, compare with the energy functional $\mathcal{E}_{W_\ast}$. This decay property is not shared by the functions $V_\ast$ and $F$.}
\begin{equation}\label{cf10}
W_\ast(t,v):=-V_\ast(t,v)+\langle F\rangle(t,v)\varrho(t,v).
\end{equation}
Then we calculate, using \eqref{EM}, \eqref{cf2}, and $\partial_t\varrho+\dot{V}\partial_v\varrho=0$,
\begin{equation}\label{cf11}
\begin{split}
&\partial_tW_\ast+\dot{V}\partial_vW_\ast=\varrho(\partial_t\langle F\rangle+\dot{V}\partial_v \langle F\rangle)-(\partial_tV_\ast+\dot{V}\partial_v V_\ast)\\
&=-W_\ast/t+\varrho^2 V'\langle \partial_z\phi\partial_vF\rangle-\varrho^2 V'\langle\partial_v\mathbb{P}_{\neq0}\phi\partial_zF\rangle+\varrho V'\langle P'_v(\partial_v-t\partial_z)F\rangle+\varrho^2 \langle P'_z\,\partial_zF\rangle.
\end{split}
\end{equation}

We summarize our calculations so far in the following:

\begin{proposition}\label{ChangedEquations}
Assume that $\omega',\psi':[0,T]\times\T\times\R_+\to\R$ and $P':[0,T]\to\R^2$ are smooth functions that satisfy the system \eqref{PEu}--\eqref{PtS2}.
Assume that $\omega'(t)$ is supported in $\T\times[\vartheta_0/2,2/\vartheta_0]$ and $\|\langle\omega'\rangle(t)\|_{H^{10}}\ll 1$ for any $t\in[0,T]$. 

We define the change-of-coordinate functions $(z,v):\mathbb{T}\times\mathbb{R}^+\to \mathbb{T}\times \mathbb{R}^+$ by
\begin{equation}\label{PCoC}
v:=\frac{\kappa}{2\pi r^2}+\frac{1}{t}\int_0^t\frac{\langle \partial_r\psi'\rangle(s,r)}{r}\,ds,\qquad z:=\theta-tv,
\end{equation}
and the new functions $F,\phi:[0,T]\times\mathbb{T}\times\mathbb{R}_+\to\mathbb{R}$ and $V',V'',\dot{V},\varrho,V_\ast,\varrho_{\ast},W_\ast:[0,T]\times\mathbb{R}_+\to\mathbb{R}$
\begin{equation}\label{rea21}
\begin{split}
&F(t,z,v):=\omega'(t,\theta,r),\qquad \phi(t,z,v):=\psi'(t,\theta,r),
 \end{split}
\end{equation}
\begin{equation}\label{rea22}
V'(t,v):=\partial_rv(t,r),\qquad V''(t,v):=\partial_{r}^2v(t,r),\qquad \dot{V}(t,v):=\partial_tv(t,r),
\end{equation}
\begin{equation}\label{rea22'}
\varrho(t,v):=1/r,\qquad \varrho_{\ast}(t,v):=\varrho(t,v)-\sqrt{\frac{2\pi v}{\kappa }},
\end{equation}
\begin{equation}\label{rea23}
V_\ast(t,v):=V'(t,v)+2\varrho(t,v) v, \qquad W_\ast(t,v):=-V_\ast(t,v)+\langle F\rangle(t,v)\varrho(t,v).
\end{equation}

Set
\begin{equation}\label{vbar}
\overline{\upsilon}:= \frac{4\kappa}{\pi \vartheta_0^2}\qquad {\rm and}\qquad  \underline{\upsilon}:=\frac{(\kappa+c_0)\vartheta_0^2}{16\pi}.
\end{equation}
Then, for any $t\in[0,T]$,
\begin{equation}\label{Psupp}
{\rm supp}\,F(t)\subseteq \mathbb{T}\times[\ubv,\obv],\qquad {\rm supp}\,\dot{V}(t) \cup {\rm supp}\, V_\ast(t)\cup {\rm supp}\, W_\ast(t)\subseteq [\ubv,\obv],
\end{equation}
\begin{equation}\label{Prho}
\varrho(t,v)=\sqrt{\frac{2\pi v}{\kappa}}\,\,{\rm for}\,\,v\in(\obv,\infty)\qquad{\rm and}\qquad \varrho(t,v)=\sqrt{\frac{2\pi v}{\kappa+c_0}}\,\,{\rm for}\,\,v\in(0,\ubv).
\end{equation}

The new variables $F,V_\ast,\varrho_\ast,W_\ast$ satisfy the evolution equations
\begin{equation}\label{Pef}
\partial_tF+\dot{V}\partial_vF=\varrho V'\partial_z\phi\partial_vF-\varrho V'\partial_v\mathbb{P}_{\neq0}\phi\partial_zF+P'_v\,V'(\partial_v-t\partial_z)F+\varrho P'_z\,\partial_zF,
\end{equation}
\begin{equation}\label{PeV'}
\partial_tV_\ast+\dot{V}\partial_vV_\ast=W_\ast/t,
\end{equation}
\begin{equation}\label{Perho}
\partial_t\varrho_\ast+\dot{V}\partial_v\varrho_\ast=-\sqrt{\frac{\pi}{2\kappa v}}\dot{V},
\end{equation}
\begin{equation}\label{PeH}
\begin{split}
\partial_tW_\ast+\dot{V}\partial_vW_\ast&=-W_\ast/t+\varrho^2 V'\langle \partial_z\phi\partial_vF\rangle-\varrho^2 V'\langle\partial_v\mathbb{P}_{\neq0}\phi\partial_zF\rangle\\
&+\varrho V'\langle P'_v(\partial_v-t\partial_z)F\rangle+\varrho^2 \langle P'_z\,\partial_zF\rangle,
\end{split}
\end{equation}
where, with $P'$ as in \eqref{PtS2},
\begin{equation}\label{PrS}
\begin{split}
&P'_v(t,z,v)=(P'(t),e_r)=P'_1(t)\cos(z+tv)+P'_2(t)\sin(z+tv),\\
& P'_{z}(t,z,v)=(P'(t),e_{\theta})=-P_1'(t)\sin(z+tv)+P'_2(t)\cos(z+tv).
\end{split}
\end{equation}

The variables $\phi,V'',$ and $\dot{V}$ satisfy the elliptic-type identities
\begin{equation}\label{Pephi}
\varrho^2\partial_z^2\phi+(V')^2(\partial_v-t\partial_z)^2\phi+V''(\partial_v-t\partial_z)\phi+\varrho V'(\partial_v-t\partial_z)\phi=F,
\end{equation}
\begin{equation}\label{PV''}
V''=V'\partial_vV',\qquad V'\partial_v\dot{V}+2\varrho\dot{V}=W_\ast/t.
\end{equation}
\end{proposition}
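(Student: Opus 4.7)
The plan is to recognise that essentially every identity in the proposition has already been derived (at least implicitly) in the computations \eqref{newCoordInt}--\eqref{ptdv} leading up to the statement, so that the proof reduces to three tasks: (a) verifying that $r\mapsto v(t,r)$ is a smooth diffeomorphism of $\R_+$ onto itself; (b) establishing the support and explicit-formula bounds \eqref{Psupp}--\eqref{Prho}; (c) reorganising the chain-rule manipulations for the evolution equations \eqref{Pef}--\eqref{PeH} and the elliptic identities \eqref{Pephi}--\eqref{PV''}.

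For (a) I would use \eqref{prv} together with standard elliptic regularity for \eqref{psiInf2} to get $\partial_r v(t,r) = -\kappa/(\pi r^3) + O(\|\langle\omega'\rangle(t)\|_{H^{10}})$ for $r$ in any compact subset of $\R_+$; outside $[\vartheta_0/2,2/\vartheta_0]$ the correction is in fact explicit, yielding $-\kappa/(\pi r^3)$ or $-(\kappa+c_0)/(\pi r^3)$, both strictly negative under the hypotheses. The smallness assumption then forces $\partial_r v<0$ uniformly in $t\in[0,T]$, so $r\mapsto v(t,r)$ is a strictly decreasing smooth bijection onto $\R_+$, legitimising the push-forward definitions in \eqref{rea21}--\eqref{rea22'}.

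For (b), \eqref{psiInf5} gives the explicit formula $v(t,r)=\kappa/(2\pi r^2)$ for $r\le\vartheta_0/2$ and $v(t,r)=(\kappa+c_0)/(2\pi r^2)$ for $r\ge 2/\vartheta_0$, which after inverting $r=1/\varrho$ yields \eqref{Prho}. Evaluating at the endpoints gives $v(t,\vartheta_0/2)=2\kappa/(\pi\vartheta_0^2)<\obv$ and $v(t,2/\vartheta_0)=(\kappa+c_0)\vartheta_0^2/(8\pi)>\ubv$, and by the monotonicity from (a) the entire image of $[\vartheta_0/2,2/\vartheta_0]$ lies in $[\ubv,\obv]$; hence ${\rm supp}\,F(t)\subseteq\T\times[\ubv,\obv]$. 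The same inclusion holds for $V_\ast$ (from \eqref{psiInf6} and \eqref{cf1}, $V_\ast\equiv 0$ off $[\vartheta_0/2,2/\vartheta_0]$), for $\dot V$ (from \eqref{ptv}: the bracketed term vanishes whenever $\langle\partial_r\psi'\rangle(s,r)/r$ is $s$-independent, which \eqref{psiInf4} guarantees), and for $W_\ast=\langle F\rangle\varrho-V_\ast$ by its definition.

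For (c), \eqref{Pef} is \eqref{EM} rewritten via \eqref{DNC}; \eqref{PeV'} rewrites \eqref{cf2} using $W_\ast=\langle F\rangle\varrho-V_\ast$; \eqref{Perho} is \eqref{cf6}; \eqref{PeH} is \eqref{cf11}; and \eqref{PrS} restates \eqref{Pr}. The elliptic identity \eqref{Pephi} follows from \eqref{St} by inserting $\partial_\theta=\partial_z$, $\partial_r=V'(\partial_v-t\partial_z)$, $\partial_r^2=(V')^2(\partial_v-t\partial_z)^2+V''(\partial_v-t\partial_z)$, $1/r=\varrho$, and $1/r^2=\varrho^2$. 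The first identity in \eqref{PV''} comes from differentiating $V'(t,v(t,r))=\partial_r v(t,r)$ in $r$, and the second is \eqref{ptdv} divided by $t$, rewritten via the definition of $W_\ast$. The only delicate ingredient is the uniform monotonicity in (a), which must provide enough margin that the endpoint computations in (b) remain strict inclusions; everything else is algebraic bookkeeping.
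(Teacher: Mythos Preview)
Your proposal is correct and follows exactly the paper's approach: the proposition is stated in the paper as a summary of the calculations \eqref{psiInf2}--\eqref{cf11} that precede it, and you have correctly matched each claimed identity to its source (\eqref{Pef}$=$\eqref{EM}, \eqref{PeV'} from \eqref{cf2} and \eqref{cf10}, \eqref{Perho}$=$\eqref{cf6}, \eqref{PeH}$=$\eqref{cf11}, \eqref{Pephi}$=$\eqref{ell}, and \eqref{PV''} from the chain rule and \eqref{ptdv}). Your treatment of the diffeomorphism and the support/formula claims \eqref{Psupp}--\eqref{Prho} is in fact slightly more detailed than what the paper writes out explicitly, but the ingredients (\eqref{psiInf4}--\eqref{psiInf6}, \eqref{cf1}, \eqref{ptv}) are the same.
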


\subsection{Weights, energy functionals, and the bootstrap proposition}\label{weightsdef} In this subsection we construct our main energy functionals and state our main bootstrap proposition.

We will use three main weights $A_{NR}$, $A_R$, and $A_k$. These special weights are ``imbalanced" and can distinguish ``resonant" and ``non-resonant" times. For simplicity, the weights we employ here are identical to the weights defined in our paper \cite{IOJI}; this allows us to use some of the estimates already proved there.

Fix $\delta_0>0$, we define the function $\lambda:[0,\infty)\to[\delta_0,3\delta_0/2]$ by
\begin{equation}\label{reb10.5}
\lambda(0)=\frac{3}{2}\delta_0,\,\,\,\,\lambda'(t)=-\frac{\delta_0\sigma_0^2}{\langle t\rangle^{1+\sigma_0}},
\end{equation}
for $\sigma_0>0$ is small (say $\sigma_0:=0.01$). In particular, $\lambda$ is decreasing on $[0,\infty)$. Define
\begin{equation}\label{reb11}
A_R(t,\xi):=\frac{e^{\lambda(t)\langle\xi\rangle^{1/2}}}{b_R(t,\xi)}e^{\sqrt{\delta}\langle\xi\rangle^{1/2}},\qquad A_{NR}(t,\xi):=\frac{e^{\lambda(t)\langle\xi\rangle^{1/2}}}{b_{NR}(t,\xi)}e^{\sqrt{\delta}\langle\xi\rangle^{1/2}},
\end{equation}
where $\delta>0$ is a small parameter that may depend only on $\delta_0$, $\vartheta_0$, and $\kappa$. Then we define
\begin{equation}\label{reb12}
A_k(t,\xi):=e^{\lambda(t)\langle k,\xi\rangle^{1/2}}\Big(\frac{e^{\sqrt{\delta}\langle\xi\rangle^{1/2}}}{b_k(t,\xi)}+e^{\sqrt{\delta}|k|^{1/2}}\Big),\qquad k\in\mathbb{Z}.
\end{equation}

The precise definitions of the weights $b_{NR},\,b_R,\,b_k$ are very important; all the details are provided in section \ref{weights}. For now we simply note that, for any $t,\xi,k$,
\begin{equation}\label{reb13}
e^{-\delta\sqrt{|\xi|}}\leq b_R(t,\xi)\leq b_k(t,\xi)\leq b_{NR}(t,\xi)\leq 1,
\end{equation}
 In other words, the weights $1/b_{NR},\,1/b_R,\,1/b_k$ are small when compared to the main factors $e^{\lambda(t)\langle\xi\rangle^{1/2}}$ and $e^{\lambda(t)\langle k,\xi\rangle^{1/2}}$ in the weights $A_{NR},\,A_R,\,A_k$. However, their relative contributions are important as they are used to distinguish between ``resonant" and ``non-resonant" times. 

\subsubsection{The main bootstrap proposition} Assume that $\omega',\psi'$ are as in Proposition \ref{ChangedEquations} and define the functions $F,\phi,V',V'',\dot{V},\varrho,V_\ast,\varrho_{\ast},W_\ast$ as in \eqref{rea21}--\eqref{rea23}. We fix also Gevrey cutoff functions $\Psi,\Psi^\dagger:\mathbb{R}\to[0,1]$ satisfying
\begin{equation}\label{rec0}
\begin{split}
&\big\|e^{\langle\xi\rangle^{3/4}}\widetilde{\Psi}(\xi)\big\|_{L^\infty}\lesssim 1,\qquad {\rm supp}\,\Psi\subseteq \big[\ubv/3,3\obv\big], \qquad\Psi\equiv 1\text{ in }\big[\ubv/2,2\obv\big],\\
&\big\|e^{\langle\xi\rangle^{3/4}}\widetilde{\Psi^\dagger}(\xi)\big\|_{L^\infty}\lesssim 1,\qquad {\rm supp}\,\Psi^\dagger\subseteq \big[\ubv/9,9\obv\big], \qquad\Psi^\dagger\equiv 1\text{ in }\big[\ubv/8,8\obv\big],
 \end{split}
\end{equation}
where $\,\,\widetilde{}\,\,$ denotes the Fourier transform either on $\mathbb{R}$ or on $\mathbb{T}\times\mathbb{R}$. See subsection \ref{GevCut} for an explicit construction of such Gevrey cutoff functions.

We are now ready to define the main energy functionals. Let
\begin{equation}\label{rec1}
\mathcal{E}_F(t):=\sum_{k\in \mathbb{Z}}\int_{\R}A_k^2(t,\xi)\big|\widetilde{F}(t,k,\xi)\big|^2\,d\xi,
\end{equation}
\begin{equation}\label{rec3.5}
\mathcal{E}_{\phi}(t):=\sum_{k\in \mathbb{Z}\setminus\{0\}}\int_{\R}A_k^2(t,\xi)\frac{|k|^4\langle t\rangle^2\langle t-\xi/k\rangle^4}{|\xi/k|^2+\langle t\rangle^2}\big|\widetilde{\mathbb{P}_{\neq 0}(\Psi\phi)}(t,k,\xi)\big|^2\,d\xi,
\end{equation}
\begin{equation}\label{rec2}
\mathcal{E}_{V_\ast}(t):=\int_{\R}A_R^2(t,\xi)\big|\widetilde{V_\ast}(t,\xi)\big|^2\,d\xi,
\end{equation}
\begin{equation}\label{rec2.1}
\mathcal{E}_{\varrho_{\ast}}(t):=\int_{\R}A_R^2(t,\xi)\big|\widetilde{(\Psi^\dagger\varrho_{\ast})}(t,\xi)\big|^2\,d\xi,
\end{equation}
\begin{equation}\label{rec3}
\mathcal{E}_{W_\ast}(t):=\mathcal{K}^{2}\int_{\R}A_{NR}^2(t,\xi)\big(\langle t\rangle^{3/2}\langle\xi\rangle^{-3/2}\big)\big|\widetilde{W_\ast}(t,\xi)\big|^2\,d\xi,
\end{equation}
where $\mathcal{K}\geq 1$ is a large constant to be fixed (depending only on $\delta$).

We define also $\dot{A}_Y(t,\xi):=(\partial_t A_Y)(t,\xi)$, $Y\in\{NR,R,k\}$, and the space-time integrals
\begin{equation}\label{rec4}
\mathcal{B}_F(t):=\int_1^t\sum_{k\in \mathbb{Z}}\int_{\R}|\dot{A}_k(s,\xi)|A_k(s,\xi)\big|\widetilde{F}(s,k,\xi)\big|^2\,d\xi ds,
\end{equation}
\begin{equation}\label{rec6.5}
\mathcal{B}_{\phi}(t):=\int_1^t\sum_{k\in \mathbb{Z}\setminus\{0\}}\int_{\R}|\dot{A}_k(s,\xi)|A_k(s,\xi)\frac{|k|^4\langle s\rangle^2\langle s-\xi/k\rangle^4}{|\xi/k|^2+\langle s\rangle^2}\big|\widetilde{\mathbb{P}_{\neq 0}(\Psi\phi)}(t,k,\xi)\big|^2\,d\xi ds,
\end{equation}
\begin{equation}\label{rec5}
\mathcal{B}_{V_\ast}(t):=\int_1^t\int_{\R}|\dot{A}_R(s,\xi)|A_R(s,\xi)\big|\widetilde{V_\ast}(s,\xi)\big|^2\,d\xi ds,
\end{equation}
\begin{equation}\label{rec5.1}
\mathcal{B}_{\varrho_{\ast}}(t):=\int_1^t\int_{\R}|\dot{A}_R(s,\xi)|A_R(s,\xi)\big|\widetilde{(\Psi^\dagger\varrho_{\ast})}(s,\xi)\big|^2\,d\xi ds,
\end{equation}
\begin{equation}\label{rec6}
\mathcal{B}_{W_\ast}(t):=\mathcal{K}^{2}\int_1^t\int_{\R}|\dot{A}_{NR}(s,\xi)|A_{NR}(s,\xi)\big(\langle t\rangle^{3/2}\langle\xi\rangle^{-3/2}\big)\big|\widetilde{W_\ast}(s,\xi)\big|^2\,d\xi ds.
\end{equation}

Our main proposition is the following: 

\begin{proposition}\label{MainBootstrap}
Assume $T\geq 2$ and $\omega'\in C([0,T]:\G^{2\delta_0,1/2})$ is a solution of the system \eqref{PEu}--\eqref{PtS2} such that $\omega'(t)$ is supported in $\T\times[\vartheta_0/2,2/\vartheta_0]$ and satisfies the smallness assumption $\|\langle\omega'\rangle(t)\|_{H^{10}}\ll 1$ for any $t\in [0,T]$. Define $F,\phi,V',V'',\dot{V},\varrho,V_\ast,\varrho_{\ast},W_\ast$ as in \eqref{rea21}--\eqref{rea23}. Assume that $\eps_1$ is sufficiently small depending on $\delta$, and
\begin{equation}\label{boot1}
\sum_{g\in\{F,\phi,V_\ast,\varrho_\ast,W_\ast\}}\mathcal{E}_g(t)\leq\eps_1^3\qquad\text{ for any }t\in[0,2],
\end{equation}
\begin{equation}\label{boot2}
\sum_{g\in\{F,\phi,V_\ast,\varrho_\ast,W_\ast\}}\big[\mathcal{E}_g(t)+\mathcal{B}_g(t)\big]\leq\eps_1^2 \qquad\text{ for any }t\in[1,T].
\end{equation}

(i) Then for any $t\in[1,T]$ we have the improved bounds
\begin{equation}\label{boot3}
\sum_{g\in\{F,\phi,V_\ast,\varrho_\ast,W_\ast\}}\big[\mathcal{E}_g(t)+\mathcal{B}_g(t)\big]\leq\eps_1^2/2.
\end{equation}
Moreover, for the functions $F$ and $\Theta$, we have the stronger bounds for $t\in[1,T]$
\begin{equation}\label{boot3'}
\sum_{g\in\{F,\phi\}}[\mathcal{E}_g(t)+\mathcal{B}_g(t)]\lesssim_{\delta}\eps_1^3.
\end{equation}

(ii) In addition, for any $t\in[1,T]$ the following estimates hold:
\begin{equation}\label{boot4}
\begin{split}
t^2\|\dot{V}(t)\|_{\mathcal{G}^{\delta_0,1/2}}+t^3\|\langle(\partial_t+\dot{V}\partial_v)F\rangle(t)\|_{\mathcal{G}^{\delta_0,1/2}}&\lesssim_\delta\eps_1^{3/2},\\
e^{0.1\delta_0t^{1/2}}\big\{|P'(t)|+\|\Psi^\dagger\cdot P_z'(t)\|_{\mathcal{G}^{\delta_0,1/2}}+\|\Psi^\dagger\cdot P_v'(t)\|_{\mathcal{G}^{\delta_0,1/2}}\big\}&\lesssim_\delta\eps_1^{3/2},
\end{split}
\end{equation}
for any Gevrey function $\Psi_1$ supported in $\big[\ubv/7,7\obv\big]$ and satisfying $\big\|e^{\langle\xi\rangle^{2/3}}\widetilde{\Psi_1}(\xi)\big\|_{L^\infty}\lesssim 1$.

\end{proposition}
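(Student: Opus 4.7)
The plan is to execute a bootstrap energy argument. Differentiating each $\mathcal{E}_g$ in time and using the evolution equations \eqref{Pef}--\eqref{PeH}, the $t$-derivative of the weight produces a non-positive Cauchy--Kovalevskaya contribution $-2|\dot{A}_Y|A_Y|\widetilde{g}|^2$ which integrates to $-2\mathcal{B}_g$ and is available to absorb the quadratic (and higher) ``bad'' terms coming from the equations. Using the hypotheses \eqref{boot2}, it suffices to show that those bad terms are bounded by $\delta\cdot\sum_g\mathcal{B}_g+C\eps_1^3$. For the reaction term $\varrho V'\partial_v\mathbb{P}_{\neq 0}\phi\cdot\partial_zF$ in \eqref{Pef}---the only term that loses derivatives, and the one that drove the design of the weights in \cite{BM,IOJI}---absorption relies on the imbalanced structure of $A_k,A_R,A_{NR}$ encoded through $b_k,b_R,b_{NR}$, together with the pre-factor $|k|^4\langle s\rangle^2\langle s-\xi/k\rangle^4/(|\xi/k|^2+\langle s\rangle^2)$ in $\mathcal{E}_\phi,\mathcal{B}_\phi$, which correctly captures the strength of the elliptic inversion of \eqref{Pephi} near both resonant and non-resonant times.

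A new feature compared to the Couette case is the variable coefficients $\varrho,V',V''$, which must be commuted past the Fourier multipliers $A_Y$ at every step. I plan to split the products via paraproduct-type decompositions and use the quasi-Lipschitz bound \eqref{S1}: whenever the coefficient sits at ``low'' frequency, the output inherits the smoothness of the unknown and a loss of $\sqrt\delta+C(\delta)\langle k,\xi\rangle^{-1/2}$ suffices; otherwise the coefficient is controlled in a low-regularity norm available from the Sobolev smallness of $\langle\omega'\rangle$ and from the smallness of $V_\ast,\varrho_\ast$. The elliptic estimate for $\phi$ from \eqref{Pephi} is handled analogously---split into low/high frequency in $(z,v)$, invert $\varrho^2\partial_z^2+(V')^2(\partial_v-t\partial_z)^2+\text{(lower order)}$ after conjugating with $A_k$, and absorb the commutator error via \eqref{S1}. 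Since every bad term in the $F,\phi$ equations is genuinely quadratic in the unknowns, this procedure delivers the stronger cubic bound $\mathcal{E}_F+\mathcal{E}_\phi+\mathcal{B}_F+\mathcal{B}_\phi\lesssim_\delta\eps_1^3$ of \eqref{boot3'}.

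For the coordinate functionals, equations \eqref{PeV'} and \eqref{PeH} carry a factor $1/t$ on the right-hand side; combined with the prefactor $\langle t\rangle^{3/2}\langle\xi\rangle^{-3/2}$ in \eqref{rec3}, the $t$-derivative of this prefactor supplies an additional positive contribution $\sim\langle t\rangle^{1/2}\langle\xi\rangle^{-3/2}|\widetilde{W_\ast}|^2$, which together with the $-W_\ast/t$ damping term in \eqref{PeH} yields decay of $\mathcal{E}_{W_\ast}$, and through \eqref{PeV'} in turn of $\mathcal{E}_{V_\ast}$ and $\mathcal{E}_{\varrho_\ast}$. For part (ii), \eqref{PV''} is solved algebraically for $\dot V$ by inverting $V'\partial_v+2\varrho$ (a small perturbation of $-2\sqrt{2\pi v/\kappa}\,\partial_v+2\sqrt{2\pi v/\kappa}$); the control $\|\langle t\rangle^{3/4}\langle\xi\rangle^{-3/4}A_{NR}\widetilde{W_\ast}\|_{L^2}\lesssim\eps_1^{3/2}$ then yields $\|\dot V(t)\|_{\mathcal{G}^{\delta_0,1/2}}\lesssim\eps_1^{3/2}t^{-2}$. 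Averaging \eqref{Pef} over $z$ eliminates the reaction term and leaves only quadratic $\mathbb{P}_{\neq 0}$ expressions; one integration by parts in $z$ extracts a $t^{-1}$ gain from $\mathcal{E}_\phi$ and, paired with a second $t^{-1}$ from the elliptic localization factor, produces the $t^{-3}$ decay. Finally, writing $\omega'(t,\theta,r)=F(t,\theta-tv(t,r),v(t,r))$ in \eqref{PtS2} and extracting the $k=\pm1$ Fourier modes in $\theta$ reduces $P'(t)$ to an integral of the form $\int e^{-iktv(t,r)}g_k(t,r)\,dr$; the Gevrey-$1/2$ control on $F$ together with $tv(t,r)\gtrsim_{\kappa,\vartheta_0}t$ on $[\ubv,\obv]$ then yields $|P'(t)|\lesssim\eps_1^{3/2}e^{-c t^{1/2}}$, and the localized bounds on $\Psi^\dagger P_v',\Psi^\dagger P_z'$ follow from this estimate and the Gevrey boundedness of $\Psi^\dagger$.

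The hard part will be the simultaneous handling of the reaction loss of derivatives, the variable-coefficient commutators, and the forcing by $P'_v,P'_z$ in \eqref{Pef}: the functions $\cos(z+tv),\sin(z+tv)$ incur a regularity weight $e^{C\langle t\rangle^{1/2}}$ in the $(z,v)$ coordinates, and only the exponential decay $|P'(t)|\lesssim e^{-c t^{1/2}}$---which must itself be propagated inside the same bootstrap---exactly compensates. Coordinating these three constraints, together with the delicate smoothness/imbalance trade-off built into the weights of section \ref{weights}, is the central technical difficulty beyond the Couette case.
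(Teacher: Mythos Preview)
Your overall strategy matches the paper closely: energy estimates on $\mathcal{E}_F,\mathcal{E}_{V_\ast},\mathcal{E}_{\varrho_\ast},\mathcal{E}_{W_\ast}$ via the evolution equations, elliptic/commutator estimates for $\mathcal{E}_\phi$ built on the weight smoothness \eqref{S1}, and the oscillatory-integral argument for $P'$. One point you leave implicit but which is essential in the paper (Proposition \ref{BootImp1}) is the \emph{symmetrization} of the transport-type nonlinearities, replacing $\ell A_k^2(\xi)$ by $\tfrac12[\ell A_k^2(\xi)-kA_\ell^2(\eta)]$ before invoking the weighted multiplier bounds; without it the reaction term cannot be absorbed by the imbalance of the weights alone. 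You should also note that closing $\mathcal{E}_{V_\ast}+\mathcal{E}_{\varrho_\ast}$ against the linear forcing $W_\ast/t$ and $\dot V$ in \eqref{PeV'}--\eqref{Perho} is exactly what forces the large constant $\mathcal{K}$ in \eqref{rec3} (cf.\ Lemma \ref{yar10}).

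There is, however, a genuine gap in your route to \eqref{boot4}. The bootstrap control on $\mathcal{E}_{W_\ast}$ only gives $\|\langle t\rangle^{3/4}\langle\xi\rangle^{-3/4}A_{NR}\widetilde{W_\ast}\|_{L^2}\lesssim\eps_1$ (not $\eps_1^{3/2}$), i.e.\ $\|W_\ast(t)\|_{\mathcal{G}^{\delta_0,1/2}}\lesssim\eps_1\langle t\rangle^{-3/4}$; inverting \eqref{PV''} then yields only $\|\dot V(t)\|\lesssim\eps_1 t^{-7/4}$, short of the claimed $t^{-2}$. The paper closes this gap in Lemma \ref{extras} by a separate argument in the \emph{weaker} norm $\mathcal{G}^{\delta_0,1/2}$: one differentiates $tW_\ast$ via \eqref{PeH} and, crucially, substitutes the elliptic equation \eqref{Pephi} for $F$ in $\langle\partial_z\phi\cdot F\rangle$ to obtain
\[
\langle\partial_z\phi\cdot F\rangle=(V')^2\langle\partial_z\phi\,\partial_v^2\phi\rangle-2t(V')^2\langle\partial_z\phi\,\partial_z\partial_v\phi\rangle+(V''+\varrho V')\langle\partial_z\phi\,(\partial_v-t\partial_z)\phi\rangle,
\]
in which the dangerous $t^2\langle\partial_z\phi\,\partial_z^2\phi\rangle$ term vanishes identically. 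Combined with $\|\langle\nabla\rangle^4\mathbb{P}_{\neq 0}\phi\|_{\mathcal{G}^{\delta_0,1/2}}\lesssim\eps_1\langle t\rangle^{-2}$, this gives $\|\partial_t(tW_\ast)\|_{\mathcal{G}^{\delta_0,1/2}}\lesssim\eps_1^2\langle t\rangle^{-3/2}$, hence $\|W_\ast\|\lesssim\eps_1^{3/2}t^{-1}$ and finally $\|\dot V\|\lesssim\eps_1^{3/2}t^{-2}$. Your ``one integration by parts in $z$'' does not capture this specific cancellation, and without it both the $t^{-2}$ bound on $\dot V$ and the $t^{-3}$ bound on $\langle(\partial_t+\dot V\partial_v)F\rangle$ fail.
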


The proof of Proposition \ref{MainBootstrap} is the main part of this paper, and covers sections \ref{firstbounds}--\ref{coimprov1}. The basic idea is to differentiate the energy functionals $\mathcal{E}_F(t)$, $\mathcal{E}_{V_\ast}(t)$, $\mathcal{E}_{\varrho_\ast}(t)$, and $\mathcal{E}_{W_\ast}(t)$,  use the evolution equations \eqref{Pef}--\eqref{PeH}, and estimate the increments. In proving these estimates it is important that the weights $A_k,A_{NR},A_R$ are decreasing in time, therefore generating space-time integrals with the right sign. 

On the other hand, to estimate the energy functional $\mathcal{E}_\phi(t)$ and the corresponding space-time integral $\mathcal{B}_\phi(t)$ we use elliptic estimates and the identity \eqref{Pephi}.

To apply this scheme we often need to estimate weighted products and paraproducts of functions. In most cases we use the following general lemma, see \cite[Lemma 8.1]{IOJI} for the proof.

\begin{lemma}\label{Multi0}
(i) Assume that $m,m_1,m_2:\R\to\C$ are symbols satisfying
\begin{equation}\label{TLX5}
|m(\xi)|\leq |m_1(\xi-\eta)|\,|m_2(\eta)|\{\langle\xi-\eta\rangle^{-2}+\langle\eta\rangle^{-2}\}
\end{equation}
for any $\xi,\eta\in\R$. If $M, M_1, M_2$ are the operators defined by these symbols then
\begin{equation}\label{TLX6}
\|M(gh)\|_{L^2(\R)}\lesssim \|M_1g\|_{L^2(\R)}\|M_2h\|_{L^2(\R)}.
\end{equation}

(ii) Similarly, if $m,m_2:\Z\times\R\to\C$ and $m_1:\R\to\C$ are symbols satisfying
\begin{equation}\label{TLX5.1}
|m(k,\xi)|\leq |m_1(\xi-\eta)|\,|m_2(k,\eta)|\{\langle\xi-\eta\rangle^{-2}+\langle k,\eta\rangle^{-2}\}
\end{equation}
for any $\xi,\eta\in\R$, $k\in\Z$, and $M, M_1, M_2$ are the operators defined by these symbols, then
\begin{equation}\label{TLX6.1}
\|M(gh)\|_{L^2(\mathbb{T}\times\R)}\lesssim \|M_1g\|_{L^2(\R)}\|M_2h\|_{L^2(\mathbb{T}\times\R)}.
\end{equation}

(iii) Finally, assume that $m,m_1,m_2:\Z\times\R\to\C$ are symbols satisfying
\begin{equation}\label{TLX5.2}
|m(k,\xi)|\leq |m_1(k-\ell,\xi-\eta)|\,|m_2(\ell,\eta)|\{\langle k-\ell,\xi-\eta\rangle^{-2}+\langle \ell,\eta\rangle^{-2}\}
\end{equation}
for any $\xi,\eta\in\R$, $k,\ell\in\Z$. If $M, M_1, M_2$ are the operators defined by these symbols, then
\begin{equation}\label{TLX6.2}
\|M(gh)\|_{L^2(\mathbb{T}\times\R)}\lesssim \|M_1g\|_{L^2(\mathbb{T}\times\R)}\|M_2h\|_{L^2(\mathbb{T}\times\R)}.
\end{equation}
\end{lemma}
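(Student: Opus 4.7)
The three parts of Lemma \ref{Multi0} are all bilinear multiplier bounds of Schur–Young type, and they share a common pattern. I would handle all three in parallel by reducing to the same convolution estimate on the Fourier side, the only differences being the underlying group ($\R$, or $\Z\times\R$, or a mixture) and the weight structure.

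\textbf{Setup and main step (part (i)).} Apply Plancherel, so that it suffices to show $\|(M(gh))^\wedge\|_{L^2(\R)}\lesssim\|M_1g\|_{L^2}\|M_2h\|_{L^2}$. Write $\widehat{gh}(\xi)=\int\hat g(\xi-\eta)\hat h(\eta)\,d\eta$ and set $a(\xi):=|m_1(\xi)||\hat g(\xi)|$, $b(\xi):=|m_2(\xi)||\hat h(\xi)|$, so that $\|a\|_{L^2}=\|M_1g\|_{L^2}$ and $\|b\|_{L^2}=\|M_2h\|_{L^2}$. The hypothesis \eqref{TLX5} yields the pointwise control
\[
|(M(gh))^\wedge(\xi)|\le \int a(\xi-\eta)\,b(\eta)\,\bigl\{\langle\xi-\eta\rangle^{-2}+\langle\eta\rangle^{-2}\bigr\}\,d\eta=:I_1(\xi)+I_2(\xi).
\]
Now $I_1=(a\langle\cdot\rangle^{-2})*b$ and $I_2=a*(b\langle\cdot\rangle^{-2})$. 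By Young's inequality $\|f*g\|_{L^2}\le\|f\|_{L^1}\|g\|_{L^2}$, combined with Cauchy–Schwarz and $\langle\cdot\rangle^{-2}\in L^2(\R)$, I get
\[
\|a\langle\cdot\rangle^{-2}\|_{L^1(\R)}\le\|a\|_{L^2}\|\langle\cdot\rangle^{-2}\|_{L^2}\lesssim\|a\|_{L^2},
\]
and likewise for $b\langle\cdot\rangle^{-2}$. Hence $\|I_1\|_{L^2}+\|I_2\|_{L^2}\lesssim\|a\|_{L^2}\|b\|_{L^2}$, which gives \eqref{TLX6}.

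\textbf{Part (ii).} Here $g:\R\to\C$ but $h:\T\times\R\to\C$, so $\widehat{gh}(k,\xi)=\int\hat g(\xi-\eta)\hat h(k,\eta)\,d\eta$ (no sum over $k$, because $g$ is a function of $r$ only). Define $a(\xi):=|m_1(\xi)||\hat g(\xi)|$ and $b(k,\xi):=|m_2(k,\xi)||\hat h(k,\xi)|$. The same pointwise bound gives
\[
|(M(gh))^\wedge(k,\xi)|\le\int a(\xi-\eta)b(k,\eta)\bigl\{\langle\xi-\eta\rangle^{-2}+\langle k,\eta\rangle^{-2}\bigr\}d\eta.
\]
For each fixed $k$ I apply the one-dimensional argument of part (i), noting $\langle k,\eta\rangle^{-2}\le\langle\eta\rangle^{-2}$, to obtain $\|(M(gh))^\wedge(k,\cdot)\|_{L^2(\R)}\lesssim\|a\|_{L^2(\R)}\|b(k,\cdot)\|_{L^2(\R)}$. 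Squaring and summing over $k\in\Z$ and using Plancherel on $\T\times\R$ yields \eqref{TLX6.1}.

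\textbf{Part (iii).} Now $\widehat{gh}(k,\xi)=\sum_{\ell}\int\hat g(k-\ell,\xi-\eta)\hat h(\ell,\eta)\,d\eta$, a full convolution on the abelian group $\Z\times\R$. Defining $a,b$ on $\Z\times\R$ as before and splitting as in part (i), I get
\[
|(M(gh))^\wedge(k,\xi)|\le \bigl[(a\langle\cdot,\cdot\rangle^{-2})*b\bigr](k,\xi)+\bigl[a*(b\langle\cdot,\cdot\rangle^{-2})\bigr](k,\xi).
\]
Applying Young's inequality on $\Z\times\R$ reduces matters to checking that $\|a\langle\cdot,\cdot\rangle^{-2}\|_{L^1(\Z\times\R)}\lesssim\|a\|_{L^2(\Z\times\R)}$, for which it suffices by Cauchy–Schwarz that $\langle\cdot,\cdot\rangle^{-2}\in L^2(\Z\times\R)$. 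This reduces to the convergent series $\sum_k\int\langle k,\xi\rangle^{-4}d\xi\sim\sum_k\langle k\rangle^{-3}<\infty$.

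\textbf{What is delicate.} There is no real obstacle, but two points must not be glossed over. First, the weight $\langle\xi-\eta\rangle^{-2}+\langle\eta\rangle^{-2}$ is the cheapest possible symmetric bound that makes both halves of the split a convolution of an $L^2$ function with an $L^1$ function; replacing $2$ by $1$ would fail because $\langle\cdot\rangle^{-1}\notin L^2(\R)$. Second, in part (iii) one must check integrability in the $\R$-direction and summability in the $\Z$-direction simultaneously, which is exactly what the exponent $-2$ in $\langle k-\ell,\xi-\eta\rangle^{-2}$ delivers (the integral in $\xi$ gains $\langle k\rangle^{-3}$, just enough for summability). With these observations in place, the three estimates are obtained by a single uniform Schur–Young scheme.
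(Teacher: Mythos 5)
Your proof is correct, and it is the standard Schur--Young argument: the paper itself gives no proof of this lemma, deferring to \cite[Lemma 8.1]{IOJI}, and your splitting of the kernel $\langle\xi-\eta\rangle^{-2}+\langle\eta\rangle^{-2}$ into two $L^1\ast L^2$ convolutions (with the fixed-$k$ reduction in (ii) and Young on $\Z\times\R$ in (iii)) is exactly the expected route. Nothing further is needed.
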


To apply Lemma \ref{Multi0} we need good bounds on products of weights. In our situation, some of the required bilinear weighted bounds have already been proved in \cite[Sections 7,8] {IOJI}, which is the main reason we use exactly the same main weights as in \cite{IOJI}.

%
%

\section{Proofs of the main theorems}\label{mainProof}

In this section we show how to use Proposition \ref{MainBootstrap} to prove our main theorems in section \ref{Introd}.

\subsection{Gevrey spaces}\label{appendix}

We review first some general properties of the Gevrey spaces of functions. The two lemmas we state here are useful to show that Theorem \ref{Thm} can be deduced from the main bootstrap estimates in Proposition \ref{MainBootstrap}. 

We start with a characterization of the Gevrey spaces on the physical side. See Lemma A2 in \cite{IOJI} for the elementary proof.

\begin{lemma}\label{lm:Gevrey}
(i) Suppose that $0<s<1$, $K>1$, and $f\in C^{\infty}(\mathbb{T}\times \mathbb{R})$ with ${\rm supp}\,f\subseteq \mathbb{T}\times[-L,L]$ satisfies the bounds
\begin{equation}\label{growth}
\big|D^{\alpha}f(x)\big|\leq K^{m}(m+1)^{m/s},
\end{equation}
for all integers $m\ge 0$ and multi-indeces $\alpha$ with $|\alpha|=m$. Then
\begin{equation}\label{gevreyP}
\big|\widetilde{f}(k,\xi)\big|\lesssim_{K,s} Le^{-\mu|k,\xi|^s},
\end{equation}
for all $k\in\mathbb{Z}, \xi\in \R$ and some $\mu=\mu(K,s)>0$.

(ii) Conversely, assume that $\mu>0$, $s\in(0,1)$, and $f:\T\times\R\to\mathbb{C}$ satisfies
\begin{equation}\label{eq:fouP}
\big\|f\big\|_{\mathcal{G}^{\mu,s}(\mathbb{T}\times \mathbb{R})}\leq 1.
\end{equation}
Then there is $K=K(s,\mu)>1$ such that, for any $m\geq 0$ and all multi-indices $\alpha$ with $|\alpha|\leq m$,
\begin{equation}\label{eq:four}
\left|D^{\alpha}f(x)\right|\lesssim_{\mu,s} K^m(m+1)^{m/s}.
\end{equation}
\end{lemma}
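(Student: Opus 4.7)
The plan is to prove both directions by dualizing through the Fourier transform and running a Stirling-type optimization in the order of the derivative.

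For part (i), since $f$ is supported in $\mathbb{T}\times[-L,L]$, for any integers $a,b\geq 0$ and any $(k,\xi)\in\mathbb{Z}\times\mathbb{R}$ integration by parts gives
\begin{equation*}
|k|^a|\xi|^b\,|\widetilde{f}(k,\xi)|=\big|\widetilde{D^\alpha f}(k,\xi)\big|\leq 2\pi L\,\|D^\alpha f\|_{L^\infty}\leq 2\pi L\cdot K^{a+b}(a+b+1)^{(a+b)/s},
\end{equation*}
with $\alpha=(a,b)$. Taking $a=m$ when $|k|\geq|\xi|$ and $b=m$ otherwise yields $\max(|k|,|\xi|)^m\,|\widetilde{f}(k,\xi)|\leq 2\pi L\cdot K^m(m+1)^{m/s}$. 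I would then optimize the free integer $m$ as a function of $N:=|k|+|\xi|$: the natural choice $m\approx c_s (N/K)^s$ for a small $c_s=c_s(s)>0$ together with a routine Stirling estimate yields
\begin{equation*}
K^m(m+1)^{m/s}\big/\max(|k|,|\xi|)^m\lesssim_{K,s} e^{-\mu|k,\xi|^s}
\end{equation*}
with $\mu=\mu(K,s)>0$, and the trivial $m=0$ bound handles small $|k,\xi|$.

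For part (ii), I would use Fourier inversion and Cauchy--Schwarz. For a multi-index $\alpha=(\alpha_1,\alpha_2)$ with $|\alpha|\leq m$, write
\begin{equation*}
D^\alpha f(x)=\frac{1}{2\pi}\sum_{k\in\mathbb{Z}}\int_{\mathbb{R}}(ik)^{\alpha_1}(i\xi)^{\alpha_2}\,\widetilde{f}(k,\xi)\,e^{ikx_1+i\xi x_2}\,d\xi,
\end{equation*}
and split the integrand as $\bigl(|k|^{\alpha_1}|\xi|^{\alpha_2}e^{-\mu|k,\xi|^s}\bigr)\cdot\bigl(e^{\mu|k,\xi|^s}|\widetilde{f}(k,\xi)|\bigr)$. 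Applying Cauchy--Schwarz in $\ell^2(\mathbb{Z})\times L^2(\mathbb{R})$ bounds $|D^\alpha f(x)|$ by $\|f\|_{\mathcal{G}^{\mu,s}}\leq 1$ times the square root of $\sum_k\int|k|^{2\alpha_1}|\xi|^{2\alpha_2}e^{-2\mu|k,\xi|^s}\,d\xi$. Passing to polar coordinates in $(k,\xi)$ and substituting $u=N^s$ reduces this integral to $(1/s)(2\mu)^{-(2m+2)/s}\,\Gamma\bigl((2m+2)/s\bigr)$, and Stirling's formula delivers the desired bound $\lesssim_{\mu,s} K^m(m+1)^{m/s}$ for an appropriate $K=K(\mu,s)$.

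The only step requiring any care is the Stirling optimization, which is the standard computation relating the Gevrey-$s$ exponential factor on the Fourier side to the $(m+1)^{m/s}$ growth of the physical-side derivatives; it is essentially the same in both directions. No part of this argument uses the rest of the paper's machinery, and an essentially identical proof is given as Lemma A.2 of \cite{IOJI}.
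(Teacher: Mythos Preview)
Your proposal is correct and is precisely the standard argument the paper has in mind; in fact the paper does not reprove this lemma but simply cites Lemma~A.2 in \cite{IOJI}, which carries out exactly the integration-by-parts plus Stirling optimization you describe. The only cosmetic point is that ``polar coordinates in $(k,\xi)$'' is shorthand for bounding the discrete-plus-continuous sum by the radial integral $\int_0^\infty N^{2m+1}e^{-2\mu N^s}\,dN$, but this is clearly what you intend.
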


The physical space characterization of Gevrey functions is useful when studying compositions and algebraic operations of functions. For any domain $D\subseteq\T\times\R$ (or $D\subseteq\R$) and parameters $s\in(0,1)$ and $M\geq 1$ we define the spaces
\begin{equation}\label{Gevr2}
\widetilde{\mathcal{G}}^{s}_M(D):=\big\{f:D\to\mathbb{C}:\,\|f\|_{\widetilde{\mathcal{G}}^{s}_M(D)}:=\sup_{x\in D,\,m\geq 0,\,|\alpha|\leq m}|D^\alpha f(x)|M^{-m}(m+1)^{-m/s}<\infty\big\}.
\end{equation}

\begin{lemma}\label{GPF} (i) Assume  $s\in(0,1)$, $M\geq 1$, and $f_1,f_2\in \widetilde{\mathcal{G}}^{s}_M(D)$. Then $f_1f_2\in\widetilde{\mathcal{G}}^{s}_{M'}(D)$ and
\begin{equation*}
\|f_1f_2\|_{\widetilde{\mathcal{G}}^{s}_{M'}(D)}\lesssim \|f_1\|_{\widetilde{\mathcal{G}}^{s}_{M}(D)}\|f_2\|_{\widetilde{\mathcal{G}}^{s}_{M}(D)}
\end{equation*}
for some $M'=M'(s,M)\geq M$. Similarly, if $f_1\geq 1$ in $D$ then $\|(1/f_1)\|_{\widetilde{\mathcal{G}}^{s}_{M'}(D)}\lesssim 1$.

(ii) Suppose $s\in(0,1)$, $M\geq 1$, $I_1\subseteq \R$ is an interval, and $g:\mathbb{T}\times I_1\to \mathbb{T}\times I_2$ satisfies
\begin{equation}\label{gbo1}
|D^\alpha g(x)|\leq M^m(m+1)^{m/s}\qquad \text{ for any }x\in\T\times I_1,\,m\geq 1,\text{ and }|\alpha|\in [1,m].
\end{equation}
If $K\geq 1$ and $f\in \widetilde{G}^s_K(\T\times I_2)$ then $f\circ g\in \widetilde{G}^s_L(\T\times I_1)$ for some $L=L(s,K,M)\geq 1$ and
\begin{equation}\label{Ffgc}
\left\|f\circ g\right\|_{\widetilde{G}^s_L(\T\times I_1)}\lesssim_{s,K,M} \left\|f\right\|_{\widetilde{G}^s_K(\T\times I_2)}.
\end{equation}

(iii) Assume $s\in(0,1)$, $L\in[1,\infty)$, $I,J\subseteq\mathbb{R}$ are open intervals, and $g:I\to J$ is a smooth bijective map satisfying, for any $m\geq 1$,
\begin{equation}\label{gbo2}
|D^\alpha g(x)|\leq L^m(m+1)^{m/s}\qquad \text{ for any }x\in I\text{ and }|\alpha|\in [1,m].
\end{equation}
If $|g'(x)|\geq \rho>0$ for any $x\in I$ then the inverse function $g^{-1}:J\to I$ satisfies the bounds
\begin{equation}\label{gbo2.1}
|D^\alpha (g^{-1})(x)|\leq M^m(m+1)^{m/s}\qquad \text{ for any }x\in J\text{ and }|\alpha|\in [1,m],
\end{equation}
for some constant $M=M(s,L,\rho)\geq L$.
\end{lemma}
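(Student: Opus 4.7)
For part (i), the plan is to apply the Leibniz rule and control the resulting combinatorial sum. Writing $D^\alpha(f_1 f_2) = \sum_{\beta \leq \alpha} \binom{\alpha}{\beta} D^\beta f_1 \cdot D^{\alpha-\beta} f_2$ and inserting the pointwise bounds $|D^\beta f_j(x)| \leq \|f_j\|_{\widetilde{\mathcal{G}}^s_M} M^{|\beta|}(|\beta|+1)^{|\beta|/s}$, I reduce to the elementary inequality
\[
\sum_{j=0}^{m} \binom{m}{j} (j+1)^{j/s}(m-j+1)^{(m-j)/s} \leq C^{m}(m+1)^{m/s},
\]
which follows from the superadditivity of $j\mapsto j\log j$ (so that $j\log(j+1)+(m-j)\log(m-j+1)\leq m\log(m+1)$ up to linear corrections absorbed by $C^m$) combined with $\sum_j \binom{m}{j}=2^m$; one then sets $M':=CM$. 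For the reciprocal, differentiating $f_1\cdot(1/f_1)=1$ produces the recursion $D^\alpha(1/f_1)=-(1/f_1)\sum_{0\neq\beta\leq\alpha}\binom{\alpha}{\beta}D^\beta f_1\cdot D^{\alpha-\beta}(1/f_1)$, and the assumption $f_1\geq 1$ together with an induction on $|\alpha|$, using exactly the same combinatorial inequality, yields the claimed Gevrey bound after enlarging $M$ appropriately.

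For part (ii), the classical Faà di Bruno formula
\[
D^\alpha(f \circ g)(x) = \sum_{\pi} c_\pi \,(D^{|\pi|} f)(g(x)) \prod_{B \in \pi} D^{B} g(x),
\]
where $\pi$ runs over partitions of the multi-index $\alpha$ into non-empty sub-multi-indices $B$, reduces the estimate to bounding each term. The hypothesis \eqref{gbo1} gives $|D^B g(x)|\leq M^{|B|}(|B|+1)^{|B|/s}$, and $f\in \widetilde{G}^s_K$ gives $|(D^{|\pi|}f)(g(x))|\leq \|f\|_{\widetilde{G}^s_K} K^{|\pi|}(|\pi|+1)^{|\pi|/s}$. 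The convexity estimate for $j\mapsto j\log j$ collapses the product of Gevrey weights over the blocks into $C^{|\alpha|}(|\alpha|+1)^{|\alpha|/s}$, while the total number of partitions of a multi-index of size $m$ is bounded by the Bell number $B_m$, which grows slower than $C^m(m+1)^{m/s}$ for any $s<1$. Summing gives \eqref{Ffgc} for a sufficiently large $L=L(s,K,M)$.

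For part (iii), the strategy is to apply Faà di Bruno to the identity $g^{-1}(g(x))=x$. Smoothness of $g^{-1}$ on $J$ follows from the inverse function theorem since $|g'|\geq\rho>0$. For $m=|\alpha|\geq 2$ the identity yields
\[
(D^m g^{-1})(g(x))\cdot (g'(x))^m = -\sum_{|\pi|<m} c_\pi\,(D^{|\pi|} g^{-1})(g(x)) \prod_{B\in\pi} D^{|B|} g(x),
\]
so dividing by $(g'(x))^m$ (bounded below by $\rho^m$) and evaluating at $x=g^{-1}(y)$ gives an explicit recursion expressing $D^m g^{-1}$ in terms of $D^j g^{-1}$ for $j<m$ and derivatives of $g$ of orders at most $m$. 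One then inducts on $m$. The main obstacle, and really the only technical point, is to verify that the recursion closes with the \emph{correct} exponential-in-$m$ growth $M^m(m+1)^{m/s}$ rather than a catastrophic loss like $M^{m^2}$: this requires combining the combinatorial bookkeeping of part (ii) with a careful choice of $M$ depending on $L,\rho,s$ so that the contribution arising from one step of the recursion is absorbed rather than amplified, exploiting the factor $\rho^{-m}$ by building it into $M$ and using the inequality $\sum_j j! j^{-j/s'} <\infty$ for $s'<s$ to sum the Bell-type combinatorial counts. Once this constant tracking is carried out, \eqref{gbo2.1} follows.
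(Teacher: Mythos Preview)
Your proposal is correct and follows exactly the elementary route the paper has in mind: the paper itself omits the proof, stating only that Lemma~\ref{GPF} ``can be proved by elementary means using just the definition \eqref{Gevr2}'' and pointing to Theorems 3.2 and 6.1 of Yamanaka for the Fa\`a di Bruno--type composition and chain-rule estimates. Your Leibniz/combinatorial argument for (i), the Fa\`a di Bruno expansion for (ii), and the recursive inversion for (iii) are precisely the standard elementary arguments underlying those references, so there is nothing to compare.
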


Lemma \ref{GPF}, which is used only in this section to pass from Proposition \ref{MainBootstrap} to Theorem \ref{Thm}, can be proved by elementary means using just the definition \eqref{Gevr2}. See also Theorem 6.1 and Theorem 3.2 of \cite{Yamanaka} for more general estimates on functions in Gevrey spaces.

\subsubsection{Gevrey cutoff functions}\label{GevCut} Using Lemma \ref{lm:Gevrey}, one can construct explicit cutoff functions in Gevrey spaces. For $a>0$ let
\begin{equation}\label{gev1}
\psi_a(x):=\begin{cases}
e^{-[1/x^a+1/(1-x)^a]}&\quad\text{ if }x\in[0,1],\\
0&\quad\text{ if }x\notin[0,1].
\end{cases}
\end{equation}
Clearly $\psi_a$ are smooth functions on $\R$, supported in the interval $[0,1]$ and independent of the periodic variable. It is easy to verify that $\psi_a$ satisfies the bounds \eqref{growth} for $s:=a/(a+1)$. Thus
\begin{equation}\label{gev2}
|\widetilde{\psi_a}(\xi)|\lesssim e^{-\mu|\xi|^{a/(a+1)}}\qquad\text{ for some }\mu=\mu(a)>0.
\end{equation} 

One can also construct compactly supported Gevrey cutoff functions which are equal to $1$ in a given interval. Indeed, for any $\rho\in[9/10,1)$, the function
\begin{equation}\label{gev3}
\psi'_{a,\rho}(x):=\frac{\psi_a(x)}{\psi_a(x)+\psi_a(x-\rho)+\psi_a(x+\rho)}
\end{equation}
is smooth, non-negative, supported in $[0,1]$, and equal to $1$ in $[1-\rho,\rho]$. Moreover, it follows from Lemma \ref{lm:Gevrey} (i) that $|\widetilde{\psi'_{a,\rho}}(\xi)|\lesssim e^{-\mu|\xi|^{a/(a+1)}}$ for some $\mu=\mu(a,\rho)>0$.

\subsection{Local regularity} As a starting point for the proofs of the main theorems we will also need the following local regularity lemma.

\begin{lemma}\label{lm:persistenceofhigherregularity}
(i) Assume that $\vartheta\in(0,1/4]$, and $\omega'_0\in H^6(\T\times\R)$ has the property that $\mathrm{supp}\,\omega'_0\subseteq \T\times [\vartheta,1/\vartheta]$. Then there is $T_0=T_0(\vartheta,\|\omega'_0\|_{H^6},\kappa)>0$ and a unique solution $\omega'\in C([0,T_0]:H^6)$ of the system \eqref{PEu}--\eqref{PtS2} satisfying, for any $t\in[0,T_0]$,
 \begin{equation}\label{SupA1}
 \mathrm{supp}\,\omega'(t)\subseteq \T\times [\vartheta/2,1-\vartheta/2].
 \end{equation}

(ii) Assume $T\geq 0$ and $\omega'\in C([0,T]:H^6)$ is a solution of the system \eqref{PEu}--\eqref{PtS2} satisfying the support assumption \eqref{SupA1} for any $t\in[0,T]$.  Assume that $s\in[1/4,3/4]$, $\lambda_0\in(0,1)$, and
\begin{equation}\label{ini1}
A:=\left\|\langle \nabla\rangle^3\,\omega'_0\right\|_{\mathcal{G}^{\lambda_0,s}}<\infty,\qquad B:=\int_0^t(\|\omega'(s)\|_{H^6}+1)\,ds<\infty.
\end{equation}
Then, for any cutoff function $\Upsilon\in\mathcal{G}^{1,3/4}$ with ${\rm supp}\,\Upsilon\subseteq [\vartheta/20, 20/\vartheta]$ we have
\begin{equation}\label{ini2}
\begin{split}
\left\|\langle\nabla\rangle^5(\Upsilon\psi')(t)\right\|_{\mathcal{G}^{\lambda(t),s}}+\left\|\langle\nabla\rangle^3\,\omega'(t)\right\|_{\mathcal{G}^{\lambda(t),s}}&\lesssim \exp(C_{\ast}B)\|\langle\nabla\rangle^3\omega'_0\|_{\mathcal{G}^{\lambda_0,s}},\\
\end{split}
\end{equation}
for any $t\in[0,T]$, where $C_\ast=C_\ast(\vartheta,\kappa)$ is a suitable large constant and
\begin{equation}\label{fdLam1}
\lambda(t):=\lambda_0\exp{\big\{-C_{\ast}A\,t\exp(C_\ast B)-C_{\ast}t\big\}}.
\end{equation}
\end{lemma}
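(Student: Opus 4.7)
The plan is to handle the two parts separately: part (i) by a standard Picard iteration in $H^6$, and part (ii) by a Foias--Temam style Gevrey energy estimate with time-decaying analyticity radius $\lambda(t)$.

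For part (i), the key observation is that if the support of $\omega'$ stays in an annulus $\T\times[\vartheta/2,2/\vartheta]$ bounded away from $r=0$, then all coefficients in \eqref{PEu} are smooth and uniformly bounded on that annulus. I would iterate as follows: given $\omega'^{(n)}\in C([0,T_0];H^6)$ supported in the annulus, solve \eqref{St} for $\psi'^{(n)}$, then the ODE \eqref{PtS2} for $P^{(n)}$, and transport $\omega_0'$ along the resulting time-dependent vector field to produce $\omega'^{(n+1)}$. Standard $H^6$ commutator estimates for transport equations, together with elliptic regularity for \eqref{St}, give a uniform bound in $C([0,T_0];H^6)$; contraction in a weaker space such as $H^5$ and weak-$*$ compactness produce the unique $H^6$ solution. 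The support property \eqref{SupA1} is secured by choosing $T_0=T_0(\vartheta,\|\omega_0'\|_{H^6},\kappa)$ so small that characteristics starting in $\T\times[\vartheta,1/\vartheta]$ move radially by less than $\vartheta/2$, which is possible because the velocity on the support is bounded by $\|\psi'\|_{H^6}+\kappa/(2\pi(\vartheta/2)^2)+|P'|$.

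For part (ii), define
\begin{equation*}
E(t):=\|\langle\nabla\rangle^3\omega'(t)\|^2_{\G^{\lambda(t),s}}+\|\langle\nabla\rangle^5(\Upsilon\psi')(t)\|^2_{\G^{\lambda(t),s}}.
\end{equation*}
Differentiating in $t$, the contribution of $\lambda'(t)<0$ to the weight $e^{\lambda(t)\langle k,\xi\rangle^s}$ generates a ``good'' dissipation term
\begin{equation*}
\mathcal{D}(t):=2|\lambda'(t)|\sum_{k\in\Z}\int_\R \langle k,\xi\rangle^{s+6}\,e^{2\lambda(t)\langle k,\xi\rangle^s}\,|\widetilde{\omega'}(t,k,\xi)|^2\,d\xi,
\end{equation*}
which lets one absorb a fractional loss of derivatives in the nonlinearity. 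Writing \eqref{PEu} schematically as $\partial_t\omega'+V\cdot\nabla\omega'=0$, where $V$ depends on $\psi'$ and $P'(t)$, a paraproduct decomposition of $\langle\nabla\rangle^3(V\cdot\nabla\omega')$, together with the Gevrey algebra property from Lemma \ref{GPF}(i), yields an inequality of the schematic form
\begin{equation*}
\Bigl|\tfrac{d}{dt}E(t)\Bigr|+\mathcal{D}(t)\lesssim_\vartheta (\|\omega'(t)\|_{H^6}+1)\,E(t)+\mathcal{D}(t)^{1/2}E(t)^{1/2}\,A\exp(C_\ast B(t)).
\end{equation*}

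The stream-function contribution is controlled via the cutoff version of \eqref{St00},
\begin{equation*}
\Delta(\Upsilon\psi')=\Upsilon\omega'+2\nabla\Upsilon\cdot\nabla\psi'+(\Delta\Upsilon)\psi',
\end{equation*}
combined with Gevrey elliptic regularity on $\R^2$ (a Fourier-multiplier estimate stating that $|\xi|^{-2}$ is bounded on $\G^{\lambda,s}$-homogeneous pieces). The leading term $\Upsilon\omega'$ is bounded by the $\omega'$ part of $E$ using the algebra property, while the commutator terms are supported in the enlarged annulus ${\rm supp}\,\nabla\Upsilon$ where $\omega'$ already controls $\psi'$ up to lower order in derivatives. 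The main obstacle is calibrating $|\lambda'(t)|$ so that $\mathcal{D}$ strictly dominates the term $\mathcal{D}^{1/2}E^{1/2}A\exp(C_\ast B)$: after applying Young's inequality the surviving piece is proportional to $(A\exp(C_\ast B))^2\,E/|\lambda'(t)|$, and the precise choice \eqref{fdLam1} of $\lambda$ makes $|\lambda'|/\lambda\gtrsim C_\ast A\exp(C_\ast B)+C_\ast$, which is exactly what is needed to close the estimate by Gronwall against $B'(t)=\|\omega'(t)\|_{H^6}+1$. This Gronwall inequality then produces the double-exponential bound \eqref{ini2}, and the quantitative rate of decrease in \eqref{fdLam1} is chosen so that $\lambda(t)$ remains positive on the full interval $[0,T]$ dictated by the hypothesis $B<\infty$.
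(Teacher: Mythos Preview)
Your approach is correct and is essentially the standard one: the paper itself omits the proof of this lemma entirely, stating only that it ``can be proved by following the argument in the appendix of \cite{IOJI}'', where the analogous local Gevrey well-posedness for the Couette problem is carried out by exactly the Foias--Temam mechanism you describe (Picard iteration in $H^6$ for existence, then a Gevrey energy estimate with shrinking radius $\lambda(t)$ to propagate regularity). Two small points of care: the algebra property you need is for the Fourier-side spaces $\mathcal{G}^{\lambda,s}$ (a direct convolution estimate using subadditivity of $\langle\cdot\rangle^s$), not Lemma~\ref{GPF}(i), which treats the physical-side spaces $\widetilde{\mathcal{G}}^s_M$; and the elliptic identity you invoke via \eqref{St00} is in Cartesian coordinates while the Gevrey norms are taken in $(\theta,r)$, so the cleanest route is either to work directly with the polar-coordinate equation \eqref{St} (whose coefficients $1/r,1/r^2$ are harmless Gevrey multipliers on the annulus ${\rm supp}\,\Upsilon$) or to pass through the change-of-variables compatibility in Lemma~\ref{GPF}(ii). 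Your Gr\"onwall closure is right in spirit: the key identity is that $\lambda(s)Ae^{C_\ast B(s)}\leq -\lambda'(s)/C_\ast$ by the definition \eqref{fdLam1}, so the time-integral of the surviving nonlinear coefficient is bounded by $(\lambda_0-\lambda(t))/C_\ast^2<1/C_\ast^2$, which is exactly what lets the bootstrap close independently of $T$.
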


In other words, the solution extends smoothly as long as its support in $r$ is bounded away from $0$. Lemma \ref{lm:persistenceofhigherregularity} can be proved by following the argument in the appendix of \cite{IOJI}, and we omit its proof.  

We note that an important aspect of the regularity theory for Euler equations in Gevrey spaces is the shrinking in time, at a fast rate, of the radius of convergence (the function $\lambda(t)$ in Lemma \ref{lm:persistenceofhigherregularity}). See, for example the work of Kukavica--Vicol \cite{Vicol}, Theorem 6.1 and Remark 6.2, for more general well-posedness results of this type in Gevrey spaces.

\subsection{Proofs of the main theorems} We are now ready to  proceed to the proofs.

\begin{proof}[Proof of Theorem \ref{Thm}] For the purpose of proving continuity in time of the energy functionals $\mathcal{E}_g$ and $\mathcal{B}_g$, we make the {\it{a priori} } assumption that $\omega_0\in\mathcal{G}^{1,2/3}$. Indeed, we may replace $\omega_0'$ with $(\omega_0')^n:=\omega_0'\ast K_n$, where $K_n\in\mathcal{G}^{1,2/3}$ is an approximation of the identity sequence and $\mathrm{supp}\,K_n\subseteq [-2^{-n},2^{-n}]$ (see  subsection \ref{GevCut} for an axplicit construction of such kernels). Then we prove uniform bounds in $n$ on the solutions generated by the mollified data $(\omega_0')^n$ , and finally pass to the limit $n\to\infty$ on any finite time interval $[0,T]$.

(i) Given small data $\omega_0$ satisfying \eqref{Eur0} we apply first Lemma \ref{lm:persistenceofhigherregularity}. Therefore $\omega',\psi'\in C([0,3]:\G^{\lambda_1,2/3})$, $\lambda_1>0$, satisfy the quantitative estimates
\begin{equation}\label{smallnessofomega}
\sup_{t\in[0,3]}\big\|\langle k,\xi\rangle^2e^{\beta'_0\langle k,\xi\rangle^{1/2}}\widetilde{(\Upsilon\psi')}(t,k,\xi)\big\|_{L^2_{k,\xi}}+\sup_{t\in[0,3]}\big\|e^{\beta'_0\langle k,\xi\rangle^{1/2}}\widetilde{\omega'}(t,k,\xi)\big\|_{L^2_{k,\xi}}\lesssim \eps,
\end{equation}
for some $\beta'_0=\beta'_0(\beta_0,\vartheta_0,\kappa)>0$, where $\Upsilon\in\mathcal{G}^{1,3/4}$ is a fixed Gevrey cutoff function supported in $[\vartheta_0/20, 20/\vartheta_0]$ and equal to $1$ in $[\vartheta_0/10, 10/\vartheta_0]$. Using the formula \eqref{prv} and Lemma \ref{lm:Gevrey}, it follows that, for some constant $K_1=K_1(\beta_0,\vartheta_0,\kappa)$,
\begin{equation}\label{proo1}
|D^\alpha_r [v(t,r)-\kappa/(2\pi r^2)]|\leq \eps K_1^m(m+1)^{2m},
\end{equation}
for any $(t,r)\in[0,3]\times[\vartheta_0/10, 10/\vartheta_0]$, $m\geq 1$, and $|\alpha|\in[1,m]$. Using now Lemma \ref{GPF} (iii) and letting 
$\mathcal{Y}(t,v)$ denote the inverse of the function $r\to v(t,r)$, we have
\begin{equation}\label{proo2}
|D^\alpha_v \mathcal{Y}(t,v)|\leq K_2^m(m+1)^{2m},
\end{equation}
for any $(t,v)\in[0,3]\times[\ubv/12,12\obv]$, $m\geq 1$, and $|\alpha|\in[1,m]$. 

Recall the formulas (see Proposition \ref{ChangedEquations})
\begin{equation}\label{prok1}
\begin{split}
&F(t,z,v)=\omega'(t,z+tv,\mathcal{Y}(t,v)),\qquad\phi(t,z,v)=\psi'(t,z+tv,\mathcal{Y}(t,v)),\\
&V'(t,v)=\partial_rv(t,\mathcal{Y}(t,v)),\qquad \varrho(t,v)=1/\mathcal{Y}(t,v).
\end{split}
\end{equation}
Using these identities, the bounds \eqref{smallnessofomega}--\eqref{proo2}, Lemma \ref{lm:Gevrey} and Lemma \ref{GPF} (ii), we have
\begin{equation}\label{prok2}
\sup_{t\in[0,3]}\big\|e^{\beta''_0\langle k,\xi\rangle^{1/2}}\widetilde{F}(t,k,\xi)\big\|_{L^2_{k,\xi}}+\sup_{t\in[0,3]}\big\|e^{\beta''_0\langle k,\xi\rangle^{1/2}}\widetilde{\Psi\phi}(t,k,\xi)\big\|_{L^2_{k,\xi}}\lesssim \eps,
\end{equation}
for some constant $\beta''_0=\beta''_0(\beta_0,\vartheta_0,\kappa)>0$. Moreover, using \eqref{proo1} and Lemma \ref{GPF} we see that
\begin{equation}\label{proo2.1}
\big|D^\alpha_v [v-\kappa/(2\pi \mathcal{Y}(t,v)^2)]\big|\leq \eps K_3^m(m+1)^{2m},
\end{equation}
for some constant $K_3=K_3(\beta_0,\vartheta_0,\kappa)\geq 1$ and for any $(t,v)\in[0,3]\times[\ubv/12,12\obv]$, $m\geq 1$, and $|\alpha|\in[1,m]$. Since $\varrho=1/\mathcal{Y}$, it follows from \eqref{proo2}, \eqref{proo2.1}, and Lemma \ref{GPF} (i) that $\|\varrho_\ast(t)\|_{\widetilde{G}^s_{K_4}(\T\times[\ubv/12,12\obv])}\lesssim \eps$, for some $K_4=K_4(\beta_0,\vartheta_0,\kappa)\geq 1$. One can bound $V_\ast$ in a similar way, using the \eqref{proo1}, \eqref{proo2}, and Lemma \ref{GPF}, and then one can also bound $W_\ast$ using the formula $W_\ast=-V_\ast+\langle F\rangle\varrho$. Finally, using again Lemma \ref{lm:Gevrey}, we have
\begin{equation}\label{prok2.1}
\big\|e^{\beta'''_0\langle \xi\rangle^{1/2}}\widetilde{V_\ast}(t,\xi)\big\|_{L^2_{\xi}}+\big\|e^{\beta'''_0\langle \xi\rangle^{1/2}}\widetilde{(\Psi^\dagger\varrho)}(t,\xi)\big\|_{L^2_{\xi}}+\big\|e^{\beta'''_0\langle \xi\rangle^{1/2}}\widetilde{W_\ast}(t,\xi)\big\|_{L^2_{\xi}}\lesssim \eps,
\end{equation}
for some constant $\beta''_0=\beta''_0(\beta_0,\vartheta_0,\kappa)>0$, for any $t\in[0,3]$. The desired bounds \eqref{boot1} follow from \eqref{prok2} and \eqref{prok2.1} provided that $\eps_1\approx \eps^{2/3}$, see \eqref{reb11}-\eqref{reb13}. 

Assume now that the solution $\omega'$ satisfies the bounds in the hypothesis of Proposition \ref{MainBootstrap} on a given interval $[0,T]$, $T\geq 3$. We would like to show that the support of $\omega'(t)$ is contained in $\T\times\big[2\vartheta_0/3,3/(2\vartheta_0)\big]$ and that $\|\langle\omega'\rangle(t)\|_{H^{10}}\lesssim\eps_1$ for any $t\in[0,T]$. Indeed, we have
\begin{equation*}
\langle\omega'\rangle(t,r)=F(t,v(t,r)),
\end{equation*}
and the bound $\|\langle\omega'\rangle(t)\|_{H^{10}}\lesssim\eps_1$ follows from \eqref{boot2}. For the support conclusion, we notice that only transportation in the $r$ direction, given by the terms $(P'(t),e_r)\,\partial_r\omega'$ and $\partial_{\theta}\psi'\partial_r\omega'/r$, could enlarge the support of $\omega'$ in $r$. 
Notice that, on the support of $\omega'$, 
\begin{equation}\label{uyM}
(\partial_{\theta}\psi')(t,\theta,r)=\partial_zP_{\neq0}\big(\Psi\phi\big)(t,\theta-tv(t,r),v(t,r)).
\end{equation}
Using the bound on $\mathcal{E}_{\phi}$ from \eqref{boot3}, we can bound, for all $t\in[0,T]$,
\begin{equation}\label{integuy}
\sup_{(\theta,r)\in \mathbb{T}\times [\vartheta_0/2,2/\vartheta_0]}\big|\partial_{\theta}\psi'(t,\theta,r)\big|\lesssim \epsilon_1\langle t\rangle^{-2}.
\end{equation}
Moreover, using the bounds \eqref{boot4} on $P'_v(t)$, we see the total transportation in $r$ due to the term $(P'(t),e_r)\,\partial_r\omega'$ is of the size $O(\epsilon_1)$.
Since ${\rm supp}\,\omega'(0)\subseteq\mathbb{T}\times[\vartheta_0,1/\vartheta_0]$, we conclude that ${\rm supp}\,\omega'(t)\subseteq \mathbb{T}\times\big[2\vartheta_0/3,3/(2\vartheta_0)\big]$ for any $t\in[0,T]$, 
as long as $\epsilon_1$ is sufficiently small.  

To summarize, we can now use a simple continuity argument to show that if $\omega_0'\in\G^{1,2/3}$ has compact support in $\T\times [\vartheta_0,1/\vartheta_0]$ and satisfies the assumptions \eqref{Eur0}, then there is a global solution $\omega'\in C([0,\infty):\G^{1,3/5})$ of the system \eqref{PEu}--\eqref{PtS2}, which has compact support in $[\vartheta_0/2,2/\vartheta_0]$ and satisfies $\|\langle\omega'\rangle(t)\|_{H^{10}}\lesssim\eps_1$  for all $t\in[0,\infty)$. Moreover
\begin{equation}\label{proo6}
\sum_{g\in\{V_\ast,\varrho_\ast,W_\ast\}}\mathcal{E}_g(t)\leq\eps_1^2\quad\text{ and }\quad\sum_{g\in\{F,\phi\}}\mathcal{E}_g(t)\lesssim_{\delta}\eps_1^3\quad \text{ for any }t\in[0,\infty).
\end{equation}

(ii) Since $\eps_1^{3/2}\approx \eps$, $A_k(t,\xi)\ge e^{1.1\delta_0\langle k,\xi\rangle^{1/2}}$ and $A_R(t,\xi)\ge A_{NR} (t,\xi)\geq e^{1.1\delta_0\langle\xi\rangle^{1/2}}$ for any $(t,k,\xi)\in[0,\infty)\times\Z\times\R$, it follows from \eqref{proo6} that 
\begin{equation}\label{uniformf}
\big\|F\big\|_{\mathcal{G}^{\delta_0,1/2}}+\langle t\rangle^2\big\|\mathbb{P}_{\neq 0}(\Psi\phi)\big\|_{\mathcal{G}^{\delta_0,1/2}}\lesssim_\delta\epsilon.
\end{equation}
Since $\mathcal{Y}=1/\varrho$ it follows from \eqref{proo6} that, for any $a,b\in[-2,2]\cap\Z$,
\begin{equation}\label{wsx1}
\|\Psi_1\cdot (\varrho)^a(V')^b(t)\|_{\mathcal{G}^{\delta_0,1/2}}+\|\Psi_1\cdot \mathcal{Y}(t)\|_{\mathcal{G}^{\delta_0,1/2}}\lesssim 1
\end{equation}
for any Gevrey function $\Psi_1$ supported in $\big[\ubv/7,7\obv\big]$ and satisfying $\big\|e^{\langle\xi\rangle^{2/3}}\widetilde{\Psi_1}(\xi)\big\|_{L^\infty}\lesssim_\delta 1$.

Recall that $\partial_rv(t,r)=V'(t,v(t,r))$, $\partial_tv(t,r)=\dot{V}(t,v(t,r))$, and $v=\mathcal{Y}^{-1}$. Therefore, using Lemmas \ref{lm:Gevrey}--\eqref{GPF} and the estimates \eqref{boot4} and \eqref{wsx1} we have
\begin{equation}\label{DdtvM}
\begin{split}
&\big\|v(t)\big\|_{\widetilde{\mathcal{G}}^{1/2}_{M_1}([\vartheta_0/6,6\vartheta_0])}+\big\|(\partial_rv)(t)\big\|_{\widetilde{\mathcal{G}}^{1/2}_{M_1}([\vartheta_0/6,6\vartheta_0])}\lesssim 1,\\
&\big\|(\partial_tv)(t)\big\|_{\widetilde{\mathcal{G}}^{1/2}_{M_1}([\vartheta_0/6,6\vartheta_0])}\lesssim_\delta \epsilon\langle t\rangle^{-2}.
\end{split}
\end{equation}
for any $t\geq 1$ and some $M_1=M_1(\beta_0,\vartheta_0,\kappa)\geq 1$. Moreover, recalling the equation \eqref{Pef} for $F$, and using the bounds \eqref{boot4} and \eqref{uniformf}--\eqref{wsx1} we have, for any $t\geq 1$,
\begin{equation}\label{QDtf}
\big\|\partial_tF(t)\big\|_{\mathcal{G}^{\delta_0/2,1/2}}\lesssim_\delta \epsilon\langle t\rangle^{-2}.
\end{equation}

Using the definitions, we notice that
$$\omega'(t,\theta+\kappa t/(2\pi r^2)+\Phi(t,r),r)=\omega'(t,\theta+tv(t,r),r)=F(t,\theta,v(t,r)).$$
Therefore
\begin{equation*}
\frac{d}{dt}\{\omega'(t,\theta+\kappa t/(2\pi r^2)+\Phi(t,r),r)\}=(\partial_tF)(t,\theta,v(t,r))+(\partial_tv)(t,r)\cdot (\partial_rF)(t,\theta,v(t,r)),
\end{equation*}
and using \eqref{DdtvM}--\eqref{QDtf} and Lemmas \ref{lm:Gevrey}--\ref{GPF} we have
\begin{equation*}
\Big\|\frac{d}{dt}\{\omega'(t,\theta+\kappa t/(2\pi r^2)+\Phi(t,r),r)\}\Big\|_{\mathcal{G}^{\delta_1,1/2}}\lesssim_\delta \epsilon\langle t\rangle^{-2},
\end{equation*}
for any $t\geq 1$ and some $\delta_1=\delta_1(\beta_0,\vartheta_0,\kappa)>0$. The existence of the function $\Omega_\infty$ and the bounds \eqref{convergence1} follow. The existence of the limit point $P_\infty$ and the bounds \eqref{convergence2} follow from the bound $|P'(t)|\lesssim_\delta\eps e^{-0.1\delta_0t^{1/2}}$ in \eqref{boot4}.

(iii) We prove first that, for any $t\geq 1$,
\begin{equation}\label{wsx2}
\big\|\partial_t\langle\partial_r\psi'\rangle(t)\big\|_{\mathcal{G}^{\delta_2,1/2}}\lesssim_\delta \epsilon\langle t\rangle^{-3},
\end{equation}
for some $\delta_2=\delta_2(\beta_0,\vartheta_0,\kappa)>0$. Indeed, starting from \eqref{psiInf2}, we have 
\begin{equation}\label{wsx3}
\partial_r(r\langle\partial_t\partial_r\psi'\rangle)=r\langle\partial_t\omega'\rangle.
\end{equation}
Using now \eqref{DNC} we have
\begin{equation*}
\langle\partial_t\omega'\rangle(t,r)=\langle\partial_tF\rangle(t,v(t,r))+\dot{V}(t,v(t,r))\langle F\rangle(t,v(t,r)).
\end{equation*}
Using now Lemma \ref{GPF} (ii), the bounds on $\langle\partial_tF+\dot{V}F\rangle$ in \eqref{boot4}, and the bounds \eqref{DdtvM} on $v$, we have $\|\langle\partial_t\omega'\rangle(t)\|_{\mathcal{G}^{\delta_2,1/2}}\lesssim_\delta \epsilon\langle t\rangle^{-3}$. Moreover, the function $\partial_t\langle\partial_r\psi'\rangle$ is supported in $[\vartheta_0/2,2/\vartheta_0]$, due to \eqref{psiInf4}, and the bounds \eqref{wsx2} follow from \eqref{wsx3} and the uncertainty principle.

The existence of the limit function $u'_\infty$ and the estimates \eqref{convergenceofux} follow from \eqref{wsx2}, provided that $\beta_1$ is sufficiently small. The identities \eqref{AsymPhi2} follow from \eqref{psiInf2} and \eqref{psiInf4}. 

To prove the decay estimates \eqref{convergencetomean} and \eqref{convergenceuy} for $ u'_{\theta}-\langle u'_{\theta}\rangle$ and $u'_r$ we use properties of the stream function $\psi'$. The starting point is the equation \eqref{St}, written in the form
\begin{equation*}
\partial_r^2\psi'+\partial_r\psi'/r+\partial_{\theta}^2\psi'/r^2=\omega'(t,\theta,r)=F(t,\theta-tv(t,r),v(t,r)),
\end{equation*}
for $(\theta,r)\in\T\times(0,\infty)$. Let $\psi'_k$ and $F_k$ denote the $k$--th Fourier modes of $\psi'$ and $F$ in $\theta$. Thus
\begin{equation}\label{formulapsikMain}
\psi'_k(t,r)=\int_0^{\infty}G_k(r,\rho)\,F_k(t,v(t,\rho))\,e^{-iktv(t,\rho)}\,d\rho,
\end{equation}
where
\begin{equation}\label{G.krepeat}
G_k(r,\rho):=\left\{\begin{array}{ll}
                         -\frac{\rho}{2|k|}\big(\frac{r}{\rho}\big)^{|k|}&{\rm for}\,\,r<\rho;\\
                          -\frac{\rho}{2|k|}\big(\frac{\rho}{r}\big)^{|k|}&{\rm for}\,\,r>\rho
                        \end{array}\right.
\end{equation}
is the associated Green function for the operator $\partial_r^2+\partial_r/r-k^2/r^2$. See \eqref{phik}--\eqref{phi.k1} for more details. Moreover
\begin{equation}\label{wsx7}
\begin{split}
r\big|u'_{r}(t,\theta,r)\big|&\lesssim\sum_{k\neq 0}|k|\big|\psi'_k(t,r)\big|\lesssim \sup_{k\neq0}|k|^3\big|\psi'_k(t,r)\big|,\\
\big|u'_{\theta}(t,\theta,r)-\langle u'_{\theta}\rangle(t,r)\big|&\lesssim\sum_{k\neq0}\big|\partial_r\psi'_k(t,r)\big|\lesssim \sup_{k\neq0}|k|^2\big|(\partial_r\psi'_k)(t,r)\big|.
\end{split}
\end{equation}

We can now integrate by parts in $\rho$ twice in the identities (\ref{formulapsikMain}), and use the formulas \eqref{G.krepeat} and the smoothness of the functions $F$ and $v$, see \eqref{uniformf} and \eqref{DdtvM}. Thus
\begin{equation}\label{pduy}
\big|\psi'_k(t,r)\big|\lesssim_{\delta}\epsilon\langle t\rangle^{-2}|k|^{-4}r.
\end{equation}
Similarly, we can differentiate in $r$ and integrate by parts in $\rho$ once in (\ref{formulapsikMain}) to see that $\big|(\partial_r\psi'_k)(t,r)\big|\lesssim_{\delta}\epsilon\langle t\rangle^{-1}|k|^{-4}$. The desired bounds (\ref{convergencetomean}) and (\ref{convergenceuy}) follow from \eqref{wsx7}, which completes the proof of Theorem \ref{Thm}.
\end{proof}

\section{Bounds on the functions $V'$, $V''$, $\varrho$, $\dot{V}$, $\mathbb{P}_{\neq0}\phi$, $P'_z$, and $P'_v$}\label{firstbounds}

In this section we prove suitable bounds on many of the functions defined in Proposition \ref{ChangedEquations}. In most cases we apply the definitions, the bootstrap assumptions \eqref{boot2}, and Lemma \ref{Multi0}. To apply this lemma we need suitable bilinear weighted bounds, which we collect in Lemma \ref{lm:Multi} below; see Lemmas 8.2 and 8.3 in \cite{IOJI} for the complete proofs.

\begin{lemma}\label{lm:Multi}
(i) For any $t\in[1,\infty)$, $\alpha\in[0,4]$, $\xi,\eta\in\R$, and $Y\in\{NR,R\}$ we have
\begin{equation}\label{TLX4}
\langle\xi\rangle^{-\alpha}A_Y(t,\xi)\lesssim_\delta \langle\xi-\eta\rangle^{-\alpha}A_Y(t,\xi-\eta)\langle\eta\rangle^{-\alpha}A_Y(t,\eta)e^{-(\lambda(t)/20)\min(\langle\xi-\eta\rangle,\langle \eta\rangle)^{1/2}}
\end{equation}
and
\begin{equation}\label{DtVMulti}
\big|(\dot{A}_Y/A_Y)(t,\xi)\big|\lesssim_\delta \big\{\big|(\dot{A}_Y/A_Y)(t,\xi-\eta)\big|+\big|(\dot{A}_Y/A_Y)(t,\eta)\big|\big\}e^{4\sqrt\delta\min(\langle\xi-\eta\rangle,\langle \eta\rangle)^{1/2}}.
\end{equation}

(ii) For any $t\in[1,\infty)$, $\xi,\eta\in\R$, and $k\in\mathbb{Z}$ we have
\begin{equation}\label{TLX7}
A_k(t,\xi)\lesssim_\delta A_R(t,\xi-\eta)A_k(t,\eta)e^{-(\lambda(t)/20)\min(\langle\xi-\eta\rangle,\langle k,\eta\rangle)^{1/2}}
\end{equation}
and
\begin{equation}\label{vfc30.7}
\big|(\dot{A}_k/A_k)(t,\xi)\big|\lesssim_\delta \big\{\big|(\dot{A}_R/A_R)(t,\xi-\eta)\big|+\big|(\dot{A}_k/A_k)(t,\eta)\big|\big\}e^{12\sqrt\delta\min(\langle\xi-\eta\rangle,\langle k,\eta\rangle)^{1/2}}.
\end{equation}
\end{lemma}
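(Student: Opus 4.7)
The plan is to reduce by symmetry to the regime $\langle\xi-\eta\rangle\le\langle\eta\rangle$ (otherwise swap the roles of $\xi-\eta$ and $\eta$), and then exploit the factorisation of the weights together with the elementary subadditivity
\[
\langle\xi\rangle^{1/2}\le\langle\eta\rangle^{1/2}+\langle\xi-\eta\rangle^{1/2},\qquad \bigl|\langle\xi\rangle^{1/2}-\langle\eta\rangle^{1/2}\bigr|\le\langle\xi-\eta\rangle^{1/2},
\]
which follows from $|\langle\xi\rangle-\langle\eta\rangle|\le|\xi-\eta|$ and concavity of $\sqrt{\,\cdot\,}$. In this regime $\langle\xi\rangle\sim\langle\eta\rangle$, so the polynomial prefactors $\langle\xi\rangle^{-\alpha}$ and $\langle\eta\rangle^{-\alpha}$ in \eqref{TLX4} match up to a constant, and the factor $\langle\xi-\eta\rangle^{-\alpha}$ on the right is harmlessly absorbed into the exponential part of $A_Y(t,\xi-\eta)$. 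The core of the proof is therefore the pointwise inequality
\[
\frac{A_Y(t,\xi)}{A_Y(t,\eta)}\lesssim_\delta A_Y(t,\xi-\eta)\,e^{-(\lambda(t)/20)\langle\xi-\eta\rangle^{1/2}}.
\]

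To establish this I split the analysis of the ratio $b_Y(t,\eta)/b_Y(t,\xi)$ into two cases. When $\langle\xi-\eta\rangle$ is comparable to $\langle\eta\rangle$, the two-sided bound \eqref{reb13} gives $b_Y(t,\eta)/b_Y(t,\xi)\lesssim e^{\delta\langle\xi-\eta\rangle^{1/2}}$, which is swallowed by the gain $\sqrt{\delta}\,\langle\xi-\eta\rangle^{1/2}$ already built into $A_Y(t,\xi-\eta)$ via \eqref{reb11}. When $\langle\xi-\eta\rangle\ll\langle\eta\rangle$, the key smoothness property \eqref{S1}, which the weights $b_Y$ are designed to satisfy in Section~\ref{weights}, forces $A_Y(t,\xi)/A_Y(t,\eta)$ to be bounded by a constant times $(1+\sqrt{\delta})^{C}\lesssim_\delta 1$; the full loss $\lambda(t)\langle\xi-\eta\rangle^{1/2}/20$ is then paid by the trivial lower bound $A_Y(t,\xi-\eta)\ge e^{(\lambda(t)+\sqrt{\delta})\langle\xi-\eta\rangle^{1/2}}$.

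For the time-derivative estimate \eqref{DtVMulti}, I would write
\[
\frac{\dot A_Y(t,\xi)}{A_Y(t,\xi)}=\dot\lambda(t)\langle\xi\rangle^{1/2}-\frac{\dot b_Y(t,\xi)}{b_Y(t,\xi)}.
\]
The first summand is controlled via the subadditivity of $\langle\cdot\rangle^{1/2}$, using that $\dot\lambda<0$. The second is handled by the structural estimates on $\dot b_Y/b_Y$ proved in Section~\ref{weights}, which bound $|\dot b_Y/b_Y|(t,\xi)$ by the same quantity at $\eta$ (or at $\xi-\eta$) up to an $e^{O(\sqrt{\delta})\min(\langle\xi-\eta\rangle,\langle\eta\rangle)^{1/2}}$ loss, reflecting the possible mismatch of the resonant intervals defining $b_Y$ at nearby frequencies. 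This is the step where the specific form of the weights is essential: the loss must be $O(\sqrt{\delta})$ in the exponent rather than $O(1)$, which is exactly what the design in Section~\ref{weights} provides.

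Part (ii) follows by essentially the same scheme, with only bookkeeping changes. Since the convolution is in $\xi$ alone and $k$ is held fixed, the relevant subadditivity is $\langle k,\xi\rangle^{1/2}\le\langle k,\eta\rangle^{1/2}+\langle\xi-\eta\rangle^{1/2}$, which naturally produces $A_R(t,\xi-\eta)$ rather than $A_k(t,\xi-\eta)$ on the right-hand side. The two summands in the definition \eqref{reb12} of $A_k$ are treated separately: the purely $k$-dependent term $e^{\sqrt{\delta}|k|^{1/2}}$ passes through trivially (as $|k|$ is unchanged), while the $b_k$-term is analysed exactly as in part (i). The main obstacle throughout is the verification that the smoothness condition \eqref{S1} holds simultaneously with the delicate resonance structure of $b_R,b_{NR},b_k$; this is precisely the content of Section~\ref{weights}, after which the bilinear bounds reduce to the routine case analysis sketched above, following the pattern of Lemmas~8.2--8.3 in \cite{IOJI}.
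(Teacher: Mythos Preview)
Your high-level plan matches the paper's approach (which simply cites Lemmas~8.2--8.3 of \cite{IOJI} for the proofs), and the reduction by symmetry, the factorisation of $A_Y$, and the treatment of the time-derivative pieces are all on the right track. However, there are two concrete gaps in your argument for \eqref{TLX4}.

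First, your invocation of \eqref{S1} in the case $\langle\xi-\eta\rangle\ll\langle\eta\rangle$ is incorrect: \eqref{S1} is a \emph{local} estimate, stated only for $\langle\xi-\eta\rangle\lesssim 1$, and does not apply when $|\xi-\eta|$ is large but still $\ll|\eta|$. In fact the claim $A_Y(t,\xi)/A_Y(t,\eta)\lesssim_\delta 1$ is false there: by \eqref{dor22} the $b_Y$-ratio alone contributes $e^{\sqrt{\delta}|\xi-\eta|^{1/2}}$, which is unbounded. The correct uniform input is \eqref{dor22} (equivalently \eqref{eq:comparisonweights1}), not \eqref{S1}.

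Second, and more seriously, plain subadditivity $\langle\xi\rangle^{1/2}\le\langle\eta\rangle^{1/2}+\langle\xi-\eta\rangle^{1/2}$ gives \emph{zero} quantitative gain in the main exponential, so after paying the $e^{\sqrt{\delta}|\xi-\eta|^{1/2}}$ loss from the $b_Y$-ratio you cannot recover the required $e^{-(\lambda(t)/20)\langle\xi-\eta\rangle^{1/2}}$; the exponent count is off by exactly $\lambda(t)/20$. What is actually needed is the refined inequality \eqref{b>a}: when $\langle\eta\rangle\ge\langle\xi-\eta\rangle$ one has $\langle\xi\rangle^{1/2}\le\langle\eta\rangle^{1/2}+\tfrac{1}{2}\langle\xi-\eta\rangle^{1/2}$, which is what underlies \eqref{vfc25}. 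That saving of $\tfrac{1}{2}$ in the exponent (applied to the factor $e^{(\lambda(t)+\sqrt{\delta})\langle\cdot\rangle^{1/2}}$) is precisely what creates enough room to absorb the $\sqrt{\delta}$-loss from \eqref{dor22} and still leave the $\lambda(t)/20$ gain. Once you replace subadditivity by \eqref{b>a} and replace \eqref{S1} by the uniform bound \eqref{dor22}, your case split becomes unnecessary and the rest of your sketch (for \eqref{DtVMulti}, \eqref{TLX7}, \eqref{vfc30.7}) goes through as written.
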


For simplicity of notation, for any $f\in C([1,T]:H^{4}(\mathbb{R}))$ we define
\begin{equation}\label{rew0}
\|f\|^2_{R}:=\sup_{t\in [1,T]}\Big\{\int_{\R}A_R^2(t,\xi)\big|\widetilde{f}(t,\xi)\big|^2\,d\xi+\int_1^t\int_{\R}|\dot{A}_R(s,\xi)|A_R(s,\xi)\big|\widetilde{f}(s,\xi)\big|^2\,d\xi ds\Big\}.
\end{equation}

We prove first bounds on the functions $V'$, $V''$, and $\varrho$.

\begin{lemma}\label{nar8}
(i) Assume that $f,g\in C([1,T]:H^{4}(\mathbb{R}))$ then
\begin{equation}\label{rew1}
\|fg\|_R\lesssim_\delta \|f\|_R\|g\|_R.
\end{equation}

(ii) As a consequence, if $\Psi_1,\Psi_2$ are Gevrey cutoff functions supported in $\big[\ubv/7,7\obv\big]$ and satisfying $\big\|e^{\langle\xi\rangle^{2/3}}\widetilde{\Psi_a}(\xi)\big\|_{L^\infty}\lesssim 1$, $a\in\{1,2\}$, and 
\begin{equation}\label{rew3}
f\in\{\Psi_1\cdot(\varrho)^a(V')^b(\langle\partial_v\rangle^{-1}(\Psi_2V''))^c:\,a,b\in[-2,2]\cap\mathbb{Z},\,c\in\{0,1\}\},
\end{equation}
then for any $t\in[1,T]$ we have
\begin{equation}\label{nar4}
\begin{split}
&\int_{\R}A_R^2(t,\xi)\big|\widetilde{f}(t,\xi)\big|^2\,d\xi+\int_1^t\int_{\R}|\dot{A}_R(s,\xi)|A_R(s,\xi)\big|\widetilde{f}(s,\xi)\big|^2\,d\xi ds\lesssim 1.
\end{split}
\end{equation} 
\end{lemma}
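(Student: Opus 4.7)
\medskip

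\noindent\textbf{Proof proposal.} For part (i), the plan is to reduce the weighted bilinear bound \eqref{rew1} to an application of Lemma \ref{Multi0}(i) using the weight inequalities of Lemma \ref{lm:Multi}(i). At a fixed time $t\in[1,T]$, the bound \eqref{TLX4} with $\alpha=0$ gives
\[
A_R(t,\xi)\lesssim_\delta A_R(t,\xi-\eta)A_R(t,\eta)\,e^{-(\lambda(t)/20)\min(\langle \xi-\eta\rangle,\langle\eta\rangle)^{1/2}},
\]
and since $\lambda(t)\geq \delta_0$, the exponential decay factor is dominated by $\langle\xi-\eta\rangle^{-2}+\langle\eta\rangle^{-2}$. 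Thus $m(\xi):=A_R(t,\xi)$ satisfies the hypothesis \eqref{TLX5} with $m_1=m_2=A_R(t,\cdot)$, and Lemma \ref{Multi0}(i) yields $\|A_R\widetilde{fg}(t)\|_{L^2_\xi}\lesssim \|A_R\widetilde{f}(t)\|_{L^2_\xi}\|A_R\widetilde{g}(t)\|_{L^2_\xi}$, handling the first term in $\|\cdot\|_R^2$ pointwise in $t$.

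For the space-time integral term I would treat the symbol $m(\xi):=(|\dot A_R|A_R)^{1/2}(t,\xi)$. Combining \eqref{TLX4} with \eqref{DtVMulti} (and using that $\lambda(t)/20-4\sqrt\delta>0$ for $\delta$ small) gives
\[
|\dot A_R|(t,\xi)A_R(t,\xi)\lesssim_\delta \bigl\{|\dot A_R|A_R(t,\xi-\eta)A_R(t,\eta)^2+A_R(t,\xi-\eta)^2|\dot A_R|A_R(t,\eta)\bigr\}\cdot\text{(rapid decay)},
\]
so after taking square roots and writing $m(\xi)$ as a sum of two terms, each satisfies \eqref{TLX5} with factors drawn from $(|\dot A_R|A_R)^{1/2}$ and $A_R$. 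Applying Lemma \ref{Multi0}(i) to each piece yields
\[
\|(|\dot A_R|A_R)^{1/2}\widetilde{fg}(s)\|_{L^2_\xi}^2\lesssim \|(|\dot A_R|A_R)^{1/2}\widetilde f(s)\|_{L^2_\xi}^2\|A_R\widetilde g(s)\|_{L^2_\xi}^2+\|A_R\widetilde f(s)\|_{L^2_\xi}^2\|(|\dot A_R|A_R)^{1/2}\widetilde g(s)\|_{L^2_\xi}^2.
\]
Integrating in $s\in[1,t]$ and placing one $L^\infty_s\|A_R\cdot\|_{L^2_\xi}$ factor outside gives \eqref{rew1}.

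For part (ii), the plan is to combine (i) iteratively with the structural decompositions from Proposition \ref{ChangedEquations}. By \eqref{rea22'} and \eqref{cf1}, $\varrho=\varrho_\ast+\sqrt{2\pi v/\kappa}$ and $V'=V_\ast-2\varrho v$; the bootstrap assumption \eqref{boot2} gives $\|\varrho_\ast\|_R^2\leq \mathcal{E}_{\varrho_\ast}+\mathcal{B}_{\varrho_\ast}\leq\eps_1^2$ and similarly $\|V_\ast\|_R\lesssim\eps_1$, while the explicit parts $\Psi_1\sqrt{2\pi v/\kappa}$ and $\Psi_1 v$ are fixed Gevrey functions with $\|\Psi_1 v^{\pm 1/2}\|_R\lesssim 1$ (the norm is $t$-independent and controlled by the Gevrey cutoff property \eqref{rec0} combined with the hypothesis on $\Psi_1$). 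Applying part (i) yields $\|\Psi_1\varrho\|_R,\|\Psi_1 V'\|_R\lesssim 1$. For negative powers $a,b\in\{-2,-1\}$, I would use the fact that on the support of $\Psi_1$ the principal parts $\sqrt{2\pi v/\kappa}$ and $-2\varrho v$ are bounded below and smooth; since $1/(1+x)$ is a Gevrey-smooth function of $x$ for $|x|$ small, writing $1/\varrho=(1/\text{principal})\cdot(1+\text{small})^{-1}$ and expanding in a Neumann-type series (which converges in $\|\cdot\|_R$ thanks to part (i) and the $\eps_1$ smallness) controls $\|\Psi_1(\varrho)^{-1}\|_R$ and $\|\Psi_1(V')^{-1}\|_R$. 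For the final factor, I would use the identity $V''=\tfrac12\partial_v(V')^2$ from \eqref{PV''}, rewrite $\Psi_2 V''=\tfrac12\partial_v(\Psi_2(V')^2)-\tfrac12(\partial_v\Psi_2)(V')^2$, and observe that on the Fourier side $\widetilde{\langle\partial_v\rangle^{-1}\partial_v g}(\xi)=(i\xi/\langle\xi\rangle)\widetilde g(\xi)$ is a bounded multiplier compatible with the weight $A_R$, so $\|\langle\partial_v\rangle^{-1}(\Psi_2 V'')\|_R\lesssim\|\Psi_2(V')^2\|_R+\|(\partial_v\Psi_2)(V')^2\|_R\lesssim 1$ by the previous step. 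Multiplying all the pieces through part (i) yields \eqref{nar4}.

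The main bookkeeping obstacle is the treatment of the reciprocals $\varrho^{-1}$ and $(V')^{-1}$: one must verify that the Neumann-type series converges in the norm $\|\cdot\|_R$ using only part (i) and the smallness of $\eps_1$, which requires estimating the explicit principal parts carefully on the slightly enlarged cutoff support and choosing an auxiliary cutoff function with stronger Gevrey regularity than $\Psi_1$ to absorb commutators between the cutoff and the principal parts. All other manipulations are routine applications of part (i) and of the bootstrap bounds \eqref{boot2}.
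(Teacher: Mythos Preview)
Your proposal is correct and follows essentially the same route as the paper: part (i) via Lemma~\ref{Multi0}(i) with the weight bounds \eqref{TLX4}--\eqref{DtVMulti}, and part (ii) via the decompositions $\varrho=\varrho_\ast+\sqrt{2\pi v/\kappa}$, $V'=V_\ast-2v\varrho$, $V''=\tfrac12\partial_v(V')^2$, Neumann series for the reciprocals, and the algebra property from (i). Two small bookkeeping points to tighten: the bootstrap \eqref{boot2} controls $\Psi^\dagger\varrho_\ast$ rather than $\varrho_\ast$ (which is not compactly supported, cf.\ \eqref{Prho}), so insert the auxiliary cutoff $\Psi^\flat$ as the paper does in \eqref{rew6.1}--\eqref{rew6.25}; and note, as the paper does, that the final constant in \eqref{nar4} is independent of $\delta$ (via \eqref{rew6.5} and taking $\eps_1$ small depending on $\delta$), a fact used later in the commutator estimates of section~\ref{coimprov1}.
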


\begin{proof} (i) The bound on the first term in the definition of the $R$-norm follows from Lemma \ref{Multi0} (i) and \eqref{TLX4} (with $Y=R$ and $\alpha=0$). To bound the second term we notice that
\begin{equation*}
\begin{split}
|\dot{A}_R(s,\xi)A_R(s,\xi)|^{1/2}\lesssim_\delta \{&|\dot{A}_R(s,\xi-\eta)A_R(s,\xi-\eta)|^{1/2}A_R(s,\eta)\\
&+A_R(s,\xi-\eta)|\dot{A}_R(s,\eta)A_R(s,\eta)|^{1/2}\}e^{-(\lambda(t)/30)\min(\langle\xi-\eta\rangle,\langle \eta\rangle)^{1/2}},
\end{split}
\end{equation*}
as a consequence of \eqref{TLX4}--\eqref{DtVMulti}. Therefore, using Lemma \ref{Multi0} (i) we estimate
\begin{equation}\label{rew3.2}
\begin{split}
\big\|\sqrt{|\dot{A}_RA_R|(s,\xi)}\widetilde{(fg)}(s,\xi)\big\|_{L^2_sL^2_{\xi}}&\lesssim_\delta \big\|\sqrt{|\dot{A}_RA_R|(s,\rho)}\widetilde{f}(s,\rho)\big\|_{L^2_sL^2_{\rho}}\big\|A_R(s,\eta)\widetilde{g}(s,\eta)\big\|_{L^\infty_sL^2_{\eta}}\\
&+\big\|A_R(s,\rho)\widetilde{f}(s,\rho)\big\|_{L^\infty_sL^2_{\rho}}\big\|\sqrt{|\dot{A}_RA_R|(s,\eta)}\widetilde{g}(s,\eta)\big\|_{L^2_sL^2_{\eta}},
\end{split}
\end{equation}
and the desired estimate follows.

(ii) Notice that the constants in the bounds \eqref{nar4} do not on $\delta$, as long as $\eps_1$ is sufficiently small depending on $\delta$. This is useful in the commutator estimates in section \ref{coimprov1}.

In view of the definitions, with $\Psi^\dagger$ as in \eqref{rec0} we have the identities
\begin{equation}\label{rew6}
\begin{split}
\Psi^\dagger(v)\varrho(t,v)&=\Psi^\dagger(v)\varrho_\ast(t,v)+\Psi^\dagger(v)\sqrt{2\pi v/\kappa},\\
\Psi^\dagger(v)V'(t,v)&=\Psi^\dagger(v)[V_{\ast}(t,v)-2v\varrho_\ast(t,v)]-\Psi^\dagger(v)2v\sqrt{2\pi v/\kappa}.
\end{split}
\end{equation}
We fix a Gevrey cutoff function $\Psi^\flat$ satisfying $\big\|e^{\langle\xi\rangle^{3/4}}\widetilde{\Psi^\flat}(\xi)\big\|_{L^\infty}\lesssim 1$, supported in $\big[\ubv/8,8\obv\big]$ and equal to $1$ in $\big[\ubv/7,7\obv\big]$. We have the identities
\begin{equation}\label{rew6.1}
\frac{\Psi^\flat(v)}{\varrho(t,v)}=\frac{\Psi^\flat(v)}{\varrho_\ast(v,t)+\sqrt{2\pi v/\kappa}}=\frac{\Psi^\flat(v)}{\sqrt{2\pi v/\kappa}}\sum_{m\geq 0}(-1)^m\frac{(\Psi^\dagger(v)\rho_\ast(t,v))^m}{(2\pi v/\kappa)^{m/2}}
\end{equation}
and
\begin{equation}\label{rew6.2}
\frac{\Psi^\flat(v)}{V'(t,v)}=\frac{\Psi^\flat(v)}{V_\ast(t,v)-2v\varrho_\ast(v,t)-2v\sqrt{2\pi v/\kappa}}=\frac{-\Psi^\flat(v)}{2v\sqrt{2\pi v/\kappa}}\sum_{m\geq 0}\frac{[V_\ast(t,v)-\Psi^\dagger(v)2v\rho_\ast(t,v)]^m}{(2v\sqrt{2\pi v/\kappa})^m}.
\end{equation}
Moreover, since $V''=\partial_v(V')^2/2$, see \eqref{PV''}, we also have the identity
\begin{equation}\label{rew6.25}
\begin{split}
\Psi^\flat(v)&V''(t,v)=\Psi^\flat(v)\cdot 6v^2(2\pi/\kappa)\\
&+(1/2)\Psi^\flat(v)\partial_v[(V_\ast(t,v)-2v\rho_\ast(t,v))(V_\ast(t,v)-2v\rho_\ast(t,v)-4v\sqrt{2\pi v/\kappa})].
\end{split}
\end{equation}

Notice that
\begin{equation}\label{rew6.5}
\int_{\R}A_R^2(t,\xi)e^{-8\delta_0\langle\xi\rangle^{1/2}}\,d\xi+\int_1^t\int_{\R}|\dot{A}_R(s,\xi)|A_R(s,\xi)e^{-8\delta_0\langle\xi\rangle^{1/2}}\,d\xi ds\lesssim 1
\end{equation}
for any $t\geq 1$. This is because the weights $A_R$ are decreasing in time and satisfy the bounds $|A_R(t,\xi)|^2\lesssim e^{4\delta_0\langle\xi\rangle^{1/2}}$. In particular, functions of the form $v^\alpha\Psi_1(v)$, $v^\alpha\Psi_2(v)$, or $v^\alpha\Psi_1(v)\Psi_2(v)$, $\alpha\in[-20,20]$, satisfy the bounds \eqref{nar4}, due to the assumptions on $\Psi_a$ and Lemma \ref{lm:Gevrey}. 

The desired bounds \eqref{nar4} follow using the algebra property proved in part (i), the bootstrap assumptions \eqref{boot2} on $V_\ast$ and $\varrho_\ast$, and the identities \eqref{rew6}--\eqref{rew6.25}, as long as $\eps_1$ is sufficiently small depending on $\delta$.
\end{proof}

We prove now bounds on the functions $\partial_z\phi$ and $\partial_v\mathbb{P}_{\neq 0}\phi$.

\begin{lemma}\label{nar13}
(i) For any $t\in[1,T]$ and $h_1\in\{(\varrho)^a(V')^b\partial_z(\Psi\phi):\,a,b\in[-2,2]\}$ we have
\begin{equation}\label{nar34}
\begin{split}
&\sum_{k\in \mathbb{Z}\setminus\{0\}}\int_{\R}A_k^2(t,\xi)\frac{k^2\langle t\rangle^4\langle t-\xi/k\rangle^4}{(|\xi/k|^2+\langle t\rangle^2)^2}\big|\widetilde{h_1}(t,k,\xi)\big|^2\,d\xi\lesssim_\delta\eps_1^2\\
&\int_1^t\sum_{k\in \mathbb{Z}\setminus\{0\}}\int_{\R}|\dot{A}_k(s,\xi)|A_k(s,\xi)\frac{k^2\langle s\rangle^4\langle s-\xi/k\rangle^4}{(|\xi/k|^2+\langle s\rangle^2)^2}\big|\widetilde{h_1}(s,k,\xi)\big|^2\,d\xi ds\lesssim_\delta\eps_1^2.
\end{split}
\end{equation}

(ii) For any $t\in[1,T]$ and $h_2\in\{(\varrho)^a(V')^b\partial_v\mathbb{P}_{\neq 0}(\Psi\phi):\,a,b\in[-2,2]\}$ we have
\begin{equation}\label{nar14} 
\begin{split}
&\sum_{k\in \mathbb{Z}\setminus\{0\}}\int_{\R}A_k^2(t,\xi)\frac{k^4\langle t\rangle^2\langle t-\xi/k\rangle^4}{(|\xi/k|^2+\langle t\rangle^2)\langle\xi\rangle^2}\big|\widetilde{h_2}(t,k,\xi)\big|^2\,d\xi\lesssim_\delta\eps_1^2\\
&\int_1^t\sum_{k\in \mathbb{Z}\setminus\{0\}}\int_{\R}|\dot{A}_k(s,\xi)|A_k(s,\xi)\frac{k^4\langle s\rangle^2\langle s-\xi/k\rangle^4}{(|\xi/k|^2+\langle s\rangle^2)\langle\xi\rangle^2}\big|\widetilde{h_2}(s,k,\xi)\big|^2\,d\xi ds\lesssim_\delta\eps_1^2.
\end{split}
\end{equation}
\end{lemma}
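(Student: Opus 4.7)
Both parts reduce to the bootstrap bounds \eqref{boot2} on $\mathcal{E}_\phi$ and $\mathcal{B}_\phi$, combined with Lemma \ref{nar8}(ii) (bounds for $(\varrho)^a(V')^b$ in the $R$-norm) and the bilinear multiplier estimate Lemma \ref{Multi0}(ii). The key observation is that, after absorbing the $|k|$ produced by $\partial_z$ (resp.~the $|\xi|$ produced by $\partial_v$) into the weight, the target weight on $h_j$ is pointwise dominated by the bootstrap weight on $\mathbb{P}_{\neq 0}(\Psi\phi)$: in part (i) we gain an extra factor $\langle t\rangle^2/(|\xi/k|^2+\langle t\rangle^2)\leq 1$, and in part (ii) we gain $\xi^2/\langle\xi\rangle^2\leq 1$. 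In particular, the case $a=b=0$ is immediate.

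\textbf{General $a,b$.} Write $h_j=f\cdot g_j$ with $f=(\varrho)^a(V')^b$ depending only on $v$, and $g_j\in\{\partial_z(\Psi\phi),\,\partial_v\mathbb{P}_{\neq 0}(\Psi\phi)\}$. Apply Lemma \ref{Multi0}(ii) with $m_1(\xi-\eta)=A_R(t,\xi-\eta)$ bounding $f$ (controlled via Lemma \ref{nar8}(ii)), and $m_2(k,\eta)$ chosen to reproduce the bootstrap weight on $g_j$; for instance in part (i) take $m_2(k,\eta):=A_k(t,\eta)\,|k|\langle t\rangle\langle t-\eta/k\rangle^2/\sqrt{|\eta/k|^2+\langle t\rangle^2}$, so that $\|m_2 g_1\|_{L^2}^2\lesssim \mathcal{E}_\phi(t)$. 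The multiplier inequality
\begin{equation*}
|m(k,\xi)|\leq |m_1(\xi-\eta)|\,|m_2(k,\eta)|\{\langle\xi-\eta\rangle^{-2}+\langle k,\eta\rangle^{-2}\}
\end{equation*}
is verified as follows: the $A_k$ factor is handled by the key bound \eqref{TLX7}, which yields a decay $e^{-(\lambda(t)/20)\min(\langle\xi-\eta\rangle,\langle k,\eta\rangle)^{1/2}}$, while the algebraic factors transition smoothly from $\xi$ to $\eta$ with at most polynomial losses $(1+|\xi-\eta|)^3$, using the elementary bounds $\langle t-\xi/k\rangle\leq\langle t-\eta/k\rangle(1+|\xi-\eta|)$ and $(|\xi/k|^2+\langle t\rangle^2)^{-1/2}\lesssim (|\eta/k|^2+\langle t\rangle^2)^{-1/2}(1+|\xi-\eta|)$ valid for $|k|\geq 1$. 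These polynomial factors are absorbed by the stretched-exponential decay from \eqref{TLX7}, which dominates any polynomial in $\langle\xi-\eta\rangle^{-1}+\langle k,\eta\rangle^{-1}$.

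\textbf{Time-integrated estimates and main obstacle.} For the $\mathcal{B}$-type bounds we imitate the proof of Lemma \ref{nar8}(i): combining \eqref{vfc30.7} and \eqref{TLX7} we split, for $\delta$ small,
\begin{equation*}
|\dot{A}_kA_k|^{1/2}(s,\xi)\lesssim_\delta\bigl\{|\dot{A}_RA_R|^{1/2}(s,\xi-\eta)\,A_k(s,\eta)+A_R(s,\xi-\eta)\,|\dot{A}_kA_k|^{1/2}(s,\eta)\bigr\}e^{-(\lambda(s)/30)\min(\langle\xi-\eta\rangle,\langle k,\eta\rangle)^{1/2}}.
\end{equation*}
Two applications of Lemma \ref{Multi0}(ii) (each with the algebraic factor transferred to the $\eta$ side as above) plus Cauchy--Schwarz in $s$ produce a sum of two pieces: one of the form $\mathcal{B}_R$-type norm of $f$ times $\sup_s\mathcal{E}_\phi(s)$-type norm of $g_j$, the other of the form $\sup_s\|A_Rf\|_{L^2}^2$ times $\mathcal{B}_\phi$-type norm of $g_j$. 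Both are $\lesssim_\delta\eps_1^2$ by Lemma \ref{nar8}(ii) and \eqref{boot2}. The main technical obstacle is the verification of the multiplier inequality in the previous paragraph: one must simultaneously handle the Gevrey weight $A_k$, the resonant algebraic factor $\frac{|k|\langle t\rangle^2\langle t-\xi/k\rangle^2}{|\xi/k|^2+\langle t\rangle^2}$, and the required polynomial tail $\{\langle\xi-\eta\rangle^{-2}+\langle k,\eta\rangle^{-2}\}$. The smoothness of the algebraic factor in $\xi$ (polynomial in $|\xi-\eta|$) combined with the exponential reserve in \eqref{TLX7} makes the verification routine but notationally involved.
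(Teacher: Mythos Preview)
Your approach is the same as the paper's: the base case $a=b=0$ follows directly from the bootstrap bounds on $\mathcal{E}_\phi,\mathcal{B}_\phi$, and the general case is obtained by Lemma~\ref{Multi0}(ii) with $m_1=A_R$ controlling the coefficient $f=(\varrho)^a(V')^b$ via Lemma~\ref{nar8}(ii), together with the splitting of $|\dot A_kA_k|^{1/2}$ coming from \eqref{TLX7} and \eqref{vfc30.7}. The paper records exactly the multiplier inequalities you need as \eqref{nar34.01}--\eqref{nar34.02} for part (i) and \eqref{rew19}--\eqref{rew19.1} for part (ii).

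There is one genuine gap in your multiplier verification. You bound the algebraic ratio by a polynomial $(1+|\xi-\eta|)^{O(1)}$ and then say this is absorbed by the stretched-exponential $e^{-(\lambda(t)/20)\min(\langle\xi-\eta\rangle,\langle k,\eta\rangle)^{1/2}}$ from \eqref{TLX7}. This is fine when $\langle\xi-\eta\rangle\lesssim\langle k,\eta\rangle$, but fails in the high-low regime $\langle\xi-\eta\rangle\geq 10\langle k,\eta\rangle$: there the exponential reserve is only $e^{-c\langle k,\eta\rangle^{1/2}}$, which cannot absorb a large power of $|\xi-\eta|$. The paper handles this by splitting into the two cases and proving the sharper bound \eqref{nar34.03},
\[
\frac{|k|\langle t\rangle^2\langle t-\xi/k\rangle^2}{|\xi/k|^2+\langle t\rangle^2}
\lesssim_\delta \frac{|k|\langle t\rangle^2\langle t-\eta/k\rangle^2}{|\eta/k|^2+\langle t\rangle^2}\, e^{\delta\min(\langle\xi-\eta\rangle,\langle k,\eta\rangle)^{1/2}}.
\]
In the high-low case this amounts to observing that $\langle t-\xi/k\rangle^2\lesssim |\xi/k|^2+\langle t\rangle^2$ and $|\eta/k|^2+\langle t\rangle^2\lesssim \langle t-\eta/k\rangle^2\langle k,\eta\rangle^2$, so the ratio is $\lesssim\langle k,\eta\rangle^2\lesssim_\delta e^{\delta\langle k,\eta\rangle^{1/2}}$, independent of $|\xi-\eta|$. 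Once you add this case split, your argument goes through exactly as in the paper.
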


\begin{proof} (i) The bounds when $h_1=\partial_z(\Psi\phi)$ follow directly from the bootstrap assumptions \eqref{boot2} on $\mathcal{E}_\phi$ and $\mathcal{B}_\phi$. One could in fact get slightly stronger bounds on $\partial_z(\Psi\phi)$, but these bounds are not compatible with multiplication by $\varrho$ and $V'$.

To prove the bounds \eqref{nar34} for all $h_1$ as claimed, we prove the multiplier bounds
\begin{equation}\label{nar34.01}
\begin{split}
A_k(t,\xi)&\frac{|k|\langle t\rangle^2\langle t-\xi/k\rangle^2}{|\xi/k|^2+\langle t\rangle^2}\lesssim_\delta A_R(t,\xi-\eta)\cdot A_k(t,\eta)\frac{|k|\langle t\rangle^2\langle t-\eta/k\rangle^2}{|\eta/k|^2+\langle t\rangle^2}\cdot \{\langle\xi-\eta\rangle^{-2}+\langle k,\eta\rangle^{-2}\}
\end{split}
\end{equation}
and
\begin{equation}\label{nar34.02}
\begin{split}
\big|\dot{A}_k(t,\xi)&A_k(t,\xi)\big|^{1/2}\frac{|k|\langle t\rangle^2\langle t-\xi/k\rangle^2}{|\xi/k|^2+\langle t\rangle^2}\lesssim_\delta \left[\big|(\dot{A}_R/A_R)(t,\xi-\eta)\big|^{1/2}+\big|(\dot{A}_k/A_k)(t,\eta)\big|^{1/2}\right]\\
&\times A_R(t,\xi-\eta)\cdot A_k(t,\eta)\frac{|k|\langle t\rangle^2\langle t-\eta/k\rangle^2}{|\eta/k|^2+\langle t\rangle^2}\cdot \{\langle\xi-\eta\rangle^{-2}+\langle k,\eta\rangle^{-2}\},
\end{split}
\end{equation}
for any $t\in[1,T]$, $\xi,\eta\in\R$, and $k\in\mathbb{Z}\setminus\{0\}$. Indeed, we use Lemma \ref{lm:Multi} (ii). In addition, by considering the cases $|\xi-\eta|\leq 10|k,\eta|$ and $|\xi-\eta|\geq 10|k,\eta|$, it is easy to see that
\begin{equation}\label{nar34.03}
\frac{|k|\langle t\rangle^2\langle t-\xi/k\rangle^2}{|\xi/k|^2+\langle t\rangle^2}\lesssim_\delta \frac{|k|\langle t\rangle^2\langle t-\eta/k\rangle^2}{|\eta/k|^2+\langle t\rangle^2}\cdot e^{\delta\min(\langle\xi-\eta\rangle,\langle k,\eta\rangle)^{1/2}}
\end{equation}
for any $t\in[1,T]$, $\xi,\eta\in\R$, and $k\in\mathbb{Z}\setminus\{0\}$. The bounds \eqref{nar34.01} follow from \eqref{TLX7} and \eqref{nar34.03}, while the bounds \eqref{nar34.02} follow by multiplication from \eqref{DtVMulti}, \eqref{TLX7}, and \eqref{nar34.03}.

The bounds \eqref{nar34} now follow from \eqref{nar34.01}--\eqref{nar34.02} and Lemma \ref{Multi0} (ii). Indeed,
\begin{equation*}
\begin{split}
\Big\|A_k(t,\xi)\frac{|k|\langle t\rangle^2\langle t-\xi/k\rangle^2}{|\xi/k|^2+\langle t\rangle^2}&\mathcal{F}\{(\varrho)^a(V')^b\partial_z(\Psi\phi)\}(t,k,\xi)\Big\|_{L^2_{k,\xi}}\lesssim_\delta\Big\|A_R(t,\rho)\mathcal{F}\{(\varrho)^a(V')^b\}(t,\rho)\Big\|_{L^2_{\rho}}\\
&\times \Big\|A_k(t,\eta)\frac{|k|\langle t\rangle^2\langle t-\eta/k\rangle^2}{|\eta/k|^2+\langle t\rangle^2}\mathcal{F}\{\partial_z(\Psi\phi)\}(t,k,\eta)\Big\|_{L^2_{k,\eta}},
\end{split}
\end{equation*}
for any $t\in[1,T]$. The estimates in the first line of \eqref{nar34} follows using \eqref{nar4}. The estimates in the second line follow by a similar argument (compare with \eqref{rew3.2}).

(ii) The bounds when $h_2=\partial_v\mathbb{P}_{\neq 0}(\Psi\phi)$ follow directly from the bootstrap assumptions on $\mathcal{E}_\phi$ and $\mathcal{B}_\phi$. For the general case, we use the multiplier bounds 
\begin{equation}\label{rew19}
\begin{split}
A_k(t,\xi)&\frac{k^2\langle t\rangle\langle t-\xi/k\rangle^2}{(|\xi/k|+\langle t\rangle)\langle\xi\rangle}\lesssim_\delta A_R(t,\xi-\eta) A_k(t,\eta)\frac{k^2\langle t\rangle\langle t-\eta/k\rangle^2}{(|\eta/k|+\langle t\rangle)\langle\eta\rangle}\{\langle\xi-\eta\rangle^{-2}+\langle k,\eta\rangle^{-2}\}
\end{split}
\end{equation}
and
\begin{equation}\label{rew19.1}
\begin{split}
\big|\dot{A}_k(t,\xi)&A_k(t,\xi)\big|^{1/2}\frac{k^2\langle t\rangle\langle t-\xi/k\rangle^2}{(|\xi/k|+\langle t\rangle)\langle\xi\rangle}\lesssim_\delta \left[\big|(\dot{A}_R/A_R)(t,\xi-\eta)\big|^{1/2}+\big|(\dot{A}_k/A_k)(t,\eta)\big|^{1/2}\right]\\
&\times A_R(t,\xi-\eta) A_k(t,\eta)\frac{k^2\langle t\rangle\langle t-\eta/k\rangle^2}{(|\eta/k|+\langle t\rangle)\langle\eta\rangle}\{\langle\xi-\eta\rangle^{-2}+\langle k,\eta\rangle^{-2}\},
\end{split}
\end{equation}
for any $t\in[1,T]$, $\xi,\eta\in\R$, and $k\in\mathbb{Z}\setminus\{0\}$, which are similar to \eqref{nar34.01}--\eqref{nar34.02}. The desired bounds \eqref{nar14} follow using Lemma \ref{Multi0} (ii) and \eqref{nar4} as before.
\end{proof}

We estimate now the functions $P_z', P_v'$, which are generated by the global shift of variables due to the movement of the Dirac mass. Our main estimates are the following:

\begin{lemma}\label{P'bounnds}

For any $t\in[1,T]$ we have
\begin{equation}\label{Pboot4}
|P'(t)|\lesssim_\delta \epsilon_1 e^{-\delta_0\langle t\rangle^{1/2}}.
\end{equation}
and
\begin{equation}\label{Par1.7}
|\widetilde{\Psi^\dagger P_z'}(t,k,\xi)|+|\widetilde{\Psi^\dagger P_v'}(t,k,\xi)|\lesssim_\delta \eps_1A_1(t,t)^{-1}\mathbf{1}_{\{-1,1\}}(k)e^{-\langle\xi-kt\rangle^{3/4}}.
\end{equation}
As a consequence, if $f\in\{t\Psi_1(\varrho)^a(V')^bP_z', t\Psi_1(\varrho)^a(V')^bP_v':\,a,b\in[-2,2]\cap\mathbb{Z}\}$, where $\Psi_1$ is a Gevrey cutoff function as in Lemma \ref{nar8} (ii), then
\begin{equation}\label{Par1}
\begin{split}
&\sum_{k\in \mathbb{Z}\setminus\{0\}}\int_{\R}A_k^2(t,\xi)\frac{\langle t\rangle^2k^4\langle t-\xi/k\rangle^4}{(|\xi/k|^2+\langle t\rangle^2)\langle\xi\rangle^2}\big|\widetilde{f}(t,k,\xi)\big|^2\,d\xi\lesssim_\delta\eps_1^2,\\
&\int_1^t\sum_{k\in \mathbb{Z}\setminus\{0\}}\int_{\R}|\dot{A}_k(s,\xi)|A_k(s,\xi)\frac{\langle s\rangle^2k^4\langle s-\xi/k\rangle^4}{(|\xi/k|^2+\langle s\rangle^2)\langle\xi\rangle^2}\big|\widetilde{f}(s,k,\xi)\big|^2\,d\xi ds\lesssim_\delta\eps_1^2.
\end{split}
\end{equation}
\end{lemma}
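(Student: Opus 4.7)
The three claims are proved in sequence, with \eqref{Pboot4} underlying \eqref{Par1.7}, which in turn underlies \eqref{Par1}. To bound $P'(t)$, I would start from \eqref{PtS2} and change variables from $(\theta, r)$ to $(z, v)$ via $z = \theta - tv(t,r)$ and $v = v(t,r)$, whose Jacobian is $|V'(t,v)|^{-1}$ and is supported where $F$ lives. Expanding $\sin\theta = \sin(z + tv)$ and $\cos\theta = \cos(z + tv)$ via Euler's formula, only the Fourier modes $k = \pm 1$ of $F$ in $z$ survive, so that
\begin{equation*}
P'(t) = C_+ \widetilde{H}(t, -1, -t) + C_- \widetilde{H}(t, 1, t), \qquad H(t, z, v) := F(t, z, v)\, \chi(v)/V'(t, v),
\end{equation*}
where $\chi$ is a fixed Gevrey cutoff equal to $1$ on the $v$-support of $F$. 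I would then view each pointwise value $\widetilde{H}(t, \pm 1, \pm t)$ as a convolution in $\eta$, pull out $A_1(t, t)^{-1}$ using \eqref{TLX7} rearranged as $A_1(t,\eta)^{-1}A_R(t,t-\eta)^{-1} \lesssim_\delta A_1(t,t)^{-1} e^{-(\lambda(t)/20)\min(\langle t-\eta\rangle,\langle 1,\eta\rangle)^{1/2}}$, and apply Cauchy--Schwarz with the bootstrap $\|A_1\widetilde{F}\|_{L^2_{k,\xi}}\lesssim\eps_1$ from \eqref{boot2} and the $R$-norm bound on $\chi/V'$ from Lemma \ref{nar8} (case $a=0,b=-1$). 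This yields $|P'(t)|\lesssim_\delta \eps_1 A_1(t,t)^{-1}$, which implies \eqref{Pboot4} because $A_1(t,t)\geq e^{\delta_0\langle t\rangle^{1/2}}$.

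For \eqref{Par1.7}, I would expand $\sin(z+tv)$ and $\cos(z+tv)$ in Euler form in \eqref{PrS} to see that $\Psi^\dagger P_z'$ and $\Psi^\dagger P_v'$ have nonzero Fourier coefficients in $z$ only at $k = \pm 1$, of the form $\Psi^\dagger(v)\, \alpha_\pm(t)\, e^{\pm itv}$ with $|\alpha_\pm(t)| \lesssim |P'(t)|$. Taking Fourier transforms in $v$ produces a modulated Gevrey profile, and the rapid decay $|\widetilde{\Psi^\dagger}(\eta)| \lesssim e^{-\langle\eta\rangle^{3/4}}$ from \eqref{rec0} combined with the previous paragraph yields \eqref{Par1.7}.

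For \eqref{Par1}, set $f = t\Psi_1 \varrho^a V'^b P_z'$ (the $P_v'$ case is identical). Since $\Psi_1\Psi^\dagger = \Psi_1$,
\begin{equation*}
\widetilde{f}(t, k, \xi) = t \int \widetilde{[\Psi_1 \varrho^a V'^b]}(t, \xi - \eta)\, \widetilde{\Psi^\dagger P_z'}(t, k, \eta)\, d\eta,
\end{equation*}
which vanishes unless $k = \pm 1$. I would then prove multiplier bounds analogous to \eqref{nar34.01}--\eqref{nar34.02} for the new multiplier $M_k(t, \xi) := A_k(t, \xi)\langle t\rangle k^2 \langle t-\xi/k\rangle^2/\bigl((|\xi/k|^2 + \langle t\rangle^2)^{1/2}\langle\xi\rangle\bigr)$, using Lemma \ref{lm:Multi} together with the observation (exactly as in \eqref{nar34.03}) that the rational factor in $M_k$ at $\xi$ is comparable to the same factor at $\eta$ up to $e^{c\sqrt\delta\min(\langle\xi-\eta\rangle,\langle k,\eta\rangle)^{1/2}}$. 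Then Lemma \ref{Multi0} (ii), combined with the $R$-norm bound on $\Psi_1 \varrho^a V'^b$ from Lemma \ref{nar8} and the pointwise estimate \eqref{Par1.7}, yields the first bound in \eqref{Par1}; the second follows by the square-root Cauchy--Schwarz splitting used in \eqref{rew3.2} and the derivative bound \eqref{vfc30.7}.

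The main obstacle is the triple cancellation at the moving frequency $\xi = kt$: there $M_k\approx A_1(t,t)/\langle t\rangle$, and this must cancel precisely against the polynomial factor $t$ in $f$ and the amplitude $A_1(t,t)^{-1}$ from \eqref{Par1.7}. The net contribution to $M_k \widetilde{f}$ near $\xi = kt$ is of size $\eps_1$, concentrated on a unit-scale Gevrey neighborhood (because $e^{-\langle\xi-kt\rangle^{3/4}}$ beats the $1/2$-Gevrey growth of $A_k$), which is exactly what is needed for the $L^2$ bound $\lesssim\eps_1$ in \eqref{Par1}. Keeping track of all three cancellations simultaneously, consistent with the commutator structure encoded in $A_k$ and $A_R$, is the delicate point, and is what forces one to use the bilinear bounds of Lemma \ref{lm:Multi}.
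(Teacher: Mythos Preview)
Your proposal is correct and follows essentially the same approach as the paper: the paper also derives $|P'(t)|\lesssim_\delta\eps_1 A_1(t,t)^{-1}$ by expressing $P'(t)$ as a pairing of $\widetilde{F}(t,\pm 1,\cdot)$ with $\widetilde{(\Psi/V')}$ and invoking \eqref{TLX7}, then obtains \eqref{Par1.7} from the Gevrey decay of $\widetilde{\Psi^\dagger}$, and finally proves \eqref{Par1} by first checking the base case $f\in\{t\Psi^\dagger P_z',t\Psi^\dagger P_v'\}$ directly from \eqref{Par1.7} and then extending to general $f$ via the multiplier bounds \eqref{rew19}--\eqref{rew19.1} (which are exactly your $M_k$-bounds) together with $\|\Psi_1\varrho^a(V')^b\|_R\lesssim_\delta 1$. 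The only organizational difference is that the paper separates the base case from the multiplication step, whereas you fold them into a single convolution argument.
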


\begin{proof} The identities \eqref{PtS2} and \eqref{fphi} show that
\begin{equation}\label{Pop2}
P'(t)=\frac{1}{2\pi}\int_0^{\infty}\int_0^{2\pi}\big(\sin{\theta},-\cos{\theta}\big)F(t,\theta-tv(t,r),v(t,r)) d\theta dr.
\end{equation}
Therefore, recalling the support property (\ref{Psupp}) of $F$, for any $t\in[1,T]$
\begin{equation}\label{Pop3}
\begin{split}
\big|P'(t)\big|&\lesssim \sum_{l\in\{1,-1\}}\Big|\int_0^{\infty}\int_0^{2\pi}e^{-il\theta}F(t,\theta-tv(t,r),v(t,r))\Psi(v(t,r)) d\theta dr\Big|\\
&\lesssim  \sum_{l\in\{1,-1\}}\Big|\int_0^{\infty}\int_{\mathbb{R}}\widetilde{F}(t,l,\xi)e^{-iltv(t,r)}e^{i\xi v(t,r)}\Psi(v(t,r))\, d\xi dr\Big|\\
&\lesssim  \sum_{l\in\{1,-1\}}\Big|\int_{\R}\widetilde{F}(t,l,\xi)\widetilde{(\Psi/V')}(t,lt-\xi)\,d\xi\Big|.
    \end{split}
\end{equation}
It follows from \eqref{boot2} and \eqref{nar4} that
\begin{equation*}
\|A_l(t,\xi)\widetilde{F}(t,l,\xi)\|_{L^2_\xi}\lesssim_\delta\eps_1\,\,\,\,\text{ and }\,\,\,\,\|A_R(t,\eta)\widetilde{(\Psi/V')}(t,\eta)\|_{L^2_\eta}\lesssim_\delta 1.
\end{equation*}
Moreover $A_l(t,\xi)A_R(t,lt-\xi)\gtrsim_\delta A_l(t,lt)$ (see \eqref{TLX7}) and $A_1(t,t)=A_{-1}(t,-t)$ (see \eqref{reb10}, \eqref{dor1}, and \eqref{dor4}). Using now \eqref{Pop3} it follows that
\begin{equation}\label{Pop3.1}
\big|P'(t)\big|\lesssim_\delta \eps_1A_1(t,t)^{-1},\qquad\text{ for any }t\in[1,T].
\end{equation}
The bounds \eqref{Pboot4}--\eqref{Par1.7} follow using also \eqref{reb12} and the assumption $|\widetilde{\Psi^\dagger}(\xi)|\lesssim e^{-\langle\xi\rangle^{3/4}}$.

We now turn to the proof of \eqref{Par1}. Notice that the weights used in the left-hand side are identical to the weights used in \eqref{nar14} to bound the functions $(\varrho)^a(V')^b\partial_v\mathbb{P}_{\neq 0}(\Psi\phi)$. The desired bounds follow directly from \eqref{Par1.7} if $f\in\{t\Psi^\dagger P_z',t\Psi^\dagger P_v'\}$, once we notice that
\begin{equation*}
A_1(t,\xi)=A_{-1}(t,-\xi),\qquad \dot{A}_1(t,\xi)=\dot{A}_{-1}(t,-\xi),\qquad \frac{A_1(t,\xi)}{A_1(t,t)}\lesssim_\delta e^{2\delta_0\langle\xi-t\rangle^{1/2},}
\end{equation*}
which follow from the definitions in subsection \ref{weightsdefin} and the bounds \eqref{vfc25.1}. Moreover, as we have seen in the proof of Lemma \ref{nar13}, these bounds can be extended to the full set of functions $f$, using the weighted bounds \eqref{rew19}--\eqref{rew19.1} and the estimates $\|\Psi_1(\varrho)^a(V')^b\|_R\lesssim_\delta 1$, see \eqref{nar4}.
\end{proof}

We prove now bounds on the function $\dot{V}$.

\begin{lemma}\label{nar8ext} With $\mathcal{K}$ as in \eqref{rec3} and \eqref{rec6}, for any $t\in[1,T]$ we have
\begin{equation}\label{nar7}
\begin{split}
&\int_{\R}A_{NR}^2(t,\xi)\big(\langle\xi\rangle^2\langle t\rangle^2+\mathcal{K}^2\langle\xi\rangle^{1/2}\langle t\rangle^{7/2}\big)\big|\widetilde{\dot{V}}(t,\xi)\big|^2\,d\xi\lesssim_\delta\eps_1^2,\\
&\int_1^t\int_{\R}|\dot{A}_{NR}(s,\xi)|A_{NR}(s,\xi)\big(\langle\xi\rangle^2\langle s\rangle^2+\mathcal{K}^2\langle\xi\rangle^{1/2}\langle s\rangle^{7/2}\big)\big|\widetilde{\dot{V}}(s,\xi)\big|^2\,d\xi ds\lesssim_\delta\eps_1^2.
\end{split}
\end{equation}
\end{lemma}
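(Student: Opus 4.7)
My plan starts with the second identity in \eqref{PV''}, namely $V'\partial_v\dot V + 2\varrho \dot V = W_\ast/t$, viewed as a first-order ODE in $v$ for $\dot V$ driven by the forcing $W_\ast/t$. Using that $\varrho = 1/r$ and $\partial_v r = 1/V'$ give $\partial_v \varrho = -\varrho^2/V'$, a direct computation consolidates this into the cleaner form
\begin{equation*}
\partial_v(\dot V/\varrho^2) = \frac{W_\ast}{tV'\varrho^2},
\end{equation*}
which expresses that $\dot V/\varrho^2$ gains one derivative relative to $W_\ast/t$. On the Fourier side this reads $i\xi\,\widetilde{(\dot V/\varrho^2)}(t,\xi) = \widetilde{W_\ast/(tV'\varrho^2)}(t,\xi)$; both sides are supported in $[\ubv,\obv]$ by \eqref{Psupp}, so this reduction is rigorous.

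Next I would transfer the derivative gain into weighted $L^2$ estimates on $\dot V$ itself. Since $\varrho^{\pm 2}$ and $1/V'$ are Gevrey-regular on a neighborhood of $[\ubv,\obv]$ by the expansions \eqref{rew6}--\eqref{rew6.2}, Lemma \ref{nar8}(ii) supplies uniform $R$-norm bounds for these functions. Because $A_{NR}\le A_R$, Lemma \ref{Multi0}(i) together with the bilinear bound \eqref{TLX4} allows multiplication by $\varrho^{\pm 2}$ and $1/V'$ to be commuted through any $A_{NR}$-weighted norm, up to a harmless exponential loss absorbed by the Gevrey regularity. Combined with the Fourier identity, this yields the high-frequency bound
\begin{equation*}
\|A_{NR}(t,\xi)\langle\xi\rangle\langle t\rangle\widetilde{\dot V}(t,\xi)\|_{L^2_\xi} \lesssim_\delta \|A_{NR}(t,\xi)\widetilde{W_\ast}(t,\xi)\|_{L^2_\xi}.
\end{equation*}
For $|\xi|\lesssim 1$, where the derivative gain is empty, I would use a Poincar\'e-type estimate $\|\dot V\|_{L^2}\lesssim \|W_\ast\|_{L^2}/t$ obtained by testing the ODE against $\dot V$ and using the positivity of the coefficient $-V''/2 + 2\varrho$ on $[\ubv,\obv]$.

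With this one-derivative gain in hand, the $\mathcal{K}^2$-weighted term of \eqref{nar7} matches $\mathcal{E}_{W_\ast}$ exactly:
\begin{equation*}
\int_{\R}A_{NR}^2\,\mathcal{K}^2\langle\xi\rangle^{1/2}\langle t\rangle^{7/2}|\widetilde{\dot V}|^2\,d\xi \lesssim \mathcal{K}^2\int_{\R}A_{NR}^2\,\langle\xi\rangle^{-3/2}\langle t\rangle^{3/2}|\widetilde{W_\ast}|^2\,d\xi = \mathcal{E}_{W_\ast}(t)\lesssim \eps_1^2,
\end{equation*}
by \eqref{boot2}. The $\mathcal{K}$-independent term reduces to $\|A_{NR}\widetilde{W_\ast}\|_{L^2}^2$, and to control this I would use the decomposition $W_\ast = -V_\ast + \langle F\rangle\varrho$ from \eqref{cf10}, the inequalities $A_{NR}\le A_R$ and $A_{NR}\le A_0$, the multiplier bound $\|A_R\widetilde{\Psi\varrho}\|_{L^2}\lesssim 1$ from Lemma \ref{nar8}(ii), Lemma \ref{Multi0}(i), and the bootstrap estimates $\mathcal{E}_{V_\ast}(t)+\mathcal{E}_F(t)\lesssim\eps_1^2$, yielding $\|A_{NR}\widetilde{W_\ast}\|_{L^2}\lesssim\eps_1$.

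Finally, the space-time integral in the second display of \eqref{nar7} is handled by exactly the same sequence of steps, with $A_{NR}^2$ replaced by $|\dot A_{NR}|A_{NR}$, the bilinear bound \eqref{DtVMulti} replacing \eqref{TLX4}, and the $\mathcal{B}$-versions $\mathcal{B}_{W_\ast}, \mathcal{B}_{V_\ast}, \mathcal{B}_F$ of the bootstrap bounds replacing the $\mathcal{E}$-versions. I expect the derivative-transfer step to be the main technical obstacle: the weight $A_{NR}$ is not merely a function of $|\xi|$ but encodes delicate time-dependent resonant structure, so the variable coefficients $\varrho^{\pm 2}, 1/V'$ cannot literally be commuted past $A_{NR}$. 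The bilinear bounds \eqref{TLX4}--\eqref{DtVMulti} provide only a quantitative substitute, and closing the estimate requires the exponential loss there to be dominated by the Gevrey decay of the Fourier transforms of $\varrho^{\pm 2}$ and $1/V'$, which is exactly the content of Lemma \ref{nar8}.
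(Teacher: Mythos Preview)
Your approach is essentially the same as the paper's: both use the identity $V'\partial_v\dot V+2\varrho\dot V=W_\ast/t$ together with an integrating factor to gain a $v$-derivative on $\dot V$ relative to $W_\ast/t$, then transfer this gain through the $A_{NR}$-weights via Lemma~\ref{Multi0} and the bilinear bounds \eqref{TLX4}--\eqref{DtVMulti}. Your observation that the integrating factor is explicitly $1/\varrho^2$ (since $\partial_v\varrho=-\varrho^2/V'$) is a nice simplification; the paper instead defines an abstract $H$ with $\partial_vH=2\Psi\varrho/V'$ and works with $e^H\dot V$, but of course $e^H$ is just $C/\varrho^2$ on the support of $\dot V$, so the two are the same object. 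The paper then has to spend Step~1 establishing $\|\Psi^\flat e^{\pm H}\|_R\lesssim_\delta 1$ from scratch, whereas your choice lets you quote Lemma~\ref{nar8}(ii) directly for $\Psi_1\varrho^{\pm 2}$.

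There is one genuine gap: your Poincar\'e-type argument for $|\xi|\lesssim 1$ relies on positivity of $-V''/2+2\varrho$ on $[\ubv,\obv]$, and this fails. At leading order $V''\approx 12\pi v^2/\kappa$ and $\varrho\approx\sqrt{2\pi v/\kappa}$, so for $v$ near $\obv=4\kappa/(\pi\vartheta_0^2)$ the negative term $-V''/2$ dominates by a large margin whenever $\vartheta_0$ is small. The fix is immediate and is exactly what the paper does: since $\dot V/\varrho^2$ is compactly supported and $\partial_v(\dot V/\varrho^2)=W_\ast/(tV'\varrho^2)=:G_2$, one has $|\widetilde{(\dot V/\varrho^2)}(t,\xi)|\lesssim\langle\xi\rangle^{-1}|\widetilde{G_2}(t,\xi)|+\|\widetilde{G_2}(t,\cdot)\|_{L^2}\mathbf{1}_{[0,1]}(|\xi|)$, the low-frequency piece coming simply from $\|\dot V/\varrho^2\|_{L^1}\lesssim\|G_2\|_{L^1}\lesssim\|G_2\|_{L^2}$. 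No sign condition is needed.
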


\begin{proof}
We use the formula $V'\partial_v\dot{V}+2\varrho\dot{V}=W_\ast/t$ (see \eqref{PV''}) and the bootstrap assumptions \eqref{boot2}. Since $\dot{V}$ and $W_\ast$  are supported in $[\underline{v},\overline{v}]$ we have
\begin{equation*}
\partial_v\dot{V}+2\Psi\varrho(V')^{-1}\dot{V}=\Psi W_\ast (V')^{-1}/t,
\end{equation*}
where $\Psi$ is as in \eqref{rec0}. Thus, if $H$ is defined such that $\partial_vH=2\Psi\varrho(V')^{-1}$ then
\begin{equation}\label{rew8}
\partial_v(e^H\dot{V})=e^H\Psi W_\ast (V')^{-1}/t.
\end{equation}

{\bf{Step 1.}} With $\Psi^\flat$ as in the proof of Lemma \ref{nar8} above, we show first that
\begin{equation}\label{rew9}
\|\Psi^\flat e^H\|_R+\|\Psi^\flat e^{-H}\|_R+\|\Psi^\flat e^H\partial_vH\|_R+\|\Psi^\flat e^{-H}\partial_vH\|_R\lesssim_\delta 1.
\end{equation}
Indeed, let $G_1(t,v):=\Psi(v)\varrho(t,v)(V'(t,v))^{-1}$. Since $H(t,v)=\int_0^v G_1(t,x)\,dx$ we have
\begin{equation*}
\begin{split}
\widetilde{(\Psi^\dagger H)}(t,\xi)&=C\int_{\mathbb{R}}\Psi^\dagger(v)e^{-i\xi v}\Big(\int_\mathbb{R}\int_0^v\widetilde{G_1}(t,\eta)e^{i\eta x}\,dx d\eta\Big)\,dv\\
&=C\int_{\mathbb{R}}\widetilde{G_1}(t,\eta)\frac{\widetilde{\Psi^\dagger}(\xi-\eta)-\widetilde{\Psi^\dagger}(\xi)}{i\eta}\,d\eta.
\end{split}
\end{equation*}
Therefore, recalling the properties of $\Psi^\dagger$,
\begin{equation}\label{rew10}
|\widetilde{(\Psi^\dagger H)}(t,\xi)|\lesssim\int_{\mathbb{R}}|\widetilde{G_1}(t,\eta)|e^{-\mu\langle\xi-\eta\rangle^{3/4}}\,d\eta+e^{-\mu\langle\xi\rangle^{3/4}}\|\widetilde{G_1}(t,\eta)\|_{L^2_\eta}.
\end{equation}
for some $\mu\approx 1$. In view of Lemma \ref{nar8} we have $\|G_1\|_R\lesssim_\delta 1$, therefore $\|\Psi^\dagger H\|_R\lesssim_\delta 1$ as a consequence of \eqref{rew10} and Lemma \ref{nar8} (i). Moreover, $\|\Psi^\flat \partial_v H\|_R\lesssim_\delta 1$, since $\partial_vH=2\Psi\varrho(V')^{-1}$, and the bounds \eqref{rew9} follow using the algebra bounds \eqref{rew1}.

{\bf{Step 2.}} With $G_2\in\{W_\ast/t,e^H\Psi W_\ast (V')^{-1}/t\}$, for any $t\geq 1$ we show now that
\begin{equation}\label{rew11}
\begin{split}
&\int_{\R}A_{NR}^2(t,\xi)\big(\langle t\rangle^2+\mathcal{K}^2\langle\xi\rangle^{-3/2}\langle t\rangle^{7/2}\big)\big|\widetilde{G_2}(t,\xi)\big|^2\,d\xi\lesssim_\delta\eps_1^2,\\
&\int_1^t\int_{\R}|\dot{A}_{NR}(s,\xi)|A_{NR}(s,\xi)\big(\langle s\rangle^2+\mathcal{K}^2\langle\xi\rangle^{-3/2}\langle s\rangle^{7/2}\big)\big|\widetilde{G_2}(s,\xi)\big|^2\,d\xi ds\lesssim_\delta\eps_1^2.
\end{split}
\end{equation}
Indeed, if $G_2=W_\ast/t$ then the bounds \eqref{rew11} hold, due to the bootstrap assumptions \eqref{boot2} and the identity $W_\ast=-V_\ast+\varrho\langle F\rangle$. Moreover, since $\|e^H\Psi (V')^{-1}\|_R\lesssim_\delta 1$, due to \eqref{rew8} and Lemma \ref{nar8}, the bounds \eqref{rew11} for $G_2=e^H\Psi W_\ast (V')^{-1}/t$ follow by the same argument as in the proof of Lemma \ref{nar8} (i), using the bilinear estimates \eqref{TLX4}--\eqref{DtVMulti} with $Y=NR$ and $\alpha\in\{0,3/4\}$.

{\bf{Step 3.}} We are now ready to prove the bounds \eqref{nar7}. In view of \eqref{rew8} and the compact support of the function $\dot{V}$, for any $t\geq 1$ and $\xi\in\mathbb{R}$  we have
\begin{equation*}
|\widetilde{e^H\dot{V}}(t,\xi)|\lesssim \langle\xi\rangle^{-1}|\widetilde{G_2}(t,\xi)|+\|\widetilde{G_2}(t,\eta)\|_{L^2_\eta}\mathbf{1}_{[0,1]}(|\xi|),
\end{equation*}
where $G_2=e^H\Psi W_\ast (V')^{-1}/t$. Therefore, in view of \eqref{rew11},
\begin{equation}\label{rew12}
\begin{split}
&\int_{\R}A_{NR}^2(t,\xi)\big(\langle\xi\rangle^2\langle t\rangle^2+\mathcal{K}^2\langle\xi\rangle^{1/2}\langle t\rangle^{7/2}\big)\big|\widetilde{e^H\dot{V}}(t,\xi)\big|^2\,d\xi\lesssim_\delta\eps_1^2,\\
&\int_1^t\int_{\R}|\dot{A}_{NR}(s,\xi)|A_{NR}(s,\xi)\big(\langle\xi\rangle^2\langle s\rangle^2+\mathcal{K}^2\langle\xi\rangle^{1/2}\langle s\rangle^{7/2}\big)\big|\widetilde{e^H\dot{V}}(s,\xi)\big|^2\,d\xi ds\lesssim_\delta\eps_1^2.
\end{split}
\end{equation}
The desired bounds \eqref{nar7} now follow as in the proof of Lemma \ref{nar8} (i), using \eqref{rew9} and the bilinear estimates \eqref{TLX4}--\eqref{DtVMulti} with $Y=NR$ and $\alpha=0$.
\end{proof}

We conclude this section with estimates on some functions that appear in the right-hand side of \eqref{PeH}.

\begin{lemma}
Assume that 
\begin{equation}\label{rew50}
f\in\mathcal{S}:=\{\langle \partial_z\phi\partial_vF\rangle,\,\langle\partial_v\mathbb{P}_{\neq0}\phi\partial_zF\rangle,\,\langle P'_v\partial_vF\rangle,\,t\langle P'_z\,\partial_zF\rangle,\,t\langle P'_v\,\partial_zF\rangle\}
\end{equation}
and $g\in\{(\varrho)^a(V')^bf:\,a,b\in[-2,2]\cap\mathbb{Z},\,f\in\mathcal{S}\}$. Then, for any $t\in[1,T]$,
\begin{equation}\label{yar24}
\begin{split}
&\int_{\R}|\dot{A}_{NR}(t,\xi)|^{-2}A^4_{NR}(t,\xi)\big(\langle t\rangle^{3/2}\langle\xi\rangle^{-3/2}\big)|\widetilde{g}(t,\xi)|^2\,d\xi\lesssim_\delta \eps_1^4,\\
&\int_1^t\int_{\R}|\dot{A}_{NR}(s,\xi)|^{-1}A^3_{NR}(s,\xi)\big(\langle s\rangle^{3/2}\langle\xi\rangle^{-3/2}\big)|\widetilde{g}(s,\xi)|^2\,d\xi ds\lesssim_\delta \eps_1^4.
\end{split}
\end{equation}
\end{lemma}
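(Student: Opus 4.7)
The pair of bounds \eqref{yar24} has the shape one expects when estimating the contribution of the five source terms on the right-hand side of \eqref{PeH} to the energy identity for $\mathcal{E}_{W_\ast}$ via Cauchy--Schwarz: the second is paired with the good space-time term generated by $|\dot A_{NR}|$, while the first is paired with $\mathcal{E}_{W_\ast}$ itself after a Duhamel-type comparison. Accordingly I will treat the five members of $\mathcal{S}$ individually, but first reduce all of them to the case $g=f$. For this peel-off step, the comparison $A_{NR}\leq A_R$ from \eqref{reb13}, the bilinear multiplier estimates \eqref{TLX4}--\eqref{DtVMulti} with $Y=NR$, and the universal bound $\|\Psi_1(\varrho)^a(V')^b\|_R\lesssim 1$ from Lemma \ref{nar8}(ii) combine, via Lemma \ref{Multi0}(i), to reduce each inequality in \eqref{yar24} to the same inequality for the bare $f\in\mathcal{S}$. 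It will be convenient to replace the compound weight $|\dot A_{NR}|^{-\alpha}A_{NR}^{2+\alpha}$, $\alpha\in\{1,2\}$, by $A_{NR}^{2}$ times an explicit polynomial in $\langle t\rangle$ and $\langle\xi\rangle$, using the uniform lower bound $|\dot A_{NR}/A_{NR}|(t,\xi)\gtrsim \delta_0\sigma_0^2\langle t\rangle^{-1-\sigma_0}\langle\xi\rangle^{1/2}$ that is built into \eqref{reb10.5}--\eqref{reb11}.

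For the two quadratic sources $\langle\partial_z\phi\,\partial_vF\rangle$ and $\langle\partial_v\mathbb{P}_{\neq 0}\phi\,\partial_zF\rangle$, I expand the zero mode as $\widetilde f(t,\xi)=\sum_{k\neq 0}\int\widetilde{h_1}(t,k,\xi-\eta)\widetilde{h_2}(t,-k,\eta)\,d\eta$ and invoke \eqref{TLX7} at the product frequency $(0,\xi)$ to obtain $A_{NR}(t,\xi)\lesssim_\delta A_k(t,\xi-\eta)A_{-k}(t,\eta)\{\langle\xi-\eta\rangle^{-2}+\langle k,\eta\rangle^{-2}\}$ after absorbing the exponential factor. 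Lemma \ref{nar13} supplies the $\phi$-factor with its resonance-adapted weight $|k|^2\langle t\rangle^2\langle t-(\xi-\eta)/k\rangle^2(|(\xi-\eta)/k|^2+\langle t\rangle^2)^{-1}$ and total $L^2$-size $\lesssim_\delta\eps_1$, while the bootstrap assumption $\mathcal{E}_F\lesssim\eps_1^3$ of Proposition \ref{MainBootstrap} controls the $A_k$-weighted $L^2$ norm of the $F$-factor by $\eps_1^{3/2}$. Applying Lemma \ref{Multi0}(iii), summing in $k$, and using that the denominator $(|(\xi-\eta)/k|^2+\langle t\rangle^2)^{-1}$ carries more than enough time decay to cancel both the residual $\langle t\rangle^{3/2}\langle\xi\rangle^{-3/2}$ from the weight and the additional $\langle t\rangle^{1+\sigma_0}$ and $\langle t\rangle^{2+2\sigma_0}$ polynomial factors coming from the $|\dot A_{NR}|^{-\alpha}$ replacement, one obtains the claimed $\lesssim_\delta\eps_1^4$.

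For the three transport-induced sources $\langle P_v'\partial_vF\rangle$, $t\langle P_z'\partial_zF\rangle$, and $t\langle P_v'\partial_zF\rangle$, the pointwise estimate $|\widetilde{\Psi^\dagger P'_{\star}}(t,k,\xi)|\lesssim_\delta\eps_1 A_1(t,t)^{-1}\mathbf{1}_{\{-1,1\}}(k)e^{-\langle\xi-kt\rangle^{3/4}}$ of \eqref{Par1.7} restricts the internal sum to $k=\pm 1$ and localizes the $v$-frequency of the $P'$-factor in an essentially bounded Gevrey window around $\xi\approx\pm t$. Pairing with the $k=\mp 1$ Fourier mode of $\partial_vF$ or $\partial_zF$ at its critical frequency $\eta\approx\mp t$, one applies Lemma \ref{Multi0}(ii) together with the $A_k$-weighted bootstrap control of $F$. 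The super-polynomial gain $A_1(t,t)^{-1}\lesssim e^{-c\langle t\rangle^{1/2}}$ trivially overcomes every polynomial weight appearing on the left-hand side, giving the desired $\lesssim_\delta\eps_1^4$ in a much stronger form.

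\emph{Main obstacle.} The subtle step is the weight bookkeeping for $\langle\partial_z\phi\partial_vF\rangle$ and $\langle\partial_v\mathbb{P}_{\neq 0}\phi\partial_zF\rangle$: at the output zero mode and frequency $\xi$ one has to factor $A_{NR}(t,\xi)$ across $A_k(t,\xi-\eta)A_{-k}(t,\eta)$ \emph{and} simultaneously distribute the composite prefactor $\langle t\rangle^{3/2}\langle\xi\rangle^{-3/2}|\dot A_{NR}|^{-\alpha}A_{NR}^{\alpha}$ across the convolution in a way compatible with Lemma \ref{Multi0}(iii). Because the only smoothness available for the weights is the mild inequality \eqref{S1}, one cannot freely move high-frequency powers of $\langle\xi\rangle^{-1/2}$ or of $|\dot A_{NR}/A_{NR}|^{-1}$ between the two factors; closing the argument therefore requires that the positive power of the resonant denominator $(|(\xi-\eta)/k|^2+\langle t\rangle^2)^{-1}$ preserved from Lemma \ref{nar13}, combined with the $|k|^{-2}$ gain available in \eqref{nar34}, uniformly dominates the residual $\langle t\rangle^{5/2+2\sigma_0}\langle\xi\rangle^{-2}$ that remains after converting the compound weight into $A_{NR}^2$ times polynomials.
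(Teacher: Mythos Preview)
Your treatment of the three $P'$-terms is fine: the super-polynomial decay $A_1(t,t)^{-1}$ indeed absorbs everything. The reduction from $g$ to $f$ is also essentially correct in spirit, although the precise multiplier bound you need is not \eqref{TLX4}--\eqref{DtVMulti} but rather an inequality for the \emph{inverse} ratio $(|\dot A_{NR}|/A_{NR})^{-1}$; this is exactly the content of the bounds \eqref{yar49}--\eqref{yar50} (cited from \cite[Lemma 8.8]{IOJI} in the paper), and your invocation of \eqref{DtVMulti} alone would not give it.

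The genuine gap is in your treatment of the two quadratic terms $\langle\partial_z\phi\,\partial_vF\rangle$ and $\langle\partial_v\mathbb{P}_{\neq 0}\phi\,\partial_zF\rangle$ for the \emph{second line} of \eqref{yar24}. When you replace $|\dot A_{NR}|^{-1}A_{NR}$ by the polynomial bound $\langle t\rangle^{1+\sigma_0}\langle\xi\rangle^{-1/2}$, you discard the resonant part $\partial_t w_{NR}/w_{NR}$ of $|\dot A_{NR}/A_{NR}|$, and with it the only mechanism for placing a $\dot A$ factor on one of the two inputs. After your conversion, both the $\phi$-factor and the $F$-factor must be controlled by the $L^\infty_s$ norms $\mathcal{E}_\phi$ and $\mathcal{E}_F$, and the resulting bound at each fixed time is $\lesssim_\delta\eps_1^4$; integrating this over $s\in[1,t]$ diverges. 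Your claimed cure---that the factor $(|(\xi-\eta)/k|^2+\langle t\rangle^2)^{-1}$ in the $\phi$-weight supplies extra $t$-decay---does not work: that factor sits in the \emph{denominator} of the weight whose product with $\phi$ is controlled, so it cannot be extracted as independent decay. At the resonant frequency $\eta\approx kt$ the $\phi$-weight is of order $k^2$ with no growth in $t$ to spare.

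The paper's resolution is precisely to \emph{not} collapse $|\dot A_{NR}|^{-1/2}A_{NR}^{3/2}$ to polynomials. Instead it invokes the multiplier bound \eqref{yar30} (from \cite[Lemma 8.7]{IOJI}), which factors the full compound weight $\frac{A_{NR}^{3/2}}{|\dot A_{NR}|^{1/2}}\frac{\langle t\rangle^{3/4}}{\langle\xi\rangle^{3/4}}$ across the convolution while retaining a factor $|(\dot A_k/A_k)(t,\eta)|^{1/2}+|(\dot A_{-k}/A_{-k})(t,\rho)|^{1/2}$ on the right-hand side. This allows one of the two inputs to be placed in $\mathcal{B}_\phi$ or $\mathcal{B}_F$, i.e.\ in an $L^2_sL^2$ norm, while the other stays in $L^\infty_sL^2$ via $\mathcal{E}_\phi$ or $\mathcal{E}_F$; this is what closes the time integral. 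The companion bound \eqref{yar29} handles the first line of \eqref{yar24} by the same mechanism without the $\dot A$ transfer. You should replace your polynomial conversion by these two bilinear weight inequalities.
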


\begin{proof}{\bf{Step 1.}} We prove first the bounds for the functions $f\in\mathcal{S}$. Take, for example, $f=\langle\partial_v\mathbb{P}_{\neq0}\phi\partial_zF\rangle$, and recall the support assumption on $F$. Thus
\begin{equation*}
\begin{split}
f(t,v)&=\frac{1}{2\pi}\int_{\mathbb{T}}\partial_v\mathbb{P}_{\neq0}\phi(t,z,v)\partial_zF(t,z,v)\,dz\\
&=C\sum_{k\in\mathbb{Z}}\int_{\R^2}e^{iv(\rho+\eta)}\widetilde{\partial_v\mathbb{P}_{\neq0}(\Psi\phi)}(t,k,\rho)\widetilde{\partial_zF}(t,-k,\eta)\,d\rho d\eta.
\end{split}
\end{equation*}
Therefore
\begin{equation}\label{yar25}
\widetilde{f}(t,\xi)=C\sum_{k\in\mathbb{Z}\setminus\{0\}}\int_{\R}k\eta\,\widetilde{\mathbb{P}_{\neq0}(\Psi\phi)}(t,k,\eta)\widetilde{F}(t,-k,\xi-\eta)\,d\eta.
\end{equation}

For any $t\in[0,T]$, $k\in\Z\setminus\{0\}$, and $\xi,\eta\in\R$, $\rho=\xi-\eta$, we have the multiplier bounds
\begin{equation}\label{yar29}
\begin{split}
\frac{A^2_{NR}(t,\xi)}{|\dot{A}_{NR}(t,\xi)|}&\frac{\langle t\rangle^{3/4}}{\langle\xi\rangle^{3/4}}(\langle\eta\rangle+\langle\rho\rangle)\lesssim_\delta A_k(t,\eta)\frac{\langle t\rangle\langle t-\eta/k\rangle^2}{\langle t\rangle+|\eta/k|}A_{-k}(t,\rho)\{\langle\rho\rangle^{-4}+\langle\eta\rangle^{-4}\}
\end{split}
\end{equation}
and
\begin{equation}\label{yar30}
\begin{split}
\frac{A^{3/2}_{NR}(t,\xi)}{|\dot{A}_{NR}(t,\xi)|^{1/2}}&\frac{\langle t\rangle^{3/4}}{\langle\xi\rangle^{3/4}}(\langle\eta\rangle+\langle\rho\rangle)\lesssim_\delta \big[|(\dot{A}_k/A_k)(t,\eta)|^{1/2}+|(\dot{A}_{-k}/A_{-k})(t,\rho)|^{1/2}\big]\\
&\times A_k(t,\eta)\frac{\langle t\rangle\langle t-\eta/k\rangle^2}{\langle t\rangle+|\eta/k|}A_{-k}(t,\rho)\{\langle\rho\rangle^{-4}+\langle\eta\rangle^{-4}\}.
\end{split}
\end{equation}
These estimates follow from Lemma 8.7 in \cite{IOJI}.

As before, the estimates \eqref{yar24} follow from the multiplier bounds \eqref{yar29}--\eqref{yar30}. Indeed, to prove the harder bounds in the second line we estimate first
\begin{equation*}
\begin{split}
&\Big\{\int_1^t\int_{\R}|\dot{A}_{NR}(s,\xi)|^{-1}A^3_{NR}(s,\xi)\big(\langle s\rangle^{3/2}\langle\xi\rangle^{-3/2}\big)|\widetilde{f}(s,\xi)|^2\,d\xi ds\Big\}^{1/2}\\
&\lesssim\sup_{\|P\|_{L^2([1,t]\times\R)}=1}\int_1^t\int_{\R}|P(s,\xi)||\dot{A}_{NR}(s,\xi)|^{-1/2}A^{3/2}_{NR}(s,\xi)\big(\langle s\rangle^{3/4}\langle\xi\rangle^{-3/4}\big)|\widetilde{f}(s,\xi)|\,d\xi ds.
\end{split}
\end{equation*}
Using now \eqref{yar25} and \eqref{yar30}, the right-hand side of the expression above is bounded by
\begin{equation}\label{yar31}
\begin{split}
&C_\delta\int_1^t\int_\R\int_\R \sum_{k\in\Z\setminus\{0\}}\big\{|P(s,\eta+\rho)|\big[|(\dot{A}_k/A_k)(s,\eta)|^{1/2}+|(\dot{A}_{-k}/A_{-k})(s,\rho)|^{1/2}\big]\\
&\times A_k(s,\eta)\frac{\langle s\rangle|k|\langle s-\eta/k\rangle^2}{\langle s\rangle+|\eta/k|}A_{-k}(s,\rho)\{\langle\rho\rangle^{-4}+\langle\eta\rangle^{-4}\}|\widetilde{\mathbb{P}_{\neq0}(\Psi\phi)}(s,k,\eta)||\widetilde{F}(s,-k,\rho)|\big\}\,d\eta d\rho ds.
\end{split}
\end{equation}
We integrate first the variables $\eta$ and $\rho$. For any $k\in\Z$ and $t\in[1,T]$ let
\begin{equation*}
\begin{split}
\widetilde{F}^\#(t,k)&:=\Big\{\int_{\R}A^2_k(t,\xi)|\widetilde{F}(t,k,\xi)|^2\,d\xi\Big\}^{1/2},\\
\widetilde{F}^{\#\#}(t,k)&:=\Big\{\int_{\R}|\dot{A}_k(t,\xi)|A_k(t,\xi)|\widetilde{F}(t,k,\xi)|^2\,d\xi\Big\}^{1/2}.
\end{split}
\end{equation*}
Similarly, for any $k\in\Z\setminus\{0\}$ and $t\in[1,T]$ let
\begin{equation*}
\begin{split}
\widetilde{\Theta}^\#(t,k)&:=\Big\{\int_{\R}A^2_k(t,\xi)\frac{|k|^4\langle t\rangle^2\langle t-\xi/k\rangle^4}{|\xi/k|^2+\langle t\rangle^2}|\widetilde{\mathbb{P}_{\neq0}(\Psi\phi)}(t,k,\xi)|^2\,d\xi\Big\}^{1/2},\\
\widetilde{\Theta}^{\#\#}(t,k)&:=\Big\{\int_{\R}|\dot{A}_k(t,\xi)|A_k(t,\xi)\frac{|k|^4\langle t\rangle^2\langle t-\xi/k\rangle^4}{|\xi/k|^2+\langle t\rangle^2}|\widetilde{\mathbb{P}_{\neq0}(\Psi\phi)}(t,k,\xi)|^2\,d\xi\Big\}^{1/2}.
\end{split}
\end{equation*}
Letting also $P^\#(s):=\|P(s,\xi)\|_{L^2_\xi}$, the expression in \eqref{yar31} is bounded by
\begin{equation*}
\begin{split}
C_\delta\int_1^t \sum_{k\in\Z\setminus\{0\}}&\big\{P^\#(s)\widetilde{F}^\#(s,-k)\widetilde{\Theta}^{\#\#}(s,k)+P^\#(s)\widetilde{F}^{\#\#}(s,-k)\widetilde{\Theta}^{\#}(s,k)\big\}ds\\
&\lesssim_\delta \|P^\#\|_{L^2_s}\|\widetilde{F}^\#\|_{L^\infty_sL^2_k}\|\widetilde{\Theta}^{\#\#}\|_{L^2_sL^2_k}+\|P^\#\|_{L^2_s}\|\widetilde{f}^{\#\#}\|_{L^2_sL^2_k}\|\widetilde{\Theta}^{\#}\|_{L^\infty_sL^2_k}.
\end{split}
\end{equation*}
The desired bounds in the second line of \eqref{yar24} follow since $\|\widetilde{F}^\#\|_{L^\infty_sL^2_k}+\|\widetilde{F}^{\#\#}\|_{L^2_sL^2_k}+\|\widetilde{\Theta}^{\#}\|_{L^\infty_sL^2_k}+\|\widetilde{\Theta}^{\#\#}\|_{L^2_sL^2_k}\lesssim\eps_1$, as a consequence of the bootstrap assumptions on $F$ and $\phi$. The bounds in the first line follow in a similar (in fact slightly easier) way from the multiplier bounds \eqref{yar29}.

The proof is similar when $f=\langle \partial_z\phi\partial_vF\rangle$ (one just has to replace $k\eta$ with $k(\xi-\eta)$ in \eqref{yar25}). On the other hand, if $f\in\{\langle P'_v\partial_vF\rangle,\,t\langle P'_z\,\partial_zF\rangle,\,t\langle P'_v\,\partial_zF\rangle\}$ then we use the stronger bounds \eqref{Par1.7} together with \eqref{yar29}--\eqref{yar30}, and estimate the functions in a similar way.

{\bf{Step 2.}} We consider now multiplication by functions in the space $R$. In view of Lemma \ref{Multi0} (i) and \eqref{nar4}, it suffices to prove the multiplier estimates
\begin{equation}\label{yar49}
\frac{A^2_{NR}(t,\xi)}{|\dot{A}_{NR}(t,\xi)|}\frac{\langle t\rangle^{3/4}}{\langle\xi\rangle^{3/4}}\lesssim_\delta \frac{A^2_{NR}(t,\eta)}{|\dot{A}_{NR}(t,\eta)|}\frac{\langle t\rangle^{3/4}}{\langle\eta\rangle^{3/4}}A_R(t,\xi-\eta)\{\langle\xi-\eta\rangle^{-2}+\langle\eta\rangle^{-2}\}
\end{equation}
and
\begin{equation}\label{yar50}
\begin{split}
\frac{A^{3/2}_{NR}(t,\xi)}{|\dot{A}_{NR}(t,\xi)|^{1/2}}\frac{\langle t\rangle^{3/4}}{\langle\xi\rangle^{3/4}}&\lesssim_\delta \big[|(\dot{A}_{NR}/A_{NR})(t,\eta)|^{1/2}+|(\dot{A}_R/A_R)(t,\xi-\eta)|^{1/2}\big]\\
&\times \frac{A^2_{NR}(t,\eta)}{|\dot{A}_{NR}(t,\eta)|}\frac{\langle t\rangle^{3/4}}{\langle\eta\rangle^{3/4}}A_R(t,\xi-\eta)\{\langle\xi-\eta\rangle^{-2}+\langle\eta\rangle^{-2}\}.
\end{split}
\end{equation}
The first bound follows from \cite[Lemma 8.8]{IOJI}, while the second inequality follows using also \eqref{DtVMulti}. This completes the proof of the lemma.
\end{proof}

\section{Bootstrap estimates, I: improved control of  the variables $F$, $V_\ast$, $\varrho_\ast$, $W_\ast$}\label{fimprov}

In this section we use the evolution equations \eqref{Pef}--\eqref{PeH} and energy estimates to prove the main bounds \eqref{boot3} for the bootstrap variables $F$, $V_\ast$, $\varrho_\ast$, and $W_\ast$. 

\subsection{The normalized vorticity $F$} We prove the following:

\begin{proposition}\label{BootImp1}
With the definitions and assumptions in Proposition \ref{MainBootstrap}, we have
\begin{equation}\label{nar1}
\mathcal{E}_F(t)+\mathcal{B}_F(t)\lesssim_\delta\eps_1^3\qquad\text{ for any }t\in[1,T].
\end{equation}
\end{proposition}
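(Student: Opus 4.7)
The plan is a weighted energy estimate driven by the evolution equation \eqref{Pef}. Differentiating $\mathcal{E}_F(t)$ in time one obtains
\[
\frac{d}{dt}\mathcal{E}_F(t)=2\sum_{k\in\Z}\int_\R\dot A_k A_k\,|\widetilde F|^2\,d\xi+2\,\Re\sum_{k\in\Z}\int_\R A_k^2\,\widetilde F\,\overline{\widetilde{\partial_t F}}\,d\xi.
\]
Since the weights $A_k$ are non-increasing in $t$, integrating the first term over $[1,t]$ produces $-2\mathcal{B}_F(t)$, which is the good dissipation we keep. Substituting $\partial_t F$ from \eqref{Pef} and using $\mathcal{E}_F(1)\lesssim\eps_1^3$ (which follows from the continuity of $\mathcal{E}_F$ together with \eqref{boot1}), the goal reduces to bounding each of the five space--time trilinear forms coming from the right-hand side of \eqref{Pef} by $C_\delta\eps_1^3+\tfrac{1}{4}\mathcal{B}_F(t)$; the $\mathcal{B}_F$ contribution is then absorbed into the dissipation and $\eps_1$ is chosen small enough depending on $\delta$.

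For the transport term $\dot V\,\partial_v F$ I would use the standard symmetrization $\xi\leftrightarrow\eta$ on the convolution to convert the contribution into a commutator of $A_k^2$ plus a derivative falling on $\dot V$. Lemma \ref{Multi0}(ii), together with the bilinear weight bound \eqref{DtVMulti}, reduces the resulting expression to a Cauchy--Schwarz pairing of $|\dot A_k A_k|^{1/2}\widetilde F$ against a factor controlled by $\langle\xi\rangle\langle t\rangle\,A_{NR}\widetilde{\dot V}$; the key input is the dissipation bound
\[
\int_1^T\!\!\int_\R|\dot A_{NR}|A_{NR}\,\langle\xi\rangle^2\langle s\rangle^2\,|\widetilde{\dot V}|^2\,d\xi\,ds\lesssim_\delta\eps_1^2
\]
from Lemma \ref{nar8ext}, which closes the estimate against $\mathcal{B}_F(t)^{1/2}$.

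For the nonlinear transport $\varrho V'\,\partial_z\phi\,\partial_v F$ and the two point-vortex terms $P'_v V'(\partial_v-t\partial_z)F$, $\varrho P'_z\,\partial_z F$, I would apply Lemma \ref{Multi0}(ii) with the multiplier bounds \eqref{nar34.01}--\eqref{nar34.02} (for the $\phi$-term) and \eqref{rew19}--\eqref{rew19.1} (for the $P'$-terms). In each case the ``coefficient'' factor in front is controlled in the strong weighted norm by Lemma \ref{nar13}(i) or by the estimate \eqref{Par1} of Lemma \ref{P'bounnds}, giving $\lesssim_\delta\eps_1$, while the remaining derivative of $F$ is absorbed into $\mathcal{B}_F(t)^{1/2}$. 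The extra factor $t$ in \eqref{Par1} is exactly what is needed to match the $t\partial_z$ derivative appearing in the point-vortex term, while the Gevrey decay \eqref{Pboot4} supplies an absolute small factor.

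The main obstacle is the reaction term $\varrho V'\,\partial_v\mathbb{P}_{\neq0}\phi\,\partial_z F$, in which the output weight $A_k(t,\xi)$ may be significantly larger than the weight $A_\ell(t,\eta)$ carried by $\partial_z F$ near a resonant time. As in the Couette analyses of \cite{BM,IOJI}, the imbalanced design of the weights $b_k$ in \eqref{reb12} is tailored to compensate exactly for this loss: after a paraproduct decomposition into low--high, high--low and low--low pieces, the bilinear weight estimates \eqref{TLX7}--\eqref{vfc30.7} allow one to redistribute a factor $[|\dot A_k/A_k|(t,\eta)\,|\dot A_R/A_R|(t,\xi-\eta)]^{1/2}$ between the two input factors. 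The resulting contribution splits, via Cauchy--Schwarz and Lemma \ref{Multi0}(ii), as
\[
C_\delta\,\mathcal{B}_\phi(t)^{1/2}\,\mathcal{B}_F(t)^{1/2}\cdot\sup_{s\in[1,t]}\|\Psi^\dagger\varrho V'\|_R,
\]
which by Lemma \ref{nar8} and the bootstrap hypothesis \eqref{boot2} is $\le C_\delta\eps_1\cdot\eps_1^2=C_\delta\eps_1^3$. Combining the four estimates and absorbing $\tfrac{1}{2}\mathcal{B}_F(t)$ into the left-hand side of the energy identity yields \eqref{nar1}.
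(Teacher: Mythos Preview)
Your overall scheme (energy identity, use of the dissipation, and the bounds from Lemmas \ref{nar8}, \ref{nar13}, \ref{nar8ext}, \ref{P'bounnds}) is the right framework, but the handling of every nonlinear term except $\dot V\partial_vF$ has a genuine gap: you only symmetrize once, whereas the paper symmetrizes \emph{all} of $\mathcal{N}_1,\mathcal{N}_2,\mathcal{N}_3,\mathcal{N}_4$. For the transport piece $\varrho V'\partial_z\phi\,\partial_vF$ and for $V'P'_v\partial_vF$, a direct application of Lemma \ref{Multi0}(ii) with the multiplier bounds \eqref{nar34.01}--\eqref{nar34.02} or \eqref{rew19}--\eqref{rew19.1} cannot close: when $(\ell,\eta)$ carries the high frequency, the factor $|\eta|$ coming from $\partial_vF$ is a full derivative loss that no available weight absorbs (the $\mathcal{B}_F$ norm has only $|\dot A_kA_k|^{1/2}$, not an extra $\langle\eta\rangle$). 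The paper instead writes the contribution as
\[
C\int_1^t\sum_{k,\ell}\int_{\R^2}\big[\eta A_k^2(s,\xi)-\xi A_\ell^2(s,\eta)\big]\,\widetilde{h_1}(s,k-\ell,\xi-\eta)\,\widetilde F(s,\ell,\eta)\,\overline{\widetilde F(s,k,\xi)}\,d\xi d\eta ds
\]
and invokes the commutator bounds \eqref{TLXH3.1}--\eqref{TLXH3.2}; the symmetrized difference $\eta A_k^2-\xi A_\ell^2$ is what recovers the lost derivative. The same applies to $\mathcal{N}_4$ (see \eqref{zar36.55}).

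For the reaction term $\mathcal{N}_1=-\varrho V'\partial_v\mathbb{P}_{\neq 0}\phi\,\partial_zF$ your argument is also incomplete. The estimates \eqref{TLX7}--\eqref{vfc30.7} you cite compare $A_k(t,\xi)$ with $A_R(t,\xi-\eta)A_k(t,\eta)$, i.e.\ they are tailored to multiplication by a function of $v$ only; here the coefficient $h_2=\varrho V'\partial_v\mathbb{P}_{\neq 0}(\Psi\phi)$ carries a nonzero $z$-mode, so one must transition $A_k\to A_\ell$ with $k\neq\ell$. The paper again symmetrizes to $\ell A_k^2(s,\xi)-kA_\ell^2(s,\eta)$ (see \eqref{exN1}) and then uses the specialized imbalanced-weight bounds \eqref{TLXH1.1}--\eqref{TLXH1.2} from \cite[Lemma 8.4]{IOJI}, which are precisely designed for this commutator and pair it against the $\mathcal{E}_\phi$/$\mathcal{B}_\phi$ weight on $h_2$. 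Your displayed estimate $C_\delta\,\mathcal{B}_\phi^{1/2}\mathcal{B}_F^{1/2}\|\Psi^\dagger\varrho V'\|_R$ also miscounts the factors: the trilinear form has two copies of $F$ and one of $h_2$, so the correct product is of the type $\mathcal{B}_F\cdot\mathcal{E}_\phi^{1/2}\lesssim\eps_1^2\cdot\eps_1$ (or symmetric variants), not $\mathcal{B}_\phi^{1/2}\mathcal{B}_F^{1/2}\cdot O(1)$.
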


\begin{proof} {\bf{Step 1.}} We calculate
\begin{equation}\label{nar1.5}
\begin{split}
\frac{d}{dt}\E_F(t)=&\sum_{k\in \mathbb{Z}}\int_\R 2\dot{A}_k(t,\xi)A_k(t,\xi)\big|\widetilde{F}(t,k,\xi)\big|^2\,d\xi\\
&+2\Re\sum_{k\in \mathbb{Z}}\int_{\R}A_k^2(t,\xi)\partial_t\widetilde{F}(t,k,\xi)\overline{\widetilde{F}(t,k,\xi)}\,d\xi.
\end{split}
\end{equation}
Therefore, since $\partial_tA_k\leq 0$, for any $t\in[1,T]$ we have
\begin{equation*}
\begin{split}
&\E_F(t)+\int_1^t\sum_{k\in \mathbb{Z}}\int_\R 2|\dot{A}_k(s,\xi)|A_k(s,\xi)\big|\widetilde{F}(s,k,\xi)\big|^2\,d\xi ds\\
&=\E_F(1)+\int_1^t\Big\{2\Re\sum_{k\in \mathbb{Z}}\int_{\R}A_k^2(s,\xi)\partial_s\widetilde{F}(s,k,\xi)\overline{\widetilde{F}(s,k,\xi)}\,d\xi\Big\}ds.
\end{split}
\end{equation*}
Since $\E_F(1)\lesssim\eps_1^3$ (see \eqref{boot1}), for \eqref{nar1} it suffices to prove that, for any $t\in[0,T]$.
\begin{equation}\label{nar2}
\Big|2\Re\int_1^t\sum_{k\in \mathbb{Z}}\int_{\R}A_k^2(s,\xi)\partial_s\widetilde{F}(s,k,\xi)\overline{\widetilde{F}(s,k,\xi)}\,d\xi ds\Big|\lesssim_\delta \eps_1^3.
\end{equation}

We examine now the space-time integrals in the left-hand side of \eqref{nar2}, and use the identity \eqref{Pef}. Therefore, recalling the support property of $F$,
\begin{equation}\label{nar8.1}
\begin{split}
&\partial_sF=\mathcal{N}_1+\mathcal{N}_2+\mathcal{N}_3+\mathcal{N}_4,\\
&\mathcal{N}_1:=-\varrho V'\partial_vP_{\neq 0}(\Psi\phi)\,\partial_zF,\qquad \mathcal{N}_2:=\varrho V'\partial_z(\Psi\phi)\partial_vF,\\
&\mathcal{N}_3:=-\dot{V}\,\partial_vF,\qquad \mathcal{N}_4:=V'P'_v\,(\partial_v-t\partial_z)F+\varrho P'_z\,\partial_zF.
\end{split}
\end{equation}

{\bf{Step 2.}} We consider now the contributions of the terms $\mathcal{N}_1$, $\mathcal{N}_2$, and $\mathcal{N}_3$. The estimates are similar to the estimates of the corresponding terms in \cite[Lemmas 4.4, 4.6, and 4.8]{IOJI}. 

{\bf{Substep 2.1.}} We provide all the details only for the term $\mathcal{N}_1$. For any $t\in[1,T]$ we will prove that
\begin{equation}\label{nar11}
\Big|2\Re\int_1^t\sum_{k\in \mathbb{Z}}\int_{\R}A_k^2(s,\xi)\widetilde{\mathcal{N}_1}(s,k,\xi)\overline{\widetilde{F}(s,k,\xi)}\,d\xi ds\Big|\lesssim_\delta \eps_1^3.
\end{equation}

With $h_2=\varrho V'\partial_v\mathbb{P}_{\neq 0}(\Psi\phi)$ as in the proof of Lemma \ref{nar13}, we have
\begin{equation}\label{exN1}
\begin{split}
&\Big|2\Re\int_1^t\sum_{k\in \mathbb{Z}}\int_{\R}A_k^2(s,\xi)\widetilde{\mathcal{N}_1}(s,k,\xi)\overline{\widetilde{F}(s,k,\xi)}\,d\xi ds\Big|\\
&=C\Big|2\Re\Big\{\sum_{k,\ell\in \mathbb{Z}}\int_1^t\int_{\R^2}A_k^2(s,\xi)\widetilde{h_2}(s,k-\ell,\xi-\eta)i\ell\widetilde{F}(s,\ell,\eta)\overline{\widetilde{F}(s,k,\xi)}\,d\xi d\eta ds\Big\}\Big|\\
&=C\Big|\int_1^t\sum_{k,\ell\in \mathbb{Z}}\int_{\R^2}\big[\ell A_k^2(s,\xi)-k A_\ell^2(s,\eta)\big]\widetilde{h_2}(s,k-\ell,\xi-\eta)\widetilde{F}(s,\ell,\eta)\overline{\widetilde{F}(s,k,\xi)}\,d\xi d\eta ds\Big|,
\end{split}
\end{equation}
where the second identity is proved by symmetrization (recall that $h_2$ is real-valued).

We define the sets
\begin{equation}\label{nar18.1}
\begin{split}
R_0:=\Big\{&((k,\xi),(\ell,\eta))\in (\Z\times \R)^2:\\
&\min(\langle k,\xi\rangle,\,\langle\ell,\eta\rangle,\,\langle k-\ell,\xi-\eta\rangle)\geq \frac{\langle k,\xi\rangle+\langle\ell,\eta\rangle+\langle k-\ell,\xi-\eta\rangle}{20}\Big\},\\
\end{split}
\end{equation}
\begin{equation}\label{nar18.2}
R_1:=\Big\{((k,\xi),(\ell,\eta))\in (\Z\times \R)^2:\,\langle k-\ell,\xi-\eta\rangle\leq \frac{\langle k,\xi\rangle+\langle\ell,\eta\rangle+\langle k-\ell,\xi-\eta\rangle}{10}\Big\},
\end{equation}
\begin{equation}\label{nar18.3}
R_2:=\Big\{((k,\xi),(\ell,\eta))\in (\Z\times \R)^2:\,\langle\ell,\eta\rangle\leq \frac{\langle k,\xi\rangle+\langle\ell,\eta\rangle+\langle k-\ell,\xi-\eta\rangle}{10}\Big\},
\end{equation}
\begin{equation}\label{nar18.4}
R_3:=\Big\{((k,\xi),(\ell,\eta))\in (\Z\times \R)^2:\,\langle k,\xi\rangle\leq \frac{\langle k,\xi\rangle+\langle\ell,\eta\rangle+\langle k-\ell,\xi-\eta\rangle}{10}\Big\}.
\end{equation}
Then we define the corresponding integrals for $a\in\{0,1,2,3\}$,
\begin{equation}\label{nar19}
\begin{split}
\mathcal{U}_a:=\int_1^t\sum_{k,\ell\in \mathbb{Z}}\int_{\R^2}&\mathbf{1}_{R_a}((k,\xi),(\ell,\eta))\big|\ell A_k^2(s,\xi)-k A_\ell^2(s,\eta)\big|\,|\widetilde{h_2}(s,k-\ell,\xi-\eta)|\\
&\times|\widetilde{F}(s,\ell,\eta)|\,|\widetilde{F}(s,k,\xi)|\,d\xi d\eta ds.
\end{split}
\end{equation}

To bound the integrals $\mathcal{U}_a$ we use estimates on the weights. More precisely, letting $(m,\rho):=(k-\ell,\xi-\eta)$, $\delta'_0:=\delta_0/200$, and assuming that $m\neq0$ we have the following bounds:

$\bullet\,\,$ If $((k,\xi),(\ell,\eta))\in R_0\cup R_1$, then
\begin{equation}\label{TLXH1.1}
\begin{split}
\frac{(|\rho/m|+\langle t \rangle)\langle \rho \rangle}{\langle t\rangle m^2\langle t-\rho/m \rangle^2}&\big|\ell A_k^2(t,\xi)-kA_{\ell}^2(s,\eta)\big|\lesssim_{\delta}\sqrt{|A_k\dot{A}_k(t,\xi)|}\,\sqrt{|A_{\ell}\dot{A}_{\ell}(t,\eta)|}\,A_{m}(t,\rho) \,e^{-\delta'_0\langle m,\rho \rangle^{1/2}}.
\end{split}
\end{equation}

$\bullet\,\,$ If $((k,\xi),(\ell,\eta))\in R_2$, then
\begin{equation}\label{TLXH1.2}
\begin{split}
\frac{(|\rho/m|+\langle t \rangle)\langle \rho \rangle}{\langle t\rangle m^2\langle t-\rho/m \rangle^2}&\big|\ell A_k^2(t,\xi)-kA_{\ell}^2(s,\eta)\big|\lesssim_{\delta}\sqrt{|A_k\dot{A}_k(t,\xi)|}\,\sqrt{|A_{m}\dot{A}_{m}(t,\rho)|}\,A_{\ell}(t,\eta) \,e^{-\delta'_0\langle \ell,\eta \rangle^{1/2}}.\\
\end{split}
\end{equation}

See \cite[Lemma 8.4]{IOJI} for the proof.

To bound $\mathcal{U}_a$, $a=0,1$, we use \eqref{TLXH1.1}. We remark that $\widetilde{h_2}(t,0,\xi)\equiv0$. Denote $(m,\rho)=(k-\ell,\xi-\eta)$. We can then bound, using \eqref{nar14} and the bootstrapping bounds (\ref{boot2})
\begin{equation*}
\begin{split}
\mathcal{U}_a&\lesssim_{\delta}\int_1^t\sum_{k,\ell\in \mathbb{Z}}\int_{\R^2}\sqrt{|A_k\dot{A}_k(s,\xi)|}\,\big|\widetilde{F}(s,k,\xi)\big|\sqrt{|A_{\ell}\dot{A}_{\ell}(s,\eta)|}\,\big|\widetilde{F}(s,\ell,\eta)\big|\\
&\qquad\times\, \mathbf{1}_{\Z^\ast}(m)\frac{\langle s\rangle m^2\langle s-\rho/m \rangle^2}{(|\rho/m|+\langle s \rangle)\langle \rho \rangle}A_{m}(s,\rho)\big|\widetilde{h_2}(s,m,\rho)\big|e^{-(\delta_0/200)\langle m,\rho\rangle^{1/2}}\,d\xi d\eta ds\\
&\lesssim_{\delta} \Big\|\sqrt{|A_k\dot{A}_k(s,\xi)|}\,\widetilde{F}(s,k,\xi)\Big\|_{L^2_{s}L^2_{k,\xi}}\Big\|\sqrt{|A_{\ell}\dot{A}_{\ell}(s,\eta)|}\,\widetilde{F}(s,\ell,\eta)\Big\|_{L^2_sL^2_{\ell,\eta}}\\
&\qquad\times \Big\|\mathbf{1}_{\Z^\ast}(m)\frac{\langle s\rangle m^2\langle s-\rho/m \rangle^2}{(|\rho/m|+\langle s \rangle)\langle \rho \rangle}A_{m}(s,\rho)e^{-(\delta_0/300)\langle m,\rho\rangle^{1/2}}\widetilde{h_2}(s,m,\rho)\Big\|_{L^{\infty}_sL^2_{m,\rho}}\\
&\lesssim_{\delta}\epsilon_1^3.
\end{split}
\end{equation*}

Similarly, for $a=2$ we use \eqref{TLXH1.2}, \eqref{nar14}, and the bootstrapping bounds (\ref{boot2}), 
\begin{equation*}
\begin{split}
\mathcal{U}_2&\lesssim_{\delta}\int_1^t\sum_{k,\ell\in \mathbb{Z}}\int_{\R^2}\mathbf{1}_{\Z^\ast}(m)\sqrt{|A_{m}\dot{A}_{m}(s,\rho)|}\frac{\langle s\rangle m^2\langle s-\rho/m \rangle^2}{(|\rho/ m|+\langle s \rangle)\langle \rho \rangle}\big|\widetilde{h_2}(s,m,\rho)\big|\\
&\qquad\times\sqrt{|A_k\dot{A}_k(s,\xi)|}\,\big|\widetilde{F}(s,k,\xi)\big|A_{\ell}(s,\eta) e^{-(\delta_0/200)\langle \ell,\eta\rangle^{1/2}}|\widetilde{F}(s,\ell,\eta)|\,d\xi d\eta ds\\
&\lesssim_{\delta} \Big\|\sqrt{|A_k\dot{A}_k(s,\xi)|}\,\widetilde{F}(s,k,\xi)\Big\|_{L^2_{s}L^2_{k,\xi}}\Big\|A_{\ell}(s,\eta)\,e^{-(\delta_0/300)\langle \ell,\eta\rangle^{1/2}}\widetilde{F}(s,\ell,\eta)\Big\|_{L^{\infty}_sL^2_{\ell,\eta}}\\
&\qquad\times \Big\|\mathbf{1}_{\Z^\ast}(m)\sqrt{|A_{m}\dot{A}_{m}(s,\rho)|}\frac{\langle s\rangle m^2\langle s-\rho/m \rangle^2}{(|\rho/ m|+\langle s \rangle)\langle \rho \rangle}\big|\widetilde{h_2}(s,m,\rho)\Big\|_{L^{2}_sL^2_{m,\rho}}\\
&\lesssim_{\delta}\epsilon_1^3.
\end{split}
\end{equation*}
By changes of variables one can prove that $\mathcal{U}_3\lesssim_\delta\eps_1^3$ as well, and the bounds \eqref{nar11} follow.
\smallskip

{\bf{Substep 2.2.}} The contributions of the nonlinearities $\mathcal{N}_2$ and $\mathcal{N}_3$ can be bounded by the same general procedure: energy estimates and symmetrization, bounds on the weights, and $L^2_sL^2\times L^2_sL^2\times L^\infty_sL^2$ estimates to control the space-time integrals. See \cite[Lemmas 4.6, and 4.8]{IOJI} for details; here we summarize only the bounds on the weights we use.

To control $\mathcal{N}_2$ we recall the sets $R_0,R_1,R_2,R_3$ defined in (\ref{nar18.1})-(\ref{nar18.4}). Denote $(m,\rho):=(k-\ell,\xi-\eta)$, $\delta'_0:=\delta_0/200$. Suppose that $m\neq0$.

$\bullet\,\,$ If $((k,\xi),(\ell,\eta))\in R_0\cup R_1$, then
\begin{equation}\label{TLXH3.1}
\begin{split}
\frac{|\rho/m|^2+\langle t \rangle^2}{|m|\langle t\rangle^2\langle t-\rho/m \rangle^2}&\big|\eta A_k^2(t,\xi)-\xi A_{\ell}^2(s,\eta)\big|\lesssim_{\delta}\sqrt{|A_k\dot{A}_k(t,\xi)|}\,\sqrt{|A_{\ell}\dot{A}_{\ell}(t,\eta)|}\,A_{m}(t,\rho) \,e^{-\delta'_0\langle m,\rho \rangle^{1/2}}.
\end{split}
\end{equation}

$\bullet\,\,$ If $((k,\xi),(\ell,\eta))\in R_2$, then
\begin{equation}\label{TLXH3.2}
\begin{split}
\frac{|\rho/m|^2+\langle t \rangle^2}{|m|\langle t\rangle^2\langle t-\rho/m \rangle^2}&\big|\eta A_k^2(t,\xi)-\xi A_{\ell}^2(s,\eta)\big|\lesssim_{\delta}\sqrt{|A_k\dot{A}_k(t,\xi)|}\,\sqrt{|A_{m}\dot{A}_{m}(t,\rho)|}\,A_{\ell}(t,\eta) \,e^{-\delta'_0\langle \ell,\eta \rangle^{1/2}}.\\
\end{split}
\end{equation}

See \cite[Lemma 8.5]{IOJI} for the proof.

To control $\mathcal{N}_3$ we define the sets $R_j^\ast$, $j=\{0,1,2,3\}$, by
\begin{equation}\label{nar19.1}
R_j^\ast:=\big\{((k,\xi),(l,\eta))\in R_j:\,k=l\big\},
\end{equation}
where $R_j$ are as in \eqref{nar18.1}--\eqref{nar18.4}. Denote $\rho:=\xi-\eta$.

$\bullet\,\,$ If $((k,\xi),(k,\eta))\in R^\ast_0\cup R^\ast_1$, then
\begin{equation}\label{TLXH2.1}
\begin{split}
\frac{\big|\eta A_k^2(t,\xi)-\xi A_{k}^2(s,\eta)\big|}{\langle\rho\rangle\langle t\rangle+\langle \rho\rangle^{1/4}\langle t\rangle^{7/4}}\lesssim_{\delta}\sqrt{|A_k\dot{A}_k(t,\xi)|}\,\sqrt{|A_{k}\dot{A}_{k}(t,\eta)|}\,A_{NR}(t,\rho) \,e^{-\delta'_0\langle \rho \rangle^{1/2}}.
\end{split}
\end{equation}

$\bullet\,\,$ If $((k,\xi),(k,\eta))\in R^\ast_2$, then
\begin{equation}\label{TLXH2.2}
\begin{split}
\frac{\big|\eta A_k^2(t,\xi)-\xi A_{k}^2(t,\eta)\big|}{\langle\rho\rangle\langle t\rangle+\langle \rho\rangle^{1/4}\langle t\rangle^{7/4}}\lesssim_{\delta}\sqrt{|A_k\dot{A}_k(t,\xi)|}\,\sqrt{|A_{NR}\dot{A}_{NR}(t,\rho)|}\,A_{k}(t,\eta) \,e^{-\delta'_0\langle k,\eta \rangle^{1/2}}.\\
\end{split}
\end{equation}

See \cite[Lemma 8.6]{IOJI} for the proof.

{\bf{Step 3.}} Finally, we bound  the contribution of the nonlinearity $\mathcal{N}_4$. We will show that
\begin{equation}\label{zar36}
\Big|2\Re\int_1^t\sum_{k\in \mathbb{Z}}\int_{\R}A_k^2(s,\xi)\widetilde{\mathcal{N}_4}(s,k,\xi)\overline{\widetilde{F}(s,k,\xi)}\,d\xi ds\Big|\lesssim_\delta \eps_1^3.
\end{equation}
Let $g_1:=V'P'_v$ and $g_2:=-tV'P'_v+\varrho P'_z$, so $\mathcal{N}_4=g_1\partial_vF+g_2\partial_zF$. As before, after symmetrization, the left-hand side of \eqref{zar36} is dominated by $C(\mathcal{I}+\mathcal{J})$, where 
\begin{equation}\label{zar36.55}
\begin{split}
\mathcal{I}:=&\Big|\int_1^t\sum_{k,\ell\in \mathbb{Z}}\int_{\R^2}\big[\eta A_k^2(s,\xi)-\xi A_\ell^2(s,\eta)\big]\widetilde{g_1}(s,k-\ell,\xi-\eta)\widetilde{F}(s,\ell,\eta)\overline{\widetilde{F}(s,k,\xi)}\,d\xi d\eta ds\Big|,\\
\mathcal{J}:=&\Big|\int_1^t\sum_{k,\ell\in \mathbb{Z}}\int_{\R^2}\big[\ell A_k^2(s,\xi)-k A_\ell^2(s,\eta)\big]\widetilde{g_2}(s,k-\ell,\xi-\eta)\widetilde{F}(s,\ell,\eta)\overline{\widetilde{F}(s,k,\xi)}\,d\xi d\eta ds\Big|.
\end{split}
\end{equation}

To estimate $\mathcal{I}$ we recall the definitions \eqref{nar18.1}--\eqref{nar18.4} and define
\begin{equation}\label{nar76}
\begin{split}
\mathcal{V}_a:=\int_1^t\sum_{k,\ell\in \mathbb{Z}}\int_{\R^2}&\mathbf{1}_{R_a}((k,\xi),(\ell,\eta))\big|\eta A_k^2(s,\xi)-\xi A_\ell^2(s,\eta)\big|\,|\widetilde{g_1}(s,k-\ell,\xi-\eta)|\\
&\times|\widetilde{F}(s,\ell,\eta)|\,|\widetilde{F}(s,k,\xi)|\,d\xi d\eta ds.
\end{split}
\end{equation}
To bound $\mathcal{V}_a$, $a=0,1$, we use \eqref{TLXH3.1}, \eqref{Par1}, and the bootstrapping bounds (\ref{boot2}). Thus, with $(m,\rho)=(k-l,\xi-\eta)$, we have
\begin{equation*}
\begin{split}
\mathcal{V}_a&\lesssim_{\delta}\int_1^t\sum_{k,\ell\in \mathbb{Z}}\int_{\R^2}\sqrt{|A_k\dot{A}_k(s,\xi)|}\,\big|\widetilde{F}(s,k,\xi)\big|\sqrt{|A_{\ell}\dot{A}_{\ell}(s,\eta)|}\,\big|\widetilde{F}(s,\ell,\eta)\big|\\
&\qquad\times\, \mathbf{1}_{\{-1,1\}}(m)\frac{\langle s\rangle^2|m|\langle s-\rho/m \rangle^2}{|\rho/m|^2+\langle s \rangle^2}A_{m}(s,\rho)\big|\widetilde{g_1}(s,m,\rho)\big|e^{-\delta'_0\langle m,\rho\rangle^{1/2}}\,d\xi d\eta ds\\
&\lesssim_{\delta} \Big\|\sqrt{|A_k\dot{A}_k(s,\xi)|}\,\widetilde{F}(s,k,\xi)\Big\|_{L^2_{s}L^2_{k,\xi}}\Big\|\sqrt{|A_{\ell}\dot{A}_{\ell}(s,\eta)|}\,\widetilde{F}(s,\ell,\eta)\Big\|_{L^2_sL^2_{\ell,\eta}}\\
&\qquad\times \Big\|\mathbf{1}_{\{-1,1\}}(m)\frac{\langle s\rangle^2|m|\langle s-\rho/\sigma \rangle^2}{|\rho/m|^2+\langle s \rangle^2}A_{m}(s,\rho)\big|\widetilde{g_1}(s,m,\rho)\big|e^{-(\delta'_0/2)\langle m,\rho\rangle^{1/2}}\Big\|_{L^{\infty}_sL^2_{m,\rho}}\\
&\lesssim_{\delta}\epsilon_1^3.
\end{split}
\end{equation*}

Similarly, for $a=2$ we use \eqref{TLXH3.2}, \eqref{Par1}, and the bootstrapping bounds (\ref{boot2}), 
\begin{equation*}
\begin{split}
\mathcal{V}_2&\lesssim_{\delta}\int_1^t\sum_{k,\ell\in \mathbb{Z}}\int_{\R^2}\mathbf{1}_{\{-1,1\}}(m)\frac{\langle s\rangle^2|m|\langle s-\rho/m\rangle^2}{|\rho/m|^2+\langle s \rangle^2}\sqrt{|A_{m}\dot{A}_{m}(s,\rho)|}\big|\widetilde{g_1}(s,m,\rho)\big|\\
&\qquad\times\sqrt{|A_k\dot{A}_k(s,\xi)|}\,\big|\widetilde{F}(s,k,\xi)\big|A_{\ell}(s,\eta) e^{-\delta'_0\langle \ell,\eta\rangle^{1/2}}|\widetilde{F}(s,\ell,\eta)|\,d\xi d\eta ds\\
&\lesssim_{\delta} \Big\|\sqrt{|A_k\dot{A}_k(s,\xi)|}\,\widetilde{f}(s,k,\xi)\Big\|_{L^2_{s}L^2_{k,\xi}}\Big\|A_{\ell}(s,\eta)\,e^{-(\delta'_0/2)\langle \ell,\eta\rangle^{1/2}}\widetilde{f}(s,\ell,\eta)\Big\|_{L^{\infty}_sL^2_{\ell,\eta}}\\
&\qquad\times \Big\|\mathbf{1}_{\{-1,1\}}(m)\frac{\langle s\rangle^2|m|\langle s-\rho/m\rangle^2}{|\rho/m|^2+\langle s \rangle^2}\sqrt{|A_{m}\dot{A}_{m}(s,\rho)|}\widetilde{g_1}(s,m,\rho)\Big\|_{L^{2}_sL^2_{\sigma,\rho}}\\
&\lesssim_{\delta}\epsilon_1^3.
\end{split}
\end{equation*}
By changes of variables one can prove that $\mathcal{V}_3\lesssim_\delta\eps_1^3$ as well, thus $\mathcal{I}\lesssim_\delta\eps_1^3$.

We notice now that the integral $\mathcal{J}$ in \eqref{zar36.55} is similar to the integral in \eqref{exN1}. Moreover, $g_2$ satisfies the bounds \eqref{Par1}, which are similar to the bounds \eqref{nar14} satisfied by the function $h_2$. The same estimates as in {\bf{Step 1}} show that $\mathcal{J}\lesssim_\delta\eps_1^3$, which completes the proof of \eqref{zar36}.
\end{proof}

\subsection{The coordinate functions $V_\ast$, $\varrho_\ast$, and $W_\ast$}\label{VRhoW} We now prove the following:

\begin{proposition}\label{BootImp3}
With the definitions and assumptions in Proposition \ref{MainBootstrap}, we have
\begin{equation}\label{yar1}
\mathcal{E}_{V_\ast}(t)+\mathcal{E}_{\varrho_\ast}(t)+\mathcal{E}_{W_\ast}(t)+\mathcal{B}_{V_\ast}(t)+\mathcal{B}_{\varrho_\ast}(t)+\mathcal{B}_{W_\ast}(t)\leq\eps_1^2/20\qquad\text{ for any }t\in[1,T].
\end{equation}
\end{proposition}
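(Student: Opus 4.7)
The plan is to repeat the energy--estimate/symmetrization scheme of Proposition~\ref{BootImp1} for each of the three functionals $\mathcal{E}_{V_\ast}$, $\mathcal{E}_{\varrho_\ast}$, $\mathcal{E}_{W_\ast}$, but the analysis is substantially simpler because $V_\ast$, $\varrho_\ast$, $W_\ast$ depend only on $v$ (no $z$--frequency $k$ to track) and because the main source terms have already been estimated in Section~\ref{firstbounds}. Concretely, I differentiate each energy functional in time and use the evolution equations \eqref{PeV'}--\eqref{PeH}. Since $\dot{A}_R\le 0$ and $\dot{A}_{NR}\le 0$, the contribution of the time derivative of the weights reproduces the space--time integrals $\mathcal{B}_{V_\ast}$, $\mathcal{B}_{\varrho_\ast}$, $\mathcal{B}_{W_\ast}$ with the correct negative sign. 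Using that $\mathcal{E}_g(1)\le\eps_1^3$ from \eqref{boot1}, it therefore suffices to show that the contributions from the right--hand sides of \eqref{PeV'}--\eqref{PeH} are $\le \eps_1^2/50$ each.

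For $V_\ast$ the only source is $W_\ast/s$, which I pair with $\overline{\widetilde{V_\ast}}$ weighted by $A_R^2$ and then apply Cauchy--Schwarz against $\sqrt{|\dot A_R|A_R}\,\widetilde{V_\ast}$ (whose $L^2_{s,\xi}$ norm is bounded by $\mathcal{B}_{V_\ast}^{1/2}\le\eps_1$). The explicit $\mathcal{K}^{-1}$ factor built into the definition \eqref{rec3} of $\mathcal{E}_{W_\ast}$, together with the $s^{-1}$ factor and the favourable weighted multiplier bounds obtained by comparing $A_R$ to $A_{NR}\langle s\rangle^{3/4}\langle\xi\rangle^{-3/4}$, provide the smallness $\lesssim_\delta \mathcal{K}^{-1}\eps_1^2$, which is $\le\eps_1^2/100$ provided $\mathcal{K}=\mathcal{K}(\delta)$ is chosen sufficiently large. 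The transport term $\dot V\partial_v V_\ast$ is bilinear in small quantities; after symmetrization it is treated exactly as the $\mathcal{N}_3$ contribution in the proof of Proposition~\ref{BootImp1}, using the commutator weight bounds \eqref{TLXH2.1}--\eqref{TLXH2.2} together with the fast decay $\dot V$ from Lemma~\ref{nar8ext}, namely $\|\langle s\rangle^2 A_{NR}\widetilde{\dot V}\|_{L^\infty_sL^2_\xi}+\|\langle s\rangle^{7/4}\sqrt{|\dot A_{NR}|A_{NR}}\widetilde{\dot V}\|_{L^2_sL^2_\xi}\lesssim_\delta\eps_1$. For $\varrho_\ast$, one first rewrites \eqref{Perho} for $\Psi^\dagger\varrho_\ast$ as $(\partial_t+\dot V\partial_v)(\Psi^\dagger\varrho_\ast)=-\sqrt{\pi/(2\kappa v)}\,\Psi^\dagger\dot V+\dot V(\partial_v\Psi^\dagger)\varrho_\ast$; both the source and the cutoff commutator inherit the $\langle s\rangle^{-2}$ decay of $\dot V$, so the time integral is trivially convergent and yields an $O(\eps_1^3)$ contribution by Lemma~\ref{nar8} (i) combined with Lemma~\ref{nar8ext}.

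The main obstacle, and the heart of the argument, is the estimate for $W_\ast$, where one has to cope with the growing factor $\langle t\rangle^{3/2}$ in the weight. This is precisely where the \emph{linear} term $-W_\ast/t$ in \eqref{PeH} becomes essential: differentiating the time--dependent factor $\langle t\rangle^{3/2}$ in $\mathcal{E}_{W_\ast}$ produces the positive contribution $\tfrac{3}{2}\,\langle t\rangle^{1/2}\langle t\rangle^{-3/2}\mathcal{E}_{W_\ast}\approx\tfrac{3}{2t}\mathcal{E}_{W_\ast}$, whereas the $-W_\ast/t$ term contributes $-\tfrac{2}{t}\mathcal{E}_{W_\ast}$, leaving a net negative $-\tfrac{1}{2t}\mathcal{E}_{W_\ast}$ that can moreover absorb error terms of size $O(\eps_1^2/t)$. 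The transport term $\dot V\partial_v W_\ast$ is again handled by symmetrization together with \eqref{TLXH2.1}--\eqref{TLXH2.2} and Lemma~\ref{nar8ext}. The remaining genuine nonlinear sources in \eqref{PeH}, that is $\varrho^2 V'\langle\partial_z\phi\,\partial_vF\rangle$, $\varrho^2 V'\langle\partial_v\mathbb{P}_{\neq0}\phi\,\partial_zF\rangle$, $\varrho V'\langle P_v'(\partial_v-t\partial_z)F\rangle$ and $\varrho^2\langle P_z'\,\partial_zF\rangle$, were all prepared in Lemma~5.1 (the $\mathcal{S}$--estimate) with exactly the weight $|\dot A_{NR}|^{-1}A_{NR}^{3}\langle s\rangle^{3/2}\langle\xi\rangle^{-3/2}$ needed to close the $L^2_sL^2\times L^\infty_sL^2$ Cauchy--Schwarz pairing against $\sqrt{|\dot A_{NR}|A_{NR}}\,\mathcal{K}\langle s\rangle^{3/4}\langle\xi\rangle^{-3/4}\,\widetilde{W_\ast}$, producing a total contribution of size $\lesssim_\delta\eps_1^4$. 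Summing the three improved bounds and choosing $\mathcal{K}$ large depending on $\delta$ gives the required \eqref{yar1}.
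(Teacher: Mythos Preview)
Your overall scheme---energy identity, symmetrization of transport terms, the net negative contribution for $W_\ast$ from $-2/t+3/(2t)$, and the use of the $\mathcal S$--lemma \eqref{yar24} for the genuinely nonlinear sources---matches the paper and is correct. The gap is in your treatment of the \emph{linear} source terms $W_\ast/s$ (in the $V_\ast$ equation) and $-\sqrt{\pi/(2\kappa v)}\,\dot V$ (in the $\varrho_\ast$ equation). These are only $O(\eps_1)$ quantities under the bootstrap, so pairing them with $V_\ast$ or $\Psi^\dagger\varrho_\ast$ produces contributions of size $O_\delta(\eps_1^2)$, not $O(\eps_1^3)$ as you claim for $\varrho_\ast$, and not $O_\delta(\mathcal K^{-1}\eps_1^2)$ as you claim for $V_\ast$.

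Concretely, your $\mathcal K^{-1}$ mechanism would require a pointwise bound of the form
\[
\frac{A_R^3(s,\xi)}{s^2|\dot A_R(s,\xi)|}\ \lesssim_\delta\ |\dot A_{NR}(s,\xi)|A_{NR}(s,\xi)\,\langle s\rangle^{3/2}\langle\xi\rangle^{-3/2}
\]
(or the analogue with $A_{NR}^2$ in place of $|\dot A_{NR}|A_{NR}$), but this fails: for $s\approx 1$ and $|\xi|$ large one has $A_R=A_{NR}$ and $|\dot A_R|/A_R\approx\langle\xi\rangle^{1/2}$, so the left side is $\approx A_{NR}^2\langle\xi\rangle^{-1/2}$ while the right side is $\approx A_{NR}^2\langle\xi\rangle^{-1}$. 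Likewise, the $\langle s\rangle^{-2}$ decay you invoke for $\dot V$ holds only in the weak $\mathcal G^{\delta_0,1/2}$ norm of Lemma~\ref{extras} (which is itself a \emph{consequence} of this proposition, hence circular here); in the $A_R$--weighted norm relevant to the $\varrho_\ast$ pairing, Lemma~\ref{nar8ext} only gives $\langle s\rangle^{-7/4}$ decay with an $A_{NR}$ weight, and the ratio $A_R/A_{NR}$ is unbounded.

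The paper resolves this by treating the three energies \emph{jointly}: the factor $2$ in $\frac{d}{dt}\mathcal E=2\int\dot A A|\cdot|^2+\ldots$ leaves an extra negative $-[\mathcal B_{V_\ast}+\mathcal B_{\varrho_\ast}+\mathcal B_{W_\ast}]$ on the right of the integrated identity \eqref{yar2}. After splitting off the quadratic part $\mathcal L_{1,2}$ and using $\mathcal L_{2,2}\le 0$, the task becomes $\mathcal L_{1,2}\le\mathcal B_{V_\ast}+\mathcal B_{\varrho_\ast}+\eps_1^2/40$. This is achieved by Cauchy--Schwarz together with the \emph{two--regime} weight inequality
\[
\frac{A_R^3(s,\xi)}{s^2|\dot A_R(s,\xi)|}\ \le\ |\dot A_{NR}(s,\xi)|A_{NR}(s,\xi)\Big(C_\delta^{-1}+\mathcal K(\delta)^2\langle s\rangle^{3/2}\langle\xi\rangle^{-3/2}\Big)
\]
(Lemma~6.2 of \cite{IOJI}). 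The $C_\delta^{-1}$ term---which is exactly what covers the regime where your $\mathcal K^{-1}$ argument breaks down---is then controlled using \eqref{rew11}, whose non--$\mathcal K$--weighted part relies on the algebraic identity $W_\ast=-V_\ast+\varrho\langle F\rangle$ and the $V_\ast,F$ bootstrap rather than the $W_\ast$ bootstrap. This coupling is essential and is missing from your separated treatment.
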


The rest of the subsection is concerned with the proof of this proposition. Some of the arguments are similar to the arguments in \cite[Section 6]{IOJI}. For the sake of completeness we provide most of the details.

Using the equations \eqref{PeV'}--\eqref{PeH} and the definitions \eqref{rec2}--\eqref{rec3} we calculate
\begin{equation*}
\begin{split}
\frac{d}{dt}&[\mathcal{E}_{W_\ast}+\mathcal{E}_{V_\ast}+\mathcal{E}_{\varrho_\ast}](t)=2\mathcal{K}^{2}\int_{\R}\dot{A}_{NR}(t,\xi)A_{NR}(t,\xi)\big(\langle t\rangle^{3/2}\langle\xi\rangle^{-3/2}\big)\big|\widetilde{W_\ast}(t,\xi)\big|^2\,d\xi\\
&+2\int_\R \dot{A}_R(t,\xi)A_R(t,\xi)\big|\widetilde{V_\ast}(t,\xi)\big|^2\,d\xi+2\int_\R \dot{A}_R(t,\xi)A_R(t,\xi)\big|\widetilde{(\Psi^\dagger\varrho_\ast)}(t,\xi)\big|^2\,d\xi\\
&+\mathcal{K}^{2}2\Re\int_{\R}A^2_{NR}(t,\xi)\big(\langle t\rangle^{3/2}\langle\xi\rangle^{-3/2}\big)\partial_t\widetilde{W_\ast}(t,\xi)\overline{\widetilde{W_\ast}(t,\xi)}\,d\xi\\
&+2\Re\int_\R A^2_R(t,\xi)\partial_t\widetilde{V_\ast}(t,\xi)\overline{\widetilde{V_\ast}(t,\xi)}\,d\xi+2\Re\int_\R A^2_R(t,\xi)\partial_t\widetilde{(\Psi^\dagger\varrho_\ast)}(t,\xi)\overline{\widetilde{(\Psi^\dagger\varrho_\ast)}(t,\xi)}\,d\xi\\
&+\mathcal{K}^{2}\int_{\R}A^2_{NR}(t,\xi)\frac{3}{2}\big(t\langle t\rangle^{-1/2}\langle\xi\rangle^{-3/2}\big)\big|\widetilde{W_\ast}(t,\xi)\big|^2\,d\xi.
\end{split}
\end{equation*}
Therefore, since $\partial_tA_R\leq 0$ and $\partial_tA_{NR}\leq 0$ and using \eqref{rec5}--\eqref{rec6}, for any $t\in[1,T]$ we have
\begin{equation}\label{yar2}
\begin{split}
\mathcal{E}_{V_\ast}(t)&+\mathcal{E}_{\varrho_\ast}(t)+\mathcal{E}_{W_\ast}(t)+\mathcal{B}_{V_\ast}(t)+\mathcal{B}_{\varrho_\ast}(t)+\mathcal{B}_{W_\ast}(t)\\
&=\mathcal{E}_{V_\ast}(1)+\mathcal{E}_{\varrho_\ast}(1)+\mathcal{E}_{W_\ast}(1)-[\mathcal{B}_{V_\ast}(t)+\mathcal{B}_{\varrho_\ast}(t)+\mathcal{B}_{W_\ast}(t)]+\mathcal{L}_1(t)+\mathcal{L}_2(t),
\end{split}
\end{equation}
where
\begin{equation}\label{yar3}
\begin{split}
\mathcal{L}_1(t):&=2\Re\int_1^t\int_\R A^2_R(s,\xi)\partial_s\widetilde{V_\ast}(s,\xi)\overline{\widetilde{V_\ast}(s,\xi)}\,d\xi ds\\
&+2\Re\int_1^t\int_\R A^2_R(s,\xi)\partial_s\widetilde{(\Psi^\dagger\varrho_\ast)}(s,\xi)\overline{\widetilde{(\Psi^\dagger\varrho_\ast)}(s,\xi)}\,d\xi ds,
\end{split}
\end{equation}
\begin{equation}\label{yar4}
\begin{split}
\mathcal{L}_2(t):&=\mathcal{K}^{2}2\Re\int_1^t\int_{\R}A^2_{NR}(s,\xi)\big(\langle s\rangle^{3/2}\langle\xi\rangle^{-3/2}\big)\partial_s\widetilde{W_\ast}(s,\xi)\overline{\widetilde{W_\ast}(s,\xi)}\,d\xi ds\\
&+\mathcal{K}^{2}\int_1^t\int_{\R}A^2_{NR}(s,\xi)\frac{3}{2}\big(s\langle s\rangle^{-1/2}\langle\xi\rangle^{-3/2}\big)\big|\widetilde{W_\ast}(s,\xi)\big|^2\,d\xi ds.
\end{split}
\end{equation}
Since $\mathcal{E}_{V_\ast}(1)+\mathcal{E}_{\varrho_\ast}(1)+\mathcal{E}_{W_\ast}(1)\lesssim\eps_1^3$, for \eqref{yar1} it suffices to prove that, for any $t\in[1,T]$,
\begin{equation}\label{yar6}
-[\mathcal{B}_{V_\ast}(t)+\mathcal{B}_{\varrho_\ast}(t)+\mathcal{B}_{W_\ast}(t)]+\mathcal{L}_1(t)+\mathcal{L}_2(t)\leq \eps_1^2/30.
\end{equation}

To prove \eqref{yar6} we use the equations \eqref{PeV'}--\eqref{PeH}. We extract the quadratic components of $\mathcal{L}_1$ and $\mathcal{L}_2$ (corresponding to the linear terms in the right-hand sides of \eqref{PeV'}--\eqref{PeH}), so we define $\dot{V}'(s,v):=\sqrt{\pi/(2\kappa v)}\dot{V}(s,v)$, and then
\begin{equation}\label{yar7}
\mathcal{L}_{1,2}(t):=2\Re\int_1^t\int_\R A^2_R(s,\xi)\Big\{\frac{\widetilde{W_\ast}(s,\xi)}{s}\overline{\widetilde{V_\ast}(s,\xi)}-\widetilde{\dot{V}'}(s,\xi)\overline{\widetilde{(\Psi^\dagger\varrho_\ast)}(s,\xi)}\Big\}\,d\xi ds,
\end{equation}
and
\begin{equation}\label{yar8}
\begin{split}
\mathcal{L}_{2,2}(t)&:=\mathcal{K}^{2}\int_1^t\int_{\R}A^2_{NR}(s,\xi)\Big\{-\frac{2\langle s\rangle^{3/2}}{s\langle\xi\rangle^{3/2}}|\widetilde{W_\ast}(s,\xi)|^2+\frac{3s/2}{\langle s\rangle^{1/2}\langle\xi\rangle^{3/2}}\big|\widetilde{W_\ast}(s,\xi)\big|^2\Big\}\,d\xi ds\\
&=-\mathcal{K}^{2}\int_1^t\int_{\R}A^2_{NR}(s,\xi)\frac{2+s^2/2}{s\langle\xi\rangle^{3/2}\langle s\rangle^{1/2}}|\widetilde{W_\ast}(s,\xi)|^2\,d\xi ds.
\end{split}
\end{equation}
We examine the identities \eqref{PeV'}--\eqref{PeH} and let
\begin{equation}\label{yar18.5}
\begin{split}
&f_1:=-\dot{V}\partial_vV_\ast,\qquad f_2:=-\dot{V}\partial_v(\Psi^\dagger\varrho_\ast),\qquad g_1:=-\dot{V}\partial_vW_\ast\\
&g_2:=\varrho^2V'[\langle\partial_z\phi\,\partial_vF\rangle-\langle\partial_vP_{\neq0}\phi\,\partial_zF\rangle],\qquad g_3:=\varrho V'\langle P'_v(\partial_v-t\partial_z)F\rangle+\varrho^2 \langle P'_z\,\partial_zF\rangle.
\end{split}
\end{equation}
Notice that
\begin{equation}\label{yar19}
\begin{split}
&\mathcal{L}_1(t)=\mathcal{L}_{1,2}(t)+2\Re\int_1^t\int_\R A^2_R(s,\xi)\big\{\widetilde{f_1}(s,\xi)\overline{\widetilde{V_\ast}(s,\xi)}+\widetilde{f_2}(s,\xi)\overline{\widetilde{(\Psi^\dagger\varrho_\ast)}(s,\xi)}\big\}\,d\xi ds,\\
&\mathcal{L}_2(t)=\mathcal{L}_{2,2}(t)+\sum_{a\in\{1,2,3\}}\mathcal{K}^{2}2\Re\int_1^t\int_{\R}A^2_{NR}(s,\xi)\big(\langle s\rangle^{3/2}\langle\xi\rangle^{-3/2}\big)\widetilde{g_a}(s,\xi)\overline{\widetilde{W_\ast}(s,\xi)}\,d\xi ds.
\end{split}
\end{equation}
The desired bounds \eqref{yar6} follow from Lemmas \ref{yar10} and \ref{yar20} below.

\begin{lemma}\label{yar10}
For any $t\in [1,T]$ we have
\begin{equation}\label{yar11}
-[\mathcal{B}_{V_\ast}(t)+\mathcal{B}_{\varrho_\ast}(t)+\mathcal{B}_{W_\ast}(t)]+\mathcal{L}_{1,2}(t)+\mathcal{L}_{2,2}(t)\leq \eps_1^2/40.
\end{equation}
\end{lemma}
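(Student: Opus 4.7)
The plan is to absorb the indefinite cross terms in $\mathcal L_{1,2}$ into the sum of manifestly negative contributions $-\mathcal B_{V_\ast}-\mathcal B_{\varrho_\ast}-\mathcal B_{W_\ast}+\mathcal L_{2,2}$ by weighted Cauchy--Schwarz inequalities calibrated to the imbalanced weights of Section \ref{weights}. Split $\mathcal L_{1,2}=\mathcal I_{WV}+\mathcal I_{\dot V\varrho}$ according to the two summands in \eqref{yar7}. The large parameter $\mathcal K$, which appears in front of $\mathcal L_{2,2}$ and $\mathcal B_{W_\ast}$, is kept at our disposal and chosen large depending on $\delta$ at the end.

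For $\mathcal I_{WV}$ the plan is to apply pointwise AM--GM
\begin{equation*}
\frac{A_R^2}{s}|\widetilde W_\ast\widetilde V_\ast|\leq \tfrac{1}{2\alpha}A_R^2|\widetilde W_\ast|^2+\tfrac{\alpha}{2s^2}A_R^2|\widetilde V_\ast|^2,
\end{equation*}
with weight $\alpha(s,\xi):=C(\delta)\mathcal K^{-2}s^{-1}\langle s\rangle^{1/2}\langle\xi\rangle^{3/2}$. Since $A_R\leq A_{NR}$, the first summand integrates to at most $C(\delta)^{-1}|\mathcal L_{2,2}|$ and is absorbed into $\mathcal L_{2,2}$ once $C(\delta)$ is large. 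The second summand is bounded by $C_\delta\mathcal K^{-2}\mathcal B_{V_\ast}\leq C_\delta\mathcal K^{-2}\eps_1^2$ once one verifies the pointwise comparison $\alpha A_R^2/s^2\leq C_\delta\mathcal K^{-2}|\dot A_R|A_R$; this combines the $\dot\lambda$-contribution $|\dot A_R|/A_R\gtrsim\delta_0\sigma_0^2\langle\xi\rangle^{1/2}\langle s\rangle^{-1-\sigma_0}$ with the additional resonant growth of $|\dot b_R/b_R|$ at the critical frequencies $\langle\xi\rangle\sim s^2$, both furnished by Section \ref{weights}.

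For $\mathcal I_{\dot V\varrho}$ the plan is to Cauchy--Schwarz in $\xi$ against $|\dot A_R|A_R$ and then in $s$,
\begin{equation*}
|\mathcal I_{\dot V\varrho}|\leq \mathcal B_{\varrho_\ast}(t)^{1/2}\Big(\int_1^t\!\int_{\R}\frac{A_R^3}{|\dot A_R|}|\widetilde{\dot V'}|^2\,d\xi ds\Big)^{1/2}.
\end{equation*}
The integral in the second factor is estimated by inserting $A_R^3/|\dot A_R|\lesssim_\delta \langle s\rangle^{1+\sigma_0}\langle\xi\rangle^{-1/2}A_{NR}^2$ and invoking the strong bound $\int A_{NR}^2\mathcal K^2\langle\xi\rangle^{1/2}\langle s\rangle^{7/2}|\widetilde{\dot V}|^2\,d\xi\lesssim_\delta\eps_1^2$ from Lemma \ref{nar8ext}, after lifting the smooth factor $\sqrt{\pi/(2\kappa v)}$ from $\dot V$ to $\dot V'$ via Lemma \ref{GPF}. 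The $\langle s\rangle^{7/2}$ decay easily beats the $\langle s\rangle^{1+\sigma_0}$ loss, giving a bound $\lesssim_\delta\mathcal K^{-2}\eps_1^2$, so that $|\mathcal I_{\dot V\varrho}|\lesssim_\delta\mathcal K^{-1}\eps_1^2$.

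Combining, choosing $\mathcal K=\mathcal K(\delta)$ sufficiently large makes the total absorbed contribution at most $\eps_1^2/40$. The main obstacle is the pointwise weight comparison in the $\mathcal I_{WV}$ step: at the resonant scale $\langle\xi\rangle\sim s^2$ the polynomial factor $\langle\xi\rangle^{3/2}$ hidden in $\alpha$ cannot be absorbed by the $\dot\lambda$-contribution to $|\dot A_R|/A_R$ alone, and one genuinely needs the extra time-growth of $|\dot b_R/b_R|$ built into the imbalanced weights. This is precisely the reason $A_R$ must be constructed with this refined structure rather than as a plain Gevrey weight.
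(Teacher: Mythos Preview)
There is a genuine gap in both halves of your argument, and it stems from the same source: you have the inequality between $A_R$ and $A_{NR}$ backwards. From \eqref{reb13} one has $b_R\le b_{NR}$, and by the definitions \eqref{reb11} this gives $A_R\ge A_{NR}$, not $A_R\le A_{NR}$. In fact at a resonant time $t\approx \xi/k$ the ratio $A_R/A_{NR}\approx w_{NR}/w_R$ is of size $\delta^2\xi/k^2$ by \eqref{reb7}, hence can be arbitrarily large. Consequently, in your treatment of $\mathcal I_{WV}$, the term $\frac{1}{2\alpha}A_R^2|\widetilde W_\ast|^2$ cannot be bounded by a small multiple of $A_{NR}^2|\widetilde W_\ast|^2$ and therefore cannot be absorbed into $|\mathcal L_{2,2}|$ as you claim.

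The same issue breaks your $\mathcal I_{\dot V\varrho}$ bound: the asserted inequality $A_R^3/|\dot A_R|\lesssim_\delta \langle s\rangle^{1+\sigma_0}\langle\xi\rangle^{-1/2}A_{NR}^2$ is equivalent to $(A_R/A_{NR})^2\cdot A_R/|\dot A_R|\lesssim_\delta \langle s\rangle^{1+\sigma_0}\langle\xi\rangle^{-1/2}$, and at $t=\xi$, $k=1$ the left side is $\sim \delta^2\xi^2$ while the right side is $\sim \xi^{1/2+\sigma_0}$, so the inequality fails for large $\xi$. The paper's proof avoids this by \emph{not} attempting to absorb into $\mathcal L_{2,2}$ (it simply drops $\mathcal L_{2,2}\le 0$), and instead does Cauchy--Schwarz against $|\dot A_R|A_R$ to produce the error terms $\int \frac{A_R^3}{s^2|\dot A_R|}|\widetilde{W_\ast}|^2$ and $\int \frac{A_R^3}{|\dot A_R|}|\widetilde{\dot V'}|^2$, then invokes the pointwise weight inequality
\[
\frac{A_R^3(s,\xi)}{s^2|\dot A_R(s,\xi)|}\le A_{NR}(s,\xi)|\dot A_{NR}(s,\xi)|\bigl(C_\delta^{-1}+\mathcal K^2\langle s\rangle^{3/2}\langle\xi\rangle^{-3/2}\bigr),
\]
proved in \cite[Lemma~6.2]{IOJI}. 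This inequality is exactly what compensates for $A_R\ge A_{NR}$: at resonant times both $|\dot A_R|/A_R$ and $|\dot A_{NR}|/A_{NR}$ are large of the same order (see \eqref{reb8} and \eqref{vfc30.5}), and the product $|\dot A_R|\cdot|\dot A_{NR}|$ on the two sides absorbs the square of the resonant amplification $(A_R/A_{NR})^2$. Your diagnosis that the imbalanced structure of the weights is essential is correct, but it enters through this two-sided $|\dot A|$ mechanism rather than through a comparison $A_R\le A_{NR}$.
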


\begin{proof} Since $\mathcal{L}_{2,2}(t)\leq 0$, it suffices to prove that, for any $t\in[1,T]$,
\begin{equation}\label{yar11.4}
\mathcal{L}_{1,2}(t)\leq \mathcal{B}_{V_\ast}(t)+\mathcal{B}_{\varrho_\ast}(t)+\eps_1^2/40.
\end{equation}
Using Cauchy-Schwartz and the definitions, we have
\begin{equation*}
\begin{split}
\mathcal{L}_{1,2}(t)&\leq \frac{1}{2}\mathcal{B}_{V_\ast}(t)+8\int_1^t\int_{\R}\frac{A^3_R(s,\xi)}{|\dot{A}_R(s,\xi)|}\frac{|\widetilde{W_\ast}(s,\xi)|^2}{s^2}\,d\xi ds\\
&+\frac{1}{2}\mathcal{B}_{\varrho_\ast}(t)+8\int_1^t\int_{\R}\frac{A^3_R(s,\xi)}{|\dot{A}_R(s,\xi)|}|\dot{V}'(s,\xi)|^2\,d\xi ds
\end{split}
\end{equation*}

The function $\dot{V}'$ satisfies the bounds \eqref{nar7}, similar to $\dot{V}$. Using also the estimates \eqref{rew11}, it suffices to show that, for any $C_\delta\geq 1$,
\begin{equation*}
\frac{A^3_R(s,\xi)}{s^2|\dot{A}_R(s,\xi)|}\leq A_{NR}(s,\xi)|\dot{A}_{NR}(s,\xi)|(C_\delta^{-1}+\mathcal{K}(\delta)^{2}\langle s\rangle^{3/2}\langle \xi\rangle^{-3/2}),
\end{equation*}
provided that $\mathcal{K}(\delta)$ is taken sufficiently large. This inequality is proved in Lemma 6.2 in \cite{IOJI}, using some of the basic properties of the weights, which are summarized in section \ref{weights}.
\end{proof}

We prove now estimates on the cubic terms. 

\begin{lemma}\label{yar20} 
For any $t\in[1,T]$ and $a\in\{1,2,3\}$ we have
\begin{equation}\label{yar21}
\Big|2\Re\int_1^t\int_\R A^2_R(s,\xi)\widetilde{f_1}(s,\xi)\overline{\widetilde{V_\ast}(s,\xi)}\,d\xi ds\Big|\lesssim_\delta\eps_1^3,
\end{equation}
\begin{equation}\label{yar21.7}
\Big|2\Re\int_1^t\int_\R A^2_R(s,\xi)\widetilde{f_2}(s,\xi)\overline{\widetilde{(\Psi^\dagger\varrho_\ast)}(s,\xi)}\,d\xi ds\Big|\lesssim_\delta\eps_1^3,
\end{equation}
and
\begin{equation}\label{yar22}
\Big|2\Re\int_1^t\int_{\R}A^2_{NR}(s,\xi)\big(\langle s\rangle^{3/2}\langle\xi\rangle^{-3/2}\big)\widetilde{g_a}(s,\xi)\overline{\widetilde{W_\ast}(s,\xi)}\,d\xi ds\Big|\lesssim_\delta\eps_1^3.
\end{equation}
\end{lemma}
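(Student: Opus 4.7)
The plan is to estimate the three integrals in parallel with the argument of \cite[Section 6]{IOJI}, modified to account for the new terms generated by the motion of the point vortex. All three bounds are cubic expressions, so I would split each one via Cauchy--Schwarz (or, for the transport-type terms, symmetrization followed by a weight inequality and Lemma~\ref{Multi0}) into three factors controlled respectively by the bootstrap assumption \eqref{boot2}, by the bounds on $\dot V$ in Lemma~\ref{nar8ext}, and by the output estimates \eqref{yar24}.

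The first two integrals and the $g_1$ contribution to the third share a common structure: the nonlinearity is $\dot V\cdot\partial_v(\text{bootstrap variable})$. For \eqref{yar21} I would write
\[
\widetilde{f_1}(s,\xi)=-\int i\eta\,\widetilde{\dot V}(s,\xi-\eta)\widetilde{V_\ast}(s,\eta)\,d\eta
\]
and then symmetrize in $\xi\leftrightarrow\eta$, using that $\dot V$ is real, to rewrite the left-hand side of \eqref{yar21} as
\[
\int_1^t\!\!\int_{\R^2}i\bigl[\eta A_R^2(s,\xi)-\xi A_R^2(s,\eta)\bigr]\widetilde{\dot V}(s,\xi-\eta)\widetilde{V_\ast}(s,\eta)\overline{\widetilde{V_\ast}(s,\xi)}\,d\xi\,d\eta\,ds.
\]
A weight-difference bound parallel to \eqref{TLXH2.1}--\eqref{TLXH2.2}, specialized to $k=\ell=0$ and with $A_R$ in place of $A_k$, should yield on the region where $\langle\xi-\eta\rangle$ is dominated by $\langle\eta\rangle$,
\[
\bigl|\eta A_R^2(s,\xi)-\xi A_R^2(s,\eta)\bigr|\lesssim_\delta\bigl(\langle\rho\rangle\langle s\rangle+\langle\rho\rangle^{1/4}\langle s\rangle^{7/4}\bigr)\sqrt{|A_R\dot A_R|(s,\xi)}\sqrt{|A_R\dot A_R|(s,\eta)}A_{NR}(s,\rho)e^{-\delta'_0\langle\rho\rangle^{1/2}},
\]
with $\rho=\xi-\eta$, together with a companion bound on the complementary region in which $\sqrt{|A_R\dot A_R|(s,\eta)}$ is replaced by $A_R(s,\eta)e^{-\delta'_0\langle\eta\rangle^{1/2}}$ and $A_{NR}(s,\rho)$ by $\sqrt{|A_{NR}\dot A_{NR}|(s,\rho)}$. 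Inserting either bound and applying Lemma~\ref{Multi0}, the integral is dominated by $\|\sqrt{|A_R\dot A_R|}\widetilde{V_\ast}\|^2_{L^2_sL^2}\|(\langle\xi\rangle\langle s\rangle+\langle\xi\rangle^{1/4}\langle s\rangle^{7/4})A_{NR}\widetilde{\dot V}\|_{L^\infty_sL^2}\lesssim_\delta\eps_1^3$, the first factor bounded by $\mathcal B_{V_\ast}(t)\lesssim\eps_1^2$ and the second by the first line of \eqref{nar7}. Estimate \eqref{yar21.7} is proved identically after replacing $V_\ast$ by $\Psi^\dagger\varrho_\ast$; the commutator $\dot V[\partial_v(\Psi^\dagger\varrho_\ast)-\Psi^\dagger\partial_v\varrho_\ast]$ is a smooth, rapidly localized triple product treated by Lemma~\ref{nar8}. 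For the $g_1$ contribution in \eqref{yar22} the same scheme applies with $W_\ast$ in place of $V_\ast$ and the multiplier $A^2_{NR}(s,\xi)\langle s\rangle^{3/2}\langle\xi\rangle^{-3/2}$ in place of $A_R^2$; the extra $\mathcal K^2\langle s\rangle^{7/4}$ factor in the corresponding weight-difference inequality pairs with the second line of \eqref{nar7}.

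For $a\in\{2,3\}$ in \eqref{yar22} the integrand is not of transport type, and I would not symmetrize. Instead, a plain Cauchy--Schwarz in $(s,\xi)$ gives
\[
\Bigl|2\Re\!\int_1^t\!\!\int_\R A^2_{NR}\tfrac{\langle s\rangle^{3/2}}{\langle\xi\rangle^{3/2}}\widetilde{g_a}\overline{\widetilde{W_\ast}}\,d\xi\,ds\Bigr|\leq\mathcal K^{-1}\Bigl(\int_1^t\!\!\int_\R\tfrac{A_{NR}^3}{|\dot A_{NR}|}\tfrac{\langle s\rangle^{3/2}}{\langle\xi\rangle^{3/2}}|\widetilde{g_a}|^2\,d\xi\,ds\Bigr)^{1/2}\sqrt{\mathcal B_{W_\ast}(t)},
\]
where \eqref{yar24} controls the first factor by $\lesssim_\delta\eps_1^2$ and \eqref{boot2} controls $\sqrt{\mathcal B_{W_\ast}(t)}\lesssim\eps_1$, for a total $\lesssim_\delta\eps_1^3/\mathcal K\lesssim\eps_1^3$. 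For $g_3$ each of its three summands is, up to a harmless factor of $1/t$, of the form $(\varrho)^a(V')^bf$ with $f\in\mathcal S$ (see \eqref{rew50}), so \eqref{yar24} is indeed applicable; this reduction depends crucially on the sharp decay of $P'$ provided by \eqref{Par1.7}.

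The main obstacle is establishing the two weight-difference inequalities referenced above, for $|\eta A_R^2(s,\xi)-\xi A_R^2(s,\eta)|$ and for $|\eta A^2_{NR}(s,\xi)\langle\xi\rangle^{-3/2}-\xi A^2_{NR}(s,\eta)\langle\eta\rangle^{-3/2}|$. Although they are direct analogues of \eqref{TLXH2.1}--\eqref{TLXH2.2}, the weights $A_R$ and $A_{NR}$ are comparable in size (both lack the resonance factor $e^{\lambda\langle k,\xi\rangle^{1/2}}$ present in $A_k$), so the usual cancellation trick is unavailable; one must instead exploit the smoothness estimate \eqref{S1} both for the exponential and for the algebraic factors $b_R,b_{NR}$, together with a case split on the regions $R_0,R_1,R_2,R_3$ to distribute the $\partial_v$ derivative among the three factors. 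The polynomial loss $\langle\rho\rangle\langle s\rangle+\langle\rho\rangle^{1/4}\langle s\rangle^{7/4}$ is precisely what the estimates of Lemma~\ref{nar8ext} on $\dot V$ absorb. Once these weight bounds are in place the remaining analysis is a mechanical repetition of the template above, and it is exactly the reason for reusing the same family of weights as in \cite{IOJI}.
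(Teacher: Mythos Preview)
Your approach is essentially the paper's: symmetrize the transport-type terms \eqref{yar21}, \eqref{yar21.7}, and the $a=1$ case of \eqref{yar22}, decompose into frequency regions, apply a weight-difference estimate, and close with $L^2_sL^2\times L^2_sL^2\times L^\infty_sL^2$; for $a\in\{2,3\}$ use Cauchy--Schwarz against \eqref{yar24}. This is exactly what the paper does.

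The one point where you are misdirected is the ``main obstacle'' paragraph. The weight-difference bounds you need are \emph{not} the $A_R$-analogues of \eqref{TLXH2.1}--\eqref{TLXH2.2}; the paper instead invokes the estimates stated there as \eqref{tol7}, \eqref{TLY2.2}, \eqref{TLY2.3}, which are already proved in \cite[Lemma~8.9]{IOJI} directly for $A_R$ and $A_{NR}$ (with $\alpha=0$ for \eqref{yar21}--\eqref{yar21.7} and $\alpha=3/2$ for the $g_1$ term). These carry the prefactor $s^{1.6}$ (resp.\ $s^{1.1}\langle\xi\rangle^{\pm}$) rather than your $\langle\rho\rangle\langle s\rangle+\langle\rho\rangle^{1/4}\langle s\rangle^{7/4}$; both forms are absorbed by \eqref{nar7}, but the point is that you do not need to re-derive anything---there is no ``usual cancellation trick'' to lose, since the relevant bounds for $A_R,A_{NR}$ are already in the companion paper and the present paper simply cites them. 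Your remark about a commutator $\dot V[\partial_v(\Psi^\dagger\varrho_\ast)-\Psi^\dagger\partial_v\varrho_\ast]$ is also unnecessary: by definition \eqref{yar18.5} one has $f_2=-\dot V\,\partial_v(\Psi^\dagger\varrho_\ast)$, so no commutator arises.
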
 

\begin{proof} {\bf{Step 1.}} We start with \eqref{yar21}-\eqref{yar21.7}. The two bounds are similar, so we only provide all the details for the estimate \eqref{yar21.7}. See also \cite[Lemma 6.5]{IOJI} for a similar argument.

We write the left-hand side of \eqref{yar21.7} in the form
\begin{equation*}
\begin{split}
&C\Big|2\Re\int_1^t\int_\R\int_\R A^2_R(s,\xi)\widetilde{\dot{V}}(s,\xi-\eta)(i\eta)\widetilde{(\Psi^\dagger\varrho_\ast)}(s,\eta)\overline{\widetilde{(\Psi^\dagger\varrho_\ast)}(s,\xi)}\,d\xi d\eta ds\Big|\\
&=C\Big|\int_1^t\int_\R\int_\R [\eta A^2_R(s,\xi)-\xi A_R^2(s,\eta)]\widetilde{\dot{V}}(s,\xi-\eta)\widetilde{(\Psi^\dagger\varrho_\ast)}(s,\eta)\overline{\widetilde{(\Psi^\dagger\varrho_\ast)}(s,\xi)}\,d\xi d\eta ds\Big|,
\end{split}
\end{equation*}
using symmetrization and the fact that $\dot{V}$ is real-valued. As in \eqref{nar18.1}--\eqref{nar18.4}, we define the sets
\begin{equation}\label{tol4}
\begin{split}
&S_0:=\Big\{(\xi,\eta)\in\R^2:\,\min(\langle\xi\rangle,\,\langle\eta\rangle,\,\langle\xi-\eta\rangle)\geq \frac{\langle\xi\rangle+\langle\eta\rangle+\langle\xi-\eta\rangle}{20}\Big\},\\
&S_1:=\Big\{(\xi,\eta)\in\R^2:\,\langle\xi-\eta\rangle\leq \frac{\langle\xi\rangle+\langle\eta\rangle+\langle\xi-\eta\rangle}{10}\Big\},\\
&S_2:=\Big\{(\xi,\eta)\in\R^2:\,\langle\eta\rangle\leq \frac{\langle\xi\rangle+\langle\eta\rangle+\langle\xi-\eta\rangle}{10}\Big\},\\
&S_3:=\Big\{(\xi,\eta)\in\R^2:\,\langle\xi\rangle\leq \frac{\langle\xi\rangle+\langle\eta\rangle+\langle\xi-\eta\rangle}{10}\Big\}.
\end{split}
\end{equation}
and the corresponding integrals
\begin{equation}\label{tol5}
\begin{split}
\mathcal{I}_n:=\int_1^t\int_\R\int_\R \mathbf{1}_{S_n}(\xi,\eta)&|\eta A^2_R(s,\xi)-\xi A_R^2(s,\eta)|\,|\widetilde{\dot{V}}(s,\xi-\eta)|\\
&\times|\widetilde{(\Psi^\dagger\varrho_\ast)}(s,\eta)|\,|\widetilde{(\Psi^\dagger\varrho_\ast)}(s,\xi)|\,d\xi d\eta ds.
\end{split}
\end{equation}
For \eqref{yar21.7} it suffices to prove that
\begin{equation}\label{tol6}
\mathcal{I}_n\lesssim_\delta \eps_1^3\qquad\text{ for }n\in\{0,1,2,3\}.
\end{equation}

As in Proposition \ref{BootImp1}, we use estimates on the weights. Letting $\delta'_0=\delta_0/200$, we have:

$\bullet\,\,$ If $(\xi,\eta)\in S_0\cup S_1$, $\rho=\xi-\eta$, $s\geq 1$, $\alpha\in[0,4]$, and $Y\in\{NR,R\}$ then
\begin{equation}\label{tol7}
\begin{split}
|\eta A^2_Y(s,\xi)&\langle\xi\rangle^{-\alpha}-\xi A_Y^2(s,\eta)\langle\eta\rangle^{-\alpha}|\\
&\lesssim_\delta s^{1.6}\frac{\sqrt{|(A_Y\dot{A}_Y)(s,\xi)|}}{\langle\xi\rangle^{\alpha/2}}\frac{\sqrt{|(A_Y\dot{A}_Y)(s,\eta)|}}{\langle\xi\rangle^{\alpha/2}}\cdot A_{NR}(s,\rho)e^{-\delta'_0\langle\rho\rangle^{1/2}}.
\end{split}
\end{equation}

$\bullet\,\,$ If $(\xi,\eta)\in S_2$, $\rho=\xi-\eta$, and $s\geq 1$ then
\begin{equation}\label{TLY2.2}
\langle\eta\rangle A^2_{R}(s,\xi)\lesssim_\delta s^{1.1}\langle\xi\rangle^{0.6}\sqrt{|(A_R\dot{A}_R)(s,\xi)|}\sqrt{|(A_{NR}\dot{A}_{NR})(s,\rho)|}\cdot A_{R}(s,\eta)e^{-\delta'_0\langle\eta\rangle^{1/2}}
\end{equation}
and
\begin{equation}\label{TLY2.3}
\langle\eta\rangle A^2_{NR}(s,\xi)\lesssim_\delta s^{1.1}\langle\xi\rangle^{-0.4}\sqrt{|(A_{NR}\dot{A}_{NR})(s,\xi)|}\sqrt{|(A_{NR}\dot{A}_{NR})(s,\rho)|}\cdot A_{NR}(s,\eta)e^{-\delta'_0\langle\eta\rangle^{1/2}}.
\end{equation}

See \cite[Lemma 8.9]{IOJI} for the proof.

For $n\in\{0,1\}$ we can now estimate, using \eqref{tol7},
\begin{equation*}
\begin{split}
\mathcal{I}_n\lesssim_\delta \Big\|\sqrt{|(A_R\dot{A}_R)(s,\xi)|}&\widetilde{(\Psi^\dagger\varrho_\ast)}(s,\xi)\Big\|_{L^2_sL^2_\xi}\Big\|\sqrt{|(A_R\dot{A}_R)(s,\eta)|}\widetilde{(\Psi^\dagger\varrho_\ast)}(s,\eta)\Big\|_{L^2_sL^2_\eta}\\
&\times\Big\|s^{1.6}A_{NR}(s,\rho)\langle\rho\rangle^2 e^{-\delta'_0\langle\rho\rangle^{1/2}}\widetilde{\dot{V}}(s,\rho)\Big\|_{L^\infty_sL^2_\rho},
\end{split}
\end{equation*}
and the bounds \eqref{tol6} follow for $n\in\{0,1\}$ from \eqref{boot2} and \eqref{nar7}. Similarly, for $n=2$ we use \eqref{TLY2.2} and \eqref{eq:comparisonweights1} to estimate
\begin{equation*}
\begin{split}
\mathcal{I}_2\lesssim_\delta \Big\|\sqrt{|(A_R\dot{A}_R)(s,\xi)|}&\widetilde{(\Psi^\dagger\varrho_\ast)}(s,\xi)\Big\|_{L^2_sL^2_\xi}\Big\|s^{1.1}\langle\rho\rangle^{0.6}\sqrt{|(A_{NR}\dot{A}_{NR})(s,\rho)|} \widetilde{\dot{V}}(s,\rho)\Big\|_{L^2_sL^2_\rho}\\
&\times\Big\|A_R(s,\eta)\langle\eta\rangle e^{-\delta'_0\langle\eta\rangle^{1/2}}\widetilde{(\Psi^\dagger\varrho_\ast)}(s,\eta)\Big\|_{L^\infty_sL^2_\eta},
\end{split}
\end{equation*}
and the desired bounds follow from \eqref{boot2} and \eqref{nar7}. The case $n=3$ is similar, by changes of variables, which completes the proof of \eqref{yar21.7}.

{\bf{Step 2.}} The bounds \eqref{yar22} for $a=1$ are similar, using symmetrization, the bounds \eqref{tol7} with $Y=NR$, and the bounds \eqref{TLY2.3}. See also \cite[Lemma 6.6]{IOJI} for a similar argument. Finally, the bounds \eqref{yar22} for $a\in\{2,3\}$ follow from \eqref{yar24}, \eqref{boot2}, and the Cauchy inequality.
\end{proof}

We can now prove the bounds \eqref{boot4} in Proposition \ref{MainBootstrap}.

\begin{lemma}\label{extras}
For any $t\in[1,T]$ we have
\begin{equation}\label{extras1}
|P'(t)|+\|\Psi^\dagger\cdot P_z'(t)\|_{\mathcal{G}^{\delta_0,1/2}}+\|\Psi^\dagger\cdot P_v'(t)\|_{\mathcal{G}^{\delta_0,1/2}}\lesssim_\delta\eps_1^{3/2}e^{-0.1\delta_0t^{1/2}},
\end{equation}
\begin{equation}\label{extras2}
\langle t\rangle\|W_\ast(t)\|_{\mathcal{G}^{\delta_0,1/2}}+\langle t\rangle^2\|\dot{V}(t)\|_{\mathcal{G}^{\delta_0,1/2}}\lesssim_\delta \eps_1^{3/2},
\end{equation}
\begin{equation}\label{extras3}
\|\langle(\partial_t+\dot{V}\partial_v)F\rangle(t)\|_{\mathcal{G}^{\delta_0,1/2}}\lesssim_\delta \eps_1^{3/2}\langle t\rangle^{-3}.
\end{equation}
\end{lemma}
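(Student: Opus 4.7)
My plan is to prove the three inequalities \eqref{extras1}, \eqref{extras2}, \eqref{extras3} in turn, with \eqref{extras2} as the main obstacle.

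For \eqref{extras1}, I would re-run the proof of Lemma \ref{P'bounnds}, invoking the improved bootstrap bound \eqref{boot3'} in place of \eqref{boot2}. Since $\mathcal{E}_F(t)\lesssim_\delta\eps_1^3$, one obtains $\|A_l(t,\cdot)\widetilde{F}(t,l,\cdot)\|_{L^2_\xi}\lesssim_\delta \eps_1^{3/2}$ for every $l\in\mathbb{Z}$, and the argument that produced \eqref{Pop3.1} upgrades its conclusion, and hence \eqref{Par1.7}, by replacing $\eps_1$ with $\eps_1^{3/2}$. To convert the pointwise Fourier bound into an exponentially decaying Gevrey norm, I would use that $\lambda(t)\geq (3/2-\sigma_0)\delta_0\geq 1.49\delta_0$ uniformly (since $\sigma_0=0.01$ and $\int_0^\infty\langle s\rangle^{-1-\sigma_0}\,ds\lesssim 1/\sigma_0$), so that $A_1(t,t)\geq e^{\lambda(t)\langle 1,t\rangle^{1/2}}$ dominates the Gevrey weight $e^{\delta_0\langle 1,\xi\rangle^{1/2}}$ near $\xi=\pm t$ by a factor of at least $e^{0.4\delta_0 t^{1/2}}$. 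The rapidly decaying factor $e^{-\langle\xi-kt\rangle^{3/4}}$ localizes the $\xi$-integral near $\xi=\pm t$ and keeps it finite, yielding the claim.

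For \eqref{extras2} the plan is to integrate \eqref{PeH} along the flow of $\dot V$. Multiplying by $t$ yields
\begin{equation*}
(\partial_t+\dot V\partial_v)(tW_\ast)=tR,
\end{equation*}
where $R$ collects the cubic remainder in \eqref{PeH}. A Gevrey-norm energy estimate using Lemma \ref{Multi0} (with the transport commutator controlled by the preliminary bound $\|\dot V\|_{\mathcal{G}^{\delta_0,1/2}}\lesssim\eps_1\langle t\rangle^{-7/4}$ from Lemma \ref{nar8ext}, which is integrable in time) together with Gronwall reduces the problem to
\begin{equation*}
\|tW_\ast(t)\|_{\mathcal{G}^{\delta_0,1/2}}\lesssim \|W_\ast(1)\|_{\mathcal{G}^{\delta_0,1/2}}+\int_1^t\|sR(s)\|_{\mathcal{G}^{\delta_0,1/2}}\,ds.
\end{equation*}
The initial value is $\lesssim\eps_1^{3/2}/\mathcal{K}$ by \eqref{boot1} (since $e^{2\delta_0\langle\xi\rangle^{1/2}}\leq A_{NR}^2\langle\xi\rangle^{-3/2}$ up to bounded factors at $s=1$). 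For the time integral I would use the sharp time-integrated bounds \eqref{yar24}: since $|\dot A_{NR}|\lesssim A_{NR}\langle\xi\rangle^{1/2}/\langle s\rangle^{1+\sigma_0}$ by the definition of $\lambda$, the weight $|\dot A_{NR}|^{-1}A_{NR}^3\langle s\rangle^{3/2}\langle\xi\rangle^{-3/2}$ dominates $e^{2\delta_0\langle\xi\rangle^{1/2}}s^2\langle s\rangle^{1+\sigma_0}$, and a Cauchy-Schwarz in time with the integrable weight $\langle s\rangle^{-1-\sigma_0}$ gives $\int_1^t\|sR(s)\|_{\mathcal{G}^{\delta_0,1/2}}\,ds\lesssim_\delta\eps_1^2\ll\eps_1^{3/2}$. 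The bound on $\dot V$ then follows from the algebraic identity $V'\partial_v\dot V+2\varrho\dot V=W_\ast/t$ in \eqref{PV''}, solved via the integrating factor $H$ with $\partial_vH=2\Psi\varrho(V')^{-1}$ as in Lemma \ref{nar8ext}; since $\|e^{\pm H}\Psi(V')^{-1}\|_{\mathcal{G}^{\delta_0,1/2}}\lesssim 1$ by \eqref{rew9}, one gets $\|\dot V\|_{\mathcal{G}^{\delta_0,1/2}}\lesssim\|W_\ast\|_{\mathcal{G}^{\delta_0,1/2}}/\langle t\rangle\lesssim_\delta\eps_1^{3/2}\langle t\rangle^{-2}$.

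Part \eqref{extras3} is the easiest, because the principal nonlinearity vanishes after $z$-averaging: since $\varrho V'$ is $z$-independent and $\int\partial_z\phi\,\partial_vF\,dz=-\int\phi\,\partial_{zv}^2F\,dz=\int\partial_v\phi\,\partial_zF\,dz$ by integration by parts in $z$, while $\partial_v\langle\phi\rangle\cdot\langle\partial_zF\rangle=0$ trivially, one has $\varrho V'[\langle\partial_z\phi\,\partial_vF\rangle-\langle\partial_v\mathbb{P}_{\neq0}\phi\,\partial_zF\rangle]\equiv 0$. Averaging \eqref{Pef} in $z$ thus leaves
\begin{equation*}
\langle(\partial_t+\dot V\partial_v)F\rangle=V'\langle P_v'(\partial_v-t\partial_z)F\rangle+\varrho\langle P_z'\partial_zF\rangle,
\end{equation*}
and by \eqref{extras1} together with $\|F\|_{\mathcal{G}^{\delta_0,1/2}}\lesssim_\delta\eps_1^{3/2}$ from \eqref{boot3'}, the right-hand side has $\mathcal{G}^{\delta_0,1/2}$-norm bounded by $\eps_1^{3/2}e^{-0.1\delta_0 t^{1/2}}\cdot\langle t\rangle\cdot\eps_1^{3/2}$, which is $\lesssim_\delta\eps_1^{3/2}\langle t\rangle^{-N}$ for any $N$, in particular $N=3$. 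The main obstacle is closing the integral $\int_1^t\|sR(s)\|_{\mathcal{G}^{\delta_0,1/2}}\,ds\lesssim_\delta\eps_1^{3/2}$ uniformly in $t$: naive use of $\|\partial_z\phi\|\lesssim\eps_1^{3/2}\langle s\rangle^{-2}$ and $\|\partial_vF\|\lesssim\eps_1^{3/2}$ yields only $\eps_1^3\log t$, and it is essential to invoke \eqref{yar24} with the $\langle s\rangle^{1+\sigma_0}$-gain built into $|\dot A_{NR}|^{-1}$ to eliminate the logarithmic loss.
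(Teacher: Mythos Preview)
Your approach to \eqref{extras1} is essentially the paper's, and is correct.

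There is, however, a genuine gap in your treatment of \eqref{extras2} and a related error in \eqref{extras3}.

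\textbf{The error in \eqref{extras3}.} Your claimed cancellation
$\varrho V'[\langle\partial_z\phi\,\partial_vF\rangle-\langle\partial_v\mathbb{P}_{\neq0}\phi\,\partial_zF\rangle]\equiv 0$
is false. The step ``$-\int\phi\,\partial_{zv}^2F\,dz=\int\partial_v\phi\,\partial_zF\,dz$'' requires integration by parts in $v$, but $\langle\cdot\rangle$ is only the $z$-average, so the $v$-derivative cannot be moved. The correct identity is
\[
\langle\partial_z\phi\,\partial_vF\rangle-\langle\partial_v\mathbb{P}_{\neq0}\phi\,\partial_zF\rangle=\partial_v\langle\partial_z\phi\,F\rangle,
\]
which does not vanish. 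Consequently the averaged equation \eqref{Pef} retains a term $\varrho V'\partial_v\langle\partial_z\phi\,F\rangle$, and you must show this has $\mathcal{G}^{\delta_0,1/2}$-norm $\lesssim_\delta\eps_1^{3/2}\langle t\rangle^{-3}$.

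\textbf{The gap in \eqref{extras2}.} Your weight-domination claim, that $|\dot A_{NR}|^{-1}A_{NR}^3\langle s\rangle^{3/2}\langle\xi\rangle^{-3/2}$ dominates $e^{2\delta_0\langle\xi\rangle^{1/2}}s^2\langle s\rangle^{1+\sigma_0}$, fails in the non-resonant regime $s>2|\xi|$. There $\partial_tw_{NR}/w_{NR}=0$ so $|\dot A_{NR}|/A_{NR}\approx_\delta\langle\xi\rangle^{1/2}\langle s\rangle^{-1-\sigma_0}$, giving
\[
|\dot A_{NR}|^{-1}A_{NR}^3\langle s\rangle^{3/2}\langle\xi\rangle^{-3/2}\approx_\delta A_{NR}^2\langle s\rangle^{5/2+\sigma_0}\langle\xi\rangle^{-2},
\]
which is short of $\langle s\rangle^{3+\sigma_0}$ by a factor $\langle s\rangle^{1/2}$. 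Since $A_{NR}^2(s,\xi)$ is bounded in $s$ for fixed $\xi$, the inequality fails as $s\to\infty$, and your Cauchy--Schwarz argument produces a divergent $t^{1/4-\sigma_0}$ factor rather than a uniform bound. (Your inequality $|\dot A_{NR}|\lesssim A_{NR}\langle\xi\rangle^{1/2}/\langle s\rangle^{1+\sigma_0}$ also goes the wrong way during resonant times, but that is not the fatal regime.)

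\textbf{The missing idea.} The paper resolves both issues with the same trick: substitute $F$ via the elliptic equation \eqref{Pephi} into $\langle\partial_z\phi\,F\rangle$. The dangerous $t^2$-term is $t^2(V')^2\langle\partial_z\phi\,\partial_z^2\phi\rangle=(t^2/2)(V')^2\langle\partial_z(\partial_z\phi)^2\rangle=0$, so only the $t^0$ and $t^1$ contributions survive. Combined with $\|\langle\nabla\rangle^4\mathbb{P}_{\neq0}\phi\|_{\mathcal{G}^{\delta_0,1/2}}\lesssim_\delta\eps_1\langle t\rangle^{-2}$ this yields the pointwise decay
\[
\|\langle\partial_z\phi\,F\rangle(t)\|_{\mathcal{G}^{\delta_0,1/2}}\lesssim_\delta\eps_1^2\langle t\rangle^{-3},
\]
which is integrable after multiplication by $t$, closing \eqref{extras2} without any logarithmic loss, and simultaneously gives \eqref{extras3}.
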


\begin{proof} The bounds \eqref{extras1} follow from \eqref{Pop3} and the improved bootstrap estimates \eqref{nar1}. We prove now the bounds \eqref{extras2}. The point is to obtain the full $\langle t\rangle^{-2}$ time decay for the functions $W_\ast/t$ and $\dot{V}$, in a weaker topology. Using \eqref{PeH} we write
\begin{equation}\label{rew20}
\begin{split}
&\partial_t(tW_\ast)=-t\dot{V}\partial_vW_\ast+tH_1+tH_2,\\
&H_1:=\varrho^2V'\partial_v(\langle\partial_z\phi\,F\rangle),\qquad H_2:=\varrho V'\langle P'_v(\partial_v-t\partial_z)F\rangle+\varrho^2 \langle P'_z\,\partial_zF\rangle.
\end{split}
\end{equation}
We have $\|\partial_vW_\ast(t)/t\|_{\mathcal{G}^{\delta_0,1/2}}+\|\dot{V}(t)\|_{\mathcal{G}^{\delta_0,1/2}}\lesssim_\delta\eps_1\langle t\rangle^{-7/4}$ (as consequences of \eqref{nar7} and \eqref{rew11}) and $\|\Psi^\dagger P_z'\|_{\mathcal{G}^{\delta_0,1/2}}+\|\Psi^\dagger P_v'\|_{\mathcal{G}^{\delta_0,1/2}}\lesssim_\delta \eps_1e^{-(\delta_0/10)\langle t\rangle^{1/2}}$ (due to \eqref{Par1.7}). Thus, for any $t\in[1,T]$,
\begin{equation}\label{rew21}
\|-t\dot{V}(t)\partial_vW_\ast(t)\|_{\mathcal{G}^{\delta_0,1/2}}+\|tH_2(t)\|_{\mathcal{G}^{\delta_0,1/2}}\lesssim_\delta\eps_1^2\langle t\rangle^{-3/2}.
\end{equation}

We would like to prove now similar bounds for the function $H_1$. Using \eqref{Pephi} we have
\begin{equation*}
\begin{split}
\langle\partial_z\phi\,F\rangle&=\langle\partial_z\phi\cdot\{\varrho^2\partial_z^2\phi+(V')^2(\partial_v-t\partial_z)^2\phi+V''(\partial_v-t\partial_z)\phi+\varrho V'(\partial_v-t\partial_z)\phi\}\rangle\\
&=(V')^2\langle\partial_z\phi\,\partial_v^2\phi\rangle-2t(V')^2\langle\partial_z\phi\,\partial_z\partial_v\phi\rangle+(V''+\varrho V')\langle\partial_z\phi(\partial_v-t\partial_z)\phi\rangle.
\end{split}
\end{equation*}
The main point is the vanishing of the term containing $t^2$. Moreover, it follows from Lemma \ref{nar13} that $\|\langle\nabla\rangle^4\mathbb{P}_{\neq 0}\phi\|_{\mathcal{G}^{\delta_0,1/2}}\lesssim_\delta\eps_1\langle t\rangle^{-2}$. Therefore, for any $t\in[1,T]$,
\begin{equation}\label{rew22}
\|\langle\nabla\rangle^4\langle\partial_z\phi\,F\rangle(t)\|_{\mathcal{G}^{\delta_0,1/2}}+\|H_1(t)\|_{\mathcal{G}^{\delta_0,1/2}}\lesssim_\delta\eps_1^2\langle t\rangle^{-3}.
\end{equation}

The desired estimate for $\langle t\rangle\|W_\ast(t)\|_{\mathcal{G}^{\delta_0,1/2}}$ holds using \eqref{rew20}--\eqref{rew22} and \eqref{boot1}. To bound $\langle t\rangle^2\|\dot{V}(t)\|_{\mathcal{G}^{\delta_0,1/2}}$ we use first \eqref{rew8}, \eqref{rew9}, and \eqref{nar4}, thus $\|\partial_v(e^H\dot{V})\|_{\mathcal{G}^{\delta_0,1/2}}\lesssim_\delta\eps_1^{3/2}\langle t\rangle^{-2}$. The desired bound follows using the uncertainty principle, as in the proof of \eqref{rew10} above.

Finally, to prove \eqref{extras3} we examine the formula \eqref{Pef}. The desired conclusion follows from \eqref{rew22} and \eqref{extras1}.
\end{proof}

\section{Bootstrap estimates, II: improved control of the variable $\phi$}\label{coimprov1}

We now prove the main bounds (\ref{boot3'}) on  the function $\phi$. More precisely,
\begin{proposition}\label{ImpTheta}
With the definitions and assumptions in Proposition \ref{MainBootstrap}, we have
\begin{equation}\label{Imboot3'}
\E_{\phi}(t)+\mathcal{B}_{\phi}(t)\lesssim_{\delta}\epsilon_1^3\leq\epsilon_1^2/2,\qquad{\rm for\,\,any\,\,}t\in[1,T].
\end{equation}
\end{proposition}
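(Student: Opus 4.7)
The plan is to treat \eqref{Imboot3'} as a purely elliptic estimate derived from \eqref{Pephi}, using Proposition \ref{BootImp1} to control the right-hand side $F$ and Lemma \ref{nar8} to control the variable coefficients. Write $L:=\varrho^2\partial_z^2 + (V')^2(\partial_v-t\partial_z)^2 + (V''+\varrho V')(\partial_v-t\partial_z)$. On the support of $\Psi$, the functions $\varrho$ and $V'$ are bounded above and below by positive constants depending only on $\kappa,\vartheta_0$, so that at Fourier mode $(k,\xi)$ with $k\neq 0$ the principal symbol obeys $|k^2\varrho^2+(V')^2(\xi-kt)^2|\gtrsim_\kappa k^2\langle t-\xi/k\rangle^2$. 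The weight $\frac{|k|^2\langle t\rangle\langle t-\xi/k\rangle^2}{(|\xi/k|^2+\langle t\rangle^2)^{1/2}}$ in $\E_\phi$ is chosen precisely so that dividing by this principal symbol leaves the factor $\langle t\rangle(|\xi/k|^2+\langle t\rangle^2)^{-1/2}\leq 1$, perfectly matching the weight $A_k$ on $F$ in $\E_F$. No time-differential inequality is needed: the estimate holds pointwise in $t$, and the space-time piece $\mathcal{B}_\phi(t)$ follows by replacing $A_k$ with $\sqrt{|\dot A_k|A_k}$ and integrating in time.

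To localize I multiply \eqref{Pephi} by $\Psi$, apply $\mathbb{P}_{\neq 0}$, and use the fact that the coefficients of $L$ depend only on $v$ (so $L$ commutes with $\mathbb{P}_{\neq 0}$). Since $\Psi\equiv 1$ on $\mathrm{supp}\,F$, this produces
$$L\bigl(\mathbb{P}_{\neq 0}(\Psi\phi)\bigr)=\mathbb{P}_{\neq 0}(\Psi F)+B,$$
where $B:=[L,\Psi]\mathbb{P}_{\neq 0}\phi$ is supported in $\{\Psi'\neq 0\}\subseteq[\underline{\upsilon}/3,\underline{\upsilon}/2]\cup[2\overline{\upsilon},3\overline{\upsilon}]$. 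On this boundary region $F\equiv 0$ and the coefficients of $L$ are the explicit functions given by \eqref{Prho}--\eqref{psiInf5}. Thus $B$ may be computed using the explicit Green function \eqref{G.krepeat} for the radial Laplacian (as in part (iii) of the proof of Theorem \ref{Thm}); integrating by parts against the oscillatory factor $e^{-iktv(t,\rho)}$ yields the rapid $|k|^{-N}\langle t\rangle^{-2}$ decay needed to absorb $B$ into the right-hand side.

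The remaining estimate on $\mathbb{P}_{\neq 0}(\Psi\phi)$ splits by frequency. For $\langle k,\xi\rangle\leq K_\ast$ the weights $A_k$ and the time factors are bounded, so classical $H^2$ elliptic regularity for $L$ on the bounded interval $[\underline{\upsilon}/3,3\overline{\upsilon}]$, together with $\|F\|_{L^2}\lesssim \eps_1^{3/2}$ from Proposition \ref{BootImp1}, yields a bound of size $\eps_1^3$. For $\langle k,\xi\rangle\geq K_\ast$ I apply the Fourier multiplier $M$ with symbol $M_k(t,\xi):=A_k(t,\xi)\frac{|k|^2\langle t\rangle\langle t-\xi/k\rangle^2}{(|\xi/k|^2+\langle t\rangle^2)^{1/2}}$, pass $M$ through $L$, and invert the principal symbol in Fourier space. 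The main term gives
$$\|M\,\mathbb{P}_{\neq 0}(\Psi\phi)\|_{L^2}\lesssim_\delta \|A_k\,\mathbb{P}_{\neq 0}(\Psi F)\|_{L^2}+\mathrm{commutators}\lesssim_\delta \eps_1^{3/2}+\mathrm{commutators}.$$

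The main obstacle is the commutators $[M,\varrho^2]\partial_z^2\phi$, $[M,(V')^2](\partial_v-t\partial_z)^2\phi$, and $[M,V''+\varrho V'](\partial_v-t\partial_z)\phi$, since $\varrho^2, (V')^2, V''$ are not small perturbations of constants. I decompose each coefficient via \eqref{rew6}--\eqref{rew6.25} into a Gevrey smooth \emph{background} ($\sqrt{2\pi v/\kappa}$, $-2v\sqrt{2\pi v/\kappa}$, $6v^2(2\pi/\kappa)$) plus a \emph{small} remainder ($\varrho_\ast$, $V_\ast$, and their products), the latter of size $\eps_1$ in the $R$-norm by Lemma \ref{nar8}; the remainder commutators are absorbed immediately by Lemma \ref{Multi0} together with the algebra property \eqref{rew1} and gain an $\eps_1$ factor. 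For the background pieces the decisive ingredient is the refined smoothness property \eqref{S1} of $A_k$: for $\langle\xi-\eta\rangle\lesssim 1\ll\langle k,\eta\rangle$, the symbol difference $M_k(t,\xi)-M_k(t,\eta)$ gains a factor $\sqrt{\delta}+C(\delta)\langle k,\xi\rangle^{-1/2}$, and combining this with the bilinear weighted bounds \eqref{TLX4}, \eqref{TLX7} of Lemma \ref{lm:Multi} and the frequency-localized bounds \eqref{nar34.01}--\eqref{nar34.03} produces commutator symbols majorized by $A_R(t,\xi-\eta)\cdot M_k(t,\eta)\cdot(\sqrt{\delta}+\text{rapidly decaying tail})$. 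Since $\delta$ is chosen small relative to the structural constants $\kappa,\vartheta_0,\delta_0$, these commutators are absorbed into the left-hand side. The identical scheme with $\sqrt{|\dot A_k|A_k}$ and the bounds \eqref{DtVMulti}, \eqref{vfc30.7} yields the corresponding control of $\mathcal{B}_\phi(t)$, closing \eqref{Imboot3'}.
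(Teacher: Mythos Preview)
Your overall architecture---localize with $\Psi$, apply a weighted Fourier multiplier, use physical-space elliptic regularity plus commutator absorption via the smoothness bound \eqref{S1}---matches the paper's section~\ref{coimprov1} essentially step for step. Two points, however, are genuine gaps.

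\textbf{The boundary term $B$.} Your claim that integration by parts against $e^{-iktv(t,\rho)}$ yields ``rapid $|k|^{-N}\langle t\rangle^{-2}$ decay'' is not enough and is not the correct mechanism. You must control $A^\dagger B$ in $L^2$, and on the support of $\widetilde{B}(k,\cdot)$ the phase $e^{iktv}$ forces $\xi\approx kt$, so the weight you are fighting is $A_k(t,kt)\gtrsim e^{\delta_0\sqrt{|k|t}}$; no polynomial decay in $k$ or $t$ beats this. Moreover each integration by parts in $\rho$ lands a derivative on $G_k(r,\rho)\sim(\rho/r)^{\pm|k|}$, which costs a factor $|k|$ and wipes out the $|k|^{-1}$ gain from the phase. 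The paper's argument (Lemmas~\ref{G.k4} and~\ref{lemmawg}) is different: because $\mathrm{supp}\,\partial_v\Psi$ and $\mathrm{supp}\,F$ are separated in $v$, the kernel $\mathcal{G}_k^\flat(t,v,v')=\Theta(v)\mathcal{G}_k(t,v,v')\Theta'(v')$ factors (see \eqref{G.k8}) into a product of Gevrey functions carrying a genuine exponential factor $e^{-c_0|k|}$, and one shows $\mathcal{G}_k^\flat$ lies in the weighted space with $A_R\otimes A_R$ norm $\lesssim e^{-c_0|k|}$. One then writes $\widetilde{(\Theta\phi)}(k,\xi)$ as a convolution of $\widetilde{F'}(k,\cdot)$ against $\widetilde{\mathcal{G}_k^\flat}(\cdot-kt,kt-\cdot)$ and propagates the full $A_k$-weighted bound from $F$ via \eqref{w.07}--\eqref{w.09}. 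No integration by parts enters.

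\textbf{The space--time piece $\mathcal{B}_\phi$.} Saying ``replace $A_k$ with $\sqrt{|\dot A_k|A_k}$'' glosses over a real obstruction: the commutator argument requires the \emph{symbol} you commute to satisfy a smoothness bound of the type \eqref{S1}, and $(\dot A_k/A_k)(t,\xi)$ as given by \eqref{eq:A_kxi} does not obviously have this property---it is built from $\partial_t w_k/w_k$, which has kinks in $\xi$ at the interval endpoints $t_{l,\xi}$. The paper handles this by introducing a separate mollified multiplier $\mu_k$ (definitions \eqref{muD}--\eqref{mu1}), proving $\mu_k\approx_\delta |\dot A_k/A_k|$ (Lemma~\ref{Lmu1}), and then establishing the needed smoothness directly for $\mu_k$ (Lemma~\ref{Lmu3}). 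Only with $B^\dagger=\sqrt{\mu_k}\,A^\dagger$ do the commutator bounds for $\mathcal{D}_j$ go through.

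A minor remark: your background/remainder splitting of $\varrho^2,(V')^2,V''$ is harmless but unnecessary. The paper commutes with the full localized coefficients $h_a\in\{\Psi'_0\varrho^2,\Psi'_0(V')^2,\langle\partial_v\rangle^{-1}\Psi'_0(V''+\varrho V')\}$, which satisfy $\|h_a\|_R\lesssim 1$, and uses Lemma~\ref{A-A} directly; the $C_0(\delta)\langle k,\rho\rangle^{-1/8}$ tail in \eqref{A-A2} is disposed of by the unweighted low-frequency estimate of Lemma~\ref{ImpThl}, exactly as you suggest for low frequencies.
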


The rest of the section is concerned with the proof of Proposition \ref{ImpTheta}, which consists of two main steps. The first step is to control the contribution of low frequencies, where the weight $A_k$ does not play a role, and the estimates follow from standard theory of elliptic equations.  In the second and main step, we commute the elliptic equation (\ref{Pephi}) with the Fourier multiplier $A_k$ and use the estimates for low frequencies obtained in the first step, to obtain the weighted estimates on the high frequencies. The key is to control the commutator terms, and show that these commutator terms are perturbative when the frequency is sufficiently high. 

We begin with an estimate which is effective in controlling low frequencies.
\begin{lemma}\label{ImpThl}
With the definitions and assumptions in Proposition \ref{MainBootstrap}, for $t\in[1,T]$ we have
\begin{equation}\label{Imboot3'l}
\int_{\mathbb{T}\times\mathbb{R}}\big|[\partial_z^2+(\partial_v-t\partial_z)^2](\Psi\phi)(t,z,v)\big|^2dzdv\lesssim \int_{\mathbb{T}\times\mathbb{R}}|F(t,z,v)|^2dzdv\lesssim_\delta\eps_1^3.
\end{equation}
\end{lemma}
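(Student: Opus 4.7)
The second inequality in \eqref{Imboot3'l} is immediate: since $A_k(t,\xi)\ge 1$ for all $(t,k,\xi)$ (a consequence of \eqref{reb12}--\eqref{reb13} and $\lambda\ge\delta_0$), Parseval gives $\|F(t)\|_{L^2(\T\times\R)}^2\le \mathcal{E}_F(t)\lesssim_\delta \eps_1^3$ from the improved bound \eqref{boot3'} of Proposition \ref{BootImp1}. So only the elliptic inequality requires proof.

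The plan is to recognize the operator $\partial_z^2+(\partial_v-t\partial_z)^2$ as a flat Laplacian in disguise. Under the shear change of variables $\theta:=z+tv$ (with $v$ kept as a coordinate), which is linear with unit Jacobian, one has $\partial_z=\partial_\theta|_v$ and $\partial_v|_z-t\partial_z=\partial_v|_\theta$, so
\begin{equation*}
\partial_z^2+(\partial_v-t\partial_z)^2=\partial_\theta^2+\partial_v^2 \quad\text{in the }(\theta,v)\text{ frame},
\end{equation*}
and $(\Psi\phi)(z,v)$ becomes $u(\theta,v):=\Psi(v)\,\psi'(\theta,r(v))$, where $r(v)$ inverts $r\mapsto v(t,r)$. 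Expanding $(\partial_\theta^2+\partial_v^2)u$ by the chain rule, using $\partial_v|_\theta=(1/V')\partial_r$ and substituting for $\partial_r^2\psi'$ from the polar-coordinate form $\partial_r^2\psi'+(1/r)\partial_r\psi'+(1/r^2)\partial_\theta^2\psi'=\omega'=F$ of the Poisson equation \eqref{St}, reduces the integrand on the left-hand side of \eqref{Imboot3'l} to a fixed linear combination of $\partial_\theta^2\psi'$, $F$, $\partial_r\psi'$, and $\psi'$, with coefficients built from $\Psi,\Psi',\Psi'',V',V'',\varrho$.

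To control these coefficients, I use the bootstrap inputs: the Gevrey--$L^\infty$ embedding applied to $\mathcal{E}_{V_\ast}(t)$, $\mathcal{E}_{\varrho_\ast}(t)\le \eps_1^2$ yields $\|V_\ast(t)\|_{L^\infty}+\|\Psi^\dagger\varrho_\ast(t)\|_{L^\infty}\lesssim \eps_1\ll 1$; combined with the exact leading-order identities $\varrho=\sqrt{2\pi v/\kappa}$, $V'=-2v\varrho$, and $V''=V'\partial_vV'$ (see \eqref{cf1}, \eqref{cf4}, \eqref{PV''}), this forces $|V'|, |1/V'|, |V''|, |\varrho|$ into a fixed compact interval away from zero on the support of $\Psi\subset [\ubv/3,3\obv]$. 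The Jacobians relating the measures $dx\,dy$, $r\,d\theta\,dr$, $d\theta\,dr$, $d\theta\,dv$, and $dz\,dv$ are likewise uniformly comparable on the relevant annulus. Thus it suffices to bound $\|\Psi\,\partial_\theta^2\psi'\|_{L^2}+\|\Psi\,\partial_r\psi'\|_{L^2}+\|\Psi''\,\psi'\|_{L^2}$ and $\|F\|_{L^2}$ by $\|F\|_{L^2}$.

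The remaining estimates are classical. The $H^2$ control $\|D^2_{x,y}\psi'\|_{L^2(\R^2)}\lesssim \|\omega'\|_{L^2(\R^2)}$ is the Calder\'on--Zygmund estimate for $\Delta_{x,y}\psi'=\omega'$ (boundedness of the double Riesz transforms on $L^2$); since $\partial_\theta=-y\partial_x+x\partial_y$ and $\partial_r=(x\partial_x+y\partial_y)/r$ have bounded coefficients on the compact annulus, this gives the desired bound on $\partial_\theta^2\psi'$. The lower-order bounds on $\partial_r\psi'$ and $\psi'$ on the bounded region follow from the explicit Biot--Savart kernel $\psi'=(2\pi)^{-1}\log|\cdot|\ast\omega'$ together with the local $L^2$-integrability of $|\cdot|^{-1}$ and $\log|\cdot|$ in $\R^2$ and the compact support of $\omega'$. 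The main technical point is the quantitative verification of the coefficient and Jacobian bounds described above; everything else is routine chain-rule bookkeeping through the chain of coordinate changes $(x,y)\leftrightarrow(\theta,r)\leftrightarrow(\theta,v)\leftrightarrow(z,v)$.
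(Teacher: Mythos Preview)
Your proof is correct and follows essentially the same approach as the paper's. The paper's argument is terser: it simply notes that, via the changes of variables \eqref{rea21} and \eqref{u'In}, the first inequality reduces to the standard interior elliptic estimate $\|\psi\|_{H^2(B_{4/\vartheta_0}\setminus B_{\vartheta_0/4})}\lesssim\|\omega\|_{L^2(\R^2)}$ for $\Delta\psi=\omega$. Your version spells out this reduction in more detail, factoring the change of variables as $(z,v)\to(\theta,v)\to(\theta,r)\to(x,y)$ and checking the coefficient bounds explicitly, but the underlying idea is identical.
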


\begin{proof}
The last inequality follows from the improved bootstrap bounds in Proposition \ref{BootImp1}. For the first inequality, in view of the changes of variables (\ref{rea21}) and \eqref{u'In} it suffices to show that 
\begin{equation}\label{Imbl3.1}
\int_{B_{4/\vartheta_0}\backslash B_{\vartheta_0/4}}|\psi(t,x,y)|^2+|\nabla \psi(t,x,y)|^2+|\nabla^2\psi(t,x,y)|^2\,dxdy\lesssim \int_{\mathbb{R}^2}|\omega(t,x,y)|^2dxdy,
\end{equation}
which follows by elliptic theory from the defining formula $\Delta\psi=\omega$.
\end{proof}

We now proceed to prove (\ref{Imboot3'}), using (\ref{Imbl3.1}). From the equation (\ref{Pephi}), localizing $\phi$ using $\Psi$, we get that
\begin{equation}\label{Lophi1}
\begin{split}
&\varrho^2\partial_z^2\big(\Psi\phi\big)+(V')^2(\partial_v-t\partial_z)^2\big(\Psi\phi\big)+V''(\partial_v-t\partial_z)\big(\Psi\phi\big)+\varrho V' (\partial_v-t\partial_z)\big(\Psi\phi\big)\\
&=\Psi F+2(V')^2\partial_v\Psi (\partial_v-t\partial_z)\phi+[(V')^2\partial_v^2\Psi+V''\partial_v\Psi+\varrho V'\partial_v\Psi]\cdot\phi\\
&=\Psi F+g_{1}+g_{2},
\end{split}
\end{equation}
where 
\begin{equation}\label{thegj}
\begin{split}
&g_1:=2(V')^2\partial_v\Psi (\partial_v-t\partial_z)\phi,\qquad g_2:=[(V')^2\partial_v^2\Psi+V''\partial_v\Psi+\varrho V'\partial_v\Psi]\phi.
\end{split}
\end{equation}
Let $\mathbb{Z}^\ast:=\mathbb{Z}\setminus\{0\}$ and let $A^\dagger$ denote the operator defined by the Fourier multiplier
\begin{equation}\label{Aast}
A^{\dagger}(t,k,\xi):=\mathbf{1}_{\mathbb{Z}^\ast}(k)A_k(t,\xi)\frac{\langle t\rangle}{\langle t\rangle+|\xi/k|}
\end{equation}
and fix a Gevrey cutoff function $\Psi_0$ supported in $\big[\ubv/5,5\obv\big]$, equal to $1$ in $\big[\ubv/4,4\obv\big]$, and satisfying $\big\|e^{\langle\xi\rangle^{3/4}}\widetilde{\Psi_0}(\xi)\big\|_{L^\infty}\lesssim 1$. We apply the operator $\Psi_0A^\dagger$ to (\ref{Lophi1}) to obtain
\begin{equation}\label{Lophi1.1}
\begin{split}
\varrho^2\partial_z^2(\Psi_0A^\dagger)\big(\Psi\phi\big)&+(V')^2(\partial_v-t\partial_z)^2(\Psi_0A^\dagger)\big(\Psi\phi\big)+(V''+\varrho V')(\partial_v-t\partial_z)(\Psi_0A^\dagger)\big(\Psi\phi\big)\\
&=(\Psi_0A^\dagger)\big(\Psi F\big)+\Psi_0A^\dagger g_1+\Psi_0A^\dagger g_2+\sum_{j=1}^3\mathcal{C}_j.
\end{split}
\end{equation}
The commutator terms $\mathcal{C}_j$ are defined as
\begin{equation}\label{defC}
\begin{split}
&\mathcal{C}_1:=\varrho^2\partial_z^2(\Psi_0A^\dagger)\big(\Psi\phi\big)-(\Psi_0A^\dagger)\big[\varrho^2\partial_z^2\big(\Psi\phi\big)\big];\\
&\mathcal{C}_2:=(V')^2(\partial_v-t\partial_z)^2(\Psi_0A^\dagger)\big(\Psi\phi\big)-(\Psi_0A^\dagger)\big[(V')^2(\partial_v-t\partial_z)^2\big(\Psi\phi\big)\big];\\
&\mathcal{C}_3:=(V''+\varrho V')(\partial_v-t\partial_z)(\Psi_0A^\dagger)\big(\Psi\phi\big)-(\Psi_0A^\dagger)\big[(V''+\varrho V')(\partial_v-t\partial_z)\big(\Psi\phi\big)\big].
\end{split}
\end{equation}

By examining the definitions, we notice that, for any $t\in[1,T]$,
\begin{equation}\label{cros1}
\mathcal{E}_\phi(t)\approx\big\|[\partial_z^2+(\partial_v-t\partial_z)^2]A^{\dagger}\big(\Psi\phi\big)(t,z,v)\big\|_{L^2_{z,v}}.
\end{equation}
We will use the elliptic equation \eqref{Lophi1.1} to bound the energy functional $\mathcal{E}_\phi(t)$. 

Similarly, to control the space-time integrals $\mathcal{B}_\phi(t)$ we would like to apply suitable weighted operators to the identity \eqref{Lophi1}. We have to be slightly careful, because our weights need to have suitable smoothness in $\xi$ in order to estimate the resulting commutator terms. Therefore, we define $\mu^\#(t,\xi)$ for $t\ge 0$ and $\xi\geq 0$ as follows:
\begin{equation}\label{muD}
\mu^{\#}(t,\xi):=\left\{\begin{array}{ll}
                            0&{\rm if}\,\,|\xi|\leq \delta^{-10},\\
                            \delta^2& {\rm if}\,\,|\xi|>\delta^{-10} \,\,{\rm and}\,\,t<t_{k_0(\xi),\xi},\\
                            \frac{\delta^2}{1+\delta^2|t-\xi/k|}&{\rm if}\,\,|\xi|>\delta^{-10}\,\,{\rm and}\,\,t\in I_{k,\xi}, \,\,k\in\{1,2,\dots,k_0(\xi)\},\\
                            0&{\rm if}\,\,t>2|\xi|.
                         \end{array}\right.
\end{equation}
Then we define $\mu^\#(t,\xi):=\mu^\#(t,|\xi|)$ if $\xi\leq 0$. Compare with the definitions in subsection \ref{weightsdefin}, in particular the formulas \eqref{reb8}. Then we define
 \begin{equation}\label{muaD}
 \mu^\ast(t,\xi):=\int_{\mathbb{R}}\mu^{\#}(t,\rho)\frac{1}{d_0L_{\delta'}(t,\xi)}\varphi\bigg(\frac{\xi-\rho}{L_{\delta'}(t,\xi)}\bigg)\,d\rho,\qquad L_{\delta'}(t,\xi):=1+\frac{\delta'\langle\xi\rangle}{\langle\xi\rangle^{1/2}+\delta' t},
 \end{equation}
 where $\delta'=\delta'(\delta)\in(0,1)$ is chosen sufficiently small. Compare with \eqref{dor1}. Finally, we define
 \begin{equation}\label{mu1}
 \begin{split}
 \mu_k(t,\xi)&:=\frac{\langle k,\xi\rangle^{1/2}}{\langle t\rangle^{1+\sigma_0}}+\frac{\mu^{\ast}(t,\xi)}{1+e^{\sqrt{\delta}(|k|^{1/2}-\langle\xi\rangle^{1/2})}b_k(t,\xi)},\\
 \mu_R(t,\xi)&:=\frac{\langle\xi\rangle^{1/2}}{\langle t\rangle^{1+\sigma_0}}+\mu^{\ast}(t,\xi).
 \end{split}
 \end{equation}
These definitions are motivated by the formulas \eqref{TLX3.5}--\eqref{eq:A_kxi}. All the required properties of the weights $\mu_k$ and $\mu_R$ are proved in subsection \ref{musection}.

Let $B^\dagger$ denote the operator defined by the Fourier multiplier
\begin{equation}\label{Bast}
B^{\dagger}(t,k,\xi):= \mu_k(t,\xi)^{1/2}A^\dagger(t,k,\xi).
\end{equation}
We apply the operator $\Psi_0 B^\dagger$ to equation (\ref{Lophi1}) to obtain
\begin{equation}\label{Lophi1.11}
\begin{split}
\varrho^2\partial_z^2(\Psi_0B^\dagger)\big(\Psi\phi\big)&+(V')^2(\partial_v-t\partial_z)^2(\Psi_0B^\dagger)\big(\Psi\phi\big)+(V''+\varrho V')(\partial_v-t\partial_z)(\Psi_0B^\dagger)\big(\Psi\phi\big)\\
&=(\Psi_0B^\dagger)\big(\Psi F\big)+\Psi_0B^\dagger g_1+\Psi_0B^\dagger g_2+\sum_{j=1}^3\mathcal{D}_j.
\end{split}
\end{equation}
The commutator terms $\mathcal{D}_j$ are defined as
\begin{equation}\label{defD}
\begin{split}
&\mathcal{D}_1:=\varrho^2\partial_z^2(\Psi_0B^\dagger)\big(\Psi\phi\big)-(\Psi_0B^\dagger)\big[\varrho^2\partial_z^2\big(\Psi\phi\big)\big];\\
&\mathcal{D}_2:=(V')^2(\partial_v-t\partial_z)^2(\Psi_0B^\dagger)\big(\Psi\phi\big)-(\Psi_0B^\dagger)\big[(V')^2(\partial_v-t\partial_z)^2\big(\Psi\phi\big)\big];\\
&\mathcal{D}_3:=(V''+\varrho V')(\partial_v-t\partial_z)(\Psi_0B^\dagger)\big(\Psi\phi\big)-(\Psi_0B^\dagger)\big[(V''+\varrho V')(\partial_v-t\partial_z)\big(\Psi\phi\big)\big].
\end{split}
\end{equation}

In view of Lemma \ref{Lmu1} (i) we notice that that for $t\in[1,T]$
\begin{equation}\label{w0.5}
\begin{split}
\E_{\phi}(t)&\approx_{\delta}\E'_{\phi}(t):=\|[\partial_z^2+(\partial_v-t\partial_z)^2]A^\dagger(\Psi\phi)(t)\|^2_{L^2(\mathbb{T}\times\mathbb{R})}\\
\mathcal{B}_{\phi}(t)&\approx_{\delta}\B'_{\phi}(t):=\int_1^t\|[\partial_z^2+(\partial_v-s\partial_z)^2]B^\dagger(\Psi\phi)(s)\|^2_{L^2(\mathbb{T}\times\mathbb{R})}ds.
\end{split}
\end{equation}
Using \eqref{Lophi1.1}, \eqref{Lophi1.11}, and a change of variables as in the proof of Lemma \ref{ImpThl}, we have
\begin{equation}\label{w1.0}
\begin{split}
\big\|[\partial_z^2+(\partial_v-t\partial_z)^2](\Psi_0A^\dagger)&(\Psi\phi)(t)\big\|^2_{L^2(\mathbb{T}\times\mathbb{R})}+\int_1^t\big\|[\partial_z^2+(\partial_v-s\partial_z)^2](\Psi_0B^\dagger)(\Psi\phi)(s)\big\|^2_{L^2(\mathbb{T}\times\mathbb{R})}ds\\
  &\lesssim \sum_{h\in\{\Psi F, g_1,g_2\}}\Big[\|\Psi_0A^\dagger h(t)\|^2_{L^2(\mathbb{T}\times\mathbb{R})}+\int_1^t\|\Psi_0B^\dagger h(s)\|^2_{L^2(\mathbb{T}\times\mathbb{R})}ds\Big]\\
&+\sum_{j=1}^3\Big[\|\mathcal{C}_j(t)\|^2_{L^2(\mathbb{T}\times\mathbb{R})}+\int_1^t\|\mathcal{D}_j(s)\|^2_{L^2(\mathbb{T}\times\mathbb{R})}ds\Big],
\end{split}
\end{equation}
for any $t\in[1,T]$. In view of \eqref{w0.5}-(\ref{w1.0}), to prove (\ref{Imboot3'l}) it suffices to prove that
\begin{equation}\label{wg1}
\sum_{h\in\{\Psi F, g_1,g_2\}}\Big[\|A^\dagger h(t)\|^2_{L^2(\mathbb{T}\times\mathbb{R})}+\int_1^t\|B^\dagger h(s)\|^2_{L^2(\mathbb{T}\times\mathbb{R})}ds\Big]\lesssim C(\delta)\epsilon_1^3
\end{equation}
and
\begin{equation}\label{wCD}
\begin{split}
\sum_{j=1}^4\Big[\|\mathcal{C}_j(t)&\|^2_{L^2(\mathbb{T}\times\mathbb{R})}+\int_1^t\|\mathcal{D}_j(s)\|^2_{L^2(\mathbb{T}\times\mathbb{R})}ds\Big]\lesssim C(\delta)\epsilon_1^3+\sqrt{\delta}\big[\E'_{\phi}(t)+\B'_{\phi}(t)\big],
\end{split}
\end{equation}
 for any $t\in[1,T]$, where the commutator terms $\mathcal{C}_1,\mathcal{C}_2,\mathcal{C}_3,\mathcal{D}_1,\mathcal{D}_2,\mathcal{D}_3$ are defined in \eqref{defC} and \eqref{defD}, and the additional commutator terms $\mathcal{C}_4,\mathcal{D}_4$ are defined by
\begin{equation}\label{defCD4}
\begin{split}
\mathcal{C}_4:=&[\partial_z^2+(\partial_v-t\partial_z)^2](\Psi_0A^\dagger)(\Psi\phi)-[\partial_z^2+(\partial_v-t\partial_z)^2](A^\dagger\Psi_0)(\Psi\phi),\\
\mathcal{D}_4:=&[\partial_z^2+(\partial_v-t\partial_z)^2](\Psi_0B^\dagger)(\Psi\phi)-[\partial_z^2+(\partial_v-t\partial_z)^2](B^\dagger\Psi_0)(\Psi\phi).
\end{split}
\end{equation}

The bounds for \eqref{wg1} for $h=\Psi F$ follow from the improved bootstrap bounds \eqref{nar1} on $F$ and the bilinear weighted bounds in Lemma \ref{lm:Multi} (ii). In the rest of the section we first prove the bounds \eqref{wg1} for $g_1,g_2$ and then we prove the commutator estimates \eqref{wCD}.

\subsection{Bounds \eqref{wg1} on the terms $A^{\dagger}g_j, \,B^{\dagger}g_j$, $j=1,2,$} We prove now the bounds (\ref{wg1}) for the functions $g_1$ and $g_2$. The main difficulty is that the boostrap assumption on $\phi$ gives information on the localized stream function $\Psi\phi$, not on $\phi$ itself. To compensate, we need to use the compact support assumption of the normalized vorticity function $F$.

Let $\phi_k(t,v)$ be the $k-$th Fourier mode of $\phi$ in $z$, i.e.,
\begin{equation}\label{phik}
\phi_k(t,v)=\frac{1}{2\pi}\int_{\mathbb{T}}\phi(t,z,v)e^{-ikz}dz.
\end{equation}
We define similarly $f_k(t,v)$ as the $k-$th Fourier mode of $f$ in $z$. In addition, let $\psi'_k(t,r),\omega'_k(t,r)$ denote the $k-$th Fourier mode of $\psi'(t,r,\theta),\omega'(t,r,\theta)$ in $\theta$, respectively. To calculate $\phi$ we use the relations
\begin{equation}\label{prpv}
\psi'_k(t,r)=\phi_k(t,v(t,r))e^{-iktv(t,r)}\qquad {\rm and}\qquad \omega'_k(t,r)=F_k(t,v(t,r))e^{-iktv(t,r)},
\end{equation}
which are consequences of the change of variables (\ref{PCoC})-(\ref{rea21}). Taking Fourier transform in the equation (\ref{St}) for $\psi'$ and using (\ref{prpv}), we obtain that
\begin{equation}\label{Stk}
\partial_r^2\psi'_k(t,r)+\frac{1}{r}\partial_r\psi'_k(t,r)-\frac{k^2\psi'_k}{r^2}=F_k(t,v(t,r))e^{-iktv(t,r)}.
\end{equation}
Define for $k\in\mathbb{Z}\backslash\{0\}, r,r'\in(0,\infty)$,
\begin{equation}\label{G.k}
G_k(r,r'):=\left\{\begin{array}{ll}
                         -\frac{r'}{2|k|}\big(\frac{r}{r'}\big)^{|k|}&{\rm for}\,\,r<r';\\
                          -\frac{r'}{2|k|}\big(\frac{r'}{r}\big)^{|k|}&{\rm for}\,\,r>r'.
                        \end{array}\right.
\end{equation}
The kernel $G_k$ is the fundamental solution to (\ref{Stk}) when $k\in\mathbb{Z}\backslash\{0\}$ and we can express $\psi'_k$, $k\in\mathbb{Z}\backslash\{0\}$, as 
\begin{equation}\label{phi.k1}
\psi'_k(t,r)=\int_{\mathbb{R}}G_k(r,r')F_k(t,v(t,r'))e^{-iktv(t,r')}dr'.
\end{equation}
Let $\mathcal{G}_k(t,v,v')$ for $k\in\mathbb{Z}\backslash\{0\}, v,v'\in[\underline{v}/10,10\overline{v}]$ be defined by
\begin{equation}\label{G.k1}
\mathcal{G}_k(t,v(t,r),v(t,r'))=G_k(r,r')\qquad {\rm for }\,\,r, r'\in(0,\infty).
\end{equation}
Using (\ref{prpv}) and the change of variable $r'\to v(t,r')$ in the integral of (\ref{phi.k1}), we get
\begin{equation}\label{phi.k2}
\phi_k(t,v)=e^{iktv}\int_{\mathbb{R}}\mathcal{G}_k(t,v,v')F_k(t,v')e^{-iktv'}\frac{1}{V'(t,v')}\,dv'.
\end{equation}
This is our main formula we use to estimate the functions $g_1$ and $g_2$. We first prove suitable bounds on $\mathcal{G}_k$. 

\begin{lemma}\label{G.k4}
Assume $\Theta,\Theta':\mathbb{R}\to\mathbb{C}$ satisfy $\big\|e^{\langle\xi\rangle^{2/3}}\widetilde{\Theta}(\xi)\big\|_{L^\infty}+\big\|e^{\langle\xi\rangle^{2/3}}\widetilde{\Theta'}(\xi)\big\|_{L^\infty} \lesssim 1$ and have separated supports, i.e. assume that there is $a\in[\underline{v}/4,4\overline{v}]$ and a constant $c>0$ such that $\Theta$ is supported in $[\underline{v}/8,a-c]$ and $\Theta'$ is supported in $[a+c,8\overline{v}]$ (or vice versa). Let
\begin{equation}\label{G.k5}
\mathcal{G}^{\flat}_k(t,v,v'):=\Theta(v)\,\mathcal{G}_k(t,v,v')\,\Theta'(v').
\end{equation}
Then there is a small constant $c_0>0$ (independent of $\delta,k$) such that for any $t\in[0,T]$ we have
\begin{equation}\label{G.k6}
\begin{split}
&\int_{\mathbb{R}^2}A_R^2(t,\xi)A_R^2(t,\eta)\big|\widetilde{\mathcal{G}^{\flat}_k}(t,\xi,\eta)\big|^2 d\xi d\eta\\
&+\,\int_0^t\int_{\mathbb{R}^2}\Big\{\big|\dot{A}_RA_R(s,\xi)\big|A_{R}^2(t,\eta)+\big|\dot{A}_RA_R(s,\eta)\big|A_R^2(t,\xi)\Big\}\big|\widetilde{\mathcal{G}^{\flat}_k}(s,\xi,\eta)\big|^2 d\xi d\eta ds\lesssim e^{-c_0|k|}.
\end{split}
\end{equation}
\end{lemma}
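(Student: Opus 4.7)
The approach is to exploit the exponential decay of $G_k(r,r')$ in $|k|$ when $r,r'$ lie in separated regions, and then transfer this decay to $\mathcal{G}^\flat_k(t,v,v')$ by composition with the Gevrey-$1/2$ change of variables $v \mapsto \mathcal{Y}(t,v) := 1/\varrho(t,v)$. The first step is geometric: since $\Theta$ and $\Theta'$ have disjoint supports in $v$ and the map $v \mapsto \mathcal{Y}(t,v)$ is strictly monotone (being approximately $\sqrt{\kappa/(2\pi v)}$ by \eqref{cf4} and the bootstrap smallness of $\varrho_\ast$), the images in $r$ lie in disjoint intervals with a gap uniform in $t \in [0,T]$. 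Consequently, on the support of $\mathcal{G}^\flat_k$ the ratio $\min(r/r',r'/r)$ is bounded above by some $\rho_0 < 1$ depending only on $\vartheta_0, \kappa$ and the separation constant $c$.

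Starting from the explicit formula \eqref{G.k} (say in the case $r' < r$, so that $G_k(r,r') = -(2|k|)^{-1}r'^{|k|+1}r^{-|k|}$), direct differentiation gives $|D^m_{r,r'}G_k| \lesssim C^{m+1}|k|^m\rho_0^{|k|}r_{\min}^{-m}$ on separated supports. Applying the elementary AM-GM inequality $|k|^m\rho_0^{|k|/2} \leq (2m/(e\log(1/\rho_0)))^m$ to absorb the polynomial factor $|k|^m$ yields the analytic-with-decay bound
\[
|D^m_{r,r'}G_k(r,r')| \leq C_1^{m+1}(m+1)^m\, e^{-c_1|k|},
\]
uniformly in $k$, with constants $C_1, c_1 > 0$ depending only on $\vartheta_0, \kappa, c$. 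Since Gevrey-$1$ regularity is stronger than Gevrey-$1/2$, this is in particular a Gevrey-$1/2$ bound with prefactor $e^{-c_1|k|}$.

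Next, the series expansion \eqref{rew6.1} together with the algebra property \eqref{rew1} and the bootstrap bounds on $\varrho_\ast$ gives $\|\Psi^\flat\mathcal{Y}(t)\|_R \lesssim 1$ uniformly in $t$, and Lemma \ref{lm:Gevrey}(ii) translates this into Gevrey-$1/2$ derivative bounds $|D^\alpha \mathcal{Y}(t,v)| \leq M^{|\alpha|}(|\alpha|+1)^{2|\alpha|}$ for $v$ in the relevant interval, with $M$ depending only on $\vartheta_0, \kappa$ (in particular uniform in $t$). The composition estimate Lemma \ref{GPF}(ii) with $s=1/2$ applied to $f = G_k$ and $g = \mathcal{Y}\otimes\mathcal{Y}$, followed by multiplication by the Gevrey-$3/4$ cutoffs $\Theta, \Theta'$ via Lemma \ref{GPF}(i), then yields
\[
|D^\alpha_{v,v'}\mathcal{G}^\flat_k(t,v,v')| \leq L^{|\alpha|}(|\alpha|+1)^{2|\alpha|}\,e^{-c_1|k|},
\]
with $L$ uniform in $t\in[0,T]$ and $k\in\mathbb{Z}\setminus\{0\}$. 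By Lemma \ref{lm:Gevrey}(i),
\[
|\widetilde{\mathcal{G}^\flat_k}(t,\xi,\eta)| \lesssim e^{-c_1|k|}\, e^{-\mu\langle\xi,\eta\rangle^{1/2}}
\]
for some $\mu = \mu(L) > 0$ depending only on $\vartheta_0, \kappa$, and the supports of $\Theta, \Theta'$.

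Finally, using the crude upper bound $A_R(t,\xi)\lesssim e^{(3\delta_0/2+\sqrt{\delta}+\delta)\langle\xi\rangle^{1/2}}$ from \eqref{reb11}--\eqref{reb13}, and choosing $\delta_0$ and $\delta$ small enough so that $\mu > 20\delta_0$, the weighted density $A_R^2(t,\xi)A_R^2(t,\eta)|\widetilde{\mathcal{G}^\flat_k}|^2$ is integrable in $(\xi,\eta)$ and bounded by $Ce^{-2c_1|k|}$, which settles the first term in \eqref{G.k6}. For the space-time term, we exploit the monotonicity $\partial_tA_R^2\leq 0$, so that $\int_0^t|\dot A_RA_R|(s,\xi)\,ds \leq A_R^2(0,\xi)/2$; after Fubini this reduces the integral in $s$ to a stationary estimate of the same type, which is absorbed by the $e^{-\mu\langle\xi,\eta\rangle^{1/2}}$ factor in exactly the same way, and setting $c_0 := c_1$ completes the proof. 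The main obstacle is ensuring that the Gevrey-$1/2$ radius $\mu$ produced by the composition dominates the effective radius ($\sim 2\delta_0$) of the weight $A_R^2$; this is possible because $\mu$ depends only on the structural constants $\vartheta_0, \kappa$ and on the fixed cutoffs $\Theta, \Theta'$, while $\delta_0$ is a free small parameter in the paper's overall hierarchy, so it can be chosen small relative to $\mu$.
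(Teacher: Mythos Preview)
Your overall strategy---analytic-with-decay bounds on $G_k$ in $(r,r')$, composition with $\mathcal{Y}$, then Lemma~\ref{lm:Gevrey}(i)---is natural, but the last step contains a circular parameter dependence that breaks the argument. The problem is your claim that the physical-space Gevrey constant $M$ for $\mathcal{Y}$ ``depends only on $\vartheta_0,\kappa$''. The only control you have on $\mathcal{Y}$ is $\|\Psi^\flat\mathcal{Y}\|_R\lesssim 1$, which through $A_R(t,\xi)\ge e^{\delta_0\langle\xi\rangle^{1/2}}$ is merely a $\mathcal{G}^{\delta_0,1/2}$ bound. Feeding this into Lemma~\ref{lm:Gevrey}(ii) produces $M=M(\delta_0)$ (roughly $M\sim\delta_0^{-2}$), and after Lemma~\ref{GPF}(ii) and Lemma~\ref{lm:Gevrey}(i) you recover $|\widetilde{\mathcal{G}^\flat_k}|\lesssim e^{-\mu\langle\xi,\eta\rangle^{1/2}}$ with $\mu\sim c\,\delta_0$ for some absolute $c\le 1$ (each passage through these lemmas costs a multiplicative constant in the radius). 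You then need $\mu>(3/2)\delta_0$ to dominate $A_R^2$, but $\mu\sim c\,\delta_0$ with $c\le 1$ cannot satisfy this for any choice of $\delta_0$; shrinking $\delta_0$ shrinks $\mu$ proportionally.

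The paper evades this circularity by never leaving the $R$-norm framework. Using \eqref{Gkb} and $\varrho=\sqrt{2\pi v/\kappa}\,(1+\sqrt{\kappa/(2\pi v)}\,\varrho_\ast)$ it factors
\[
\Theta(v)\,\mathcal{G}_k(t,v,v')\,\Theta'(v')=c(k)\cdot H_1^{(k)}(t,v)\cdot H_2^{(k)}(t,v'),
\]
where $c(k)$ carries the exponential decay in $|k|$ from the separated supports, and each $H_j^{(k)}$ splits further as a \emph{structural} piece like $\Theta(v)(v/(a-c/2))^{|k|/2}$---which is analytic with constants uniform in $k$ and has Fourier decay $e^{-\mu|\xi|^{2/3}}$ with $\mu$ genuinely independent of $\delta_0$, hence sitting in the $R$-space with a bound $\lesssim 1$ regardless of $\delta_0$---times a \emph{perturbative} piece $(1+\sqrt{\kappa/(2\pi v)}\,\Psi^\dagger\varrho_\ast)^{\pm|k|}$, whose $R$-norm is at most $(1+C_\delta\varepsilon_1)^{|k|}$ by the algebra property~\eqref{rew1} and the bootstrap on $\varrho_\ast$. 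Since $\varepsilon_1$ is taken small depending on $\delta$, this mild growth is absorbed by the decay of $c(k)$. The essential point your argument misses is precisely this separation: the part of $\mathcal{Y}$ that limits the Gevrey radius to $\delta_0$ is the small perturbation $\varrho_\ast$, and it must be handled multiplicatively inside the $R$-norm rather than compositionally in physical space.
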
 
 
\begin{proof}
By the definitions \eqref{rea22'} and \eqref{G.k}, we have
\begin{equation}\label{Gkb}
\mathcal{G}_k(t,v,v')=\left\{ \begin{array}{rl}
                               -\frac{1}{2|k|}\varrho(t,v')^{|k|-1}\varrho(t,v)^{-|k|}&\qquad {\rm if}\,\,v\ge v'>0;\\
                               &\\
                               -\frac{1}{2|k|}\varrho(t,v)^{|k|}\varrho(t,v')^{-1-|k|} &\qquad {\rm if}\,\,0<v\leq v'.
                               \end{array}\right.              
\end{equation}
To fix ideas, assume that $\Theta$ is supported in $[\underline{v}/8,a-c]$ and $\Theta'$ is supported in $[a+c,8\overline{v}]$ (the other case is similar). Then, using also \eqref{rea22'},
\begin{equation}\label{G.k8}
\begin{split}
\Theta(v)\,&\mathcal{G}_k(t,v,v')\,\Theta'(v')=-\frac{\Theta(v)\Theta'(v')}{2|k|}\frac{(2\pi v/\kappa)^{|k|/2}[1+\sqrt{\kappa/(2\pi v)}\,\varrho_{\ast}(t,v)]^{|k|}}{(2\pi v'/\kappa)^{(|k|+1)/2}[1+\sqrt{\kappa/(2\pi v')}\,\varrho_{\ast}(t,v')]^{|k|+1}}\\
&=-\frac{\sqrt{\kappa/(2\pi)}}{2|k|}\frac{(a-c/2)^{|k|/2}}{(a+c/2)^{(|k|+1)/2}}\times\frac{\Theta(v)v^{|k|/2}}{(a-c/2)^{|k|/2}}[1+\sqrt{\kappa/(2\pi v)}\,\Psi^\dagger(v)\varrho_{\ast}(t,v)]^{|k|}\\
&\times \frac{\Theta'(v')(a+c/2)^{(|k|+1)/2}}{(v')^{(|k|+1)/2}[1+\sqrt{\kappa/(2\pi v')}\,\Psi^\dagger(v')\varrho_{\ast}(t,v')]^{|k|+1}}.
\end{split}
\end{equation}
Thus $\Theta(v)\,\mathcal{G}_k(t,v,v')\,\Theta'(v')$ has product structure in the variables $v$ and $v'$. Moreover, it follows from Lemma \ref{lm:Gevrey} that, for any $\xi\in\mathbb{R}$,
\begin{equation*}
\Big|\mathcal{F}\Big\{\frac{\Theta(v)v^{|k|/2}}{(a-c/2)^{|k|/2}}\Big\}(\xi)\Big|+\Big|\mathcal{F}\Big\{\frac{\Theta'(v')(a+c/2)^{(|k|+1)/2}}{(v')^{(|k|+1)/2}}\Big\}(\xi)\Big|\lesssim e^{-\mu|\xi|^{2/3}}.
\end{equation*}
Notice that the factor $\frac{(a-c/2)^{|k|/2}}{(a+c/2)^{(|k|+1)/2}}$ provides exponential decay in $|k|$. Recall that $\|\Psi^\dagger\varrho_\ast\|_{R}\lesssim \eps_1$, due to the bootstrap assumption \eqref{boot2}. The desired conclusion follows from the algebra property of the space $R$ in Lemma \ref{nar8} (i), provided that $\eps_1$ is sufficiently small relative to $\delta$.
\end{proof}

We can now complete the proof of the bounds \eqref{wg1}.

\begin{lemma}\label{lemmawg}
With $g_j$, $j\in\{1,2\}$ as in \eqref{thegj}, for any $t\in[1,T]$ we have
\begin{equation}\label{wg1*}
\|A^\dagger g_j(t)\|^2_{L^2(\mathbb{T}\times\mathbb{R})}+\int_1^t\|B^\dagger g_j(s)\|^2_{L^2(\mathbb{T}\times\mathbb{R})}ds\lesssim_\delta\epsilon_1^3.
\end{equation}
\end{lemma}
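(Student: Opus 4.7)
My plan is to exploit the separation between the supports of the cutoffs $\partial_v\Psi$ and $\partial_v^2\Psi$ that appear in $g_1,g_2$, and the support of the normalized vorticity $F$. Since $\partial_v^j\Psi$ is supported in $[\ubv/3,\ubv/2]\cup[2\obv,3\obv]$ while $\mathrm{supp}\,F\subseteq[\ubv,\obv]$, on the support of $\partial_v^j\Psi$ the Green's function formula \eqref{phi.k2} reads
\[
\phi_k(t,v)=e^{iktv}\int_\R\mathcal{G}_k(t,v,v')\chi(v')F_k(t,v')e^{-iktv'}(V'(t,v'))^{-1}\,dv',
\]
where $\chi$ is a Gevrey cutoff equal to one on $[\ubv,\obv]$ and supported in $(\ubv/2,2\obv)$. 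The localized kernel $\partial_v^j\Psi(v)\,\mathcal{G}_k(t,v,v')\,\chi(v')$ is then of precisely the form $\mathcal{G}^\flat_k$ treated in Lemma \ref{G.k4}. Differentiating once in $v$ (needed to handle $(\partial_v-t\partial_z)\phi$ inside $g_1$) preserves the product structure because of the explicit factorization \eqref{Gkb}; the only cost is polynomial factors of $|k|$ which are absorbed by the exponential gain $e^{-c_0|k|}$.

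Next I would go to the Fourier side. The modulations $e^{\pm iktv}$ and $e^{\mp iktv'}$ translate into shifts of the Fourier variable by $\pm kt$, so
\[
\widetilde{g_{j,k}}(t,\xi)=\int_\R\widetilde{N_j^{(k)}}(t,\xi-\sigma,\sigma-kt)\,\widetilde{F}(t,k,\sigma-kt)\,d\sigma,
\]
where the bilinear symbol $\widetilde{N_j^{(k)}}$ is built from the Gevrey coefficients $(V')^2\partial_v\Psi$ and $(V''+\varrho V')\partial_v\Psi$ (each bounded in the $R$-norm by Lemma \ref{nar8}), paired with $\widetilde{\mathcal{G}^\flat_k}$ (bounded in the bilinear $A_R$-weighted $L^2$ by $e^{-c_0|k|/2}$ via Lemma \ref{G.k4}). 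Using the bilinear weight bound \eqref{TLX7} to distribute $A_k(t,\xi)$ through the convolution, then applying the reduction $A_k(t,\xi)\lesssim_\delta A_R(t,\xi-kt)A_k(t,kt)$ and the pointwise comparison of $A_k(t,kt)$ with $A_R(t,kt)$ at a cost absorbable by $e^{-c_0|k|/2}$ (see section \ref{weights}), I arrive at a schematic bound
\[
\|A^\dagger g_j(t)\|_{L^2(\T\times\R)}\lesssim_\delta \Big(\sum_{k\neq 0}e^{-c_0|k|}\Big)^{1/2}\,\bigl(\mathcal{E}_F(t)\bigr)^{1/2}\lesssim_\delta\epsilon_1^{3/2},
\]
invoking Proposition \ref{BootImp1} in the last step. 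Squaring yields the first part of \eqref{wg1*}.

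The space-time estimate $\int_1^t\|B^\dagger g_j(s)\|^2\,ds$ is handled by the same scheme with the extra factor $\mu_k(s,\xi)^{1/2}$. Using the decomposition \eqref{mu1}, the piece $\langle k,\xi\rangle^{1/2}/\langle s\rangle^{1+\sigma_0}$ is integrable in $s$ after the $\langle k\rangle$ factor is absorbed by $e^{-c_0|k|}$, while the remaining piece is comparable to $|\dot{A}_k|/A_k$ and, combined with Cauchy--Schwarz in time, produces a contribution controlled by $\mathcal{B}_F(t)\lesssim_\delta\epsilon_1^3$. The main obstacle I anticipate is the bookkeeping of the resonant weight $A_k(t,kt)$: near the resonance $\xi\approx kt$ the denominator $b_k(t,kt)$ can be very small, potentially inflating $A_k$. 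However, this potential loss is dominated by the exponential gain $e^{-c_0|k|}$, which reflects the genuine elliptic decay of the Green's function $\mathcal{G}_k$ between the separated annuli where $\partial_v^j\Psi$ and $F$ live, and which for all sufficiently large $|k|$ beats any polynomial $b_k^{-1}$-inflation; the finitely many low-$k$ modes are handled by crude bounds.
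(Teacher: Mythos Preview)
Your overall strategy is the same as the paper's: exploit the support separation between $\partial_v^j\Psi$ and $F$ to invoke Lemma~\ref{G.k4} on the localized Green kernel, pass to the Fourier side via the representation \eqref{phi.k2}, and use the exponential gain $e^{-c_0|k|}$ to sum in $k$. Your observation that $(\partial_v-t\partial_z)\phi_k=e^{iktv}\int\partial_v\mathcal{G}_k(\cdot,v')F'_k\,e^{-iktv'}\,dv'$, so that the $t\partial_z$ piece cancels against the modulation and only a $\partial_v$ lands on the kernel, is correct and is implicitly what makes the paper's weight $\nu$ in \eqref{w.115} work.

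However, your weight distribution has a genuine gap. You propose to split
\[
A_k(t,\xi)\lesssim_\delta A_R(t,\xi-kt)\,A_k(t,kt)
\]
and then compare $A_k(t,kt)$ with $A_R(t,kt)$, claiming the discrepancy is absorbable by $e^{-c_0|k|/2}$ because the $b_k^{-1}$ inflation is ``polynomial''. It is not: from \eqref{reb13} one only has $b_k(t,kt)^{-1}\le e^{\delta\sqrt{|kt|}}$, which for large $t$ grows like $e^{\delta|k|^{1/2}t^{1/2}}$ and is \emph{not} beaten by $e^{-c_0|k|}$. The same issue persists for each fixed low mode $k$ as $t\to\infty$, so ``crude bounds on finitely many modes'' does not help either. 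More fundamentally, after this split you still have no $A_k(t,\eta)$ to place on $\widetilde{F'}(t,k,\eta)$, which is what $\mathcal{E}_F$ controls.

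The fix, which is exactly what the paper does in \eqref{w.07}--\eqref{w.08}, is to distribute the weight in two steps \emph{toward the $F$-variable} rather than toward the resonant point $kt$:
\[
A_k(t,\xi)\lesssim_\delta A_k(t,\eta)\,A_R(t,\xi-\eta)\lesssim_\delta A_k(t,\eta)\,A_R(t,\xi-kt)\,A_R(t,kt-\eta)\,e^{-(\lambda(t)/20)[\cdots]^{1/2}},
\]
using \eqref{TLX7} and then \eqref{TLX4}. This places $A_k(t,\eta)$ directly on $\widetilde{F'}$ (controlled by \eqref{w.06}) and $A_R\otimes A_R$ on the two kernel variables, which matches Lemma~\ref{G.k4} exactly. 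No evaluation of $A_k$ at the resonant frequency $kt$ ever occurs, so the inflation you worry about simply does not arise. The $e^{-c_0|k|}$ gain is then used only for summability in $k$ (and to absorb the harmless $\mathrm{poly}(|k|)$ from $\partial_v\mathcal{G}_k$), not to compensate any weight loss. The space-time piece follows the same pattern with \eqref{w.08} in place of \eqref{w.07}, landing the $|\dot A_*A_*|^{1/2}$ factor on one of the three inputs and closing via $\mathcal{B}_F$ and the time-integrated bound in \eqref{G.k6}.
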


\begin{proof} Let $F'(t,z,v):=F(t,z,v)/V'(t,v)$. As in Lemma \ref{nar8}, using Lemma \ref{lm:Multi} (ii), Lemma \ref{Multi0} and the bounds \eqref{nar4}, the function $F'$ satisfies similar bounds as the function $F$,
\begin{equation}\label{w.06}
\sum_{k\in\mathbb{Z}}\int_{\R}A_k^2(t,\eta)\big|\widetilde{F'}(t,k,\eta)\big|^2d\eta+\sum_{k\in\mathbb{Z}}\int_1^t\int_{\R}|\dot{A}_kA_k(s,\eta)|\big|\widetilde{F'}(s,k,\eta)\big|^2\,d\eta ds\lesssim_{\delta}\epsilon_1^3,
\end{equation}
for any $t\in[1,T]$. 

{\bf{Step 1.}} Let $\Theta$ denote a Gevrey cutoff function supported in $[\underline{v}/3.4,\underline{v}/1.6]\cup[1.6\overline{v},3.4\overline{v}]$ and equal to $1$ in $[\underline{v}/3.2,\underline{v}/1.8]\cup[1.8\overline{v},3.2\overline{v}]$ (such that $\Theta(v)g_j(t,z,v)=g_j(t,z,v)$, $j\in\{1,2\}$), and let $\Theta'$ denote a Gevrey cutoff function supported in  $[\underline{v}/1.4,1.4\overline{v}]$ and equal to $1$ in $[\underline{v}/1.2,1.2\overline{v}]$ (such that $\Theta'(v')F'(t,z',v')=F'(t,z',v')$, see \eqref{Psupp}). As in Lemma \ref{G.k4} we assume that $\big\|e^{\langle\xi\rangle^{2/3}}\widetilde{\Theta}(\xi)\big\|_{L^\infty}+\big\|e^{\langle\xi\rangle^{2/3}}\widetilde{\Theta'}(\xi)\big\|_{L^\infty} \lesssim 1$ and define $\mathcal{G}^{\flat}_k$ as in \eqref{G.k5}. 

In view of \eqref{phi.k2} and taking the Fourier transform in $v$, we obtain
\begin{equation}\label{phi.k3}
\big|\widetilde{(\Theta\phi)}(t,k,\xi)\big|\lesssim\int_{\mathbb{R}}\big|\widetilde{F'}(t,k,\eta)\big|\big|\widetilde{G_k^{\flat}}(t,\xi-kt,kt-\eta)\big|\,d\eta.
\end{equation}

Using Lemma \ref{lm:Multi} we have
\begin{equation}\label{w.07}
\begin{split}
A_k(t,\xi)&\lesssim_{\delta}A_k(t,\eta)A_R(t,\xi-\eta)e^{-(\lambda(t)/20)\min(\langle\xi-\eta\rangle, \langle k,\eta\rangle)^{1/2}}\\
              &\lesssim_{\delta}A_k(t,\eta)A_R(t,\xi-kt)A_R(t,kt-\eta)e^{-(\lambda(t)/20)[\min(\langle\xi-\eta\rangle, \langle k,\eta\rangle)+\min(\langle\xi-kt\rangle,\langle kt-\eta\rangle)]^{1/2}}
\end{split}
\end{equation}
and
\begin{equation}\label{w.08}
\begin{split}
|\dot{A}_kA_k(t,\xi)|^{1/2}&\lesssim_{\delta}\big\{|\dot{A}_RA_R(t,\xi-\eta)|^{1/2}A_k(t,\eta)+|\dot{A}_kA_k(t,\eta)|^{1/2}A_R(t,\xi-\eta)\big\}\\
&\qquad\qquad \times e^{-(\lambda(t)/30)\min(\langle\xi-\eta\rangle, \langle k,\eta\rangle)^{1/2}}\\
              &\lesssim_{\delta}\left\{|\dot{A}_RA_R(t,\xi-kt)|^{1/2}A_k(t,\eta)A_R(t,kt-\eta)\right.\\
              &\qquad\qquad+|\dot{A}_RA_R(t,kt-\eta)|^{1/2}A_k(t,\eta)A_R(t,\xi-kt)\\
               &\qquad\qquad\left.+|\dot{A}_kA_k(t,\eta)|^{1/2}A_R(t,\xi-kt)A_R(t,kt-\eta)\right\}\\
               &\qquad\qquad\times e^{-(\lambda(t)/30)\min(\langle\xi-\eta\rangle, \langle k,\eta\rangle)^{1/2}}e^{-(\lambda(t)/30)\min(\langle\xi-kt\rangle,\langle kt-\eta\rangle)^{1/2}}.
\end{split}
\end{equation}
Moreover, it is easy to see that for any $k\in\mathbb{Z}\backslash\{0\}, \xi, \eta\in \R, t\ge 1$, we have
\begin{equation}\label{w.09}
\begin{split}
\frac{|k|t}{|k|t+|\xi|}\langle\xi-kt\rangle&\lesssim_{\delta}e^{\delta\min(\langle\xi-\eta\rangle, \langle k,\eta\rangle)^{1/2}}e^{\delta\min(\langle\xi-kt\rangle,\langle kt-\eta\rangle)^{1/2}},\\
e^{\delta\min(\langle\xi-kt\rangle,\langle kt\rangle)^{1/2}}&\lesssim e^{10\delta\min(\langle\xi-\eta\rangle, \langle k,\eta\rangle)^{1/2}}e^{10\delta\min(\langle\xi-kt\rangle,\langle kt-\eta\rangle)^{1/2}}
\end{split}
\end{equation}

{\bf{Step 2.}} We would like to show now that for each $t\in[1,T]$ we have
\begin{equation}\label{w.11}
\begin{split}
\sum_{k\in\mathbb{Z}\setminus\{0\}}&\int_{\R}A_k^2(t,\xi)\nu(t,k,\xi)^2e^{2\delta|k|}\big|\widetilde{(\Theta\phi)}(t,k,\xi)\big|^2\,d\xi\\
&+\sum_{k\in\mathbb{Z}\setminus\{0\}}\int_0^t\int_{\R}|\dot{A}_kA_k(s,\xi)|\nu(s,k,\xi)^2e^{2\delta|k|}\big|\widetilde{(\Theta\phi)}(s,k,\xi)\big|^2 d\xi ds\lesssim_{\delta}\epsilon_1^3,
\end{split}
\end{equation}
where
\begin{equation}\label{w.115}
\nu(t,k,\xi):=\frac{|k|t}{|k|t+|\xi|}\langle\xi-kt\rangle e^{\delta\min(\langle\xi-kt\rangle,\langle kt\rangle)^{1/2}}.
\end{equation}

Indeed, the first term in the left-hand side of \eqref{w.11} is bounded by
\begin{equation*}
\begin{split}
\sup_{\|P\|_{L^2_kL^2_\xi}\leq 1}&\sum_{k\in\mathbb{Z}\setminus\{0\}}\int_{\R}A_k(t,\xi)\nu(t,k,\xi)e^{\delta|k|}\big|\widetilde{(\Theta\phi)}(t,k,\xi)\big||P(k,\xi)|\,d\xi\\
&\lesssim_\delta \sup_{\|P\|_{L^2_kL^2_\xi}\leq 1}\sum_{k\in\mathbb{Z}\setminus\{0\}}\int_{\R^2}A_k(t,\eta)\big|\widetilde{F'}(t,k,\eta)\big|A_R(t,\xi-kt)A_R(t,kt-\eta)\\
&\qquad\qquad\times\big|\widetilde{G_k^{\flat}}(t,\xi-kt,kt-\eta)\big|e^{\delta|k|}|P(k,\xi)|\,d\eta d\xi,
\end{split}
\end{equation*}
using \eqref{phi.k3}, \eqref{w.07}, and \eqref{w.09}. The desired inequality follows using the Cauchy-Schwartz inequality, and the bounds \eqref{G.k6} and \eqref{w.06}. The second term in \eqref{w.11} can be bounded in a similar way, using \eqref{w.08} instead of \eqref{w.07}, which completes the proof of \eqref{w.11}.

{\bf{Step 3.}} We prove now the bounds \eqref{wg1*}. Notice that
\begin{equation*}
g_1=2(V')^2\partial_v\Psi\cdot (\partial_v-t\partial_z)(\Theta\phi),
\end{equation*}
since the additional cutoff function is equal to $1$ on the support of $\partial_v\Psi$. To estimate $g_1$ we would like to use Lemma \ref{Multi0} (ii). The bilinear weighted bounds we need are
\begin{equation*}
\begin{split}
A_k(t,\xi)&\frac{\langle t\rangle}{|\xi/k|+\langle t\rangle}\lesssim_\delta A_R(t,\xi-\eta)\cdot A_k(t,\eta)\frac{\langle t\rangle}{|\eta/k|+\langle t\rangle}e^{-\delta'_0\min(\langle\xi-\eta\rangle,\langle k,\eta\rangle)^{1/2}},
\end{split}
\end{equation*}
for any $t\geq 1$, $\xi,\eta\in\mathbb{R}$, and $k\in\mathbb{Z}\setminus\{0\}$ (which follows from \eqref{TLX7}) and \eqref{vfc30.7}. The desired bounds on $g_1$ follow from \eqref{w.11} and Lemma \ref{nar8} (ii).

Finally we consider the function $g_2=[(V')^2\partial_v^2\Psi+V''\partial_v\Psi+\varrho V'\partial_v\Psi]\cdot\Theta\phi$. The proof is the same as before, using Lemma \ref{Multi0} (ii), \eqref{w.11}, and Lemma \ref{nar8} (ii). The additional weighted bounds we need in this case are
\begin{equation*}
\begin{split}
A_k(t,\xi)&\frac{\langle t\rangle}{|\xi/k|+\langle t\rangle}\lesssim_\delta \frac{A_R(t,\xi-\eta)}{\langle\xi-\eta\rangle}\cdot A_k(t,\eta)\frac{\langle t\rangle}{|\eta/k|+\langle t\rangle}\langle\eta-kt\rangle e^{-\delta'_0\min(\langle\xi-\eta\rangle,\langle k,\eta\rangle)^{1/2}},
\end{split}
\end{equation*}
which follow easily from \eqref{TLX7}. This completes the proof of the lemma.
\end{proof}

\subsection{Bounds on the terms $\mathcal{C}_a$, $a\in\{1,2,3,4\}$}
In this subsection we prove the bounds (\ref{wCD}) for the commutator terms $\mathcal{C}_a$. The idea is to exploit the cancellation in these commutators to obtain suitable smallness, using either the more favorable bounds \eqref{Imboot3'l} for small frequencies, or the gain in regularity from \eqref{A-A2} for high frequencies.

We start by decomposing $\mathcal{C}_2=\mathcal{C}'_2+\mathcal{C}''_2$ and $\mathcal{C}_3=\mathcal{C}'_3+\mathcal{C}''_3$ where
\begin{equation}\label{ront2}
\begin{split}
\mathcal{C}'_2&:=(V')^2\Psi_0(\partial_v-t\partial_z)^2A^\dagger\big(\Psi\phi\big)-(\Psi_0A^\dagger)\big[(V')^2(\partial_v-t\partial_z)^2\big(\Psi\phi\big)\big],\\
\mathcal{C}''_2&:=2(V')^2\partial_v\Psi_0\cdot(\partial_v-t\partial_z)A^\dagger\big(\Psi\phi\big)+(V')^2\partial_v^2\Psi_0\cdot A^\dagger\big(\Psi\phi\big),
\end{split}
\end{equation}
and
\begin{equation}\label{ront3}
\begin{split}
\mathcal{C}'_3&:=(V''+\varrho V')\Psi_0(\partial_v-t\partial_z)A^\dagger\big(\Psi\phi\big)-(\Psi_0A^\dagger)\big[(V''+\varrho V')(\partial_v-t\partial_z)\big(\Psi\phi\big)\big],\\
\mathcal{C}''_3&:=(V''+\varrho V')\partial_v\Psi_0\cdot A^\dagger\big(\Psi\phi\big).
\end{split}
\end{equation}

We bound first the commutators $\mathcal{C}_1$, $\mathcal{C}'_2$, and $\mathcal{C}'_3$.

\begin{lemma}\label{wCDc1}
For $t\in[1,T]$ we have
\begin{equation}\label{wCDc1.0}
\|\mathcal{C}_1(t)\|^2_{L^2(\mathbb{T}\times\mathbb{R})}+\|\mathcal{C}'_2(t)\|^2_{L^2(\mathbb{T}\times\mathbb{R})}+\|\mathcal{C}'_3(t)\|^2_{L^2(\mathbb{T}\times\mathbb{R})}\lesssim C(\delta)\epsilon_1^3+\delta\mathcal{E}'_\phi.
\end{equation}
\end{lemma}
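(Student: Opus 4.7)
The plan is to first observe that, since $A^{\dagger}$ is a Fourier multiplier in $(z,v)$ while $\Psi_0$, $\varrho^2$, $(V')^2$, and $V''+\varrho V'$ are functions of $v$ alone that commute with $\partial_z$, the three commutators collapse to
\begin{equation*}
\mathcal{C}_1=\Psi_0[\varrho^2,A^{\dagger}]\partial_z^2(\Psi\phi),\qquad \mathcal{C}'_2=\Psi_0[(V')^2,A^{\dagger}](\partial_v-t\partial_z)^2(\Psi\phi),
\end{equation*}
and $\mathcal{C}'_3=\Psi_0[V''+\varrho V',A^{\dagger}](\partial_v-t\partial_z)(\Psi\phi)$. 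Writing $a\in\{\varrho^2,(V')^2,V''+\varrho V'\}$ and replacing it by $\chi a$ for a Gevrey cutoff $\chi$ equal to one on $\mathrm{supp}\,\Psi_0$, the localized coefficient lies in the $R$-space by Lemma~\ref{nar8}, and hence its Fourier transform in $\xi$ decays rapidly. On the Fourier side each commutator has kernel $[A^{\dagger}(t,k,\xi)-A^{\dagger}(t,k,\eta)]\widetilde{\chi a}(\xi-\eta)$ acting on a derivative of $\widetilde{\Psi\phi}(k,\eta)$.

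I would then split the $(\xi,\eta)$-integration into three frequency regions. In the bulk low-frequency region $\langle k,\xi\rangle+\langle k,\eta\rangle\le C(\delta)$, the multiplier $A^{\dagger}$ is uniformly bounded by $C(\delta)$, so the commutator is controlled crudely and the contribution is bounded by the rough $L^2$ estimate of Lemma~\ref{ImpThl} (together with its elliptic variants for one or zero $v$-derivatives), giving $\lesssim C(\delta)\eps_1^3$. In the off-diagonal region $\langle\xi-\eta\rangle\ge \tfrac{1}{10}\min(\langle k,\xi\rangle,\langle k,\eta\rangle)$, one transfers the weight onto whichever factor is frequency-dominant using Lemma~\ref{lm:Multi}~(ii); the Gevrey decay of $\widetilde{\chi a}(\xi-\eta)$ absorbs the loss, and Lemma~\ref{Multi0}~(ii) combined with the bootstrap control on $\mathcal{E}_\phi$ and the $R$-estimates of Lemma~\ref{nar8} again produces $\lesssim C(\delta)\eps_1^3$.

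The heart of the proof is the near-diagonal high-frequency region $\langle\xi-\eta\rangle\le \tfrac{1}{10}\min(\langle k,\xi\rangle,\langle k,\eta\rangle)$ with $\min(\langle k,\xi\rangle,\langle k,\eta\rangle)\ge C(\delta)$, where one must establish a weighted Lipschitz bound of the form
\begin{equation*}
|A^{\dagger}(t,k,\xi)-A^{\dagger}(t,k,\eta)|\lesssim \Bigl[\sqrt{\delta}+\tfrac{C(\delta)}{\langle k,\xi\rangle^{1/2}}\Bigr]\max\!\bigl(A^{\dagger}(t,k,\xi),A^{\dagger}(t,k,\eta)\bigr)\langle\xi-\eta\rangle^{N},
\end{equation*}
sharpening \eqref{S1} so as to accommodate simultaneously the Gevrey core $e^{\lambda(t)\langle k,\xi\rangle^{1/2}}$, the imbalanced factor $1/b_k$, and the extra resonance symbol $\langle t\rangle/(\langle t\rangle+|\xi/k|)$ present in $A^{\dagger}$. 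The polynomial growth $\langle\xi-\eta\rangle^{N}$ is defeated by the Gevrey decay of $\widetilde{\chi a}$; the $C(\delta)\langle k,\xi\rangle^{-1/2}$ summand furnishes an extra half-derivative of elliptic regularity which, combined with the improved bootstrap \eqref{nar1} on $F$ through the equation \eqref{Lophi1}, contributes $\lesssim C(\delta)\eps_1^3$; and the $\sqrt{\delta}$ summand produces an $L^2$ bound of order $\sqrt{\delta}\cdot\|(\partial_v-t\partial_z)^{j}A^{\dagger}(\Psi\phi)\|_{L^2}\lesssim \sqrt{\delta}\sqrt{\mathcal{E}'_\phi(t)}$, whose square is precisely the admissible error term $\delta\,\mathcal{E}'_\phi(t)$.

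The main obstacle will be the Lipschitz bound in the near-diagonal region: the weight $A^{\dagger}$ is a product of several factors with delicate $t$- and $\xi$-dependence, each of which must independently contribute an $\sqrt{\delta}$-type gain. For the Gevrey core this is essentially automatic from the $1/2$-exponent of $\langle k,\xi\rangle^{1/2}$ and the bound $\lambda(t)\le 2\delta_0$. For the imbalanced factor $1/b_k$ it hinges on the quantitative smoothness of $b_k$ built into its definition through the mollification scale $L_{\delta'}$ of \eqref{muaD}. For the resonance factor $\langle t\rangle/(\langle t\rangle+|\xi/k|)$, an elementary chain-rule computation supplies an extra $\langle t\rangle/((\langle t\rangle+|\xi/k|)|k|)\cdot\langle\xi-\eta\rangle$ which is absorbed into the $\langle k,\xi\rangle^{-1/2}$ summand away from the critical times $t\sim|\xi/k|$. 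Checking that all of these pieces cohere uniformly across the critical times — where the imbalanced weight transitions from the resonant to the non-resonant regime — is the delicate technical step on which the lemma ultimately rests.
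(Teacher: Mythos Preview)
Your proposal is correct and follows essentially the same route as the paper: the paper writes each commutator on the Fourier side, reduces to a single kernel involving $|A^{\dagger}(t,k,\rho)-A^{\dagger}(t,k,\rho+\xi-\eta)|$, and splits into the near-diagonal region $R_1^\ast$ (handled via the weight-smoothness bound \eqref{A-A2}, which is exactly the Lipschitz estimate you propose, with exponent $\langle k,\xi\rangle^{-1/8}$) and the off-diagonal region (handled via \eqref{TLX7}), with the unweighted elliptic bound of Lemma~\ref{ImpThl} supplying the $C(\delta)\eps_1^3$ piece. One small correction to your near-diagonal discussion: the $C(\delta)\langle k,\xi\rangle^{-1/8}$ summand is not controlled by going back to $F$ through the equation \eqref{Lophi1}; rather, one chooses the low-frequency cutoff large enough that $C(\delta)\langle k,\xi\rangle^{-1/8}\le\sqrt{\delta}$ in the high-frequency zone, so that this piece is absorbed into the $\sqrt{\delta}\,(\mathcal{E}'_\phi)^{1/2}$ term, while on the complementary low-frequency set $A^{\dagger}\lesssim_\delta 1$ and Lemma~\ref{ImpThl} applies directly---this is precisely the content of the paper's bound \eqref{ront8}.
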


\begin{proof} {\bf{Step 1.}} Fix a Gevrey cutoff function $\Psi'_0$ supported in $\big[\ubv/6,6\obv\big]$, equal to $1$ in $\big[\ubv/5,5\obv\big]$, and satisfying $\big\|e^{\langle\xi\rangle^{3/4}}\widetilde{\Psi'_0}(\xi)\big\|_{L^\infty}\lesssim 1$. Let $h_1:=\Psi'_0\varrho^2$. In view of (\ref{defC}) we have
\begin{equation*}
\begin{split}
\widetilde{\mathcal{C}_1}(t,k,\xi)=C\int_{\mathbb{R}^2}\Big\{&\widetilde{h_1}(t,\xi-\eta)k^2\widetilde{\Psi_0}(\eta-\rho)A^\dagger(t,k,\rho)\widetilde{(\Psi\phi)}(t,k,\rho)\\
&-\widetilde{\Psi_0}(\xi-\eta)A^\dagger(t,k,\eta)\widetilde{h_1}(t,\eta-\rho)k^2\widetilde{(\Psi\phi)}(t,k,\rho)\Big\}\, d\eta d\rho.
\end{split}
\end{equation*}
Therefore, after changes of variables,
\begin{equation*}
|\widetilde{\mathcal{C}_1}(t,k,\xi)|\lesssim\int_{\mathbb{R}^2}|\widetilde{h_1}(t,\xi-\eta)||\widetilde{\Psi_0}(\eta-\rho)||\widetilde{G}(t,k,\rho)| K_1(t,k;\xi,\eta,\rho)\, d\eta d\rho,
\end{equation*}
where $G:=[\partial_z^2+(\partial_v-t\partial_z)^2](\Psi\phi)$ and
\begin{equation}\label{kern1}
K_1(t,k;\xi,\eta,\rho):=\frac{\mathbf{1}_{\Z^\ast}(k)k^2}{k^2+(\rho-tk)^2}\Big|\frac{A_k(t,\rho)\langle t\rangle}{\langle t\rangle+|\rho/k|}-\frac{A_k(t,\rho+\xi-\eta)\langle t\rangle}{\langle t\rangle+|(\rho+\xi-\eta)/k|}\Big|.
\end{equation}
One can estimate $|\widetilde{\mathcal{C}'_2}(t,k,\xi)|$ and $|\widetilde{\mathcal{C}'_3}(t,k,\xi)|$ is a similar way, thus
\begin{equation*}
|\widetilde{\mathcal{C}'_a}(t,k,\xi)|\lesssim\int_{\mathbb{R}^2}|\widetilde{h_a}(t,\xi-\eta)||\widetilde{\Psi_0}(\eta-\rho)||\widetilde{G}(t,k,\rho)| K_a(t,k;\xi,\eta,\rho)\, d\eta d\rho,
\end{equation*}
where
\begin{equation}\label{kern2}
K_2(t,k;\xi,\eta,\rho):=\frac{\mathbf{1}_{\Z^\ast}(k)(\rho-tk)^2}{k^2+(\rho-tk)^2}\Big|\frac{A_k(t,\rho)\langle t\rangle}{\langle t\rangle+|\rho/k|}-\frac{A_k(t,\rho+\xi-\eta)\langle t\rangle}{\langle t\rangle+|(\rho+\xi-\eta)/k|}\Big|,
\end{equation}
\begin{equation}\label{kern3}
K_3(t,k;\xi,\eta,\rho):=\frac{\mathbf{1}_{\Z^\ast}(k)\langle\xi-\eta\rangle(\rho-tk)}{k^2+(\rho-tk)^2}\Big|\frac{A_k(t,\rho)\langle t\rangle}{\langle t\rangle+|\rho/k|}-\frac{A_k(t,\rho+\xi-\eta)\langle t\rangle}{\langle t\rangle+|(\rho+\xi-\eta)/k|}\Big|,
\end{equation}
and
\begin{equation}\label{kern5}
h_2:=\Psi'_0(V')^2,\qquad h_3:=\langle\partial_v\rangle^{-1}[\Psi'_0(V''+\varrho V')].
\end{equation}

Notice that $K_1+K_2+K_3\lesssim K$, where
\begin{equation}\label{ront5}
K(t,k;\xi,\eta,\rho):=\mathbf{1}_{\Z^\ast}(k)\Big[1+\frac{\langle\xi-\eta\rangle}{|k|\langle\rho/k-t\rangle}\Big]\Big|\frac{A_k(t,\rho)\langle t\rangle}{\langle t\rangle+|\rho/k|}-\frac{A_k(t,\rho+\xi-\eta)\langle t\rangle}{\langle t\rangle+|(\rho+\xi-\eta)/k|}\Big|.
\end{equation}
Notice also that $\|h_a\|_R\lesssim 1$ for $a\in\{1,2,3\}$, due to \eqref{nar4}. For \eqref{wCDc1.0} it suffices to prove that
\begin{equation}\label{ront6}
\Big\|\int_{\mathbb{R}^2}|\widetilde{h_a}(t,\xi-\eta)||\widetilde{\Psi_0}(\eta-\rho)||\widetilde{G}(t,k,\rho)| K(t,k;\xi,\eta,\rho)\, d\eta d\rho\Big\|^2_{L^2_{k,\xi}}\lesssim C(\delta)\epsilon_1^3+\delta\mathcal{E}'_\phi,
\end{equation}
for any $t\in[1,T]$ and $a\in\{1,2,3\}$.

{\bf Step 2.} Recall the definition of the sets $R_i^\ast$, $i\in\{0,1,2,3\}$, in \eqref{nar18.1}--\eqref{nar18.4} and \eqref{nar19.1}. Assume first that $((k,\rho),(k,\rho+\xi-\eta))\in R_1^\ast$. Then, using \eqref{A-A2},
\begin{equation*}
K(t,k;\xi,\eta,\rho)\lesssim \mathbf{1}_{\Z^\ast}(k)\frac{A_k(t,\rho)\langle t\rangle}{\langle t\rangle+|\rho/k|}A_R(t,\xi-\eta)e^{-(\delta_0/50)\langle\xi-\eta\rangle^{1/2}}\big[\sqrt\delta+C_0(\delta)\langle k,\rho\rangle^{-1/8}\big].
\end{equation*}
Therefore
\begin{equation}\label{ront7}
\begin{split}
\Big\|\int_{\mathbb{R}^2}&\mathbf{1}_{R_1^\ast}((k,\rho),(k,\rho+\xi-\eta))|\widetilde{h_a}(t,\xi-\eta)||\widetilde{\Psi_0}(\eta-\rho)||\widetilde{G}(t,k,\rho)| K(t,k;\xi,\eta,\rho)\, d\eta d\rho\Big\|_{L^2_{k,\xi}}\\
&\lesssim \|\widetilde{h_a}(t,\nu)A_R(t,\nu)\|_{L^2_\nu}\Big\|\mathbf{1}_{\Z^\ast}(k)\frac{A_k(t,\rho)\langle t\rangle}{\langle t\rangle+|\rho/k|}\widetilde{G}(t,k,\rho)\big[\sqrt\delta+C_0(\delta)\langle k,\rho\rangle^{-1/8}\big]\Big\|_{L^2_{k,\rho}}\\
&\lesssim C(\delta)\epsilon_1^{3/2}+\sqrt{\delta}(\mathcal{E}'_\phi)^{1/2},
\end{split}
\end{equation}
where the last inequality  follows from the bounds $\|h_a\|_R\lesssim 1$ and
\begin{equation}\label{ront8}
\Big\|\mathbf{1}_{\Z^\ast}(k)\frac{A_k(t,\rho)\langle t\rangle}{\langle t\rangle+|\rho/k|}\widetilde{G}(t,k,\rho)\big[\sqrt\delta+C_0(\delta)\langle k,\rho\rangle^{-1/8}\big]\Big\|_{L^2_{k,\rho}}\lesssim \sqrt{\delta}(\mathcal{E}'_\phi)^{1/2}+C(\delta)\eps_1^{3/2}.
\end{equation}
The bounds \eqref{ront8} follow from Lemma \ref{ImpThl} and the definition \eqref{w0.5}.

{\bf{Step 3.}} In the remaining case when $\langle\xi-\eta\rangle\geq\langle k,\rho\rangle/10$ we use the bounds \eqref{TLX7} to estimate
\begin{equation*}
\begin{split}
K(t,k;\xi,\eta,\rho)&\lesssim \mathbf{1}_{\Z^\ast}(k)\langle\xi-\eta\rangle\frac{A_k(t,\rho)\langle t\rangle}{\langle t\rangle+|\rho/k|}\\
&+C(\delta)\mathbf{1}_{\Z^\ast}(k)\Big[1+\frac{\langle\xi-\eta\rangle}{|k|\langle\rho/k-t\rangle}\Big]\frac{A_k(t,\rho)A_R(t,\xi-\eta)e^{-(\delta_0/100)\langle k,\rho\rangle^{1/2}}\langle t\rangle}{\langle t\rangle+|(\rho+\xi-\eta)/k|}.
\end{split}
\end{equation*}
Moreover, with $\delta'_0=\delta_0/200$ as before, it is easy to see that
\begin{equation}\label{runt13.5}
\Big[1+\frac{\langle\xi-\eta\rangle}{|k|\langle\rho/k-t\rangle}\Big]\frac{\langle t\rangle}{\langle t\rangle+|(\rho+\xi-\eta)/k|}\lesssim e^{\delta'_0\langle k,\rho\rangle^{1/2}}.
\end{equation}
Therefore, if $k\neq 0$ and $\langle\xi-\eta\rangle\geq\langle k,\rho\rangle/10$ then
\begin{equation}\label{ront9}
K(t,k;\xi,\eta,\rho)\lesssim_\delta \mathbf{1}_{\Z^\ast}(k)\frac{\langle\xi-\eta\rangle^2}{\langle k,\rho\rangle}\frac{A_k(t,\rho)\langle t\rangle}{\langle t\rangle+|\rho/k|}+\mathbf{1}_{\Z^\ast}(k)A_k(t,\rho)A_R(t,\xi-\eta)e^{-\delta'_0\langle k,\rho\rangle^{1/2}}.
\end{equation}
We notice that if $((k,\rho),(k,\rho+\xi-\eta))\notin R_1^\ast$ then $\langle\xi-\eta\rangle\geq\langle k,\rho\rangle/10$, so the bounds \eqref{ront9} hold. As in \eqref{ront7} one can then estimate
\begin{equation*}
\begin{split}
\Big\|\int_{\mathbb{R}^2}&\mathbf{1}_{{}^cR_1^\ast}((k,\rho),(k,\rho+\xi-\eta))|\widetilde{h_a}(t,\xi-\eta)||\widetilde{\Psi_0}(\eta-\rho)||\widetilde{G}(t,k,\rho)| K(t,k;\xi,\eta,\rho)\, d\eta d\rho\Big\|_{L^2_{k,\xi}}\\
&\lesssim C(\delta)\epsilon_1^{3/2}+\sqrt{\delta}(\mathcal{E}'_\phi)^{1/2},
\end{split}
\end{equation*} 
and the desired conclusion \eqref{ront6} follows.
\end{proof}

We bound now the remaining commutators $\mathcal{C}_4$, $\mathcal{C}''_2$, and $\mathcal{C}''_3$.

\begin{lemma}\label{wCDc2}
For $t\in[1,T]$ we have
\begin{equation}\label{ront10}
\|\mathcal{C}_4(t)\|^2_{L^2(\mathbb{T}\times\mathbb{R})}+\|\mathcal{C}''_2(t)\|^2_{L^2(\mathbb{T}\times\mathbb{R})}+\|\mathcal{C}''_3(t)\|^2_{L^2(\mathbb{T}\times\mathbb{R})}\lesssim C(\delta)\epsilon_1^3+\delta\mathcal{E}'_\phi.
\end{equation}
\end{lemma}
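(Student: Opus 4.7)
My plan for Lemma \ref{wCDc2} rests on the structural observation that each of the three terms in \eqref{ront10} reduces to a sum of products of the form $G\cdot D[A^\dagger(\Psi\phi)]$, where $G$ is a Gevrey-$3/4$ function supported in $\R\setminus[\ubv/4,4\obv]$ (hence disjoint from $\mathrm{supp}(\Psi)\subseteq[\ubv/3,3\obv]$) and $D$ is a differential operator of order at most two in $(z,v)$ commuting with $A^\dagger$. Indeed, expanding via the Leibniz rule gives
\begin{equation*}
\mathcal{C}_4=(\Psi_0-1)[\partial_z^2+(\partial_v-t\partial_z)^2]A^\dagger(\Psi\phi)+2\partial_v\Psi_0\cdot(\partial_v-t\partial_z)A^\dagger(\Psi\phi)+\partial_v^2\Psi_0\cdot A^\dagger(\Psi\phi),
\end{equation*}
and $\mathcal{C}''_2,\mathcal{C}''_3$ already have this shape once the Gevrey coefficients $(V')^2$ and $V''+\varrho V'$ are separated via Lemma \ref{Multi0} together with the $R$-norm bounds of Lemma \ref{nar8}. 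Since $G\cdot\Psi\equiv 0$ and every lower-order derivative of $\Psi\phi$ remains supported in $\mathrm{supp}(\Psi)$, we have $G\cdot D(\Psi\phi)\equiv 0$, so each piece equals the commutator $[G,A^\dagger](D(\Psi\phi))$.

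I plan to control these commutators on the Fourier side using
\begin{equation*}
\mathcal{F}_v\big([G,A^\dagger]f\big)(k,\xi)=\int\widehat{G}(\xi-\eta)\big[A^\dagger(t,k,\eta)-A^\dagger(t,k,\xi)\big]\widehat{f}(k,\eta)\,d\eta.
\end{equation*}
The central step will be to combine the Gevrey-$3/4$ decay $|\widehat{G}(\xi-\eta)|\lesssim e^{-\mu\langle\xi-\eta\rangle^{3/4}}$ (which dominates the at-most Gevrey-$2$ growth of $A^\dagger$ in the regime $|\xi-\eta|\gg 1$) with the smoothness bound on $A_k$ already used in Lemma \ref{wCDc1} to produce the pointwise multiplier inequality
\begin{equation*}
|\widehat{G}(\xi-\eta)|\cdot|A^\dagger(t,k,\eta)-A^\dagger(t,k,\xi)|\lesssim_\delta\Big[\sqrt\delta+\frac{C(\delta)}{\langle k,\eta\rangle^{1/8}}\Big]A^\dagger(t,k,\eta)\,e^{-(\mu/2)\langle\xi-\eta\rangle^{3/4}}.
\end{equation*}
Schur's test in $\eta$ then yields
\begin{equation*}
\|G\cdot D[A^\dagger(\Psi\phi)](t)\|_{L^2}^2\lesssim\delta\,\|A^\dagger D(\Psi\phi)(t)\|_{L^2}^2+C(\delta)\|D(\Psi\phi)(t)\|_{L^2}^2,
\end{equation*}
of which the first term is absorbed into $\delta\mathcal{E}'_\phi$ via \eqref{w0.5} (using that the Fourier symbol of $D$ is dominated by that of $\partial_z^2+(\partial_v-t\partial_z)^2$, since $k\neq 0$ on the support of $A^\dagger$), and the second is bounded by $C(\delta)\|F\|_{L^2}^2\lesssim_\delta\eps_1^3$ thanks to Lemma \ref{ImpThl} and the improved bootstrap bound \eqref{nar1}.

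The main technical obstacle will be the derivation of the pointwise multiplier inequality in the stated form, with the $\sqrt\delta+\langle k,\eta\rangle^{-1/8}$ gain, uniformly in $t,k,\xi,\eta$. This demands the detailed structural properties of $A_k$ and $A^\dagger$ from Section \ref{weights}---in particular, a smoothness estimate in the spirit of \eqref{S1}---together with a case analysis in the relative sizes of $|\xi-\eta|$ and $\langle k,\eta\rangle$, and careful tracking of the auxiliary factor $\tfrac{\langle t\rangle}{\langle t\rangle+|\xi/k|}$ hidden inside $A^\dagger$. Once this multiplier estimate is in hand, the remaining steps follow the scheme of Lemma \ref{wCDc1}, and the Gevrey factors $(V')^2,V''+\varrho V'$ appearing in $\mathcal{C}''_2,\mathcal{C}''_3$ contribute only a harmless multiplicative $\|\cdot\|_R\lesssim 1$ via Lemma \ref{Multi0} (ii).
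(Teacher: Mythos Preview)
Your proposal is correct and follows essentially the same route as the paper. Both arguments exploit the disjointness of $\mathrm{supp}(\partial_v^a\Psi_0)$ (and $\mathrm{supp}(\Psi_0-1)$) from $\mathrm{supp}(\Psi)$ to rewrite each piece as a commutator $[G,A^\dagger](D(\Psi\phi))$, pass to the Fourier side, and invoke the smoothness estimate \eqref{A-A2} on $A_k$ together with the Gevrey-$3/4$ decay of $\widehat G$; the resulting bound is then split into a $\sqrt\delta(\mathcal{E}'_\phi)^{1/2}$ piece and a low-frequency piece controlled by Lemma \ref{ImpThl}. Two cosmetic differences: the paper strips the coefficients $(V')^2$ and $V''+\varrho V'$ with a plain $L^\infty$ bound rather than Lemma \ref{Multi0}, and it performs the case split $((k,\xi),(k,\eta))\in R_1^\ast$ versus its complement \emph{before} applying Schur, which makes it slightly easier to keep the implicit constant in front of the $\sqrt\delta$ term independent of $\delta$ (your single multiplier inequality is true, but one has to be careful that the $\lesssim_\delta$ does not contaminate the $\sqrt\delta$ factor).
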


\begin{proof} With $\Psi'_0$ as in the proof of Lemma \ref{wCDc1}, we notice first that $\|\Psi'_0 V''\|_{L^\infty}+\|\Psi'_0 V'\|_{L^\infty}+\|\Psi'_0 \varrho\|_{L^\infty}\lesssim 1$, as a consequence of \eqref{rew3}. Therefore, if we let
\begin{equation}\label{ront11}
\mathcal{C}''':=\partial^a_v\Psi_0\cdot A^\dagger(\partial_v-t\partial_z)^b(\Psi\phi)-A^\dagger[\partial^a_v\Psi_0\cdot (\partial_v-t\partial_z)^b(\Psi\phi)],
\end{equation}
where $a\in\{1,2\},\,b\in\{0,1\}$, then it suffices to prove that
\begin{equation}\label{ront12}
\|\mathcal{C}'''(t)\|_{L^2(\mathbb{T}\times\mathbb{R})}+\|\mathcal{C}_4(t)\|_{L^2(\mathbb{T}\times\mathbb{R})}\lesssim C(\delta)\epsilon_1^{3/2}+\sqrt{\delta}(\mathcal{E}'_\phi)^{1/2}.
\end{equation}

Let $P(\rho):=e^{-\langle\rho\rangle^{3/4}/2}$. As in the proof of Lemma \ref{wCDc1} we have
\begin{equation}\label{ront13}
|\widetilde{\mathcal{C}'''}(t,k,\xi)|+|\widetilde{\mathcal{C}_4}(t,k,\xi)|\lesssim \int_{\mathbb{R}}P(\xi-\eta)|\widetilde{G}(t,k,\eta)|L(t,k;\xi,\eta)\,d\eta,
\end{equation}
where $G=[\partial_z^2+(\partial_v-t\partial_z)^2](\Psi\phi)$ as before and
\begin{equation}\label{ront14}
L(t,k;\xi,\eta):=\mathbf{1}_{\Z^\ast}(k)\langle\xi-\eta\rangle^2\Big|\frac{A_k(t,\xi)\langle t\rangle}{\langle t\rangle+|\xi/k|}-\frac{A_k(t,\eta)\langle t\rangle}{\langle t\rangle+|\eta/k|}\Big|.
\end{equation}

If $((k,\xi),(k,\eta))\in R_1^\ast$ then we use \eqref{A-A2} to estimate
\begin{equation*}
L(t,k;\xi,\eta)\lesssim \mathbf{1}_{\Z^\ast}(k)\frac{A_k(t,\eta)\langle t\rangle}{\langle t\rangle+|\eta/k|}A_R(t,\xi-\eta)e^{-(\delta_0/50)\langle\xi-\eta\rangle^{1/2}}\big[\sqrt\delta+C_0(\delta)\langle k,\eta\rangle^{-1/8}\big].
\end{equation*}
Therefore, using \eqref{ront8},
\begin{equation}\label{ront14.5}
\begin{split}
\Big\|\int_{\mathbb{R}^2}\mathbf{1}_{R_1^\ast}((k,\xi),(k,\eta))&P(\xi-\eta)|\widetilde{G}(t,k,\eta)| L(t,k;\xi,\eta)\, d\eta\Big\|_{L^2_{k,\xi}}\\
&\lesssim \Big\|\mathbf{1}_{\Z^\ast}(k)\frac{A_k(t,\eta)\langle t\rangle}{\langle t\rangle+|\eta/k|}\widetilde{G}(t,k,\eta)\big[\sqrt\delta+C_0(\delta)\langle k,\rho\rangle^{-1/8}\big]\Big\|_{L^2_{k,\eta}}\\
&\lesssim C(\delta)\epsilon_1^{3/2}+\sqrt{\delta}(\mathcal{E}'_\phi)^{1/2}.
\end{split}
\end{equation}

On the other hand, if $\langle\xi-\eta\rangle\geq\langle k,\eta\rangle/10$ then we can use the definitions \eqref{dor4} to estimate
\begin{equation*}
L(t,k;\xi,\eta)\lesssim \mathbf{1}_{\Z^\ast}(k)e^{10\delta_0\langle\xi-\eta\rangle^{1/2}}\langle k,\eta\rangle^{-1}.
\end{equation*}
Using \eqref{Imboot3'l} we then estimate
\begin{equation}\label{ront15}
\begin{split}
\Big\|\int_{\mathbb{R}^2}\mathbf{1}_{{}^cR_1^\ast}((k,\xi),(k,\eta))&P(\xi-\eta)|\widetilde{G}(t,k,\eta)| L(t,k;\xi,\eta)\, d\eta\Big\|_{L^2_{k,\xi}}\lesssim \|\widetilde{G}(t,k,\eta)\|_{L^2_{k,\eta}}\lesssim_\delta\epsilon_1^{3/2}.
\end{split}
\end{equation}
The desired bounds \eqref{ront12} follow from \eqref{ront14.5}--\eqref{ront15}.
\end{proof}

\subsection{Bounds on the terms $\mathcal{D}_a$, $a\in\{1,2,3,4\}$} Finally, we prove the bounds (\ref{wCD}) for the commutator terms $\mathcal{D}_a$. As before, we decompose $\mathcal{D}_2=\mathcal{D}'_2+\mathcal{D}''_2$ and $\mathcal{D}_3=\mathcal{D}'_3+\mathcal{D}''_3$ where
\begin{equation}\label{runt2}
\begin{split}
\mathcal{D}'_2&:=(V')^2\Psi_0(\partial_v-t\partial_z)^2B^\dagger\big(\Psi\phi\big)-(\Psi_0B^\dagger)\big[(V')^2(\partial_v-t\partial_z)^2\big(\Psi\phi\big)\big],\\
\mathcal{D}''_2&:=2(V')^2\partial_v\Psi_0\cdot(\partial_v-t\partial_z)B^\dagger\big(\Psi\phi\big)+(V')^2\partial_v^2\Psi_0\cdot B^\dagger\big(\Psi\phi\big),
\end{split}
\end{equation}
and
\begin{equation}\label{runt3}
\begin{split}
\mathcal{D}'_3&:=(V''+\varrho V')\Psi_0(\partial_v-t\partial_z)B^\dagger\big(\Psi\phi\big)-(\Psi_0B^\dagger)\big[(V''+\varrho V')(\partial_v-t\partial_z)\big(\Psi\phi\big)\big],\\
\mathcal{D}''_3&:=(V''+\varrho V')\partial_v\Psi_0\cdot B^\dagger\big(\Psi\phi\big).
\end{split}
\end{equation}

As before, we bound first the commutators $\mathcal{D}_1$, $\mathcal{D}'_2$, and $\mathcal{D}'_3$.

\begin{lemma}\label{runt5}
For $t\in[1,T]$ we have
\begin{equation}\label{runt6}
\int_1^t\big\{\|\mathcal{D}_1(s)\|^2_{L^2(\mathbb{T}\times\mathbb{R})}+\|\mathcal{D}'_2(s)\|^2_{L^2(\mathbb{T}\times\mathbb{R})}+\|\mathcal{D}'_3(s)\|^2_{L^2(\mathbb{T}\times\mathbb{R})}\big\}\,ds\lesssim C(\delta)\epsilon_1^3+\delta(\mathcal{E}'_\phi+\mathcal{B}'_\phi).
\end{equation}
\end{lemma}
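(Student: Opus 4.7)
The plan is to mirror the proof of Lemma~\ref{wCDc1}, replacing the Fourier multiplier $A^\dagger$ by $B^\dagger = \mu_k^{1/2} A^\dagger$ and the pointwise-in-$t$ $L^2_{z,v}$ norm by the space-time $L^2_s L^2_{z,v}$ norm on $[1,t]$. The same general mechanism operates: in the regime where the coefficient frequency $\xi-\eta$ is much smaller than the solution frequency $\langle k,\rho\rangle$, the commutator is small because $B^\dagger$ is nearly constant on the relevant scale, with a quantitative $\sqrt{\delta}+C(\delta)\langle k,\rho\rangle^{-1/8}$ gain; in the complementary regime, smoothness of the Gevrey cutoff $\Psi_0$ and of $\varrho^2,(V')^2,V''+\varrho V'$ provides the required decay.

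First I will reduce the three commutators to a single kernel estimate, exactly as in Step~1 of Lemma~\ref{wCDc1}. With $G:=[\partial_z^2+(\partial_v-s\partial_z)^2](\Psi\phi)$ and $h_a$ as in \eqref{kern5} (which satisfy $\|h_a\|_R\lesssim 1$ by Lemma~\ref{nar8}), writing out the symbols of $\mathcal{D}_1,\mathcal{D}'_2,\mathcal{D}'_3$ produces the pointwise bound
\begin{equation*}
|\widetilde{\mathcal{D}_1}|+|\widetilde{\mathcal{D}'_2}|+|\widetilde{\mathcal{D}'_3}| \lesssim \sum_{a=1}^{3}\int_{\mathbb{R}^2}|\widetilde{h_a}(s,\xi-\eta)|\,|\widetilde{\Psi_0}(\eta-\rho)|\,|\widetilde{G}(s,k,\rho)|\,\widetilde{K}(s,k;\xi,\eta,\rho)\,d\eta\,d\rho,
\end{equation*}
where the kernel $\widetilde{K}$ is the obvious analogue of \eqref{ront5} with $A^\dagger$ replaced by $B^\dagger$, namely
\begin{equation*}
\widetilde{K}(s,k;\xi,\eta,\rho) = \mathbf{1}_{\mathbb{Z}^\ast}(k)\Big[1+\frac{\langle\xi-\eta\rangle}{|k|\langle\rho/k-s\rangle}\Big]\big|B^\dagger(s,k,\rho)-B^\dagger(s,k,\rho+\xi-\eta)\big|.
\end{equation*}
It then suffices to prove the $L^2_s L^2_{k,\xi}$ bound on the right-hand side above by $C(\delta)\eps_1^{3/2}+\sqrt{\delta}(\mathcal{B}'_\phi)^{1/2}$.

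Next I will split the $\rho$-integration according to whether $((k,\rho),(k,\rho+\xi-\eta))$ lies in the resonant set $R_1^\ast$ (i.e.\ $\langle\xi-\eta\rangle\le\langle k,\rho\rangle/10$) or not. In the resonant regime, the smoothness estimate \eqref{A-A2} for $A_k$, combined with the analogous smoothness of $\mu_k^{1/2}$ that I will extract from the convolution regularization built into the definition \eqref{muaD} via $L_{\delta'}(t,\xi)$, yields
\begin{equation*}
\widetilde{K}(s,k;\xi,\eta,\rho)\lesssim B^\dagger(s,k,\rho)\,A_R(s,\xi-\eta)\,e^{-(\delta_0/50)\langle\xi-\eta\rangle^{1/2}}\big[\sqrt{\delta}+C(\delta)\langle k,\rho\rangle^{-1/8}\big].
\end{equation*}
The $\sqrt{\delta}$-term is then absorbed by $\sqrt{\delta}\,\mathcal{B}'_\phi$ exactly as in \eqref{ront7}, while the $C(\delta)\langle k,\rho\rangle^{-1/8}$ term is converted, using Lemma~\ref{ImpThl} to bound $[\partial_z^2+(\partial_v-s\partial_z)^2](\Psi\phi)$ in $L^\infty_s L^2$ by $C\eps_1^{3/2}$ at low frequencies and using the uniform bound $\mu_k^{1/2}(s,\rho)\lesssim_\delta\langle s\rangle^{-(1+\sigma_0)/2}\langle k,\rho\rangle^{1/4}$ (from the first term in \eqref{mu1}) to make the $s$-integral converge, into a contribution of size $C(\delta)\eps_1^{3/2}$. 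In the non-resonant regime $\langle\xi-\eta\rangle\ge\langle k,\rho\rangle/10$, the bilinear estimate \eqref{TLX7} together with the elementary bound \eqref{runt13.5} produces a factor $e^{-\delta'_0\langle k,\rho\rangle^{1/2}}$ which more than compensates for both the bracket $[1+\langle\xi-\eta\rangle/(|k|\langle\rho/k-s\rangle)]$ and the polynomial growth of $\mu_k^{1/2}$; a Schur-type argument as in Step~3 of Lemma~\ref{wCDc1}, combined with the bootstrap bound $\mathcal{B}_\phi(t)\le\eps_1^2$, closes the estimate.

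The main obstacle I anticipate is the smoothness bound for $\mu_k^{1/2}$ in the resonant regime, analogous to \eqref{A-A2} for $A_k$. This is precisely the reason for introducing the convolution regularization in \eqref{muaD}: differentiating $\mu^\ast$ in $\xi$ loses one power of $L_{\delta'}(t,\xi)^{-1}\sim \min(1,\langle\xi\rangle^{1/2}/(\delta't))$ per derivative, so for $|\xi-\eta|\lesssim L_{\delta'}(s,\xi)$ one has $|\mu^\ast(s,\xi)-\mu^\ast(s,\eta)|\lesssim \mu^\ast(s,\xi)\,|\xi-\eta|/L_{\delta'}(s,\xi)$, which translates after taking square roots into the required $\sqrt{\delta}+C(\delta)\langle k,\rho\rangle^{-1/8}$ bound on $|\mu_k^{1/2}(s,\rho)-\mu_k^{1/2}(s,\rho+\xi-\eta)|/\mu_k^{1/2}(s,\rho)$ when $\langle\xi-\eta\rangle\le\langle k,\rho\rangle/10$. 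Granting these weight estimates, which will be proved in section~\ref{weights}, the rest of the argument is a direct adaptation of Lemma~\ref{wCDc1}, with each instance of the $L^2_{k,\xi}$ norm replaced by its $L^2_s L^2_{k,\xi}$ counterpart.
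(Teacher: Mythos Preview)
Your proposal follows the same approach as the paper: reduce to the kernel $K'$ (the paper's notation for your $\widetilde K$), split into $R_1^\ast$ and its complement, and invoke Lemmas~\ref{A-A} and~\ref{Lmu3} for the smoothness of $B^\dagger=\mu_k^{1/2}A^\dagger$ in $\xi$.

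Two points need tightening. First, your ``uniform bound'' $\mu_k^{1/2}(s,\rho)\lesssim_\delta\langle s\rangle^{-(1+\sigma_0)/2}\langle k,\rho\rangle^{1/4}$ is false in general because of the $\mu^\ast$ contribution in \eqref{mu1}; what is actually true, and sufficient, is that $\mu^\ast(s,\rho)=0$ whenever $|\rho|\lesssim_\delta 1$, so at the low frequencies where the $C(\delta)\langle k,\rho\rangle^{-1/8}$ term bites one has $\mu_k(s,\rho)\lesssim_\delta\langle s\rangle^{-1-\sigma_0}$ and the $s$-integral converges as you claim. Second, and more importantly, in the non-resonant regime you cannot close by invoking the bootstrap bound $\mathcal{B}_\phi\le\eps_1^2$: that yields only $C(\delta)\eps_1^2$, not $C(\delta)\eps_1^3$, and the absorption argument in \eqref{wCD} would fail. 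The paper instead uses \eqref{Lmu3.1} to place a factor $\sqrt{\mu_R(s,\xi-\eta)}$ on the coefficient $h_a$; since $\mu_R\approx_\delta|\dot A_R/A_R|$ by Lemma~\ref{Lmu1}, the $L^2_s$ integration then lands on the space-time part of $\|h_a\|_R\lesssim 1$, while the remaining $L^\infty_s L^2_{k,\rho}$ factor on $G$ is controlled by $(\mathcal{E}'_\phi)^{1/2}$ via \eqref{ront8}. This is precisely why the right-hand side of \eqref{runt6} carries $\mathcal{E}'_\phi$ as well as $\mathcal{B}'_\phi$, a term your target bound omits.
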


\begin{proof} As in the proof of Lemma \ref{wCDc1}, we can estimate the Fourier transforms $|\widetilde{\mathcal{D}_1}(s,k,\xi)|$, $|\widetilde{\mathcal{D}'_2}(s,k,\xi)|$, $|\widetilde{\mathcal{D}'_3}(s,k,\xi)|$. With $\Psi'_0$ as in the proof of Lemma \ref{wCDc1}, let, as before,
\begin{equation}\label{runt8}
G=[\partial_z^2+(\partial_v-t\partial_z)^2](\Psi\phi),\quad h_1=\Psi'_0\varrho^2,\quad h_2=\Psi'_0(V')^2,\quad h_3=\langle\partial_v\rangle^{-1}[\Psi'_0(V''+\varrho V')].
\end{equation}
With $B^\dagger$ defined as in \eqref{Bast}, \eqref{Aast}, and \eqref{mu1}, let
\begin{equation}\label{runt12}
K'(s,k;\xi,\eta,\rho):=\mathbf{1}_{\Z^\ast}(k)\Big[1+\frac{\langle\xi-\eta\rangle}{|k|\langle\rho/k-s\rangle}\Big]\big|B^\dagger(s,k,\rho)-B^\dagger(s,k,\rho+\xi-\eta)\big|.
\end{equation}
For \eqref{runt6} it suffices to prove that, for any $t\in[1,T]$ and $a\in\{1,2,3\}$,
\begin{equation}\label{runt40}
\begin{split}
\int_1^t\Big\|\int_{\mathbb{R}^2}|\widetilde{h_a}(s,\xi-\eta)||\widetilde{\Psi_0}(\eta-\rho)||\widetilde{G}(s,k,\rho)| K'(s,k;\xi,\eta,\rho)\, d\eta d\rho\Big\|^2_{L^2_{k,\xi}}\,ds\\
\lesssim C(\delta)\epsilon_1^3+\delta(\mathcal{E}'_\phi+\mathcal{B}'_\phi).
\end{split}
\end{equation}

As in the proof of Lemma \ref{wCDc1}, assume first that $((k,\rho),(k,\rho+\xi-\eta))\in R_1^\ast$. Then
\begin{equation*}
K'(s,k;\xi,\eta,\rho)\lesssim \mathbf{1}_{\Z^\ast}(k)\frac{\sqrt{\mu_k(s,\rho)}A_k(s,\rho)\langle s\rangle}{\langle s\rangle+|\rho/k|}A_R(s,\xi-\eta)e^{-(\delta_0/50)\langle\xi-\eta\rangle^{1/2}}\big[\sqrt\delta+C'(\delta)\langle k,\rho\rangle^{-1/8}\big],
\end{equation*}
using the definition and Lemmas \ref{A-A} and \ref{Lmu3}. Moreover
\begin{equation}\label{runt41}
\begin{split}
\Big\|\mathbf{1}_{\Z^\ast}(k)\frac{\sqrt{\mu_k(s,\rho)}A_k(s,\rho)\langle s\rangle}{\langle s\rangle+|\rho/k|}\widetilde{G}(s,k,\rho)\big[\sqrt\delta+C'(\delta)\langle k,\rho\rangle^{-1/8}\big]\Big\|_{L^2_sL^2_{k,\rho}}\\
\lesssim \sqrt{\delta}(\mathcal{B}'_\phi)^{1/2}+C''(\delta)\eps_1^{3/2},
\end{split}
\end{equation}
due to the definition \eqref{w0.5}, the bounds \eqref{Imboot3'l}, and the observation that $A_k(s,\rho)\lesssim_\delta 1$, $\mu_k(s,\rho)\lesssim_\delta \langle s\rangle^{-1-\sigma_0}$ if $\langle k,\rho\rangle\lesssim_\delta 1$. Therefore
\begin{equation}\label{runt13}
\begin{split}
&\Big\|\int_{\mathbb{R}^2}\mathbf{1}_{R_1^\ast}((k,\rho),(k,\rho+\xi-\eta))|\widetilde{h_a}(s,\xi-\eta)||\widetilde{\Psi_0}(\eta-\rho)||\widetilde{G}(s,k,\rho)| K'(s,k;\xi,\eta,\rho)\, d\eta d\rho\Big\|_{L^2_sL^2_{k,\xi}}\\
&\lesssim \|\widetilde{h_a}(s,\nu)A_R(s,\nu)\|_{L^\infty_sL^2_\nu}\Big\|\mathbf{1}_{\Z^\ast}(k)\frac{\sqrt{\mu_k(s,\rho)}A_k(s,\rho)\langle s\rangle}{\langle s\rangle+|\rho/k|}\widetilde{G}(s,k,\rho)\big[\sqrt\delta+C'(\delta)\langle k,\rho\rangle^{-1/8}\big]\Big\|_{L^2_sL^2_{k,\rho}}\\
&\lesssim C(\delta)\epsilon_1^{3/2}+\sqrt{\delta}(\mathcal{B}'_\phi)^{1/2},
\end{split}
\end{equation}
where the last inequality uses also the bounds $\|h_a\|_R\lesssim 1$.

On the other hand, if $\langle\xi-\eta\rangle\geq\langle k,\rho\rangle/10$ we use the bounds \eqref{TLX7} and \eqref{Lmu3.1} to estimate
\begin{equation*}
\begin{split}
K'(s,&k;\xi,\eta,\rho)\lesssim \mathbf{1}_{\Z^\ast}(k)\langle\xi-\eta\rangle\frac{\sqrt{\mu_k(s,\rho)}A_k(s,\rho)\langle s\rangle}{\langle s\rangle+|\rho/k|}\\
&+C(\delta)\mathbf{1}_{\Z^\ast}(k)\Big[1+\frac{\langle\xi-\eta\rangle}{|k|\langle\rho/k-s\rangle}\Big]\frac{\sqrt{\mu_R(s,\xi-\eta)}A_k(s,\rho)A_R(s,\xi-\eta)e^{-(\delta_0/100)\langle k,\rho\rangle^{1/2}}\langle s\rangle}{\langle s\rangle+|(\rho+\xi-\eta)/k|}.
\end{split}
\end{equation*}
Using again \eqref{runt13.5} it follows that if $\delta'_0=\delta_0/200$, $k\neq 0$, and $\langle\xi-\eta\rangle\geq\langle k,\rho\rangle/10$ then
\begin{equation*}
\begin{split}
K'(s,k;\xi,\eta,\rho)&\lesssim_\delta \mathbf{1}_{\Z^\ast}(k)\frac{\langle\xi-\eta\rangle^2}{\langle k,\rho\rangle}\frac{\sqrt{\mu_k(s,\rho)}A_k(s,\rho)\langle s\rangle}{\langle s\rangle+|\rho/k|}\\
&+\mathbf{1}_{\Z^\ast}(k)\sqrt{\mu_R(s,\xi-\eta)}A_R(s,\xi-\eta)A_k(s,\rho)e^{-\delta'_0\langle k,\rho\rangle^{1/2}}.
\end{split}
\end{equation*}
Therefore, using \eqref{ront8} and \eqref{runt41},
\begin{equation*}
\begin{split}
\Big\|\int_{\mathbb{R}^2}&\mathbf{1}_{{}^cR_1^\ast}((k,\rho),(k,\rho+\xi-\eta))|\widetilde{h_a}(s,\xi-\eta)||\widetilde{\Psi_0}(\eta-\rho)||\widetilde{G}(s,k,\rho)| K'(s,k;\xi,\eta,\rho)\, d\eta d\rho\Big\|_{L^2_sL^2_{k,\xi}}\\
&\lesssim_\delta \|\widetilde{h_a}(s,\nu)A_R(s,\nu)\|_{L^\infty_sL^2_\nu}\Big\|\mathbf{1}_{\Z^\ast}(k)\frac{\sqrt{\mu_k(s,\rho)}A_k(s,\rho)\langle s\rangle}{\langle s\rangle+|\rho/k|}\widetilde{G}(s,k,\rho)\langle k,\rho\rangle^{-1}\Big\|_{L^2_sL^2_{k,\rho}}\\\\
&+\|\widetilde{h_a}(s,\nu)\sqrt{\mu_R(s,\nu)}A_R(s,\nu)\|_{L^2_sL^2_\nu}\big\|\mathbf{1}_{\Z^\ast}(k)A_k(s,\rho)\widetilde{G}(s,k,\rho)e^{-(\delta'_0/2)\langle k,\rho\rangle^{1/2}}\big\|_{L^\infty_sL^2_{k,\rho}}\\
&\lesssim C(\delta)\epsilon_1^{3/2}+\sqrt{\delta}(\mathcal{B}'_\phi)^{1/2}+\sqrt{\delta}(\mathcal{E}'_\phi)^{1/2}.
\end{split}
\end{equation*} 
The desired conclusion \eqref{runt40} follows using also \eqref{runt13}.
\end{proof}

We bound now the remaining commutators $\mathcal{D}_4$, $\mathcal{D}''_2$, and $\mathcal{D}''_3$.

\begin{lemma}\label{wCDc5}
For $t\in[1,T]$ we have
\begin{equation}\label{runt50}
\int_1^t\big\{\|\mathcal{D}_4(s)\|^2_{L^2(\mathbb{T}\times\mathbb{R})}+\|\mathcal{D}''_2(s)\|^2_{L^2(\mathbb{T}\times\mathbb{R})}+\|\mathcal{D}''_3(s)\|^2_{L^2(\mathbb{T}\times\mathbb{R})}\big\}\,ds\lesssim C(\delta)\epsilon_1^3+\delta(\mathcal{E}'_\phi+\mathcal{B}'_\phi).
\end{equation}
\end{lemma}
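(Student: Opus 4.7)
The plan is to adapt the proof of Lemma~\ref{wCDc2} to the space-time setting, replacing $A^\dagger$ by $B^\dagger=\sqrt{\mu_k}\,A^\dagger$ and upgrading from $L^2_{z,v}$ bounds to $\int_1^t\|\cdot\|_{L^2_{z,v}}^2\,ds$ bounds. First I would reduce, just as in \eqref{ront11}, the control of $\mathcal{D}''_2$ and $\mathcal{D}''_3$ to a single model commutator
\begin{equation*}
\mathcal{D}''':=\partial^a_v\Psi_0\cdot B^\dagger(\partial_v-t\partial_z)^b(\Psi\phi)-B^\dagger\bigl[\partial^a_v\Psi_0\cdot (\partial_v-t\partial_z)^b(\Psi\phi)\bigr],\qquad a\in\{1,2\},\,b\in\{0,1\},
\end{equation*}
using that $\|\Psi'_0 V'\|_{L^\infty}+\|\Psi'_0 V''\|_{L^\infty}+\|\Psi'_0\varrho\|_{L^\infty}\lesssim 1$ by \eqref{rew3}. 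The target inequality then becomes
\begin{equation*}
\int_1^t\bigl[\|\mathcal{D}'''(s)\|_{L^2(\mathbb{T}\times\mathbb{R})}^2+\|\mathcal{D}_4(s)\|_{L^2(\mathbb{T}\times\mathbb{R})}^2\bigr]\,ds\lesssim C(\delta)\eps_1^3+\delta(\mathcal{E}'_\phi+\mathcal{B}'_\phi).
\end{equation*}

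After taking Fourier transforms, both $|\widetilde{\mathcal{D}'''}(s,k,\xi)|$ and $|\widetilde{\mathcal{D}_4}(s,k,\xi)|$ are dominated by $\int_{\mathbb{R}}P(\xi-\eta)|\widetilde G(s,k,\eta)|L'(s,k;\xi,\eta)\,d\eta$, with $P(\rho):=e^{-\langle\rho\rangle^{3/4}/2}$, $G:=[\partial_z^2+(\partial_v-t\partial_z)^2](\Psi\phi)$, and
\begin{equation*}
L'(s,k;\xi,\eta):=\mathbf{1}_{\Z^\ast}(k)\langle\xi-\eta\rangle^2\bigl|B^\dagger(s,k,\xi)-B^\dagger(s,k,\eta)\bigr|.
\end{equation*}
I would then split as in Lemma~\ref{wCDc2} between the diagonal region $R_1^\ast$ of \eqref{nar19.1} and its complement. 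On the off-diagonal regime $\langle\xi-\eta\rangle\geq\langle k,\eta\rangle/10$, the basic weight estimates together with the decay definitions in subsection~\ref{weightsdefin} give $L'(s,k;\xi,\eta)\lesssim_\delta e^{10\delta_0\langle\xi-\eta\rangle^{1/2}}\langle k,\eta\rangle^{-1}$, so Lemma~\ref{ImpThl} provides a uniform $\|G(s)\|_{L^2}\lesssim_\delta\eps_1^{3/2}\langle s\rangle^{-2}$ bound whose square is integrable in $s$ and yields the $C(\delta)\eps_1^3$ contribution.

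On the diagonal $R_1^\ast$, I would exploit the product structure $B^\dagger=\sqrt{\mu_k}\,A^\dagger$ by writing
\begin{equation*}
|B^\dagger(s,k,\xi)-B^\dagger(s,k,\eta)|\leq\bigl|\sqrt{\mu_k(s,\xi)}-\sqrt{\mu_k(s,\eta)}\bigr|A^\dagger(s,k,\xi)+\sqrt{\mu_k(s,\eta)}\,|A^\dagger(s,k,\xi)-A^\dagger(s,k,\eta)|,
\end{equation*}
and bound the second term by Lemma~\ref{A-A} and the first by the smoothness of $\mu_k$ coming from the mollification on scale $L_{\delta'}$ in \eqref{muaD} (quantified by \eqref{Lmu3.1}). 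This produces an effective gain $\sqrt\delta+C(\delta)\langle k,\eta\rangle^{-1/8}$ relative to $\sqrt{\mu_k(s,\eta)}A^\dagger(s,k,\eta)A_R(s,\xi-\eta)e^{-(\delta_0/50)\langle\xi-\eta\rangle^{1/2}}$, and Cauchy--Schwarz combined with the definition of $\mathcal{B}'_\phi$ in \eqref{w0.5} and the argument behind \eqref{runt41} closes the estimate. The main obstacle is precisely this joint commutator bound for $B^\dagger$: without the mollification in \eqref{muaD} the $\sqrt\delta$-gain for $\mu_k$ differences would be lost near the resonant times $t\approx\xi/k$, so one has to verify that the chosen scale $L_{\delta'}$ is compatible both with the resonant structure of $A_k$ inherited from subsection~\ref{weightsdef} and with the support condition $\{|\xi|\gtrsim\delta^{-10}\}$ appearing in \eqref{muD}, so that no residual loss is paid at critical times.
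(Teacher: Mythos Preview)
Your diagonal ($R_1^\ast$) argument is essentially the paper's: split $B^\dagger=\sqrt{\mu_k}\,A^\dagger$, use Lemma~\ref{A-A} for the $A^\dagger$-difference and the $\mu_k$-smoothness for the other piece, then close via \eqref{runt41}. One minor slip: the relevant $\mu_k$ estimate on $R_1^\ast$ is \eqref{Lmu3.2}, not \eqref{Lmu3.1} (the latter is the off-diagonal statement). You also need \eqref{runt30} to pass from $|\mu_k(s,\xi)-\mu_k(s,\eta)|$ to $|\sqrt{\mu_k(s,\xi)}-\sqrt{\mu_k(s,\eta)}|$.

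The off-diagonal argument, however, has a genuine gap. Lemma~\ref{ImpThl} gives only the uniform bound $\|G(s)\|_{L^2}^2\lesssim\|F(s)\|_{L^2}^2\lesssim_\delta\eps_1^3$; there is no $\langle s\rangle^{-2}$ decay (indeed $G=[\partial_z^2+(\partial_v-s\partial_z)^2](\Psi\phi)$ carries a $s^2\partial_z^2$ piece that precludes decay of $G$ itself). With your stated bound $L'\lesssim_\delta e^{10\delta_0\langle\xi-\eta\rangle^{1/2}}\langle k,\eta\rangle^{-1}$ and only $\|G(s)\|_{L^2}\lesssim_\delta\eps_1^{3/2}$, the $s$-integral $\int_1^t(\cdot)\,ds$ diverges. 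The missing ingredient is that in the off-diagonal regime $\langle k,\xi\rangle,\langle k,\eta\rangle\lesssim\langle\xi-\eta\rangle$, so (absorbing polynomial and $\mu^\ast$ contributions into $e^{C\delta_0\langle\xi-\eta\rangle^{1/2}}$, which $P$ then kills) one has $\sqrt{\mu_k(s,\cdot)}\lesssim e^{C\delta_0\langle\xi-\eta\rangle^{1/2}}\langle s\rangle^{-(1+\sigma_0)/2}$. This yields the paper's bound
\[
L'(s,k;\xi,\eta)\lesssim \mathbf{1}_{\Z^\ast}(k)\,e^{10\delta_0\langle\xi-\eta\rangle^{1/2}}\langle s\rangle^{-(1+\sigma_0)/2}\langle k,\eta\rangle^{-1},
\]
and the factor $\langle s\rangle^{-(1+\sigma_0)}$ (after squaring) is what makes the time integral converge. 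Once you insert this correction, the rest of your scheme goes through exactly as in Lemma~\ref{wCDc2}.
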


\begin{proof} As in the proof of Lemma \ref{wCDc2}, it suffices to show that, for any $t\in[1,T]$,
\begin{equation}\label{runt60}
\begin{split}
\int_1^t\Big\|\int_{\mathbb{R}}P(\xi-\eta)|\widetilde{G}(s,k,\eta)|L'(s,k;\xi,\eta)\,d\eta\Big\|^2_{L^2_{k,\xi}}\,ds\lesssim C(\delta)\epsilon_1^3+\delta(\mathcal{E}'_\phi+\mathcal{B}'_\phi),
\end{split}
\end{equation}
where $P(\rho):=e^{-\langle\rho\rangle^{3/4}/2}$, $G=[\partial_z^2+(\partial_v-t\partial_z)^2](\Psi\phi)$ as before, and
\begin{equation}\label{runt61}
L'(s,k;\xi,\eta):=\mathbf{1}_{\Z^\ast}(k)\langle\xi-\eta\rangle^2\Big|\frac{\sqrt{\mu_k(s,\xi)}A_k(s,\xi)\langle s\rangle}{\langle s\rangle+|\xi/k|}-\frac{\sqrt{\mu_k(s,\eta)}A_k(s,\eta)\langle s\rangle}{\langle s\rangle+|\eta/k|}\Big|.
\end{equation}

If $((k,\xi),(k,\eta))\in R_1^\ast$ then we use \eqref{A-A2} and Lemma \ref{Lmu3} to estimate
\begin{equation*}
L'(s,k;\xi,\eta)\lesssim \mathbf{1}_{\Z^\ast}(k)\frac{\sqrt{\mu_k(s,\eta)}A_k(s,\eta)\langle s\rangle}{\langle s\rangle+|\eta/k|}A_R(s,\xi-\eta)e^{-(\delta_0/50)\langle\xi-\eta\rangle^{1/2}}\big[\sqrt\delta+C_0(\delta)\langle k,\eta\rangle^{-1/8}\big].
\end{equation*}
On the other hand, if $\langle\xi-\eta\rangle\geq\langle k,\eta\rangle/10$ then we can use the definitions \eqref{dor4} to estimate
\begin{equation*}
L'(s,k;\xi,\eta)\lesssim \mathbf{1}_{\Z^\ast}(k)e^{10\delta_0\langle\xi-\eta\rangle^{1/2}}\langle t\rangle^{-(1+\sigma_0)/2}\langle k,\eta\rangle^{-1}.
\end{equation*}
The desired bounds \eqref{runt60} follow in the same way as in the proof of Lemma \ref{runt5}.
\end{proof}

\section{The time-dependent imbalanced weights: definitions and properties}\label{weights}

\subsection{Definitions}\label{weightsdefin}

In this subsection we recall the precise definitions of the weights $w_\ast, b_\ast, A_\ast$, $\ast\in\{NR,R,k\}$, $k\in\mathbb{Z}$, from \cite{IOJI}. We define first the functions $w_{NR},w_R:[0,\infty)\times\mathbb{R}\to [0,1]$ which model the non-resonant and resonant growth respectively. Take small $\delta>0$ with $\delta\ll \delta_0$, which is still much larger than $ \epsilon$. For $|\eta|\leq\delta^{-10}$ we define simply
\begin{equation}\label{reb1}
w_{NR}(t,\eta):=1,\qquad w_R(t,\eta):=1.
\end{equation}
For $\eta>\delta^{-10}$ we define $k_0(\eta):=\lfloor\sqrt{\delta^3\eta}\rfloor$. For $l\in\{1,\ldots,k_0(\eta)\}$ we define
\begin{equation}\label{reb2}
t_{l,\eta}:=\frac{1}{2}\big(\frac{\eta}{l+1}+\frac{\eta}{l}\big),\qquad t_{0,\eta}:=2\eta,\qquad I_{l,\eta}:=[t_{l,\eta},\,t_{l-1,\eta}].
\end{equation}
Notice that $|I_{l,\eta}|\sim \frac{\eta}{l^2}$ and
\begin{equation*}
\delta^{-3/2}\sqrt{\eta}/2\leq t_{k_0(\eta),\eta}\leq\ldots\leq t_{l,\eta}\leq\eta/l\leq t_{l-1,\eta}\leq\ldots\leq t_{0,\eta}=2\eta.
\end{equation*}

We define
\begin{equation}\label{reb3}
w_{NR}(t,\eta):=1,\,w_{R}(t,\eta):=1\qquad\text{ if }\,\,t\geq t_{0,\eta}=2\eta.
\end{equation}
Then we define, for $k\in\{1,\ldots,k_0(\eta)\}$,
\begin{equation}\label{reb5}
\begin{split}
w_{NR}(t,\eta)&:=\Big(\frac{1+\delta^2|t-\eta/k|}{1+\delta^2|t_{k-1,\eta}-\eta/k|}\Big)^{\delta_0}w_{NR}(t_{k-1,\eta},\eta)\qquad\text{ if }t\in[\eta/k,t_{k-1,\eta}],\\
w_{NR}(t,\eta)&:=\Big(\frac{1}{1+\delta^2|t-\eta/k|}\Big)^{1+\delta_0}w_{NR}(\eta/k,\eta)\qquad\text{ if }t\in[t_{k,\eta},\eta/k].
\end{split}
\end{equation}
We define also the weight $w_R$ by the formula
\begin{equation}\label{reb5.5}
w_R(t,\eta):=
\begin{cases}
w_{NR}(t,\eta)\frac{1+\delta^2|t-\eta/k|}{1+\delta^2\eta/(8k^2)}\qquad&\text{ if }|t-\eta/k|\leq\eta/(8k^2)\\
w_{NR}(t,\eta)\qquad&\text{ if }t\in I_{k,\eta},\,|t-\eta/k|\geq\eta/(8k^2),
\end{cases}
\end{equation}
for any $k\in\{1,\ldots,k_0(\eta)\}$. Notice that
\begin{equation}\label{reb4}
\frac{w_{NR}(t_{k,\eta},\eta)}{w_{NR}(t_{k-1,\eta},\eta)}\approx \Big(\frac{k^2}{\delta^2\eta}\Big)^{1+2\delta_0},\qquad w_{R}(t_{k,\eta},\eta)=w_{NR}(t_{k,\eta},\eta).
\end{equation}
Moreover, notice that for $t\in I_{k,\eta}$,
\begin{equation}\label{reb7}
w_{R}(t,\eta)\approx w_{NR}(t,\eta)\left[\frac{k^2}{\delta^2\eta}\left(1+\delta^2|t-\eta/k|\right)\right],
\end{equation}
and
\begin{equation}\label{reb8}
\frac{\partial_tw_{NR}(t,\eta)}{w_{NR}(t,\eta)}\approx\frac{\partial_tw_R(t,\eta)}{w_R(t,\eta)}\approx \frac{\delta^2}{1+\delta^2\left|t-\eta/k\right|}.
\end{equation}

We observe that 
\begin{equation}\label{reb8.5}
e^{(J_2-J_1)\ln(A/J_2^2)}\leq\prod_{j=J_1+1}^{J_2}\frac{A}{j^2}\leq e^{(J_2-J_1)\ln(A/J_2^2)+4(J_2-J_1)}
\end{equation}
provided that $1\leq J_1+1\leq J_2$. In particular, for $\eta>\delta^{-10}$,
\begin{equation}\label{reb8.6}
\begin{split}
&w_{NR}(t_{k_0(\eta),\eta},\eta)=w_R(t_{k_0(\eta),\eta},\eta)\in[X_\delta(\eta)^4,X_\delta(\eta)^{1/4}],\\
&X_\delta(\eta):=e^{-\delta^{3/2}\ln(\delta^{-1})\sqrt\eta}.
\end{split}
\end{equation}

For small values of $t\leq t_{k_0(\eta),\eta}$ we define the weights $w_{NR}$ and $w_R$ by the formulas 
\begin{equation}\label{reb9}
w_{NR}(t,\eta)=w_R(t,\eta):=(e^{-\delta\sqrt\eta})^\beta w_{NR}(t_{k_0(\eta),\eta},\eta)^{1-\beta}
\end{equation}
if $t=(1-\beta)t_{k_0(\eta),\eta}$, $\beta\in[0,1]$. We notice that
\begin{equation}\label{reb9.5}
\frac{w_{NR}(t_1,\eta)}{w_{NR}(t_2,\eta)}\lesssim e^{4\delta^{5/2}|t_1-t_2|}\qquad\text{ for any }t_1\in[0,t_{k_0(\eta),\eta}],\,t_2\in[0,\infty).
\end{equation}

If $\eta<-\delta^{-10}$, then we define $w_R(t,\eta):=w_R(t,|\eta|)$, $w_{NR}(t,\eta):=w_{NR}(t,|\eta|)$.

We define now the weights $w_k(t,\eta)$ which crucially distinguish the way resonant and non-resonant modes grow around the critical times $\eta/k$. If $|\eta|\leq\delta^{-10}$, then we define $w_k(t,\eta)\equiv 1$. If $\eta>\delta^{-10}$ and $1\leq k\leq \sqrt{\delta^3\eta}$, then we define
\begin{equation}\label{eq:resonantweight}
w_k(t,\eta):=\left\{\begin{array}{lll}
w_{NR}(t,\eta)&{\rm \,if\,}&t\not\in I_{k,\eta},\\
w_R(t,\eta)&{\rm \,if\,}&t\in I_{k,\eta}.
\end{array}\right.
\end{equation}
If $\eta>\delta^{-10}$, $k\not\in\big[1,\,\sqrt{\delta^3\eta}\,\big]$, we define $w_k(t,\eta)\equiv w_{NR}(t,\eta)$. If $\eta<-\delta^{-10}$ then we define
\begin{equation}\label{reb10}
w_k(\eta):=w_{-k}(-\eta).
\end{equation}
In particular $w_k(\eta)=w_{NR}(\eta)$ if $k\eta\leq 0$. 

\subsubsection{The functions $b_\ast$ and $A_\ast$} The functions $w_{NR}$, $w_{R}$ and $w_k$ have the right size but lack optimal smoothness in the frequency parameter $\eta$, mainly due to the jump discontinuities of the function $k_0(\eta)$. The smoothness of the weight is important in the analysis of the transport terms, as it leads to smaller loss of derivatives after symmetrization in the energy functionals.

To correct this problem we mollify the weights $w_\ast$. We fix $\varphi: \R \to [0,1]$ an even smooth function supported in $[-8/5,8/5]$ and equal to $1$ in $[-5/4,5/4]$ and let $d_0:=\int_\mathbb{R}\varphi(x)\,dx$. For $k\in\mathbb{Z}$ and $\ast\in \{NR,R,k\}$ let
\begin{equation}\label{dor1}
\begin{split}
b_\ast(t,\xi)&:=\int_\R w_\ast(t,\rho)\varphi\Big(\frac{\xi-\rho}{L_{\delta'}(t,\xi)}\Big)\frac{1}{d_0L_{\delta'}(t,\xi)}\,d\rho,\\
L_{\delta'}(t,\xi)&:=1+\frac{\delta'\langle\xi\rangle}{\langle\xi\rangle^{1/2}+\delta' t},\qquad\delta'\in[0,1].
\end{split}
\end{equation}
In other words, the functions $b_\ast(t,\xi)$ are obtained by averaging $w_\ast(t,\rho)$ over intervals of length $\approx L_{\delta'}(t,\xi)$ around the point $\xi$. The length $L_{\delta'}(t,\xi)$ in \eqref{dor1} is chosen to optimize the smoothness in $\xi$ of the functions $b_\ast(t,.)$, while not changing significantly the size of the weights. The parameter $\delta'$ is fixed sufficiently small, depending only on $\delta$. 

We can now finally define our main weights $A_{NR}$, $A_R$, and $A_k$. We define first the decreasing function $\lambda:[0,\infty)\to[\delta_0,3\delta_0/2]$ by
\begin{equation}\label{dor2}
\lambda(0)=\frac{3}{2}\delta_0,\,\,\,\,\lambda'(t)=-\frac{\delta_0\sigma_0^2}{\langle t\rangle^{1+\sigma_0}},
\end{equation}
for small positive constant $\sigma_0$ (say $\sigma_0=0.01$). Then we define
\begin{equation}\label{dor3}
A_R(t,\xi):=\frac{e^{\lambda(t)\langle\xi\rangle^{1/2}}}{b_R(t,\xi)}e^{\sqrt{\delta}\langle\xi\rangle^{1/2}},\qquad A_{NR}(t,\xi):=\frac{e^{\lambda(t)\langle\xi\rangle^{1/2}}}{b_{NR}(t,\xi)}e^{\sqrt{\delta}\langle\xi\rangle^{1/2}},
\end{equation}
and, for any $k\in\mathbb{Z}$,
\begin{equation}\label{dor4}
A_k(t,\xi):=e^{\lambda(t)\langle k,\xi\rangle^{1/2}}\Big(\frac{e^{\sqrt{\delta}\langle\xi\rangle^{1/2}}}{b_k(t,\xi)}+e^{\sqrt{\delta}|k|^{1/2}}\Big).
\end{equation}

\subsection{Properties of the weights} In this subsection we collect several bounds on the weights $w_\ast$, $b_\ast$, and $A_\ast$, see \cite{IOJI} for proofs. We start with a lemma (Lemma 7.1 in \cite{IOJI}):

\begin{lemma}\label{comparisonweights}
For all $t\ge0$ and $\xi,\,\eta\in \R$, and $k\in\mathbb{Z}$ we have
\begin{equation}\label{eq:comparisonweights1}
\frac{w_{NR}(t,\xi)}{w_{NR}(t,\eta)}+\frac{w_{R}(t,\xi)}{w_{R}(t,\eta)}+\frac{w_{k}(t,\xi)}{w_{k}(t,\eta)}\lesssim_\delta e^{\sqrt{\delta} |\eta-\xi|^{1/2}}.
\end{equation}
Moreover, if $|\xi-\eta|\leq 10L_1(t,\eta)$ (see \eqref{dor1} for the definition of $L_1(t,\eta)$) then we have the stronger bounds
\begin{equation}\label{dor6}
\frac{w_{NR}(t,\xi)}{w_{NR}(t,\eta)}+\frac{w_{R}(t,\xi)}{w_{R}(t,\eta)}+\frac{w_{k}(t,\xi)}{w_{k}(t,\eta)}\lesssim_\delta 1.
\end{equation}
Finally, if $\min(|\xi|,|\eta|)\geq 2\delta^{-10}$, $|\xi-\eta|\leq \min(|\xi|,|\eta|)/3$, and $t\geq \max(t_{k_0(\xi)-4,\xi},t_{k_0(\eta)-4,\eta})$ then we also have the stronger bounds
\begin{equation}\label{ReSm2}
\max\Big\{\frac{w_{NR}(t,\xi)}{w_{NR}(t,\eta)},\,\frac{w_{R}(t,\xi)}{w_{R}(t,\eta)},\,\frac{w_{k}(t,\xi)}{w_{k}(t,\eta)}\Big\}\leq e^{\sqrt{\delta} |\eta-\xi|^{1/2}}.
\end{equation}
\end{lemma}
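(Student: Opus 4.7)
My plan is to work at the level of the pre-smoothed weights $w_{NR},w_R,w_k$, since the mollification \eqref{dor1} averages over a length $L_{\delta'}(t,\xi)\lesssim\delta'\sqrt{\langle\xi\rangle}$ which is much shorter than every resonance-interval width $|I_{l,\eta}|\sim\eta/l^2\geq\delta^{-3}$; ratio bounds for $w_\ast$ therefore transfer to $b_\ast$ with only $\delta$-dependent constants. I would reduce throughout to $\xi,\eta>2\delta^{-10}$ of the same sign, using \eqref{reb1} and the symmetry \eqref{reb10}, and record the uniform two-sided bounds $e^{-\delta\sqrt\rho}\leq w_\ast(t,\rho)\leq 1$, which follow from \eqref{reb8.6} and \eqref{reb9} for $\delta$ small.

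For part (i) I would split into two regimes. If $|\xi-\eta|\geq\max(\xi,\eta)/100$, the crude two-sided bounds already give the claim: then $\sqrt{\max(\xi,\eta)}\leq 11\sqrt{|\xi-\eta|}$, and the logarithm of the ratio is at most $\delta\sqrt{\max(\xi,\eta)}\leq 11\delta\sqrt{|\xi-\eta|}\leq\sqrt\delta\,|\xi-\eta|^{1/2}$ for $\delta$ small. If $|\xi-\eta|<\max(\xi,\eta)/100$ then $k_0(\xi)$ and $k_0(\eta)$ differ by at most one, and the product representations from \eqref{reb5} have compatible structure. I would telescope $\ln w_{NR}(t,\xi)-\ln w_{NR}(t,\eta)$ as a sum over $l\in\{1,\ldots,k_0(\eta)\}$ of two kinds of contributions: pointwise differences $\ln(1+\delta^2|t-\xi/l|)-\ln(1+\delta^2|t-\eta/l|)$ weighted by the exponents $\delta_0$ or $1+\delta_0$, and boundary contributions from the shift $t_{l,\eta}\to t_{l,\xi}\sim t_{l,\eta}+(\xi-\eta)/l$. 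Each contribution is bounded by $C\delta^2|\xi-\eta|/l$, and the structural cancellation between the growth exponent on $[\eta/l,t_{l-1,\eta}]$ and the decay exponent on $[t_{l,\eta},\eta/l]$ keeps the sum over $l\leq k_0(\eta)\leq\sqrt{\delta^3\eta}$ at size $O(\delta^{3/2}\sqrt{|\xi-\eta|})$, absorbable by $\sqrt\delta\,|\xi-\eta|^{1/2}$. The cases $w_R$ and $w_k$ pick up an extra factor that is smooth in $\xi$ from \eqref{reb5.5}, \eqref{eq:resonantweight} and are handled identically.

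For part (ii), the hypothesis $|\xi-\eta|\leq 10L_1(t,\eta)\lesssim\sqrt\eta$ is much smaller than every $|I_{l,\eta}|\geq\delta^{-3}$, so $\xi,\eta$ lie in the same resonance interval and $k_0(\xi)=k_0(\eta)$; the telescoped sum above now has each term of size $O(\delta^2|\xi-\eta|/l^2)$, totalling $O(\delta^{5/2}\sqrt\eta)=O(1)$. For part (iii), the assumption $t\geq\max(t_{k_0(\xi)-4,\xi},t_{k_0(\eta)-4,\eta})$ restricts the active indices in the product to a set of size at most four, so the telescoped sum reduces to finitely many terms, each sharply bounded by $\sqrt\delta\,|\xi-\eta|^{1/2}$ without an implicit multiplicative constant. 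The main obstacle is the telescoping in part (i): the cancellation between the growth exponent $\delta_0$ and decay exponent $1+\delta_0$ in \eqref{reb5}, together with the boundary-shift contributions from the moving break points $t_{l,\eta}$, must be tracked across all $k_0(\eta)$ intervals to obtain the $\sqrt{|\xi-\eta|}$ dependence rather than $\sqrt{\max(\xi,\eta)}$; the extra smallness $\sqrt\delta$ rather than $O(1)$ comes precisely from the structural bound $k_0(\eta)\leq\sqrt{\delta^3\eta}$.
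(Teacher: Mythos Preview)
The paper does not prove this lemma itself; it is quoted as Lemma~7.1 of \cite{IOJI}. So there is no proof in the present paper to compare against, and I am assessing your sketch on its own merits.

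Your overall strategy for parts (i) and (ii) --- separate a far regime handled by the crude two-sided bound $e^{-\delta\sqrt{\rho}}\le w_\ast\le 1$, and a near regime handled by telescoping $\ln w_{NR}(t,\xi)-\ln w_{NR}(t,\eta)$ across the resonance intervals --- is sound. However, your intermediate bookkeeping is off. For a completed interval $l<a$ (where $t\in I_{a,\cdot}$) the relevant quantity is $|\ln(1+\delta^2\xi/(2l(l\pm1)))-\ln(1+\delta^2\eta/(2l(l\pm1)))|$, and since $\delta^2\eta/l^2\ge\delta^{-1}$ is large, this difference is $\approx|\xi-\eta|/\eta$, \emph{not} $\delta^2|\xi-\eta|/l$ or $\delta^2|\xi-\eta|/l^2$. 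Summing over $l\le a\le k_0(\eta)\le\sqrt{\delta^3\eta}$ gives the full-drop contribution $\lesssim a|\xi-\eta|/\eta\le\delta^{3/2}|\xi-\eta|/\sqrt\eta$, which is what drives both (i) and (ii). In particular your claim in (ii) that the total is ``$O(\delta^{5/2}\sqrt\eta)=O(1)$'' cannot be right, since $\delta^{5/2}\sqrt\eta$ is unbounded in $\eta$; the correct count is $a|\xi-\eta|/\eta\le 10a^2/\eta\le 10\delta^3$.

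Your argument for part (iii) rests on a genuine misreading of the hypothesis. Since $t_{l,\xi}$ is \emph{decreasing} in $l$, the condition $t\ge t_{k_0(\xi)-4,\xi}$ says $t$ lies \emph{above} the $(k_0-4)$-th break point, which is among the \emph{smallest} break points. Hence $t\in I_{a,\xi}$ for some $a\in\{1,\dots,k_0(\xi)-4\}$, and there can be up to $k_0(\xi)-5$ completed intervals in the product --- not ``at most four.'' The real purpose of the hypothesis is to exclude the pre-resonant interpolation regime $t<t_{k_0(\xi),\xi}$ of \eqref{reb9} (together with a four-interval buffer to absorb the possible jump $k_0(\xi)\neq k_0(\eta)$), since in that regime the weights depend on the discontinuous function $k_0(\cdot)$ and only a $\lesssim_\delta$ bound is available. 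Away from it, the product formulas \eqref{reb5}--\eqref{reb5.5} apply cleanly and the telescoping from part (i) can be carried out with explicit (small) constants: the completed-interval sum contributes $\le C\delta^{3/2}|\xi-\eta|/\sqrt\eta\le C\delta^{3/2}\sqrt{|\xi-\eta|}$, and the single partial interval contributes at most $(1+\delta_0)\ln(1+\delta^2|\xi-\eta|/a)\le 2(1+\delta_0)\delta\sqrt{|\xi-\eta|}$, both absorbed into $\sqrt\delta\,|\xi-\eta|^{1/2}$ for $\delta$ small. Your argument for (iii) would need to be redone along these lines.
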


We also need estimates on the functions $b_\ast$ defined in \eqref{dor1}, see also Lemma 7.2 in \cite{IOJI}.

\begin{lemma}\label{bweights}
For $t\geq 0$, $\xi,\eta\in\R$, $k\in\Z$, and $\ast\in\{NR,R,k\}$ we have
\begin{equation}\label{dor20}
b_\ast(t,\xi)\approx_\delta w_\ast(t,\xi),
\end{equation}
\begin{equation}\label{dor21}
|\partial_\xi b_\ast(t,\xi)|\lesssim_\delta b_\ast(t,\xi)\frac{1}{L_{\delta'}(t,\xi)},
\end{equation}
and
\begin{equation}\label{dor22}
\frac{b_\ast(t,\xi)}{b_\ast(t,\eta)}\lesssim_\delta e^{\sqrt{\delta} |\eta-\xi|^{1/2}}.
\end{equation}
\end{lemma}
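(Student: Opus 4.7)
The plan is to derive all three estimates directly from the mollification formula \eqref{dor1} and the pointwise comparison bounds in Lemma \ref{comparisonweights}, in the order (a) $\to$ (c) $\to$ (b), with (b) as the only computational step.

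For the equivalence \eqref{dor20}, I would first observe that $L_{\delta'}(t,\xi)\leq L_1(t,\xi)$ and that on the support of $\varphi\big((\xi-\rho)/L_{\delta'}(t,\xi)\big)$ one has $|\xi-\rho|\leq (8/5)L_{\delta'}(t,\xi)\leq(8/5)L_1(t,\xi)$. A short check shows that in this range one also has $L_1(t,\rho)\approx L_1(t,\xi)$ (both cases $\langle\xi\rangle\lesssim 1$ and $\langle\xi\rangle\gg 1$ are elementary, using that $L_1(t,\xi)\lesssim \langle\xi\rangle^{1/2}$ so $\langle\rho\rangle\approx\langle\xi\rangle$). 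Therefore \eqref{dor6} of Lemma \ref{comparisonweights} applies and gives $w_\ast(t,\rho)\approx_\delta w_\ast(t,\xi)$ uniformly over $\rho$ in the support of the mollifier. Integrating against the probability kernel $\varphi\big((\xi-\rho)/L_{\delta'}(t,\xi)\big)/(d_0L_{\delta'}(t,\xi))$ yields \eqref{dor20}. Then \eqref{dor22} is immediate: combining \eqref{dor20} at $\xi$ and at $\eta$ with \eqref{eq:comparisonweights1} gives $b_\ast(t,\xi)/b_\ast(t,\eta)\approx_\delta w_\ast(t,\xi)/w_\ast(t,\eta)\lesssim_\delta e^{\sqrt\delta |\xi-\eta|^{1/2}}$.

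The main calculation is the derivative estimate \eqref{dor21}. I would differentiate under the integral sign in \eqref{dor1}. Writing $L=L_{\delta'}(t,\xi)$ and $u=(\xi-\rho)/L$, a direct computation gives
\begin{equation*}
\partial_\xi\Big[\frac{\varphi(u)}{L}\Big]=-\frac{\partial_\xi L}{L^2}\varphi(u)+\frac{\varphi'(u)}{L^2}-\frac{u\,\varphi'(u)\,\partial_\xi L}{L^2}.
\end{equation*}
A short calculation using $\partial_\xi\langle\xi\rangle=\xi/\langle\xi\rangle$ shows that
\begin{equation*}
|\partial_\xi L_{\delta'}(t,\xi)|\lesssim \delta'\cdot\frac{\langle\xi\rangle^{1/2}+\delta' t}{(\langle\xi\rangle^{1/2}+\delta' t)^2}\lesssim 1,
\end{equation*}
so all three summands are bounded pointwise by $C/L^2$ on the support of $\varphi(u)+\varphi'(u)$, which has $\rho$-measure $\lesssim L$. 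Combining this with the uniform bound $w_\ast(t,\rho)\lesssim_\delta w_\ast(t,\xi)$ established in step one, and using \eqref{dor20} to pass from $w_\ast$ back to $b_\ast$, we conclude
\begin{equation*}
|\partial_\xi b_\ast(t,\xi)|\lesssim_\delta \frac{w_\ast(t,\xi)}{L_{\delta'}(t,\xi)}\approx_\delta \frac{b_\ast(t,\xi)}{L_{\delta'}(t,\xi)},
\end{equation*}
which is \eqref{dor21}.

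The only genuinely delicate point, and where I would spend the most care, is the verification that $L_{\delta'}(t,\xi)\approx L_{\delta'}(t,\rho)$ throughout the support of the convolution kernel; this is what justifies applying the \emph{local} comparison bound \eqref{dor6} to replace $w_\ast(t,\rho)$ by $w_\ast(t,\xi)$ and then to absorb all constants into $\lesssim_\delta$. All three statements \eqref{dor20}--\eqref{dor22} hold uniformly in $\ast\in\{NR,R,k\}$ because both Lemma \ref{comparisonweights} and the definition \eqref{dor1} treat the three families identically.
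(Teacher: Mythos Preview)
Your proof is correct. The paper itself does not give a proof of this lemma, deferring instead to \cite{IOJI} (Lemma 7.2 there); your argument is precisely the natural one, using the local comparison bound \eqref{dor6} on the support of the mollifier to get \eqref{dor20}, then \eqref{eq:comparisonweights1} for \eqref{dor22}, and differentiating under the integral for \eqref{dor21}. The only cosmetic point is that your displayed bound for $|\partial_\xi L_{\delta'}|$ is slightly miswritten (the numerator should be $\langle\xi\rangle^{1/2}/2+\delta' t$ rather than $\langle\xi\rangle^{1/2}+\delta' t$), but the conclusion $|\partial_\xi L_{\delta'}|\lesssim 1$ is correct and is all that is used.
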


We recall now several bounds on the main weights $A_{NR}, A_R, A_k$, see Lemma 7.3 in \cite{IOJI}.

\begin{lemma}\label{A_kA_ell}
(i) Assume $t\in[0,\infty)$, $k\in\mathbb{Z}$, and $\ast\in\{NR,R,k\}$. Then, for any $\xi,\eta\in\mathbb{R}$,
\begin{equation}\label{vfc25.1}
\frac{A_\ast(t,\xi)}{A_\ast(t,\eta)}\lesssim_\delta e^{(\lambda(t)+4\sqrt{\delta})|\xi-\eta|^{1/2}}.
\end{equation}
Moreover, if $\xi,\eta\in\mathbb{R}$ satisfy $|\eta|\geq |\xi|/4$ (or $|(k,\eta)|\geq|(k,\xi)|/4$ if $\ast=k$) then
\begin{equation}\label{vfc25}
\frac{A_\ast(t,\xi)}{A_\ast(t,\eta)}\lesssim_\delta e^{0.9\lambda(t)|\xi-\eta|^{1/2}}.
\end{equation}

(ii) Assume $t\in[0,\infty)$, $k,\ell\in\mathbb{Z}$ and $\xi,\eta\in\mathbb{R}$ satisfy $|(\ell,\eta)|\geq |(k,\xi)|/4$. If $t\not\in I_{k,\xi}$ or if $t\in I_{k,\xi}\cap I_{\ell,\eta}$, then
\begin{equation}\label{vfc26}
\frac{A_k(t,\xi)}{A_\ell(t,\eta)}\lesssim_\delta e^{0.9\lambda(t)|(k-\ell,\xi-\eta)|^{1/2}}.
\end{equation}
If $t\in I_{k,\xi}$ and $t\not\in I_{\ell,\eta}$, then
\begin{equation}\label{vfc27}
\frac{A_k(t,\xi)}{A_\ell(t,\eta)}\lesssim_\delta \frac{|\xi|}{k^2}\frac{1}{1+\big|t-\xi/k\big|} e^{0.9\lambda(t)|(k-\ell,\xi-\eta)|^{1/2}}.
\end{equation}
\end{lemma}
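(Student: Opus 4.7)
\medskip

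\noindent\textbf{Plan for the proof of Lemma \ref{A_kA_ell}.}

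The plan is to unwind the definitions \eqref{dor3}--\eqref{dor4} of $A_{NR}, A_R, A_k$ to reduce every ratio to (a) an exponential factor coming from the Gevrey symbol $e^{\lambda(t)\langle\cdot\rangle^{1/2}}$, and (b) a ratio of the mollified weights $b_\ast$, which by Lemma \ref{bweights} is comparable to the ratio of the underlying weights $w_\ast$ controlled by Lemma \ref{comparisonweights}. For the $A_k$--weight, I will also treat the summand $e^{\sqrt\delta|k|^{1/2}}$ separately: at fixed $k$ this contributes only a bounded factor when passing from $\xi$ to $\eta$, so all the structural information sits in the $1/b_k(t,\xi)$ piece. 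The workhorse elementary inequality throughout is the Gevrey triangle bound $\bigl|\langle\xi\rangle^{1/2}-\langle\eta\rangle^{1/2}\bigr|\leq|\xi-\eta|^{1/2}$ together with the sharper representation $\langle\xi\rangle^{1/2}-\langle\eta\rangle^{1/2}=(\xi^2-\eta^2)/[(\langle\xi\rangle+\langle\eta\rangle)(\langle\xi\rangle^{1/2}+\langle\eta\rangle^{1/2})]$, which is needed for the improved estimates.

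For part (i), the general bound \eqref{vfc25.1} is immediate: the exponential factor contributes at most $e^{(\lambda(t)+\sqrt\delta)|\xi-\eta|^{1/2}}$, while the $b_\ast$--ratio contributes $\lesssim_\delta e^{\sqrt\delta|\xi-\eta|^{1/2}}$ by \eqref{dor22}. The refined bound \eqref{vfc25} requires a gain from the hypothesis $|\eta|\geq|\xi|/4$. I will first dispose of the bounded regime $|\xi|\lesssim 1$ by absorbing into the $\lesssim_\delta$ constant. In the remaining regime both $\langle\xi\rangle^{1/2}$ and $\langle\eta\rangle^{1/2}$ are large and comparable, so the denominator $\langle\xi\rangle^{1/2}+\langle\eta\rangle^{1/2}$ dominates and the refined identity yields $|\langle\xi\rangle^{1/2}-\langle\eta\rangle^{1/2}|\leq c_\ast|\xi-\eta|^{1/2}$ with $c_\ast<1$ depending only on the comparability constant $1/4$; since $\sqrt\delta\ll\delta_0\leq\lambda(t)$, combining this with the $b_\ast$--estimate gives the exponent $0.9\lambda(t)$ as required.

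For part (ii), the key additional input is the interplay between the resonant and non-resonant weights at a fixed frequency, encoded in \eqref{eq:resonantweight} and \eqref{reb5.5}. I will split into the cases stipulated in the statement. When $t\notin I_{k,\xi}$, or when $t\in I_{k,\xi}\cap I_{\ell,\eta}$, the weights $w_k(t,\xi)$ and $w_\ell(t,\eta)$ are both drawn from the same branch ($w_{NR}$ or $w_R$, respectively), so by Lemma \ref{comparisonweights} together with the elementary inequality $|\langle k,\xi\rangle^{1/2}-\langle\ell,\eta\rangle^{1/2}|\leq|(k-\ell,\xi-\eta)|^{1/2}$ one recovers \eqref{vfc26} exactly as in part (i), provided one uses the hypothesis $|(\ell,\eta)|\geq|(k,\xi)|/4$ in the same concavity argument to upgrade to the coefficient $0.9\lambda(t)$. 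The case \eqref{vfc27}, where $t\in I_{k,\xi}$ but $t\notin I_{\ell,\eta}$, is the most delicate: here $w_k(t,\xi)=w_R(t,\xi)$ while $w_\ell(t,\eta)=w_{NR}(t,\eta)$, so I will first factor the ratio as
\[
\frac{A_k(t,\xi)}{A_\ell(t,\eta)}\lesssim_\delta e^{0.9\lambda(t)|(k-\ell,\xi-\eta)|^{1/2}}\cdot\frac{w_{NR}(t,\xi)}{w_R(t,\xi)},
\]
and then apply \eqref{reb5.5}, which gives $w_{NR}(t,\xi)/w_R(t,\xi)=(1+\delta^2\xi/(8k^2))/(1+\delta^2|t-\xi/k|)$ in the sub-case $|t-\xi/k|\leq\xi/(8k^2)$; since $k\leq k_0(\xi)$ forces $\xi/k^2\gtrsim\delta^{-3}$, the numerator is $\approx_\delta|\xi|/k^2$, yielding the advertised loss factor. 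The complementary sub-case inside $I_{k,\xi}$ simply gives $w_{NR}/w_R=1$ with $|t-\xi/k|\gtrsim\xi/k^2$, which is still consistent with the right-hand side of \eqref{vfc27}.

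The main obstacle is the refined exponent $0.9\lambda(t)$ in \eqref{vfc25}--\eqref{vfc26}: a naive application of the triangle bound loses $\lambda(t)$ exactly, so one must exploit the comparability hypothesis quantitatively through the refined identity for $\langle\xi\rangle^{1/2}-\langle\eta\rangle^{1/2}$ and carefully track the parameter hierarchy $\eps\ll\delta\ll\delta_0\leq\lambda(t)$ to absorb the $\sqrt\delta$ contributions from the $b_\ast$--ratios. A secondary technical point, already handled by Lemma \ref{bweights}, is that the mollification length $L_{\delta'}(t,\xi)$ in \eqref{dor1} must be small enough compared to the width $\xi/(8k^2)$ of the resonant interval for the $b$--$w$ comparability to preserve the sharp resonance structure exploited in case \eqref{vfc27}; this is why the parameter $\delta'$ in the construction is chosen sufficiently small depending on $\delta$.
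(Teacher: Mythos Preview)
The paper does not give its own proof of this lemma; it simply records the statement and refers to \cite[Lemma 7.3]{IOJI}. Your outline is therefore not to be compared against anything in the present paper, and judged on its own it is essentially correct: the reduction to the Gevrey triangle inequality, the concavity identity
\[
\langle a\rangle^{1/2}-\langle b\rangle^{1/2}=\frac{|a|^2-|b|^2}{(\langle a\rangle+\langle b\rangle)(\langle a\rangle^{1/2}+\langle b\rangle^{1/2})}
\]
under the comparability hypothesis (which yields $c_\ast=1/\sqrt{3}<0.9$ at the endpoint $|b|=|a|/4$), and the use of \eqref{reb5.5}--\eqref{reb7} for the resonant loss factor in \eqref{vfc27} are exactly the right ingredients.

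One small imprecision: in your treatment of \eqref{vfc26} you write that when $t\notin I_{k,\xi}$ both $w_k(t,\xi)$ and $w_\ell(t,\eta)$ lie on the non-resonant branch. This need not be true---$t$ may well belong to $I_{\ell,\eta}$, in which case $w_\ell(t,\eta)=w_R(t,\eta)$. What saves you is the inequality $w_\ell(t,\eta)\leq w_{NR}(t,\eta)$ (valid for all $\ell,\eta,t$ by \eqref{reb5.5}), so that
\[
\frac{b_\ell(t,\eta)}{b_k(t,\xi)}\approx_\delta\frac{w_\ell(t,\eta)}{w_{NR}(t,\xi)}\leq\frac{w_{NR}(t,\eta)}{w_{NR}(t,\xi)}\lesssim_\delta e^{\sqrt\delta|\xi-\eta|^{1/2}},
\]
and the argument goes through unchanged. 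With this one-line correction your plan is complete.
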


We also need estimates on the time derivatives of the weights $A_\ast$, see Lemma 7.4 in \cite{IOJI}. 

\begin{lemma}\label{lm:CDW}
(i) For all $t\ge 0,$ $\rho\in\mathbb{R}$, and $\ast\in\{NR,R\}$ we have
\begin{equation}\label{TLX3.5}
\frac{-\dot{A}_\ast(t,\rho)}{A_\ast(t,\rho)}\approx_\delta\left[\frac{\langle\rho\rangle^{1/2}}{\langle t\rangle^{1+\sigma_0}}+\frac{\partial_tw_\ast(t,\rho)}{w_\ast(t,\rho)}\right].
\end{equation}
and, for any $k\in\Z$,
\begin{equation}\label{eq:A_kxi}
\frac{-\dot{A}_k(t,\rho)}{A_k(t,\rho)}\approx_\delta\left[\frac{\langle k,\rho\rangle^{1/2}}{\langle t\rangle^{1+\sigma_0}}+\frac{\partial_tw_k(t,\rho)}{w_k(t,\rho)}\frac{1}{1+e^{\sqrt\delta(|k|^{1/2}-\langle\rho\rangle^{1/2})}w_k(t,\rho)}\right].
\end{equation}

(ii) For all $t\ge 0,$ $\xi,\,\eta\in\mathbb{R}$, and $\ast\in\{NR,R\}$ we have
\begin{equation}\label{vfc30}
\big|(\dot{A}_\ast/A_{\ast})(t,\xi)\big|\lesssim_\delta \big|(\dot{A}_\ast/A_{\ast})(t,\eta)\big|e^{4\sqrt{\delta}|\xi-\eta|^{1/2}}.
\end{equation}
Moreover, if $k,\ell\in\mathbb{Z}$ then
\begin{equation}\label{eq:CDW}
\big|(\dot{A}_k/A_k)(t,\xi)\big|\lesssim_\delta \big|(\dot{A}_\ell/A_{\ell})(t,\eta)\big|e^{4\sqrt{\delta}|k-\ell,\xi-\eta|^{1/2}}.
\end{equation}
Finally, if $\rho\in\mathbb{R}$ and $k\in\mathbb{Z}$ satisfy $|k|\leq\langle\rho\rangle+10$ then
\begin{equation}\label{vfc30.5}
\big|(\dot{A}_k/A_k)(t,\rho)\big|\approx_\delta\big|(\dot{A}_{NR}/A_{NR})(t,\rho)\big|\approx_\delta\big|(\dot{A}_R/A_R)(t,\rho)\big|.
\end{equation}
\end{lemma}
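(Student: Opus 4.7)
The plan is to compute $\dot A_\ast/A_\ast$ directly from the explicit definitions \eqref{dor3}--\eqref{dor4}, and then reduce the bounds on $\partial_t b_\ast/b_\ast$ to bounds on $\partial_t w_\ast/w_\ast$ using Lemma~\ref{bweights}. For $\ast\in\{NR,R\}$, taking logarithmic derivatives in $t$ gives
\begin{equation*}
\frac{-\dot A_\ast(t,\rho)}{A_\ast(t,\rho)} = -\lambda'(t)\langle\rho\rangle^{1/2}+\frac{\partial_t b_\ast(t,\rho)}{b_\ast(t,\rho)},
\end{equation*}
and since $-\lambda'(t)=\delta_0\sigma_0^2/\langle t\rangle^{1+\sigma_0}$ by \eqref{dor2}, the first term matches the first term on the right of \eqref{TLX3.5}. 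For the second term, I would write $\partial_t b_\ast(t,\rho)$ as an integral of $\partial_t w_\ast(t,\sigma)$ against the mollifier $\varphi((\rho-\sigma)/L_{\delta'})/(d_0 L_{\delta'})$ (the $\partial_t$ falls cleanly on $w_\ast$ since the time dependence of $L_{\delta'}$ can be absorbed); then combine \eqref{reb8}, which shows $\partial_t w_\ast/w_\ast \approx \delta^2/(1+\delta^2|t-\rho/k|)$ is essentially constant on the mollification scale, with \eqref{dor6} which gives $w_\ast(t,\sigma)\approx_\delta w_\ast(t,\rho)$ in that same scale. This yields $\partial_t b_\ast/b_\ast \approx_\delta \partial_t w_\ast/w_\ast$ and finishes \eqref{TLX3.5}. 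For the case $\ast=k$, the extra summand $e^{\sqrt\delta|k|^{1/2}}$ produces the denominator $1+e^{\sqrt\delta(|k|^{1/2}-\langle\rho\rangle^{1/2})}b_k$; the calculation
\begin{equation*}
\frac{d}{dt}\log\Bigl(\frac{e^{\sqrt\delta\langle\xi\rangle^{1/2}}}{b_k}+e^{\sqrt\delta|k|^{1/2}}\Bigr) = -\frac{\partial_t b_k/b_k}{1+e^{\sqrt\delta(|k|^{1/2}-\langle\xi\rangle^{1/2})}b_k}
\end{equation*}
together with the same reduction $\partial_t b_k/b_k \approx_\delta \partial_t w_k/w_k$ and $b_k \approx_\delta w_k$ gives \eqref{eq:A_kxi}.

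For part~(ii), both terms in the expressions on the right of \eqref{TLX3.5}--\eqref{eq:A_kxi} are slowly varying in the frequency variable, and I would handle them separately. The term $\langle\rho\rangle^{1/2}/\langle t\rangle^{1+\sigma_0}$ satisfies $\langle\xi\rangle^{1/2}\leq\langle\eta\rangle^{1/2}+|\xi-\eta|^{1/2}$, giving an $e^{4\sqrt\delta|\xi-\eta|^{1/2}}$ factor trivially (and likewise $\langle k,\xi\rangle^{1/2}\leq\langle\ell,\eta\rangle^{1/2}+|k-\ell,\xi-\eta|^{1/2}$). The term $\partial_t w_\ast/w_\ast$ is bounded uniformly by $\delta^2\leq\sqrt\delta$ from \eqref{reb8}, and one can compare $\partial_t w_\ast(t,\xi)/w_\ast(t,\xi)$ at different $\xi$'s by using the case analysis in \eqref{reb5}--\eqref{reb9} together with \eqref{eq:comparisonweights1}; the denominator $1+e^{\sqrt\delta(|k|^{1/2}-\langle\xi\rangle^{1/2})}w_k(t,\xi)$ in the $k$ case is also slowly varying by \eqref{eq:comparisonweights1} and the subadditivity of $x\mapsto x^{1/2}$, producing only an $e^{O(\sqrt\delta)|k-\ell,\xi-\eta|^{1/2}}$ factor. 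Combining these gives \eqref{vfc30} and \eqref{eq:CDW}.

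Finally, the refined comparison \eqref{vfc30.5} in the regime $|k|\leq\langle\rho\rangle+10$ is obtained by observing that $\langle k,\rho\rangle\approx\langle\rho\rangle$ so the first terms in \eqref{TLX3.5} and \eqref{eq:A_kxi} agree up to constants; that in this regime $e^{\sqrt\delta(|k|^{1/2}-\langle\rho\rangle^{1/2})}w_k(t,\rho)\leq 1$, so the extra denominator in \eqref{eq:A_kxi} is $\approx 1$; and that $\partial_t w_k/w_k$ equals either $\partial_t w_{NR}/w_{NR}$ (if $t\notin I_{k,\rho}$) or $\partial_t w_R/w_R$ (if $t\in I_{k,\rho}$), both of which are $\approx \delta^2/(1+\delta^2|t-\rho/k|)$ by \eqref{reb8} and hence comparable to $\partial_t w_{NR}(t,\rho)/w_{NR}(t,\rho)$ and $\partial_t w_R(t,\rho)/w_R(t,\rho)$.

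The main technical obstacle I anticipate is the claim $\partial_t b_\ast/b_\ast\approx_\delta \partial_t w_\ast/w_\ast$: one must justify commuting $\partial_t$ past the mollifier (the mollification length $L_{\delta'}$ depends mildly on $t$, but the $t$-derivative of $L_{\delta'}$ gives an extra contribution of order $\delta'\langle\xi\rangle^{1/2}/\langle t\rangle^2$ which is controllable by the first term $\langle\xi\rangle^{1/2}/\langle t\rangle^{1+\sigma_0}$ when $\delta'$ is chosen small relative to $\delta_0\sigma_0^2$), and one must handle carefully the few $\rho$-intervals near the discontinuities $\rho=\pm\delta^{-10}$ and $t=t_{k,\rho}$ where the piecewise definitions \eqref{reb5}--\eqref{reb9} change regime.
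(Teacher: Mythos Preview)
Your approach is correct and is the natural one: compute the logarithmic time derivative of $A_\ast$ directly from \eqref{dor3}--\eqref{dor4}, reduce to showing $\partial_t b_\ast/b_\ast\approx_\delta\partial_t w_\ast/w_\ast$ via the mollifier representation, and then deduce (ii) from (i) by checking that each summand on the right of \eqref{TLX3.5}--\eqref{eq:A_kxi} is slowly varying in the frequency variable. You have also correctly identified the only genuine technical wrinkles (the $t$-dependence of the mollification length $L_{\delta'}$ and the transition regions near $|\rho|=\delta^{-10}$ and $t=t_{k,\rho}$), and your proposed treatment of them is sound: the contribution from $\partial_t L_{\delta'}$ is $O_\delta(|\partial_t L_{\delta'}|/L_{\delta'})$ after integrating by parts in the mollification variable, and this is absorbed by the $\langle\rho\rangle^{1/2}/\langle t\rangle^{1+\sigma_0}$ term once $\delta'$ is fixed small depending on $\delta$.

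Note that the present paper does not actually prove this lemma; it is stated here as a quotation of \cite[Lemma~7.4]{IOJI}, with the proof deferred to that reference. So there is no ``paper's own proof'' to compare against in this text. Your outline matches the argument one finds in \cite{IOJI}.
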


\subsection{Refined smoothness of the weights}
To bound the commutator terms in (\ref{wCD}), we need more refined smoothness properties of the weights $A_k(t,\xi)$ in $\xi\in\mathbb{R}$ than those proved in \cite{IOJI}. We start with a proposition that contains the main estimates necessary to control the commutator terms $\mathcal{C}_j$, $j\in\{1,2,3,4\}$, see (\ref{defC}) and (\ref{wCD}).

\begin{lemma}\label{A-A}
If $\xi,\eta\in\mathbb{R}$, $t\geq 0$, $k\in\mathbb{Z}$, and $\langle\xi-\eta\rangle\leq (\langle k,\xi\rangle+\langle k,\eta\rangle)/8$ then
 \begin{equation}\label{A-A2}
\big|A_k(t,\xi)-A_k(t,\eta)\big|\lesssim  A_R(t,\xi-\eta)A_k(t,\eta) e^{-(\lambda(t)/40)\langle \xi-\eta\rangle^{1/2}}\Big[\frac{C_0(\delta)}{\langle k,\xi\rangle^{1/8}}+\sqrt{\delta}\Big],
\end{equation}
for some constant $C_0(\delta)\gg 1$.
\end{lemma}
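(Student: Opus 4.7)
The plan is to decompose the difference $A_k(t,\xi)-A_k(t,\eta)$ according to the three multiplicative ingredients of $A_k$, namely the Gevrey exponential $e^{\lambda(t)\langle k,\cdot\rangle^{1/2}}$, the subexponential $e^{\sqrt\delta\langle\cdot\rangle^{1/2}}$, and the reciprocal mollified weight $1/b_k(t,\cdot)$. I would first write $A_k=A_k^{(1)}+A_k^{(2)}$, with $A_k^{(1)}(t,\xi):=e^{\lambda(t)\langle k,\xi\rangle^{1/2}}e^{\sqrt\delta\langle\xi\rangle^{1/2}}/b_k(t,\xi)$ and $A_k^{(2)}(t,\xi):=e^{\lambda(t)\langle k,\xi\rangle^{1/2}}e^{\sqrt\delta|k|^{1/2}}$, and then peel off each factor in a telescoping sum so that in every term only one factor produces a difference, the other factors evaluated at $\xi$ or $\eta$ being absorbed into $A_k(t,\eta)$ via the near-equivalence $\langle k,\xi\rangle\approx\langle k,\eta\rangle$ coming from the hypothesis.

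For the Gevrey piece, the identity $\langle k,\xi\rangle-\langle k,\eta\rangle=(\xi^2-\eta^2)/(\langle k,\xi\rangle+\langle k,\eta\rangle)$ together with the mean value theorem gives $|\langle k,\xi\rangle^{1/2}-\langle k,\eta\rangle^{1/2}|\lesssim |\xi-\eta|/\langle k,\xi\rangle^{1/2}$, hence a prefactor of size $\lambda(t)|\xi-\eta|/\langle k,\xi\rangle^{1/2}$. A case split on whether $\langle\xi-\eta\rangle\leq\langle k,\xi\rangle^{3/8}$ or not bounds this prefactor by $C_0(\delta)\langle k,\xi\rangle^{-1/8}$ in the first regime; in the second regime the polynomial growth of $|\xi-\eta|/\langle k,\xi\rangle^{1/2}\leq\langle k,\xi\rangle^{1/2}$ is absorbed by the exponential room $e^{(\sqrt\delta-\lambda(t)/40)\langle\xi-\eta\rangle^{1/2}}$ present in $A_R(t,\xi-\eta)e^{-(\lambda(t)/40)\langle\xi-\eta\rangle^{1/2}}$ (using $A_R(t,\xi-\eta)\gtrsim e^{(\lambda(t)+\sqrt\delta)\langle\xi-\eta\rangle^{1/2}}$, which follows from \eqref{reb13} and \eqref{dor3}). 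The subexponential piece is straightforward: another application of the mean value theorem and the elementary bound $|\langle\xi\rangle^{1/2}-\langle\eta\rangle^{1/2}|\leq\langle\xi-\eta\rangle^{1/2}$ produces a prefactor $\sqrt\delta\langle\xi-\eta\rangle^{1/2}$, and the $\langle\xi-\eta\rangle^{1/2}$ is absorbed into $A_R(t,\xi-\eta)$, giving precisely the $\sqrt\delta$ gain.

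For the $1/b_k$ piece I would use $1/b_k(t,\xi)-1/b_k(t,\eta)=(b_k(t,\eta)-b_k(t,\xi))/(b_k(t,\xi)b_k(t,\eta))$ together with the gradient estimate \eqref{dor21} and the ratio estimate \eqref{dor22} from Lemma \ref{bweights}, integrating along the segment from $\eta$ to $\xi$ to obtain $|b_k(t,\xi)-b_k(t,\eta)|\lesssim b_k(t,\eta)\cdot\min\bigl(|\xi-\eta|/L_{\delta'}(t,\xi),\,e^{\sqrt\delta\langle\xi-\eta\rangle^{1/2}}\bigr)$. Combining the two bounds via the interpolation $\min(a,b)\leq a^{1/2}b^{1/2}$ and using $L_{\delta'}(t,\xi)\gtrsim 1+\delta'\min(\langle\xi\rangle^{1/2},\langle\xi\rangle/t)$ yields, after another case split, either the polynomial gain $C_0(\delta)\langle k,\xi\rangle^{-1/8}$ or a factor absorbed by $A_R e^{-(\lambda(t)/40)\langle\xi-\eta\rangle^{1/2}}$. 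Assembling the three estimates finishes the proof. The main obstacle is precisely this last step: because $b_k$ is built from $w_k$, which jumps across resonant intervals $I_{l,\xi}$ near the critical times $t\approx\xi/k$, the pointwise derivative bound on $b_k$ is only useful when coupled with the precise relation between $L_{\delta'}$, $t$, and $\xi/k$, and it is here that the careful choice of the mollification scale $L_{\delta'}$ in \eqref{dor1} pays off, ensuring both enough smoothness to run the commutator estimate and enough resonance sensitivity to preserve the bounds \eqref{dor20}.
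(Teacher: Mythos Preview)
Your overall telescoping decomposition matches the paper's approach exactly, and your treatment of the Gevrey factor $e^{\lambda(t)\langle k,\cdot\rangle^{1/2}}$ and the subexponential factor $e^{\sqrt\delta\langle\cdot\rangle^{1/2}}$ is correct. The gap is in the $1/b_k$ piece.

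Both \eqref{dor21} and \eqref{dor22} carry implicit constants of the form $C(\delta)$, so your interpolated bound reads
\[
\big|b_k(t,\xi)-b_k(t,\eta)\big|\leq C(\delta)\,b_k(t,\eta)\min\Big(\frac{|\xi-\eta|}{L_{\delta'}(t,\xi)},\,e^{\sqrt\delta\langle\xi-\eta\rangle^{1/2}}\Big).
\]
In the resonant regime $t\gtrsim\langle\xi\rangle^{3/4}$ (and in particular $t\approx\xi$) one has $L_{\delta'}(t,\xi)=1+\delta'\langle\xi\rangle/(\langle\xi\rangle^{1/2}+\delta' t)\approx 1$, so the first branch of the minimum is just $|\xi-\eta|$ and the second branch is $\geq 1$. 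No interpolation of these two produces a prefactor of size $\sqrt\delta$ or $C_0(\delta)\langle k,\xi\rangle^{-1/8}$; you are left with $C(\delta)\cdot\langle\xi-\eta\rangle$, and since $C(\delta)$ can be arbitrarily large compared to $\sqrt\delta$, the commutator estimate \eqref{A-A2} does not close.

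What the paper does instead for this regime is invoke the \emph{sharp} ratio bound \eqref{ReSm2} from Lemma~\ref{comparisonweights}, which states $w_k(t,\xi)/w_k(t,\eta)\leq e^{\sqrt\delta|\xi-\eta|^{1/2}}$ with constant exactly $1$ (not $\lesssim_\delta$) once $t\geq\max(t_{k_0(\xi)-4,\xi},t_{k_0(\eta)-4,\eta})$; the hypothesis $t\geq\xi^{3/4}$ with $\xi\geq\delta^{-12}$ guarantees this. Writing $b_k$ as an average of $w_k$ and using $e^{\sqrt\delta x}-1\leq\sqrt\delta\,x\,e^{\sqrt\delta x}$ then gives
\[
\big|b_k(t,\xi)-b_k(t,\eta)\big|\lesssim \sqrt\delta\,\langle\xi-\eta\rangle\,e^{2\sqrt\delta|\xi-\eta|^{1/2}}\,b_k(t,\xi),
\]
which is exactly the $\sqrt\delta$ gain needed. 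Your gradient bound \eqref{dor21} is only used in the complementary regime $t\leq\xi^{3/4}$, where $L_{\delta'}\gtrsim\delta'\langle\xi\rangle^{1/4}$ is genuinely large and yields the polynomial gain $C_0(\delta)\langle k,\xi\rangle^{-1/8}$. So the missing ingredient in your plan is precisely the sharp, constant-free inequality \eqref{ReSm2}.
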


\begin{proof}
We sometimes use the following elementary inequality: if $a,b\in\mathbb{R}^n$ and $\beta\in[0,1]$ then
\begin{equation}\label{b>a}
    \langle b\rangle\ge \beta \langle a-b\rangle \qquad{\rm implies} \qquad \langle a\rangle^{1/2} \leq \langle b\rangle ^{1/2}+\big(1-\sqrt{\beta}/2\big)\langle a-b\rangle^{1/2}.
    \end{equation} 

To prove \eqref{A-A2} we can assume that $|\eta-\xi|\leq\langle k,\xi\rangle/100$; otherwise (\ref{A-A2}) follows from (\ref{TLX7}). By symmetry, we can also assume that $\xi\geq 0$.
By the definitions of the weights, we can write
\begin{equation}\label{A-A2.1}
\begin{split}
A_k(t,\xi)&-A_k(t,\eta)=\Big[e^{\lambda(t)\langle k,\xi\rangle^{1/2}}-e^{\lambda(t)\langle k,\eta\rangle^{1/2}}\Big]\Big[\frac{e^{\sqrt{\delta}\langle\xi\rangle^{1/2}}}{b_k(t,\xi)}+e^{\sqrt{\delta}|k|^{1/2}}\Big]\\
&+e^{\lambda(t)\langle k,\eta\rangle^{1/2}}\bigg[\frac{e^{\sqrt{\delta}\langle\xi\rangle^{1/2}}}{b_k(t,\xi)}-\frac{e^{\sqrt{\delta}\langle\eta\rangle^{1/2}}}{b_k(t,\xi)}\bigg]+e^{\lambda(t)\langle k,\eta\rangle^{1/2}}e^{\sqrt{\delta}\langle\eta\rangle^{1/2}}\bigg[\frac{1}{b_k(t,\xi)}-\frac{1}{b_k(t,\eta)}\bigg]\\
&:=\mathcal{T}_1+\mathcal{T}_2+\mathcal{T}_3.
\end{split}
\end{equation}
It suffices to prove that for each $i\in\{1,2,3\}$,
\begin{equation}\label{A-A2.2}
\big|\mathcal{T}_i\big|\lesssim A_R(t,\xi-\eta)A_k(t,\eta) e^{-(\lambda(t)/40)\langle \xi-\eta\rangle^{1/2}}\Big[\frac{C_0(\delta)}{\langle k,\xi\rangle^{1/8}}+\sqrt{\delta}\Big].
\end{equation}

The proof of (\ref{A-A2.2}) for $i\in\{1,2\}$ follows easily from (\ref{eq:comparisonweights1})-(\ref{dor20}). For the case $i=3$ we prove the following stronger bounds: if $\langle\xi-\eta\rangle\leq (\langle k,\xi\rangle+\langle k,\eta\rangle)/8$ and $|k|\leq 3|\xi|$ then
\begin{equation}\label{mur9}
\big|b_k(t,\xi)-b_k(t,\eta)\big|\lesssim b_k(t,\xi)\langle\xi-\eta\rangle e^{2\sqrt\delta|\xi-\eta|^{1/2}}\Big[\frac{C'_0(\delta)}{\langle k,\xi\rangle^{1/8}}+\sqrt{\delta}\Big].
\end{equation}
Indeed this follows from \eqref{dor22} unless $|\xi-\eta|\leq \langle k,\xi\rangle/100$ and $\langle k,\xi\rangle\geq \delta^{-12}$. In this case, we may assume that $\xi>0$ and the bounds \eqref{mur9} follow from \eqref{dor21} if $t\leq\xi^{3/4}$. In the remaining case when $t\geq \xi^{3/4}$ and $\max(100|\xi-\eta|,\delta^{-12})\leq \langle k,\xi\rangle$ we use \eqref{ReSm2} to bound
\begin{equation}\label{A-A2.3}
\begin{split}
\big|b_k(t,\xi)&-b_k(t,\eta)\big|=\frac{1}{d_0}\bigg|\int_{\mathbb{R}}\big[w_k(t,\xi+L_{\delta'}(t,\xi)\rho)-w_k(t,\eta+L_{\delta'}(t,\eta)\rho)\big]\varphi(\rho)\,d\rho\bigg|\\
 &\lesssim \int_{\mathbb{R}}\Big(e^{\sqrt{\delta}|\xi-\eta|^{1/2}+\sqrt{\delta}|L_{\delta'}(t,\xi)\rho-L_{\delta'}(t,\eta)\rho|^{1/2}}-1\Big)w_k(t,\xi+L_{\delta'}(t,\xi)\rho)\varphi(\rho)\,d\rho\\
 &\lesssim \sqrt{\delta}\langle\xi-\eta\rangle\,e^{2\sqrt\delta|\xi-\eta|^{1/2}}b_k(t,\xi).
\end{split}
\end{equation}
The bounds (\ref{mur9}) now follow.
\end{proof}

\subsubsection{The functions $\mu_k$ and $\mu_R$}\label{musection}

To bound the commutator terms $\mathcal{D}_j$ in (\ref{defD}), we need to prove smoothness of the weights $\mu_Y(t,\xi)$ in $\xi\in\mathbb{R}$. We prove two lemmas.

\begin{lemma}\label{Lmu1}
(i) For $t\ge0, \xi\in\R, k\in\mathbb{Z}$, we have
\begin{equation}\label{mudA1}
\mu_k(t,\xi)\approx_{\delta}\big|(\dot{A}_k/A_k)(t,\xi)\big|\qquad \mathrm{ and }\qquad\mu_R(t,\xi)\approx_{\delta}\big|(\dot{A}_R/A_R)(t,\xi)\big|
\end{equation}

(ii) Suppose that $\xi,\eta\in\mathbb{R}$, $t\ge 0$ satisfy
\begin{equation}\label{Lmu2.0}
|\xi|,|\eta|\geq 2\delta^{-10}, \qquad t\leq \min\{2|\xi|,2|\eta|\}, \qquad |\eta-\xi|\leq \min\{10L_1(t,\xi),10L_1(t,\eta)\}.
\end{equation}
Then, for any $k\in\mathbb{Z}$,
\begin{equation}\label{Lmu2.1}
\mu^\ast(t,\xi)\approx_\delta\mu^\ast(t,\eta)\quad\text{ thus }\quad\mu_R(t,\xi)\approx_{\delta}\mu_R(t,\eta)\text{ and }\mu_k(t,\xi)\approx_{\delta}\mu_k(t,\eta).
\end{equation}
\end{lemma}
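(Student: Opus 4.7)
\textbf{Proof plan for Lemma \ref{Lmu1}.} The strategy for both parts is to unpack the definitions so that $\mu^{\ast}$ becomes, up to $\delta$-dependent constants, a smooth regularization of the quantity $\partial_tw_{NR}(t,\xi)/w_{NR}(t,\xi)$ that appears in Lemma \ref{lm:CDW}. Once this reduction is in place, part (i) follows from Lemma \ref{lm:CDW} together with $b_k\approx_\delta w_k$ from \eqref{dor20}, and part (ii) follows from the uniform smoothness of the mollifier with scale $L_{\delta'}\approx_{\delta'} L_1$.

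For part (i), I will first reduce to proving
\begin{equation*}
\mu^{\ast}(t,\xi)\approx_\delta \frac{\partial_tw_{NR}(t,\xi)}{w_{NR}(t,\xi)}
\end{equation*}
for all $(t,\xi)$, and that the same holds with $w_{NR}$ replaced by $w_k$. Given this, the bound \eqref{mudA1} for $\mu_R$ is immediate from \eqref{TLX3.5}, and the bound for $\mu_k$ follows from \eqref{eq:A_kxi} once we use \eqref{dor20} to identify the denominator $1+e^{\sqrt\delta(|k|^{1/2}-\langle\xi\rangle^{1/2})}b_k(t,\xi)$ with $1+e^{\sqrt\delta(|k|^{1/2}-\langle\xi\rangle^{1/2})}w_k(t,\xi)$ up to $\delta$-dependent constants. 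The reduction itself is by case analysis matching the four branches of \eqref{muD}: for $t\in I_{k,\xi}$ with $|\xi|>\delta^{-10}$ the pointwise identity $\mu^\#(t,\xi)\approx\partial_tw_{NR}/w_{NR}$ is exactly \eqref{reb8}; for $t<t_{k_0(\xi),\xi}$ the estimate \eqref{reb9.5} gives $\partial_tw_{NR}/w_{NR}\lesssim\delta^{5/2}$, comparable to $\mu^\#=\delta^2$ in the $\approx_\delta$ sense; for $|\xi|\le\delta^{-10}$ or $t>2|\xi|$ both sides vanish (or are absorbed into the $\langle\xi\rangle^{1/2}/\langle t\rangle^{1+\sigma_0}$ term of $\mu_R,\mu_k$, which dominates in that small-frequency regime). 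Passing from $\mu^\#$ to the mollified $\mu^{\ast}$ uses that for $\rho$ in a ball of radius $L_{\delta'}(t,\xi)$ around $\xi$, the relevant resonant index $k'$ for which $t\in I_{k',\rho}$ differs from $k$ by at most an $O(1)$ amount (since $|\rho/k'-\xi/k|$ is then controlled by the ratios $L_{\delta'}/k$ and $\xi/k^2$, which remain small compared to $1+\delta^2|t-\xi/k|^{-1}\cdot(1+\delta^2|t-\xi/k|)$ up to $\delta$-constants), so the integrand stays comparable to $\mu^{\#}(t,\xi)$ throughout the support of $\varphi$.

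For part (ii), the hypotheses in \eqref{Lmu2.0} together with the explicit form of $L_{\delta'}$ yield $L_{\delta'}(t,\xi)\approx L_{\delta'}(t,\eta)$. After the change of variables $\rho=\xi+L_{\delta'}(t,\xi)\sigma$ and $\rho'=\eta+L_{\delta'}(t,\eta)\sigma$, it then suffices to show
\begin{equation*}
\mu^{\#}\!\bigl(t,\xi+L_{\delta'}(t,\xi)\sigma\bigr)\approx_\delta \mu^{\#}\!\bigl(t,\eta+L_{\delta'}(t,\eta)\sigma\bigr)
\end{equation*}
uniformly in $\sigma\in\operatorname{supp}\varphi$. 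Since $|\xi-\eta|\le 10L_1(t,\xi)\lesssim_{\delta'}L_{\delta'}(t,\xi)$, both shifted arguments lie in the same ball of radius $C(\delta)L_{\delta'}(t,\xi)$ about $\xi$, and the comparison repeats the argument of part (i) (bounded change in the resonant index $k'$, bounded perturbation of $|t-\rho/k'|$ relative to $|t-\xi/k|$). Comparability of $\mu^{\ast}$ then transfers to $\mu_R$ and $\mu_k$ via the definitions \eqref{mu1}, using $\langle\xi\rangle\approx\langle\eta\rangle$ (from \eqref{Lmu2.0}) and $b_k(t,\xi)\approx_\delta b_k(t,\eta)$ (from \eqref{dor6} and \eqref{dor20}).

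The main technical obstacle will be the boundary cases in part (i)(b): when $t$ straddles the endpoints $t_{k,\xi}$ (where the resonant index $k$ jumps), when $t$ crosses $t_{k_0(\xi),\xi}$ (the transition between the small- and large-$t$ branches of \eqref{muD}), and when $|\xi|\sim\delta^{-10}$. In each such boundary regime the two sides one wishes to compare are of a fixed $\delta$-power size (either both $O(\delta^2)$ or both negligible relative to $\langle\xi\rangle^{1/2}\langle t\rangle^{-1-\sigma_0}$), so the $\delta$-dependent constants in $\approx_\delta$ absorb the discrepancy, but verifying this cleanly requires one to track exactly which branch of \eqref{muD} governs each $\rho$ in the mollification interval; this is the bookkeeping the proof has to carry out.
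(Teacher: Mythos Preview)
Your plan is essentially the same as the paper's: both reduce to the pointwise comparison $\mu^{\#}(t,\xi)\approx_\delta\mu^{\#}(t,\eta)$ on the mollification scale $L_1$, then invoke \eqref{TLX3.5}--\eqref{eq:A_kxi} together with \eqref{dor20}. The paper organizes part (i) slightly differently: rather than proving the standalone identity $\mu^{\ast}\approx_\delta\partial_t w_{NR}/w_{NR}$ in all regimes, it disposes of the easy regimes $|\xi|\le\delta^{-11}$ and $t\ge 3|\xi|/2$ directly (where the term $\langle\xi\rangle^{1/2}\langle t\rangle^{-1-\sigma_0}$ already matches $|\dot A_R/A_R|$) and only needs $\mu^{\#}\approx_\delta\mu^{\ast}$ in the remaining window $|\xi|\ge\delta^{-11}$, $t\le 7|\xi|/4$. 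This sidesteps precisely the boundary bookkeeping you flag as the main obstacle.

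One point to tighten: your parenthetical justification for why the resonant index shifts by at most $O(1)$ is garbled as written (the displayed quantity $1+\delta^2|t-\xi/k|^{-1}\cdot(1+\delta^2|t-\xi/k|)$ does not parse to anything useful). The paper's argument here is cleaner and worth adopting: under \eqref{Lmu2.0} with $t\in I_{a',\xi}\cap I_{a,\eta}$, one has $L_1(t,\eta)\lesssim \eta/t\lesssim a$, hence $|\xi-\eta|\le 30a$, which forces $|a-a'|\le 1$ directly. From there, either $a=a'$ and \eqref{Lmu2.2} gives the ratio bound, or $a\neq a'$ and both $|t-\xi/a'|$, $|t-\eta/a|$ are at the interval endpoints, so $\mu^{\#}(t,\xi)\approx a^2/|\xi|\approx\mu^{\#}(t,\eta)$.
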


\begin{proof} (i) We use the definitions \eqref{muD}-\eqref{mu1} and the formulas \eqref{reb8} and \eqref{TLX3.5}-\eqref{eq:A_kxi}. If $|\xi|\leq \delta^{-11}$ then $w_R(t,\xi)\approx_\delta w_k(t,\xi)\approx_\delta 1$, and it s easy to see that
\begin{equation*}
\big|(\dot{A}_R/A_R)(t,\xi)\big|\approx_\delta\langle t\rangle^{-1-\sigma_0}\approx_\delta\mu_R(t,\xi),\qquad \big|(\dot{A}_k/A_k)(t,\xi)\big|\approx_\delta\langle k\rangle^{1/2}\langle t\rangle^{-1-\sigma_0}\approx_\delta\mu_k(t,\xi),
\end{equation*}
as claimed. Moreover, if $|\xi|\geq \delta^{-11}$ and $t\geq 3|\xi|/2$ then 
\begin{equation*}
\big|(\dot{A}_R/A_R)(t,\xi)\big|\approx_\delta\frac{\langle\xi\rangle^{1/2}}{\langle t\rangle^{1+\sigma_0}}\approx_\delta\mu_R(t,\xi),\qquad \big|(\dot{A}_k/A_k)(t,\xi)\big|\approx_\delta\frac{\langle k,\xi\rangle^{1/2}}{\langle t\rangle^{1+\sigma_0}}\approx_\delta\mu_k(t,\xi),
\end{equation*}
which again gives \eqref{mudA1}. Finally, in view of \eqref{dor20}, it remains to show that 
\begin{equation}\label{mudA2}
\mu^\#(t,\xi)\approx_{\delta}\mu^{\ast}(t,\xi), \qquad{\rm for}\,\,0\leq t\leq7|\xi|/4, \,\,|\xi|\geq\delta^{-11}.
\end{equation}
This follows since $\mu^\#(t,\xi)\approx_{\delta}\mu^{\#}(t,\eta)$ if $|\eta-\xi|\leq L_1(t,\xi)$, see the more precise analysis in part (ii) below.

(ii) In view of \eqref{dor20} and \eqref{dor6}, it suffices to prove that $\mu^\ast(t,\xi)\approx_\delta\mu^\ast(t,\eta)$. We may assume that $\xi,\eta\geq 0$. If $t\leq t_{k_0(\xi),\xi}$ then $L_1(t,\xi)\leq 2\langle\xi\rangle^{1/2}$, thus $|\eta-\xi|\leq 20\langle\xi\rangle^{1/2}$. In particular $\mu^\#(t,\rho)\approx_\delta 1$ if $|\rho-\xi|\leq 50\langle\xi\rangle^{1/2}$, so $\mu^\ast(t,\xi)\approx_\delta \mu^\ast(t,\xi)\approx_\delta 1$, and the desired bounds \eqref{Lmu2.1} follow. The same argument works also if $t\leq t_{k_0(\eta),\eta}$.

Assume that $t\geq \max(t_{k_0(\xi),\xi},t_{k_0(\eta),\eta})$. Then $t\in I_{a',\xi}\cap I_{a,\eta}$ for suitable $a'\in\{1,2,\dots,k_0(\xi)\}$ and $a\in \{1,2,\dots,k_0(\eta)\}$. The condition (\ref{Lmu2.0}) implies 
$|\eta-\xi|\leq30a.$
Thus, $|a-a'|\leq 1$. Notice that $\mu^\#(t,\rho)\approx \mu^\#(t,\xi)\approx\frac{\delta^2}{1+\delta^2|t-\xi/a'|}$ if $|\xi-\rho|\leq 2L_1(t,\xi)$. Thus 
\begin{equation}\label{mur1}
\mu^\#(t,\xi)\approx \mu^\ast(t,\xi)\qquad\text{ and }\qquad\mu^\#(t,\eta)\approx \mu^\ast(t,\eta).
\end{equation}

If $a'=a$, then 
\begin{equation}\label{Lmu2.2}
\frac{\mu^\#(t,\xi)}{\mu^\#(t,\eta)}=\frac{1+\delta^2|t-\eta/a|}{1+\delta^2|t-\xi/a|}\leq 1+\delta^2\bigg|\frac{\eta-\xi}{a}\bigg|\lesssim1.
\end{equation}
Similarly $\mu^\#(t,\eta)/\mu^\#(t,\xi)\lesssim 1$, and the desired estimates in \eqref{Lmu2.1} follow using also \eqref{mur1}. On the other hand, if $a\neq a'$, then 
$$\Big|t-\frac{\xi}{a'}\Big|\gtrsim \frac{|\xi|}{|a'|^2} \qquad {\rm and}\qquad \Big|t-\frac{\eta}{a}\Big|\gtrsim \frac{|\eta|}{a^2},$$
and consequently 
\begin{equation}\label{Lmu2.4}
\mu^\#(t,\xi)\approx \frac{|a'|^2}{|\xi|}\approx \frac{a^2}{|\eta|}\approx \mu^\#(t,\eta).
\end{equation}
The bounds (\ref{Lmu2.1}) follow using again \eqref{mur1}.
\end{proof}

The following lemma plays an important role in controlling the commutator terms $\mathcal{D}_j$, $j\in\{1,2,3,4\}$, see (\ref{defD}) and \eqref{defCD4}.

\begin{lemma}\label{Lmu3}
Assume $\xi,\eta\in\mathbb{R}$, $k\in\mathbb{Z}$, and $t\ge 0$. Then:

(i) if $\langle\xi-\eta\rangle\geq \langle k,\eta\rangle/100$, then
\begin{equation}\label{Lmu3.1}
\mu_k(t,\xi)+\mu_k(t,\eta)\lesssim_{\delta} \mu_R(t,\xi-\eta)\,e^{4\sqrt{\delta}|k,\eta|^{1/2}};
\end{equation}

(ii) if $\langle\xi-\eta\rangle\leq (\langle k,\xi\rangle+\langle k,\eta\rangle)/8$, then there is $C_1(\delta)\gg 1$ such that
\begin{equation}\label{Lmu3.2}
\big|\mu_k(t,\xi)-\mu_k(t,\eta)\big|\lesssim \langle\xi-\eta\rangle\mu_k(t,\eta)\,e^{4\sqrt{\delta}|\xi-\eta|^{1/2}}\Big[\frac{C_1(\delta)}{\langle k,\xi\rangle^{1/8}}+\sqrt{\delta}\Big].
\end{equation}

(iii) in all cases we have
\begin{equation}\label{runt30}
\mu_k(t,\xi)\lesssim_\delta \mu_k(t,\eta)e^{6\sqrt\delta |\xi-\eta|^{1/2}}.
\end{equation}
\end{lemma}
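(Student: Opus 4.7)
By Lemma \ref{Lmu1}(i), the pointwise comparisons $\mu_k(t,\xi) \approx_\delta |(\dot A_k/A_k)(t,\xi)|$ and $\mu_R(t,\xi) \approx_\delta |(\dot A_R/A_R)(t,\xi)|$ hold, so it is equivalent to prove the three statements with $\mu_k,\mu_R$ replaced by the corresponding logarithmic derivatives of the weights. With this in mind, (i) and (iii) will follow quickly from the smoothness estimates for $\dot A_\ast/A_\ast$ already established in Lemma \ref{lm:CDW}, while (ii) is the main technical content.

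For (iii), I would apply \eqref{eq:CDW} with $\ell=k$, producing $|(\dot A_k/A_k)(t,\xi)| \lesssim_\delta |(\dot A_k/A_k)(t,\eta)|\, e^{4\sqrt\delta \langle\xi-\eta\rangle^{1/2}}$; converting both sides to $\mu_k$ via Lemma \ref{Lmu1}(i) yields the stated bound, with the extra room between $4\sqrt\delta$ and $6\sqrt\delta$ in the exponent absorbing the implicit constants. For (i), I would apply \eqref{eq:CDW} to each of $(k,\xi)$ and $(k,\eta)$ paired against $(0,\xi-\eta)$, producing exponents of size $|k,\eta|^{1/2}$ and $|k,2\eta-\xi|^{1/2}$ respectively. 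The standing hypothesis $\langle\xi-\eta\rangle \ge \langle k,\eta\rangle/100$ makes the second exponent $\lesssim \langle\xi-\eta\rangle^{1/2}$ and hence $\lesssim |k,\eta|^{1/2}$, and \eqref{vfc30.5} converts $|(\dot A_0/A_0)(t,\xi-\eta)|$ into $|(\dot A_R/A_R)(t,\xi-\eta)| \approx_\delta \mu_R(t,\xi-\eta)$, completing (i).

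For (ii), I would split $\mu_k = P_k + Q_k$ along the definition \eqref{mu1}, with
\begin{equation*}
P_k(t,\xi) := \frac{\langle k,\xi\rangle^{1/2}}{\langle t\rangle^{1+\sigma_0}}, \qquad Q_k(t,\xi) := \frac{\mu^\ast(t,\xi)}{1+e^{\sqrt\delta(|k|^{1/2}-\langle\xi\rangle^{1/2})}b_k(t,\xi)},
\end{equation*}
and estimate the two increments separately. Under the hypothesis $\langle\xi-\eta\rangle \le (\langle k,\xi\rangle + \langle k,\eta\rangle)/8$ we have $\langle k,\xi\rangle \approx \langle k,\eta\rangle$, so
\begin{equation*}
|P_k(t,\xi) - P_k(t,\eta)| \lesssim \frac{\langle\xi-\eta\rangle}{\langle k,\xi\rangle^{1/2} \langle t\rangle^{1+\sigma_0}} \lesssim \frac{\langle\xi-\eta\rangle\, P_k(t,\eta)}{\langle k,\xi\rangle} \le \frac{\langle\xi-\eta\rangle\, \mu_k(t,\eta)}{\langle k,\xi\rangle^{1/8}},
\end{equation*}
which is admissible. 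For $Q_k$ I would decompose $Q_k(t,\xi) - Q_k(t,\eta)$ into three contributions, from (a) the numerator difference $\mu^\ast(t,\xi) - \mu^\ast(t,\eta)$, (b) the denominator difference $b_k(t,\xi) - b_k(t,\eta)$, and (c) the smooth exponential factor $e^{\sqrt\delta(|k|^{1/2}-\langle\xi\rangle^{1/2})}$. Part (b) is already available in the exact required form as \eqref{mur9}, proved inside Lemma \ref{A-A}, and part (c) is elementary and carries a built-in $\langle\xi\rangle^{-1/2}$ gain that converts into the $\langle k,\xi\rangle^{-1/8}$ smallness.

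The hardest step will be part (a), the smoothness bound for $\mu^\ast$, since the building block $\mu^\#$ is piecewise defined and discontinuous across the intervals $I_{k,\xi}$. My plan is to mimic the change-of-variables argument from the proof of \eqref{mur9}: since $\mu^\ast$ is the convolution of $\mu^\#$ against the averaged kernel of width $L_{\delta'}(t,\cdot)$, after the substitutions $\rho \mapsto (\rho-\xi)/L_{\delta'}(t,\xi)$ and $\rho\mapsto (\rho-\eta)/L_{\delta'}(t,\eta)$ the difference $\mu^\ast(t,\xi) - \mu^\ast(t,\eta)$ can be written as the integral of $\mu^\#(t,\xi + L_{\delta'}(t,\xi)\rho) - \mu^\#(t,\eta + L_{\delta'}(t,\eta)\rho)$ against $\varphi(\rho)$. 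For each fixed $\rho$ the two arguments differ by at most $O(\langle\xi-\eta\rangle)$, so the pointwise comparabilities \eqref{Lmu2.2} and \eqref{Lmu2.4} established inside Lemma \ref{Lmu1}(ii) upgrade to
\begin{equation*}
|\mu^\ast(t,\xi) - \mu^\ast(t,\eta)| \lesssim \mu^\ast(t,\eta)\, \langle\xi-\eta\rangle\, e^{2\sqrt\delta\langle\xi-\eta\rangle^{1/2}}\Big[\frac{C_2(\delta)}{\langle k,\xi\rangle^{1/8}} + \sqrt\delta\Big].
\end{equation*}
Combining this with \eqref{mur9} and the smoothness of the exponential factor controls $Q_k(t,\xi) - Q_k(t,\eta)$ in the required form, and adding the $P_k$ bound closes (ii).
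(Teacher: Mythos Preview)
Your approach to (iii) via \eqref{eq:CDW} is fine, and your overall architecture for (ii)---split $\mu_k=P_k+Q_k$, handle $P_k$ by the mean-value theorem, and decompose the $Q_k$-increment into contributions from $\mu^\ast$, $b_k$, and the exponential prefactor---matches the paper exactly. There are, however, two genuine gaps.

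In part (i), your treatment of the term $\mu_k(t,\eta)$ is wrong: the hypothesis $\langle\xi-\eta\rangle\ge\langle k,\eta\rangle/100$ gives $|k,\eta|^{1/2}\lesssim\langle\xi-\eta\rangle^{1/2}$, which is the \emph{reverse} of the inequality you need. So the black-box route through \eqref{eq:CDW} leaves you with an extra $e^{4\sqrt\delta|\xi-\eta|^{1/2}}$ factor that cannot be absorbed. The paper instead opens up the definition: the $P_k$-part of $\mu_k(t,\eta)$ is controlled directly by $\langle\xi-\eta\rangle^{1/2}/\langle t\rangle^{1+\sigma_0}\le\mu_R(t,\xi-\eta)$ via the hypothesis, and the $\mu^\ast$-part is handled by a case analysis in $t$ (see \eqref{Lmu3.12} and the two cases following it), using crucially that $\mu^\ast(t,\eta)\neq 0$ forces $|\eta|\gtrsim\delta^{-10}$, at which point the exponential $e^{4\sqrt\delta|\eta|^{1/2}}$ on the right is enormous.

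In part (ii), your intermediate claim for (a),
\[
|\mu^\ast(t,\xi)-\mu^\ast(t,\eta)|\lesssim \mu^\ast(t,\eta)\,\langle\xi-\eta\rangle\, e^{2\sqrt\delta\langle\xi-\eta\rangle^{1/2}}\Big[\tfrac{C_2(\delta)}{\langle k,\xi\rangle^{1/8}}+\sqrt\delta\Big],
\]
is false as stated. Take $t$ large, $\eta=t/2-3$, $\xi=t/2+3$: then $\mu^\ast(t,\eta)=0$ (all $\rho$ in the averaging window satisfy $t>2\rho$) while $\mu^\ast(t,\xi)\approx 1/\eta$, so the left side is $\approx 1/\eta$ and the right side vanishes. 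The paper's version \eqref{mur11} repairs this by putting $\mu^\ast(t,\eta)+\langle\eta\rangle^{1/2}/\langle t\rangle^{1+\sigma_0}$ on the right; since $\mu_k(t,\eta)\ge P_k(t,\eta)\ge\langle\eta\rangle^{1/2}/\langle t\rangle^{1+\sigma_0}$, this still feeds into the final bound \eqref{Lmu3.2}. Relatedly, citing only \eqref{Lmu2.2}--\eqref{Lmu2.4} gives comparability $\mu^\#(t,\xi)\approx_\delta\mu^\#(t,\eta)$ but not the refined $\sqrt\delta$-gain; for that you need the sharper pointwise bound $|\mu^\#(t,\xi')-\mu^\#(t,\eta')|\le\delta^2|\xi'-\eta'|\,\mu^\#(t,\eta')/a$ inside a single $I_{a,\cdot}$, together with the $L_{\delta'}$-smoothing to handle transitions between intervals (see the paper's Case 1 / Case 2 split and \eqref{Lmu3.31}).
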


\begin{proof}
(i) By the definitions \eqref{mu1}, it suffices to prove that
\begin{equation}\label{Lmu3.11}
\frac{\langle k,\xi\rangle^{1/2}}{\langle t\rangle^{1+\sigma_0}}+\frac{\langle k,\eta\rangle^{1/2}}{\langle t\rangle^{1+\sigma_0}}\lesssim_{\delta} \frac{\langle \xi-\eta\rangle^{1/2}}{\langle t\rangle^{1+\sigma_0}}e^{4\sqrt{\delta}|k,\eta|^{1/2}}
\end{equation}
and
\begin{equation}\label{Lmu3.12}
\mu^{\ast}(t,\xi)+\mu^{\ast}(t,\eta)\lesssim_{\delta} \mu_R(t,\xi-\eta)\,e^{4\sqrt{\delta}|\eta|^{1/2}}.
\end{equation}
The bounds (\ref{Lmu3.11}) are easy, and so we focus on (\ref{Lmu3.12}).  Since
\begin{equation}\label{Lmu3.111}
\mu^{\ast}(t,\xi)+\mu^{\ast}(t,\eta)\lesssim\delta^2\qquad {\rm and}\qquad \mu_R(t,\xi-\eta)\ge\frac{\langle\xi-\eta\rangle^{1/2}}{\langle t\rangle^{1+\sigma_0}},
\end{equation}
it suffices to consider the case $t\ge\delta^{-12}\langle \eta\rangle$. In this case, in view of the definitions (\ref{muD})-(\ref{muaD}), 
$\mu^{\ast}(t,\eta)=0.$  We distinguish two cases.

{\bf Case 1.} Suppose that $t\geq 3|\xi|/2$ and $t\ge\delta^{-12}\langle \eta\rangle$. From (\ref{muD})-(\ref{muaD}), we get $\mu^{\ast}(t,\xi)\lesssim \langle\xi\rangle^{-1}$. Since the left-hand side of (\ref{Lmu3.12}) vanishes if $t>\delta^{-1}\langle \xi-\eta\rangle$, we can assume that $t\leq \delta^{-1}\langle \xi-\eta\rangle$. Then from $t\ge\delta^{-12}\langle \eta\rangle$, it follows that $\langle\xi-\eta\rangle\approx \langle \xi\rangle$ and  
\begin{equation}
\frac{1}{\langle\xi\rangle}\lesssim_{\delta}\frac{\langle \xi-\eta\rangle^{1/2}}{\langle t\rangle^{1+\sigma_0}}\frac{1}{\langle \xi-\eta\rangle^{1/4}},
\end{equation}
from which (\ref{Lmu3.12}) follows easily.

{\bf Case 2.} Now assume that $t\leq 3|\xi|/2$ and $t\ge\delta^{-12}\langle \eta\rangle$.  If $|\eta|\geq t^{1/10}$ then \eqref{Lmu3.12} follows from \eqref{Lmu3.111}. On the other hand, if $|\eta|\leq t^{1/10}$ and $t\leq |\xi|^{9/10}$ then \eqref{Lmu3.12} follows from \eqref{Lmu2.1}.

In the remaining case $|\xi|\geq 2\delta^{-11}$, $|\xi|^{9/10}\leq t\leq 3|\xi|/2$, $|\eta|\leq 2|\xi|^{1/10}$ we prove that
\begin{equation}\label{mur2}
\mu^\#(t,\xi)\lesssim_\delta\mu^\#(t,\xi-\eta)e^{\sqrt{\delta}|\eta|^{1/2}}.
\end{equation}
This suffices to prove \eqref{Lmu3.12}, due to \eqref{mudA2}. To prove \eqref{mur2}, we may assume that $\xi>0$ and $t\in I_{a,\xi}$ for some $a\in[1,4|\xi|^{1/10}]$. If $|t-\xi/a|\geq\xi/(20a^2)$ then $\mu^\#(t,\xi)\approx_\delta a^2/\xi$, and it follows easily that $\mu^\#(t,\xi)\lesssim_\delta\mu^\#(t,\xi-\eta)$, which is better than needed. On the other hand, if $|t-\xi/a|\leq\xi/(20a^2)$ then $t\in I_{a,\xi-\eta}$ as well, and we estimate
\begin{equation*}
\frac{\mu^\#(t,\xi)}{\mu^\#(t,\xi-\eta)}=\frac{1+\delta^2|t-(\xi-\eta)/a|}{1+\delta^2|t-\xi/a|}\leq 1+\delta^2\bigg|\frac{\eta}{a}\bigg|\lesssim_\delta e^{\sqrt{\delta}|\eta|^{1/2}},
\end{equation*}
and \eqref{mur2} follows in this last case.

(ii) We now prove (\ref{Lmu3.2}). Since $\langle k,\xi\rangle\approx \langle k,\eta\rangle$, by the definitions it suffices to prove that
\begin{equation}\label{Lmu3.21}
\Big|\frac{\langle k,\xi\rangle^{1/2}}{\langle t\rangle^{1+\sigma_0}}-\frac{\langle k,\eta\rangle^{1/2}}{\langle t\rangle^{1+\sigma_0}}\Big|\lesssim\langle\xi-\eta\rangle\frac{\langle k,\eta\rangle^{1/2}}{\langle t\rangle^{1+\sigma_0}}\frac{1}{\langle k,\eta\rangle^{1/8}}
\end{equation}
and
\begin{equation}\label{Lmu3.22}
\begin{split}
\Big|\frac{\mu^{\ast}(t,\xi)}{1+e^{\sqrt{\delta}(|k|^{1/2}-\langle\xi\rangle^{1/2})}b_k(t,\xi)}&-\frac{\mu^{\ast}(t,\eta)}{1+e^{\sqrt{\delta}(|k|^{1/2}-\langle\eta\rangle^{1/2})}b_k(t,\eta)}\Big|\\
&\lesssim \langle\xi-\eta\rangle\mu_k(t,\eta)\,e^{4\sqrt{\delta}|\xi-\eta|^{1/2}}\Big[\frac{C_2(\delta)}{\langle k,\eta\rangle^{1/8}}+\sqrt{\delta}\Big].
\end{split}
\end{equation}
The bounds (\ref{Lmu3.21}) follow easily. To prove (\ref{Lmu3.22}) we first eliminate some of the simpler cases. 

These bounds follow easily if $\langle t\rangle\geq 4\langle k,\eta\rangle$ (the left-hand side is $0$) or if ($\langle t\rangle\leq 4\langle k,\eta\rangle$ and $|\xi-\eta|\geq\langle k,\eta\rangle/100$) or if $\langle t\rangle \leq \delta^{-12}\langle k,\eta\rangle^{1/4}$ , due to the lower bound $\mu_k(t,\eta)\ge \frac{\langle k,\eta\rangle^{1/2}}{\langle t\rangle^{1+\sigma_0}}$. On the other hand, if $\delta^{-12}\langle k,\eta\rangle^{1/4}\leq\langle t\rangle\leq 4\langle k,\eta\rangle$ and $|k|\geq 2\max(|\xi|,|\eta|)$ then both terms in the left-hand side are bounded by $C_\delta e^{-\delta |k|}$, since $b_k(t,\rho)\gtrsim_\delta e^{-\delta\sqrt{|\rho|}}$ for any $\rho\in\mathbb{R}$, and the bounds \eqref{Lmu3.22} follow. Finally, if
\begin{equation}\label{mur7}
\delta^{-12}\langle k,\eta\rangle^{1/4}\leq\langle t\rangle\leq 4\langle k,\eta\rangle,\qquad |\xi-\eta|\leq\langle k,\eta\rangle/100,\qquad |k|\leq 2\max(|\xi|,|\eta|)
\end{equation}
then $\langle\xi\rangle\approx\langle\eta\rangle$ and we estimate the left-hand side of \eqref{Lmu3.22} by $I+II$ where
\begin{equation}\label{Lmu3.25}
\begin{split}
I:=&\mu^{\ast}(t,\eta)\Big|\frac{1}{1+e^{\sqrt{\delta}(|k|^{1/2}-\langle\xi\rangle^{1/2})}b_k(t,\xi)}-\frac{1}{1+e^{\sqrt{\delta}(|k|^{1/2}-\langle\eta\rangle^{1/2})}b_k(t,\eta)}\Big|,\\
II:=&\frac{|\mu^{\ast}(t,\xi)-\mu^{\ast}(t,\eta)|}{1+e^{\sqrt{\delta}(|k|^{1/2}-\langle\xi\rangle^{1/2})}b_k(t,\xi)}.
\end{split}
\end{equation}

Using \eqref{dor22} and \eqref{mur9} we estimate
\begin{equation*}
\begin{split}
I&\lesssim\frac{\mu^{\ast}(t,\eta)e^{\sqrt{\delta}(|k|^{1/2}-\langle\xi\rangle^{1/2})}\big\{|b_k(t,\xi)-b_k(t,\eta)|+|1-e^{\sqrt{\delta}(\langle\xi\rangle^{1/2}-\langle\eta\rangle^{1/2})}|b_k(t,\eta)\big\}}{[1+e^{\sqrt{\delta}(|k|^{1/2}-\langle\eta\rangle^{1/2})}b_k(t,\eta)][1+e^{\sqrt{\delta}(|k|^{1/2}-\langle\xi\rangle^{1/2})}b_k(t,\xi)]}\\
&\lesssim\frac{\mu^{\ast}(t,\eta)}{1+e^{\sqrt{\delta}(|k|^{1/2}-\langle\eta\rangle^{1/2})}b_k(t,\eta)}\Big\{\frac{|b_k(t,\xi)-b_k(t,\eta)|}{b_k(t,\xi)}+\frac{C(\delta)e^{\sqrt{\delta}|\xi-\eta|^{1/2}}b_k(t,\eta)}{\langle\xi\rangle^{1/2}b_k(t,\xi)}\Big\}\\
&\lesssim \mu_k(t,\eta)\langle\xi-\eta\rangle e^{2\sqrt\delta|\xi-\eta|^{1/2}}\Big[\frac{C'(\delta)}{\langle \xi\rangle^{1/8}}+\sqrt{\delta}\Big].
\end{split}
\end{equation*}
This is consistent with the desired estimates \eqref{Lmu3.22}. 

To control $II$ it suffices to show that
\begin{equation}\label{mur11}
\begin{split}
|\mu^{\ast}(t,\xi)-\mu^{\ast}(t,\eta)|\lesssim \langle\xi-\eta\rangle e^{2\sqrt{\delta}|\xi-\eta|^{1/2}}\Big[\frac{C''(\delta)}{\langle \eta\rangle^{1/8}}+\sqrt{\delta}\Big]\big(\mu^\ast(t,\eta)+\langle \eta\rangle^{1/2}/\langle t\rangle^{1+\sigma_0}\big)
\end{split}
\end{equation}
provided that $\xi,\eta\in\mathbb{R}$ and $t\geq 0$ satisfy $\delta^{-12}\langle\eta\rangle^{1/4}\leq\langle t\rangle\leq 20\langle\eta\rangle$ and $|\xi-\eta|\leq\langle\eta\rangle/10$.

{\bf Case 1.} Assume first that $t\leq\langle\eta\rangle^{7/8}$. If $|\xi-\eta|\geq\langle\eta\rangle^{1/10}$ then \eqref{mur11} follows easily. Assume that $|\xi-\eta|\leq\langle\eta\rangle^{1/10}$. For $\rho$ between $\xi$ and $\eta$ we have
\begin{equation}\label{Lmu3.26}
\big|\partial_{\rho}\mu^{\ast}(t,\rho)\big|\lesssim_\delta\mu^{\ast}(t,\rho)\frac{1}{L_{\delta'}(t,\rho)}\lesssim_\delta \mu^{\ast}(t,\rho)\langle\rho\rangle^{-1/8},
\end{equation}
using the definition (\ref{muaD}) and Lemma \ref{Lmu1} (ii). The desired bounds \eqref{mur11} follow using again Lemma \ref{Lmu1} (ii) and the observation that $|\xi-\eta|\leq 2L_1(t,\eta)$.

{\bf Case 2.} Suppose now that $t\geq\langle\eta\rangle^{7/8}$. We may assume again that $|\xi-\eta|\leq\langle\eta\rangle^{1/10}$ and $\eta\geq \delta^{-12}$. If $t\geq 5\eta/4$ then $\mu^{\ast}(t,\xi)+\mu^{\ast}(t,\eta)\lesssim \langle\eta\rangle^{-1}$, and the desired bounds \eqref{mur11} follow. 

Finally, assume that $\eta^{7/8}\leq t\leq 5\eta/4$. Therefore there is $a\in[1,2\eta^{1/8}]\cap\mathbb{Z}$ such that $t\in I_{a,\eta}$. If $|t-\eta/a|\geq \eta/(8a^2)$ then the definitions \eqref{muD}--\eqref{muaD} show that
\begin{equation*}
\mu^{\ast}(t,\xi)+\mu^{\ast}(t,\eta)\lesssim a^2\langle\eta\rangle^{-1}\lesssim\langle\eta\rangle^{-3/4},
\end{equation*}
and the desired bounds \eqref{mur11} follow. On the other hand, if $|t-\eta/a|\leq \eta/(8a^2)$ then $|t-\xi/a|\leq \eta/(6a^2)$ (due to the assumptions $|\xi-\eta|\leq\eta^{1/10}$ and $\eta\geq \delta^{-12}$), thus $t\in I_{a,\xi}$. In fact, $t\in I_{a,\rho}$ for any $\rho$ satisfying $|\rho-\eta|\leq 4L_{\delta'}(t,\eta)$ or $|\rho-\xi|\leq 4L_{\delta'}(t,\xi)$. Therefore, using \eqref{muD}--\eqref{muaD} we estimate
\begin{equation}\label{Lmu3.31}
\begin{split}
\big|\mu^{\ast}(t,\xi)&-\mu^{\ast}(t,\eta)\big|=\frac{1}{d_0}\Big|\int_{\mathbb{R}}\big[\mu^\#(t,\xi+L_{\delta'}(t,\xi)\rho)-\mu^\#(t,\eta+L_{\delta'}(t,\eta)\rho)\big]\varphi(\rho)\,d\rho\Big|\\
&\leq\frac{1}{d_0}\Big|\int_{\mathbb{R}}\mu^\#(t,\eta+L_{\delta'}(t,\eta)\rho)\varphi(\rho)\cdot \delta^2|(\xi+L_{\delta'}(t,\xi)\rho)-(\eta+L_{\delta'}(t,\eta)\rho)|\,d\rho\Big|\\
&\lesssim \delta^2\,\langle\xi-\eta\rangle^{1/2}\mu^\ast(t,\eta)
\end{split}
\end{equation}
which completes the proof of (\ref{Lmu3.22}). 

(iii) The bounds \eqref{runt30} follow from \eqref{Lmu3.2} if $\langle\xi-\eta\rangle\leq (\langle k,\xi\rangle+\langle k,\eta\rangle)/8$. On the other hand, if $\langle\xi-\eta\rangle\geq (\langle k,\xi\rangle+\langle k,\eta\rangle)/8$ then we prove the stronger bounds
\begin{equation}\label{runt30.1}
\frac{\langle k,\xi\rangle^{1/2}}{\langle t\rangle^{1+\sigma_0}}+\mu^{\ast}(t,\xi)\lesssim_\delta \frac{\langle k,\eta\rangle^{1/2}}{\langle t\rangle^{1+\sigma_0}}e^{6\sqrt\delta |\xi-\eta|^{1/2}}.
\end{equation}
The bound on the first term in the left-hand side is elementary. For the second term, we notice that it is nontrivial only if $|\xi|\geq\delta^{-9}$ and $t\leq 4|\xi|$. The assumption  $\langle\xi-\eta\rangle\geq (\langle k,\xi\rangle+\langle k,\eta\rangle)/8$ then gives $|\xi-\eta|\geq|\xi|/100$, and the bound on the second term in \eqref{runt30.1} is clear.
\end{proof}

\bigskip
\begin{center}
{\bf Acknowledgement}\\

We are grateful to Vladimir Sverak for suggesting us to consider this problem.
\end{center}

\end{document}